\def\BState{\State\hskip-\ALG@thistlm}
\DeclareMathOperator*{\argmin}{arg\,min}
\newtheorem{theorem}{Theorem}
\newtheorem{proposition}{Proposition}
\newtheorem{lemma}{Lemma}
\newtheorem{definition}{Definition}
\newtheorem{corollary}{Corollary}
\newtheorem{assumption}{Assumption}
\begin{document}
\title{An Approximation Algorithm for \\ training One-Node ReLU Neural Network}
\author[1]{Santanu S. Dey\thanks{santanu.dey@isye.gatech.edu}}  
\author[2]{Guanyi Wang\thanks{gwang93@gatech.edu}}
\author[3]{Yao Xie\thanks{yao.xie@isye.gatech.edu}}

\affil[1,2,3]{School of Industrial and Systems Engineering, Georgia Institute of Technology, Atlanta, USA} 
\maketitle

\begin{abstract}
	Training a one-node neural network with ReLU activation function (\ref{eq:One-Node-ReLU}) is a fundamental optimization problem in deep learning. In this paper, we begin with proving the NP-hardness of training \ref{eq:One-Node-ReLU}. We then present an approximation algorithm to solve \ref{eq:One-Node-ReLU} whose running time is $\mathcal{O}(n^k)$ where $n$ is the number of samples, $k$ is a predefined integral constant. Except $k$, this algorithm does not require pre-processing or tuning of parameters. We analyze the performance of this algorithm under various regimes. First, given any arbitrary set of training sample data set, we show that the algorithm guarantees a $\frac{n}{k}$-approximation for training \ref{eq:One-Node-ReLU} problem. As a consequence, in the realizable case (i.e. when the training error is zero), this approximation algorithm achieves the global optimal solution for the \ref{eq:One-Node-ReLU} problem.  Second, we assume that the training sample data is obtained from an underlying one-node neural network with ReLU activation function, where the output is perturbed by a Gaussian noise. In this regime, we show that the same approximation algorithm guarantees a much better asymptotic approximation ratio which is independent of the number of samples $n$. Finally, we conduct extensive empirical studies and arrive at two conclusions. One, the approximation algorithm together with some heuristic performs better than gradient descent algorithm. Two, the solution of the approximation algorithm can be used as starting point for gradient descent -- a combination that works significantly better than gradient descent.
\end{abstract}

\section{Introduction}
Training neural networks is a fundamental problem in machine learning. 

As a first step of understanding the theoretical properties of training neural networks, we study training the most basic neural network with the following structure: a single node with rectified linear unit function (ReLU) as its activation function (See Figure \ref{fig:Onn-Node-NN}).
\def\layersep{2.5cm}
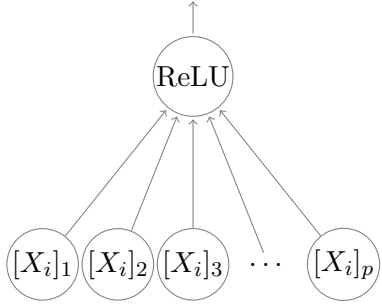
\begin{figure}[h]
\begin{center}
\begin{tikzpicture}[shorten >=1pt,->,draw=black!50, node distance=\layersep,transform shape,rotate=90]  
    \tikzstyle{every pin edge}=[<-,shorten <=1pt]
    \tikzstyle{neuron}=[circle,fill=black!25,minimum size=17pt,inner sep=0.5pt]
    \tikzstyle{input neuron}=[neuron, fill=white!, draw];
    \tikzstyle{output neuron}=[neuron, fill=white!, draw];
    \tikzstyle{annot} = [text width=4em, text centered]
    \tikzset{hoz/.style={rotate=-90}}   
    
	\foreach \name / \y in {1/1,2/2,3/3,5/p}
		\node[input neuron, hoz] (I-\name) at (0,-\name) {$[X_i]_\y$};
		\node[hoz] (I-4) at (0,-4) {$\dots$};
    

    \node[output neuron,pin={[pin edge={->}]right:\rotatebox{-90}{}}, right of=I-3] (O) {\rotatebox{-90}{ReLU}};

    \foreach \source in {1,...,4}
        \draw (I-\source) -- (O);  
    \draw (I-5) -- (O) node [midway, below, sloped] (TextNode) {};

    \node[annot,above of=I-1, node distance=1cm,hoz] (hl) {};
    \node[annot,right of=hl,hoz] {};
\end{tikzpicture}
\end{center}
\caption{Single node neural network with ReLU activate function}
\label{fig:Onn-Node-NN}
\end{figure}
Formally, in this paper, training a single-node neural network with ReLU activation function is the following:
Given a set of $n$ sample points $\{ (X_i, Y_i) \}_{i = 1}^n \in \mathbb{R}^p \times \mathbb{R}$ where $X_i$ is the $i^{\text{th}}$ \textit{input sample (observation sample)}, and $Y_i$ is the $i^{\text{th}}$ \textit{output sample (response sample)}, the task is to minimize the empirical average of sum of square loss 
\begin{align*}
	\min_{(\beta, \beta_0) \in \mathbb{R}^p \times \mathbb{R}} \frac{1}{n} \|\max\{\mathbf{0}, X^{\top}\beta + \beta_0 \mathbf{1}\} - Y\|_2^2 \tag{One-Node-ReLU} \label{eq:One-Node-ReLU}
\end{align*}
where $X = (X_1 | X_2 |\ldots | X_n) \in \mathbb{R}^{p \times n}$ and $Y = (Y_i)_{i = 1}^n \in \mathbb{R}^{n \times 1}$. 

Note that \ref{eq:One-Node-ReLU} can be viewed as a non-linear regression problem.  More general versions of this problem from the perspective of nonlinear regression have been studied in~\cite{toriello2012fitting, magnani2009convex}, which approach this problem from a perspective of Integer programming model and heuristics. 

We caution the reader that in the field of machine learning, the phrase \emph{complexity of training a neural network} has been used to refer to various different problems with corresponding goals. For example, in \cite{goel2016reliably}, their target is to find a feasible hypothesis $h$ that satisfies the false-positive rate condition and the expected loss condition.  In ~\cite{SongVempala2018}, complexity is measured by the number of samples that is needed to learn a certain class of function. Thus, any other hardness/lower bounds depends on how the training problem is defined. \emph{We reiterate here: this paper solely deals with the computational complexity (i.e. the amount of computational effort needed and the related questions of design of algorithm) questions related to solving the optimization problem~\ref{eq:One-Node-ReLU}.}

Note that in \ref{eq:One-Node-ReLU} problem, we do not assume $Y_i > 0$ holds for every $i \in [n]$. Under this formulation, if there exists $i$ such that $Y_i < 0$, then the optimal objective function cannot be $0$. Without loss of generality, we may assume $I^+ := \{i \in [n]: Y_i > 0 \} = \{1, \ldots, m\}$ and $I^- := \{i \in [n]: Y_i \leq 0\}= \{m + 1, \ldots, n\}$.

The rest of the paper is organized as follows: Section~\ref{sec:theory} presents our theoretical results and highlights comparison with related results in the literature. Section~\ref{section:numerical-exp} presents out computational results. In Section~\ref{sec:conclusion}, we provide concluding results. Section~\ref{sec:proofs} contains all the proofs of results presented in Section~\ref{sec:theory}.

\section{Main Theoretical Results \label{section:Main-Result}} \label{sec:theory}
Training a one-node neural network as defined in \ref{eq:One-Node-ReLU} is a non-convex optimization problem which we expect it to be challenging to solve. However, not all non-convex problems are difficult (i.e. NP-hard). For example the classical principal component analysis problem which is non-convex but can be solved in polynomial-time. 

In this paper, we analyze the optimization problem \ref{eq:One-Node-ReLU} in two scenarios:
\begin{enumerate}
\item Arbitrary data: In this case, we do not assume anything about the data, i.e., the training data is arbitrary. We would like to find optimal values of $\beta, \beta_0$ to fit the function $\textup{max}\{0, x^{\top}\beta + \beta_0\}$ to the given data. We would like to answer the following questions: Is this problem NP-hard? Can we come up with an approximation algorithm? How well does this approximation algorithm perform in the worst case?
\item Underlying statistical model: In this case, we assume that the training data is of the form: (1) $X_i$'s are iid sampled from a ``reasonable'' distributions, (2) $Y_i  = \textup{max}\{ 0, X_i^{\top} \beta^{\ast} + \beta_0^{\ast}\} + \epsilon$ where $\epsilon$ is a Gaussian noise and $(\beta^*, \beta^*_0)$ is the ground truth. We show that the same approximation algorithm described for arbitrary data case above, performs significantly better. 
\end{enumerate}


\subsection{Training \ref{eq:One-Node-ReLU} With Arbitrary Data}
For the first scenario, suppose the set of sample points $\{(X_i, Y_i)\}_{i = 1}^n$ are fixed and arbitrary, we study \ref{eq:One-Node-ReLU} in the perspective of computational complexity. In this section, for convenience we drop the $\frac{1}{n}$ term from the objective function from \ref{eq:One-Node-ReLU} since it does not change the optimal solution.  

Our first result formalizes the fact that we expect \ref{eq:One-Node-ReLU} to be NP-hard.

\begin{theorem}[NP-hardness] \label{thm:NP-hard}
	The \ref{eq:One-Node-ReLU} problem is NP-hard. 
\end{theorem}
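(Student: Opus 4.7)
I aim to prove NP-hardness via a polynomial-time Karp reduction from a known NP-hard problem to the decision version of \ref{eq:One-Node-ReLU}, which asks, given data and a rational threshold $t$, whether some $(\beta,\beta_0)$ achieves loss at most $t$. The key structural observation driving the plan is that for any $(\beta,\beta_0)$ with activation set $A = \{i : X_i^\top \beta + \beta_0 > 0\}$, the loss splits cleanly as
\begin{align*}
\sum_{i\in A}\bigl(X_i^\top \beta + \beta_0 - Y_i\bigr)^2 \;+\; \sum_{i\notin A} Y_i^2.
\end{align*}
Conditional on $A$, the inner minimization is a linearly-constrained least squares problem and is polynomial. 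Hence the hardness must live entirely in the discrete outer choice of a \emph{linearly-realizable} subset $A\subseteq[n]$ (one separable from its complement by a hyperplane), which is precisely the combinatorial object behind known hardness results for halfspace problems.

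\textbf{Reduction strategy.} The most natural source problems are ones already phrased in terms of linear separations: e.g., the NP-hard Maximum-Agreement-Halfspace (equivalently Minimum-Disagreement-Halfspace) problem, or direct reductions from $3$-SAT or Set-Splitting via explicit sample gadgets. As a concrete first attempt, given a labeled source instance $\{(x_i,y_i)\}$ with $y_i\in\{+1,-1\}$, I would build samples $(X_i,Y_i)=(x_i,c)$ for $y_i=+1$ and $(X_i,Y_i)=(x_i,0)$ for $y_i=-1$, augmented by a small collection of ``anchor'' samples that pin the prediction value on the positive class to $c$. A halfspace that agrees with every label and realizes $X_i^\top\beta+\beta_0 = c$ on each activated sample then yields zero loss, giving the forward direction of the reduction.

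\textbf{Main obstacle and resolution.} The delicate direction is the converse: showing that low loss forces near-agreement with the source-instance solution. Nothing a priori prevents the optimizer from mis-activating a handful of samples if the resulting freedom improves the continuous fit elsewhere, so without extra care the reduction only relates approximate versions of the two problems. The standard remedy is a big-$M$ scaling of the gadget $Y_i$'s together with the anchor samples, so that (i) each individual mis-activation contributes $\Omega(M^2)$ to the loss and dominates any continuous savings on the remaining samples, and (ii) $(\beta,\beta_0)$ cannot retreat to a coordinate-wise degenerate minimum. The decision threshold $t$ can then be placed in the resulting $\Theta(M^2)$ gap between ``correct'' and ``incorrect'' activation patterns; crucially, $M$ can be chosen polynomial in the input size so the whole reduction remains polynomial-time and $t$ has polynomial bit-length. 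A polynomial algorithm for \ref{eq:One-Node-ReLU} would then solve the NP-hard source problem, completing the proof. I expect the cleanest execution to come from a source problem already in the halfspace family, so that the ``separability'' assumption on $A$ matches the source instance without extra dimension blow-up; the main bookkeeping will be quantifying the big-$M$ scaling precisely enough to close the gap.
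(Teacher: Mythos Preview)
Your plan diverges from the paper's proof, which reduces from (a $\{\pm 1\}$ variant of) \textsc{Subset-Sum} rather than from a halfspace-agreement problem. The paper builds a gadget $\theta(\beta_j,\beta_0)=(\max\{0,\beta_j+\beta_0\}-1)^2+(\max\{0,-\beta_j+\beta_0\}-1)^2$ for each coordinate, together with a term that forces $\beta_0=0$; at $\beta_0=0$ the minimum of each $\theta$ is attained precisely at $\beta_j\in\{-1,+1\}$, and an additional pair of samples encodes the subset-sum constraint $\sum a_j\beta_j=\tfrac12\sum a_j$. The point is that the ``good'' instance certificate already specifies the \emph{exact} values of $X_i^\top\beta+\beta_0$ on every sample, so the forward direction is immediate.

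Your proposal, by contrast, has a real gap in the forward direction, not the converse one you flag. With $(X_i,Y_i)=(x_i,c)$ on positives and $(x_i,0)$ on negatives, zero (or even controllably small) loss on a correctly-activated positive requires $x_i^\top\beta+\beta_0=c$, i.e.\ all positive points must lie on a single affine hyperplane. A separating halfspace for the source instance gives only $x_i^\top\beta+\beta_0>0$, and no ``small collection of anchor samples'' can force equality simultaneously at arbitrary positive $x_i$'s: anchors can only add further linear constraints on $(\beta,\beta_0)$, and if they pin it down enough to hit $c$ at every positive $x_i$ they have eliminated the freedom to choose the halfspace at all. The big-$M$ scaling does not rescue this either: with $c=M$, a wrong positive costs $M^2$, but a \emph{correct} positive costs $(x_i^\top\beta+\beta_0-M)^2$, which is also $\Theta(M^2)$ unless $x_i^\top\beta+\beta_0\approx M$, and you cannot arrange that uniformly over generic positive points by rescaling $(\beta,\beta_0)$. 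So the loss on ``yes'' and ``no'' instances of the halfspace problem need not be separated by any polynomially computable threshold. To make a halfspace-style reduction go through you would need an additional per-sample degree of freedom (e.g.\ a fresh coordinate for each positive point) or a source problem whose ``yes'' certificate already places the relevant points on a common hyperplane---which is exactly why the paper's \textsc{Subset-Sum} route, where the intended $\beta\in\{\pm1\}^p$ makes every target value hit exactly, is cleaner here.
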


See Section~\ref{section:NP-hard} for a proof. Our proof of Theorem \ref{thm:NP-hard} is by showing that subset sum problem can be reduced to \ref{eq:One-Node-ReLU} problem. 
\paragraph{Comparison with related results from literature:} We study training ReLU neural networks in the perspective of NP-hardness when the input data are fixed and given. The two-layer $(k + 1)$ nodes neural network problem with ReLU as activation function has been studied in \cite{digvijay2018}, which shows that the training problem is NP-hard. Comparing to our main results, we show that even a more simplified structure, a neural network with one node is NP-hard. 
In \cite{manurangsi2018computational}, P. Manurangsi and D. Reichman independently gave another NP-hardness reduction.
%

Based on NP-hardness result in Theorem~\ref{thm:NP-hard}, it is natural to consider an efficient approximation algorithm with multiplicative bound. We first introduce some basic notions that explain the design of the algorithm. First, we can represent the \ref{eq:One-Node-ReLU} problem by dividing the summation into two parts:
\begin{align*}
	& \min_{(\beta, \beta_0) \in \mathbb{R}^p \times \mathbb{R}} \|\max\{\mathbf{0}, X^{\top}\beta + \beta_0 \mathbf{1}\} - Y\|_2^2 \\
	= & \min_{(\beta, \beta_0) \in \mathbb{R}^p \times \mathbb{R}} \sum_{i \in \{1, \ldots, m\}} (\max\{0, X_i^{\top} \beta + \beta_0\} - Y_i)^2 +  \phi(\beta, \beta_0) \tag{$\ast$} \label{eq:ast}
\end{align*}
where $\phi(\beta, \beta_0) \triangleq \sum_{i \in \{m + 1, \ldots, n\}} (\max\{0, X_i^{\top} \beta + \beta_0\} - Y_i)^2$. It is easy to observe that: 
\vspace{10pt}
\begin{proposition} \label{prop:convex}
		The second term of (\ref{eq:ast}), that is the  function $\phi(\beta, \beta_0)$, is convex. 
\end{proposition}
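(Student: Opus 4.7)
The plan is to verify convexity termwise, using the sign constraint $Y_i \leq 0$ for $i \in I^- = \{m+1, \ldots, n\}$ in an essential way. For each such $i$, let $g_i(\beta, \beta_0) := \max\{0, X_i^\top \beta + \beta_0\} - Y_i$. The inner function $\max\{0, X_i^\top \beta + \beta_0\}$ is the pointwise maximum of two affine (hence convex) functions, so it is convex in $(\beta, \beta_0)$; subtracting the constant $Y_i$ preserves convexity. So $g_i$ is convex.

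Next I would observe that because the inner max is nonnegative and $-Y_i \geq 0$ (this is exactly where the partition into $I^+$ and $I^-$ is used), we have $g_i(\beta, \beta_0) \geq 0$ everywhere. Now invoke the standard composition rule: if $g: \mathbb{R}^{p+1} \to [0, \infty)$ is convex and $h: [0, \infty) \to \mathbb{R}$ is convex and nondecreasing, then $h \circ g$ is convex. Applying this with $h(t) = t^2$, which is convex and nondecreasing on $[0,\infty)$, yields that $g_i^2 = (\max\{0, X_i^\top \beta + \beta_0\} - Y_i)^2$ is convex.

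Finally, $\phi(\beta, \beta_0) = \sum_{i = m+1}^n g_i(\beta, \beta_0)^2$ is a nonnegative sum of convex functions, and is therefore convex. This completes the proof.

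There is no real obstacle here; the only subtle point worth stating explicitly is the need for $g_i \geq 0$ in order to apply the squaring composition, since $t \mapsto t^2$ is not monotone on all of $\mathbb{R}$. Without the assumption $Y_i \leq 0$ the argument would fail — which is precisely why the partition of the index set into $I^+$ and $I^-$ was introduced in $(\ref{eq:ast})$.
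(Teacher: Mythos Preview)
Your proof is correct and is essentially the same termwise verification as the paper's, hinging on $Y_i \leq 0$. The only cosmetic difference is that the paper first reduces to the one-variable function $\theta(x) = (\max\{0,x\} - Y_i)^2$, expands it as $(\max\{0,x\})^2 + Y_i^2 - 2Y_i\max\{0,x\}$ and checks each summand is convex (the cross term needing $-2Y_i \geq 0$), then composes with the affine map $x = X_i^\top\beta + \beta_0$; you instead invoke the convex--nondecreasing composition rule with $h(t)=t^2$ on $[0,\infty)$ directly, which is a bit cleaner and makes the role of the sign condition $g_i \geq 0$ explicit.
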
 

The first term of (\ref{eq:ast}) can be further represented as a two-phase optimization problem as follows:
\begin{align*}
	& \min_{(\beta, \beta_0) \in \mathbb{R}^p \times \mathbb{R}} \|\max\{\mathbf{0}, X^{\top}\beta + \beta_0 \mathbf{1}\} - Y\|_2^2 = \left(\min_{I \subseteq [m]} \left( \min_{(\beta, \beta_0) \in P(I)} f_I(\beta, \beta_0) \right) \right) + \phi(\beta, \beta_0)
\end{align*}
where, for a given index set $I$, define set $P(I)$ and function $f_I(\beta, \beta_0)$ be:
\begin{align*}
	P(I) := & ~ \left\{ (\beta, \beta_0): 
	\begin{array}{lll}
		X_i^{\top} \beta + \beta_0 > 0, & i \in I \\
		X_i^{\top} \beta + \beta_0 \leq 0, & i \in [m]\backslash I
	\end{array}
	\right\}, & \text{set of feasible region of $\beta$,} \\
	f_I(\beta, \beta_0) := & ~ \sum_{i \in I} (X_i^{\top} \beta + \beta_0 - Y_i)^2 + \sum_{i \in [m]\backslash I} Y_i^2, & \text{summation of $i \in [m]$ via $P(I)$.}
\end{align*} 
Henceforth, we denote the index set $I$ be the \textit{active set} and denote the index set $I^C = [m] \backslash I$ be the \textit{inactive set}. Hence the original \ref{eq:One-Node-ReLU} problem can be interpreted as a two-phase optimization problem: For any given $I \subseteq [m]$, the inner-phase optimization problem 
\begin{align*}
	z^{\ast}(I) := \min_{(\beta, \beta_0) \in P(I)} f_I(\beta, \beta_0) + \phi(\beta, \beta_0)
\end{align*}
is convex over $(\beta, \beta_0)$. the  Our approximation algorithm will be build based on the fact that we will examine only a polynomial number of distinct $I$'s.

In order to obtain the approximation guarantees it is convenient to work with an `unconstrained version' of the optimization problem corresponding to $z^*(I)$. Let $\sigma: \mathbb{R} \times \mathbb{R} \mapsto \mathbb{R}$ be the convex function (See Figure \ref{fig:sigma}) defined as:
\begin{align*}
	\sigma(x, y) = \left\{
	\begin{array}{lll}
		(x - y)^2 & \text{ if } x > 2y \\
		y^2  & \text{ if } x \leq 2y, 
	\end{array}
	\right.
\end{align*}
\begin{figure}[h]
	\begin{center}
		\includegraphics[width= 0.6 \textwidth]{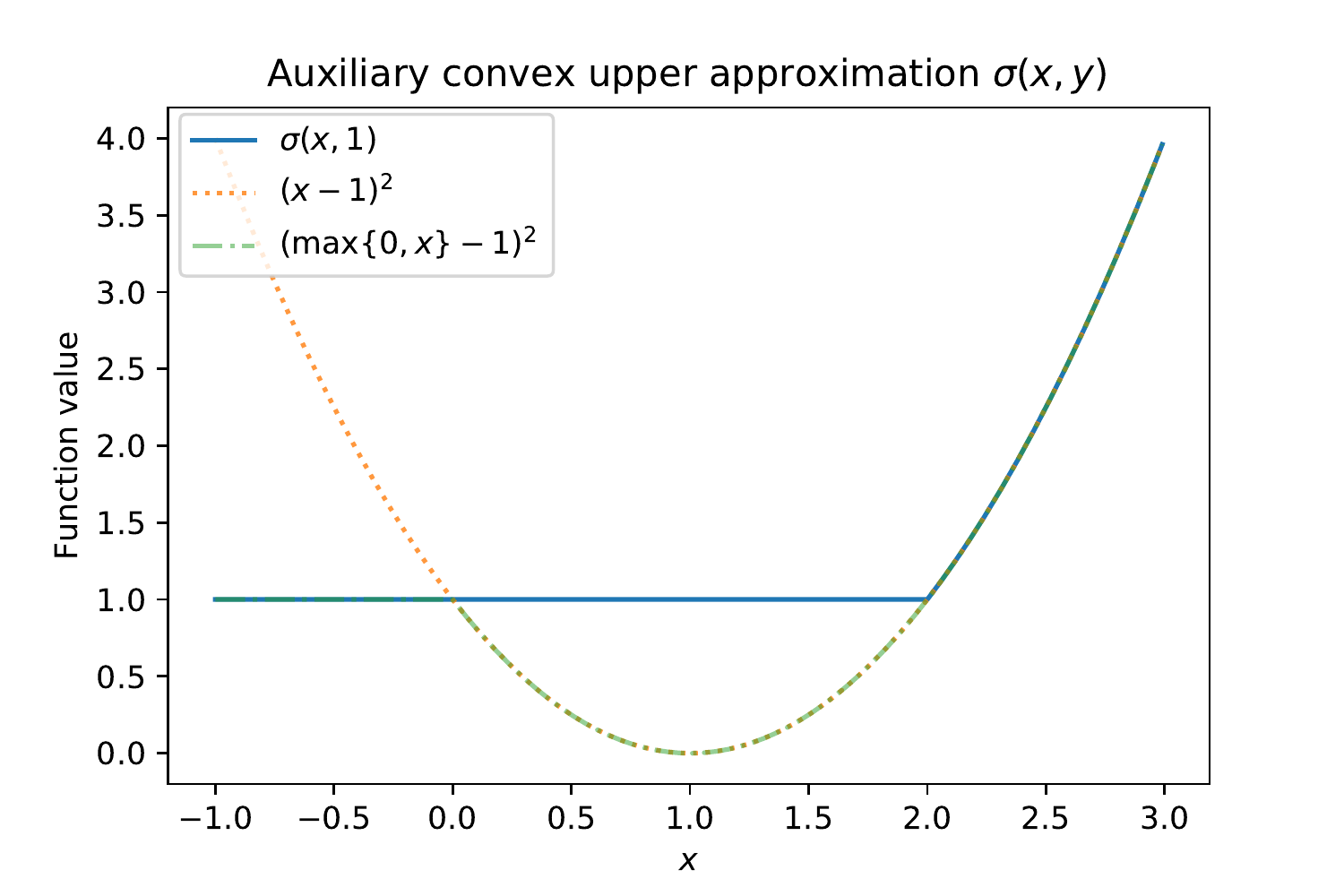}
	\end{center}
	\caption{Function $\sigma(x, y)$ with $y = 1$}
	\label{fig:sigma}
\end{figure}
where $\sigma(X_i^{\top} \beta + \beta_0, Y_i) \geq (\max\{0, X_i^{\top} \beta + \beta_0\} - Y_i)^2$ holds for all $(\beta, \beta_0) \in \mathbb{R}^p \times \mathbb{R}$. Let 
\begin{align*}
	z^{\sigma}(I) := \min_{(\beta, \beta_0) \in \mathbb{R}^p \times \mathbb{R}} { \sum_{i \in I} (X_i^{\top} \beta + \beta_0 - Y_i)^2 + \sum_{i \in [m] \backslash I} \sigma(X_i^{\top} \beta + \beta_0, Y_i) }+ \phi(\beta, \beta_0)
\end{align*}

be an convex upper approximation of $z^{\ast}(I)$. Let
\begin{align*}
	z^{\text{OPT}} & := \min_{(\beta, \beta_0) \in \mathbb{R}^p \times \mathbb{R}} \|\max\{\mathbf{0}, X^{\top}\beta + \beta_0 \mathbf{1}\} - Y\|_2^2, & \text{ be the global optimal value,} \\
	(\beta^{\text{OPT}}, \beta^{\text{OPT}}_0 ) & := \argmin_{(\beta, \beta_0) \in \mathbb{R}^p \times \mathbb{R}} \|\max\{\mathbf{0}, X^{\top}\beta + \beta_0 \mathbf{1}\} - Y\|_2^2, & \text{ be a global optimal solution.} 
\end{align*}
Thus $I^{\text{OPT}} := \left\{i \in [m]: X_i^{\top} \beta^{\text{OPT}} + \beta^{\text{OPT}}_0 > 0 \right\}$ and $[m] \backslash I^{\text{OPT}} := \left\{ i \in [m]: X_i^{\top} \beta^{\text{OPT}} + \beta^{\text{OPT}}_0 \leq 0 \right\}$ are the corresponding active, inactive set of $(\beta^{\text{OPT}}, \beta^{\text{OPT}}_0)$ respectively. Then $z^{\sigma}(I)$ satisfies:

\vspace{10pt}
\begin{proposition} \label{prop:LB}
	For any $I\subseteq [m]$, $z^{\text{OPT}} \leq z^{\sigma}(I)$. Moreover, there exists a $I \subseteq [m]$ such that $z^{\text{OPT}} = z^{\sigma}(I)$. 
\end{proposition}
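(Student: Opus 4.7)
The plan is to establish both claims by pointwise comparison of the integrands, exploiting the convention that $I \subseteq [m]$ indexes only samples with $Y_i > 0$.

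For the inequality $z^{\text{OPT}} \leq z^{\sigma}(I)$, I would fix an arbitrary $I \subseteq [m]$ and an arbitrary point $(\beta,\beta_0) \in \mathbb{R}^p \times \mathbb{R}$, and show that the integrand defining $z^{\sigma}(I)$ dominates that of the original objective term by term. For indices $i \in I$ (so $Y_i > 0$), I would compare $(X_i^\top \beta + \beta_0 - Y_i)^2$ with $(\max\{0, X_i^\top \beta + \beta_0\} - Y_i)^2$: in the case $X_i^\top \beta + \beta_0 \geq 0$ they coincide, and in the case $X_i^\top \beta + \beta_0 < 0$ the former equals $(X_i^\top \beta + \beta_0 - Y_i)^2 \geq Y_i^2$ (since $X_i^\top \beta + \beta_0 - Y_i < -Y_i < 0$), while the latter is exactly $Y_i^2$. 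For $i \in [m] \setminus I$, I would directly invoke the stated majorization $\sigma(X_i^\top\beta+\beta_0, Y_i) \geq (\max\{0, X_i^\top\beta+\beta_0\} - Y_i)^2$. The remaining sum over $i \in \{m+1, \ldots, n\}$ is $\phi(\beta,\beta_0)$ on both sides, hence cancels. Summing, the objective of $z^{\sigma}(I)$ at any $(\beta,\beta_0)$ upper bounds the objective of \ref{eq:One-Node-ReLU}, so the minima satisfy the same inequality.

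For the existence of an $I$ achieving equality, I would take $I = I^{\text{OPT}}$ and evaluate the objective defining $z^{\sigma}(I^{\text{OPT}})$ at $(\beta^{\text{OPT}}, \beta_0^{\text{OPT}})$. By definition of $I^{\text{OPT}}$, for $i \in I^{\text{OPT}}$ the argument $X_i^\top \beta^{\text{OPT}} + \beta_0^{\text{OPT}} > 0$ so $\max\{0, X_i^\top \beta^{\text{OPT}} + \beta_0^{\text{OPT}}\} = X_i^\top \beta^{\text{OPT}} + \beta_0^{\text{OPT}}$ and the two quadratic terms agree. For $i \in [m] \setminus I^{\text{OPT}}$, the argument is $\leq 0$, and since $Y_i > 0$ we have $X_i^\top \beta^{\text{OPT}} + \beta_0^{\text{OPT}} \leq 0 < 2 Y_i$, putting us in the second branch of $\sigma$, which gives $\sigma = Y_i^2 = (\max\{0, X_i^\top \beta^{\text{OPT}} + \beta_0^{\text{OPT}}\} - Y_i)^2$. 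Thus the $z^{\sigma}(I^{\text{OPT}})$ integrand equals the original one at $(\beta^{\text{OPT}}, \beta_0^{\text{OPT}})$, yielding $z^{\sigma}(I^{\text{OPT}}) \leq z^{\text{OPT}}$; combining with the first part forces equality.

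There is no real obstacle; the whole argument is essentially a careful case analysis. The one subtlety worth flagging is that the first-part bound on indices in $I$ (namely that a plain squared residual dominates the residual against the ReLU of the argument) relies crucially on the convention $Y_i > 0$ for $i \in [m]$; without this the sign handling in the case $X_i^\top \beta + \beta_0 < 0$ would break, and the proposition would have to be stated differently.
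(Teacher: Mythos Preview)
Your proposal is correct and follows essentially the same route as the paper's proof: a pointwise termwise comparison using $Y_i>0$ for $i\in[m]$ to show the surrogate objective dominates the true one, then evaluation at $(\beta^{\text{OPT}},\beta_0^{\text{OPT}})$ with $I=I^{\text{OPT}}$ to force equality. Your case analysis for $i\in I$ is slightly more explicit than the paper's one-line statement of the inequality $(\max\{0,x\}-y)^2\le(x-y)^2$ for $y>0$, but the argument is the same.
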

Proof of Proposition \ref{prop:LB} can be found in Section \ref{section:Prop-LB}. Thus, we can use the  $z^{\sigma}(I)$ functions instead of $z^{*}(I)$ to design the algorithm, and being an unconstrained problem is easier to work with. 

Given any feasible solution $(\beta, \beta_0)$, by definition, the $i^{\text{th}}$ row will contribute 
\begin{align*}
	\left( \max\{0, X_i^{\top} \beta + \beta_0 \} - Y_i \right)^2 
\end{align*}
to the objective value. Suppose the given $(\beta, \beta_0)$ satisfies 
\begin{align*}
	X_i^{\top} \beta + \beta_0 \leq 0, 
\end{align*}
then for some $i \in [m]$ such that $0 \ll Y_i$, the $i^{\text{th}}$ row contributes a large value to objective.
Therefore, we expect the greater $Y_i$, the more likely that the index $i$ is in the active set. 
This is the key intuition behind Algorithm~\ref{algo:gen-approximation}, which explores a polynomial number of active sets with the property that larger the value of $Y_i$ the more likely $i$ is in the active set.
\begin{algorithm}[h]
\caption{Generalized Approximation Algorithm} \label{algo:gen-approximation}
\begin{algorithmic}[1]
\BState \emph{Input}: A set of $n$ sample points $(X, Y) \in \mathbb{R}^{p \times n} \times \mathbb{R}^n$, a positive-label index set $I^+ = \{1, \ldots, m\}$ such that $0 < Y_1 \leq Y_2 \leq \dots \leq Y_m$, a negative-label index set $I^- =\{m + 1, \ldots, n\}$, a fixed integer $k \geq 1$.
\BState \emph{Output}: A feasible $n/k$-approximation solution $(\beta, \beta_0)$ for the \ref{eq:One-Node-ReLU} problem. 
\Function{Generalized Approximation Algorithm}{$X, Y, I^+$} \label{function:gen-approximation}
\For{$j = 1, \ldots, k$ as the size of inactive set $[m] \backslash I$} 
\State Pick $j$ distinct indices $i_1, \ldots, i_j$ such that $0 \leq i_1 < \ldots < i_j \leq m$. 

\State Set inactive set be 
	\begin{align*}
		\left\{
		\begin{array}{lll}
			\left\{1, \ldots, i_1 \right\} \cup \left( \bigcup_{\ell = 2}^{j}\{i_{\ell}\} \right) & \text{ for } j \geq 2, \\
			\left\{1, \ldots, i_1 \right\} & \text{ for } j = 1.
		\end{array}
		\right.
	\end{align*} 
\State Set active set be the complement of inactive set as  
	\begin{align*}
		I = \left( \bigcup_{\ell = 1}^{j - 1} \left\{i_{\ell} + 1, \ldots, i_{\ell + 1} - 1 \right\} \right) \cup \{i_{j} + 1, \ldots, m\}.
	\end{align*}
\State For each active set $I$, compute
	\begin{align*}
		(\beta^I, \beta^I_0) \gets & \argmin_{(\beta, \beta_0)} f^{\sigma}_{I}(\beta, \beta_0) + \phi(\beta, \beta_0), \\
		z^{\sigma}(I) \gets & \min_{(\beta, \beta_0)} f^{\sigma}_{I}(\beta, \beta_0) + \phi(\beta, \beta_0).
	\end{align*}
\EndFor		
\State \Return $(\hat{\beta}, \hat{\beta}_0)$ which corresponds to the minimum $z^{\sigma}(I)$ among all the $I$'s examined. 
\EndFunction
\end{algorithmic}
\end{algorithm}\\
Note that when $i_1 = 0$, the set $\{1, \ldots, i_1\} = \emptyset$, similarly, when $i_j = m$, the set $\{i_k + 1, \ldots, m\} = \emptyset$. It is clear to see that, for each $j = 1, \ldots, k$, there are $\binom{m}{j}$ distinct subsets $\{i_1, \ldots, i_j\}$ in $\{1, \ldots, m\}$. For each subset $\{i_1, \ldots, i_j\}$, Algorithm \ref{algo:gen-approximation} requires to solve a convex optimization problem, thus the total running time of Algorithm \ref{algo:gen-approximation} is $$\left( \sum_{i = 1}^k \binom{n}{i} \right) T = O\left(n^k T \right)$$ where $T$ is the running time of solving a convex optimization problem $$(\beta^I, \beta^I_0) \gets \argmin_{(\beta, \beta_0)} f^{\sigma}_{I}(\beta, \beta_0) + \phi(\beta, \beta_0).$$ Thus Algorithm \ref{algo:gen-approximation} is a polynomial-time algorithm.
%
%
\begin{theorem}[Approximation Ratio] \label{thm:AA} Algorithm~\ref{algo:gen-approximation} is an $\frac{n}{k}$-Approximation Algorithm, i.e., if $z^{approx}$ is the objective value of the $(\hat{\beta}, \hat{\beta_0})$ returned from Algorithm~\ref{algo:gen-approximation}, and $z^{\text{OPT}}$ is the global optimal value of \ref{eq:One-Node-ReLU}, then:
$$ z^{\text{OPT}} \leq z^{approx} \leq \frac{n}{k}z^{\text{OPT}}.$$
\end{theorem}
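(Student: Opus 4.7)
The lower bound $z^{\text{OPT}} \le z^{approx}$ is immediate: Algorithm~\ref{algo:gen-approximation} returns a feasible pair $(\hat{\beta}, \hat{\beta}_0)$ for \ref{eq:One-Node-ReLU}, so its objective value cannot beat the global minimum.

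For the upper bound my plan is to exhibit one particular active set $\tilde I$ that the algorithm enumerates and for which $z^{\sigma}(\tilde I) \le \tfrac{n}{k}\, z^{\text{OPT}}$. Once such a $\tilde I$ is found, we get
\begin{align*}
z^{approx} \;\le\; z^{\sigma}(\hat I) \;=\; \min_{I \in \text{list}} z^{\sigma}(I) \;\le\; z^{\sigma}(\tilde I) \;\le\; \tfrac{n}{k}\, z^{\text{OPT}},
\end{align*}
where the first inequality uses the pointwise bound $\sigma(x,y) \ge (\max\{0,x\} - y)^2$, which guarantees that $z^{\sigma}(\hat I)$ is an upper bound on the true objective evaluated at $(\hat{\beta}, \hat{\beta}_0)$.

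To construct $\tilde I$, I would start from the optimizer $(\beta^{\text{OPT}}, \beta^{\text{OPT}}_0)$ and its induced active set $I^{\text{OPT}}$. By Proposition~\ref{prop:LB} we have $z^{\sigma}(I^{\text{OPT}}) = z^{\text{OPT}}$. If $I^{\text{OPT}}$ already has the ``prefix plus at most $k-1$ extras'' shape that Algorithm~\ref{algo:gen-approximation} considers then $\tilde I = I^{\text{OPT}}$ and the ratio is $1$. Otherwise I would choose indices $i_1 < \cdots < i_j$ (with $j \le k$) aligned with the largest OPT-contributions from $[m]$, define $\tilde I$ via the formula in Algorithm~\ref{algo:gen-approximation}, and plug $(\beta^{\text{OPT}}, \beta^{\text{OPT}}_0)$ into the convex functional defining $z^{\sigma}(\tilde I)$ to compare with $z^{\text{OPT}}$ term by term. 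On indices where $\tilde I$ and $I^{\text{OPT}}$ agree the per-sample contribution is unchanged; on disagreeing indices, the piecewise structure of $\sigma$ (namely $\sigma(x,y) \le \max\{(x-y)^2, y^2\}$) bounds each deviation by a term of the same type as one already appearing in $z^{\text{OPT}}$, and the sorted ordering $Y_1 \le \cdots \le Y_m$ lets me charge each deviation to an existing OPT-contribution without double counting.

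The hard part will be the amortization step: arguing, via sortedness of the $Y_i$'s together with the two-branch structure of $\sigma$, that no OPT-contribution is charged more than $n/k$ times, so that the total excess is at most $(\tfrac{n}{k} - 1)\, z^{\text{OPT}}$. The summand $\phi(\beta^{\text{OPT}}, \beta^{\text{OPT}}_0)$ from the negative-label samples does not depend on $I$ and so passes through cleanly; the remaining combinatorial bookkeeping over the $k$ enumeration degrees of freedom is where I expect the bulk of the technical effort.
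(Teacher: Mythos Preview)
Your skeleton is exactly the paper's: pick an active set $\tilde I$ that the algorithm enumerates, evaluate $z^{\sigma}(\tilde I)$ at $(\beta^{\text{OPT}},\beta_0^{\text{OPT}})$, and compare term by term with $z^{\text{OPT}}$. The lower bound and the role of $\phi$ are also handled correctly.

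What is missing is the one concrete choice that makes the ``amortization'' trivial rather than hard. Your phrase ``aligned with the largest OPT-contributions from $[m]$'' is too vague, and a generic charging scheme will not close. The right choice is: let $i_1<\cdots<i_k$ be the $k$ \emph{largest indices in $[m]\setminus I^{\text{OPT}}$} (if $|[m]\setminus I^{\text{OPT}}|<k$ the algorithm already hits $I^{\text{OPT}}$ exactly). Two things fall out immediately from this choice together with the sorting $Y_1\le\cdots\le Y_m$:
\begin{enumerate}
\item Every index above $i_1$ that is not one of $i_2,\dots,i_k$ lies in $I^{\text{OPT}}$ (by maximality) and hence in $\tilde I$. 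Therefore $I^{\text{OPT}}\setminus \tilde I\subseteq\{1,\dots,i_1-1\}$, so each ``bad'' index $i$ (where $\sigma$ returns $Y_i^2$ instead of $(X_i^\top\beta^{\text{OPT}}+\beta_0^{\text{OPT}}-Y_i)^2$) satisfies $Y_i\le Y_{i_1}$.
\item Since $i_1,\dots,i_k\in[m]\setminus I^{\text{OPT}}$, the optimal value already contains $\sum_{\ell=1}^k Y_{i_\ell}^2\ge k\,Y_{i_1}^2$ inside the part the paper calls $D$.
\end{enumerate}
With these two facts the excess is at most $|I^{\text{OPT}}\setminus\tilde I|\cdot Y_{i_1}^2\le (n-k)\,Y_{i_1}^2\le \tfrac{n-k}{k}\,D$, and the ratio bound
\[
\frac{z^{\sigma}(\tilde I)}{z^{\text{OPT}}}\;\le\;\frac{D+\sum_{i\in I^{\text{OPT}}\setminus\tilde I}Y_i^2}{D}\;\le\;\frac{n}{k}
\]
is immediate---no amortization bookkeeping needed. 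So the ``hard part'' you anticipated disappears once the indices are chosen this way; identifying that choice is the actual content of the proof.
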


\paragraph{Comparison with related results from literature:} Note that there is an exact algorithm that solves \ref{eq:One-Node-ReLU} problem within $O(n^p)$ running time where $p$ is the dimension~\cite{arora2016understanding}. However, when $p$ is much greater than $n$ (i.e., in high dimensional cases), our generalized approximation algorithm could achieve a reasonable good solution in $O(n^k T)$ running time. 
In \cite{manurangsi2018computational}, P. Manurangsi and D. Reichman show that minimizing squared training error of a one-node neural network is NP-hard to approximate within the factor $n^{\frac{1}{(\log \log n)^{O(1)}}}$ (actually, $m$ samples $\{(x_i, y_i)\}_{i = 1}^m$ in their setting generates $nm$ samples in our setting based their polynomial-time reduction).  There is a significant gap between  upper bound from Algorithm~\ref{algo:gen-approximation} and this lower bound.

A very important consequence of Theorem~\ref{thm:AA} is the following result.
\begin{corollary}[Realizable case] When the \ref{eq:One-Node-ReLU} problem is realizable, i.e., there exists a true solution $(\beta^{\ast}, \beta^{\ast}_0)$ with 0 objective value, then Theorem~\ref{thm:AA} implies that the Algorithm~\ref{algo:gen-approximation} gives a polynomial-time approach that solves the \ref{eq:One-Node-ReLU} problem exactly to global optimal. 
\end{corollary}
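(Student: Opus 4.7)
The plan is to derive the corollary as an immediate consequence of the multiplicative approximation bound established in Theorem~\ref{thm:AA}. The key observation is that a bound of the form $z^{\text{approx}} \leq \frac{n}{k} z^{\text{OPT}}$ becomes infinitely strong when $z^{\text{OPT}} = 0$: any scalar multiple of zero is still zero, so $z^{\text{approx}}$ is forced to vanish as well. The whole argument therefore boils down to substitution, and the main conceptual work has already been done when proving Theorem~\ref{thm:AA}.

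Concretely, I would proceed in three short steps. First, invoke the realizability hypothesis: the true solution $(\beta^{\ast}, \beta^{\ast}_0)$ attains objective value $0$, and because the objective $\|\max\{\mathbf{0}, X^{\top}\beta + \beta_0 \mathbf{1}\} - Y\|_2^2$ is a sum of squares and therefore nonnegative, this forces $z^{\text{OPT}} = 0$. Second, plug this value into the inequality from Theorem~\ref{thm:AA} to obtain
\begin{equation*}
0 \;\leq\; z^{\text{approx}} \;\leq\; \frac{n}{k}\cdot 0 \;=\; 0,
\end{equation*}
so $z^{\text{approx}} = 0$. Third, conclude that the output $(\hat{\beta}, \hat{\beta}_0)$ of Algorithm~\ref{algo:gen-approximation} is itself a global optimum of \ref{eq:One-Node-ReLU}, since it attains the minimum possible objective value. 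Combined with the earlier running-time observation that Algorithm~\ref{algo:gen-approximation} terminates in $O(n^k T)$ operations, this yields an exact polynomial-time solver in the realizable regime.

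There is essentially no obstacle in this argument; the corollary is a one-line consequence of Theorem~\ref{thm:AA}. The only sanity check is the nonnegativity of $z^{\text{approx}}$, which is immediate because the algorithm returns a feasible $(\hat{\beta}, \hat{\beta}_0)$ whose objective is a sum of squared residuals. In particular, no additional analysis of the subroutines computing $z^{\sigma}(I)$ or of the active-set enumeration is required beyond what already underlies Theorem~\ref{thm:AA}.
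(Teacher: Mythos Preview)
Your proposal is correct and matches the paper's own treatment: the corollary is stated without proof immediately after Theorem~\ref{thm:AA} as ``a very important consequence,'' and the intended argument is precisely the substitution $z^{\text{OPT}} = 0$ into the multiplicative bound that you carry out.
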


\paragraph{Comparison with related results in literature} Soltanolkotabi in \cite{soltanolkotabi2017learning}, and Kalan et.al. in \cite{kalan2019fitting} studied the problem of learning one node ReLU neural network with i.i.d. random Gaussian distribution observation samples via gradient descent (GD) method and stochastic gradient descent (SGD) method in the realizable case. Soltanolkotabi showed that the gradient descent, when starting from origin converges at a linear rate to the true solution (with additive error) where the number of samples is sufficiently large. Kalan et.al. in \cite{kalan2019fitting} discussed the stochastic version that mini-batch stochastic gradient descent when suitably initialized, converges at a geometric rate to the true solution (with additive error). In contrast Algorithm~\ref{algo:gen-approximation} does not need the assumption that the data is i.i.d. random Gaussian distribution, there are no additive errors, and also deals with the case where $\beta_0$ is non-trivial. Finally, the initialization of SGD method requires some additional effort not needed for Algorithm~\ref{algo:gen-approximation}.

\subsection{Training \ref{eq:One-Node-ReLU} With Underlying Statistical Model}
In real life, it is natural to assume that the set of sample points follows some underlying statistical model. Here is our assumptions:  
\begin{assumption}[Underlying Statistical Model] \label{assumption}
	Suppose the set of sample points $$(X_1, Y_1), \ldots, (X_n, Y_n) \in \mathbb{R}^p \times \mathbb{R}$$ satisfies the correct underlying statistical model, i.e., for each $i = 1, \ldots, n$, 
\begin{align*}
	Y_i = \max\{0, X_i^{\top} \beta^{\ast} + \beta^{\ast}_0\} + \epsilon_i
\end{align*}
where $\beta^{\ast}, \beta^{\ast}_0$ is some fixed true parameter (may be distinct from $\left(\beta^{\text{OPT}}, \beta^{\text{OPT}}_0 \right)$ as the optimal solution of \ref{eq:One-Node-ReLU}). We further assume that  $\beta^{\ast}, \beta^{\ast}_0$ belongs to a convex compact set $\Theta \subseteq \mathbb{R}^p \times \mathbb{R}$, and for $i = 1, \ldots, n$, $X_i, \epsilon_i$ are i.i.d. random variables that are generated from some underlying fixed distribution $\mathcal{N}, \mathcal{D}$, respectively. Furthermore, $\mathcal{N}$ is a distribution that satisfies the following properties:
\begin{enumerate}
	\item $\mathbb{E}_{X \sim \mathcal{N}}[X] = 0_p, \text{ and } \text{Var}_{X \sim \mathcal{N}}(X) = \Sigma$ positive semi-definite.  
	\item {Unique Optimal Property:} Let $\text{Supp}_{\mathcal{N}} \subseteq \mathbb{R}^p$ be the support of distribution $\mathcal{N}$. For any $(\beta^{\ast}, \beta^{\ast}_0) \in \Theta$, there exists $p + 1$ vectors $v_1, \ldots, v_p, v_{p + 1} \in \text{Supp}_{\mathcal{N}}$ such that 
		\begin{align*}
			v_i^{\top} \beta^{\ast} + \beta^{\ast}_0 > 0, ~ \forall i = 1, \ldots, p, p + 1,
		\end{align*}
		in which $(v_1, 1), \ldots, (v_p, 1), (v_{p + 1}, 1) \in \mathbb{R}^{p + 1}$ are linearly independent. 
	\item Since $\beta^{\ast}, \beta^{\ast}_0$ is fixed then $$\mathbb{E}_{X \sim \mathcal{N}}[X^{\top} \beta^{\ast} + \beta^{\ast}_0] = \beta^{\ast}_0, \text{ and } \text{Var}_{X \sim \mathcal{N}}(X^{\top} \beta^{\ast} + \beta^{\ast}_0) = (\beta^{\ast})^{\top} \Sigma \beta^{\ast} =: \Delta^2.$$ 
\end{enumerate}
and $\mathcal{D}$ is a Gaussian distribution such that $\mathbb{E}_{\epsilon \sim \mathcal{D}} = 0$ and $\text{Var}_{\epsilon \sim \mathcal{D}} = \gamma^2 < \infty$. 
 
\end{assumption}
Note that Algorithm~\ref{algo:gen-approximation} with parameter $k = 1$,  provides a solution to the objective function in the following format:
\begin{align*}
	\min_{\beta \in \Theta} \underbrace{ \frac{1}{n} \bigg[ \sum_{i \in I(y)} \sigma(X_i^{\top} \beta + \beta_0, Y_i) + \sum_{i \in I^+ \backslash I(y)} (X_i^{\top} \beta + \beta_0 - Y_i)^2 + \sum_{i \in I^-} (\max\{0, X_i^{\top} \beta + \beta_0 \} - Y_i)^2 \bigg]}_{=: S^y_n (\beta, \beta_0)}
\end{align*}
where $I^+ = \{i: Y_i > 0\}, ~ I^- = \{i: Y_i \leq 0\}$, and $I(y) = \{i: 0 < Y_i \leq y\}$ for some $y > 0$. To see the exact correspondence: $\{0, 1, \dots, i_1\}$ is $I(y)$ and \{$i_1 +1, \dots, m\}$ is $[m]\setminus I(y)$. As we change $y$, we are effectively picking different values of $i_i$. 

First, using existing classical results in (\cite{mickey1963test}, p40) and \cite{jennrich1969asymptotic}, we obtain the following  results:

\begin{proposition} \label{prop:asy-sorting-obj}
	As $n \rightarrow \infty$, the objective function $S^y_n(\beta, \beta_0)$ converges to the following asymptotic objective function: 
\begin{align*}
	S^y_n (\beta, \beta_0) \rightarrow \mathbb{E}_{X \sim \mathcal{N}, \epsilon \sim \mathcal{D}}\bigg[ \psi_y (X^{\top} \beta + \beta_0, Y) \bigg], 
\end{align*}
for almost every sequence $\{(X_i, Y_i)\}_{i = 1}^n$ where $Y_i = \max\{0, X_i^{\top} \beta^{\ast} + \beta_0^{\ast} \} + \epsilon_i$ and 
\begin{align*}
	\psi_y (X^{\top} \beta + \beta_0, Y) := \left\{
	\begin{array}{llll}
		\sigma(X^{\top}\beta + \beta_0, Y) & \text{ if } 0 < Y \leq y \\
		(X^{\top} \beta + \beta_0 - Y)^2 & \text{ if } y < Y \\
		(\max\{0, X^{\top} \beta + \beta_0 \} - Y)^2 & \text{ if } Y \leq 0
	\end{array}
	\right..
\end{align*}
\end{proposition}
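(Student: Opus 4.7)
The plan is to recognize $S^y_n(\beta, \beta_0)$ as an empirical average of iid random variables and then invoke the classical Strong Law of Large Numbers; the cited results of Mickey and Jennrich can be used to obtain a uniform-in-$(\beta, \beta_0)$ strengthening if needed elsewhere in the paper.

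First, I would consolidate the three pieces of $S^y_n$ into a single sum. The partition $\{1, \dots, n\} = I^- \sqcup I(y) \sqcup (I^+ \setminus I(y))$ is determined purely by the random variable $Y_i$: $i \in I^-$ iff $Y_i \leq 0$, $i \in I(y)$ iff $0 < Y_i \leq y$, and $i \in I^+ \setminus I(y)$ iff $Y_i > y$. Matching these cases against the piecewise definition of $\psi_y$ yields $$ S^y_n(\beta, \beta_0) = \frac{1}{n} \sum_{i=1}^n \psi_y(X_i^\top \beta + \beta_0, Y_i). $$

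Second, I would argue that the summands are iid. Under Assumption \ref{assumption} the pairs $(X_i, \epsilon_i)$ are iid, and since $(\beta^{\ast}, \beta^{\ast}_0)$ is fixed, $Y_i = \max\{0, X_i^\top \beta^{\ast} + \beta_0^{\ast}\} + \epsilon_i$ is a Borel function of $(X_i, \epsilon_i)$. Hence the sequence $(X_i, Y_i)$ is iid, and for each fixed $(\beta, \beta_0)$ the variables $\psi_y(X_i^\top \beta + \beta_0, Y_i)$ are iid with common mean $\mathbb{E}[\psi_y(X^\top \beta + \beta_0, Y)]$. A short case check shows $|\psi_y(x, Y)| \leq C(x^2 + Y^2)$ for an absolute constant $C$, and combined with finiteness of $\Sigma = \text{Var}(X)$ and of $\text{Var}(\epsilon) = \gamma^2$, this gives the integrability condition $\mathbb{E}|\psi_y(X^\top \beta + \beta_0, Y)| < \infty$.

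Third, Kolmogorov's SLLN now delivers the claimed almost-sure convergence for each fixed $(\beta, \beta_0)$. The main obstacle, which is really a routine verification, is producing the uniform domination needed for the integrability step; lifting the pointwise SLLN to a uniform SLLN over the compact set $\Theta$ (if required for downstream consistency arguments about the $\argmin$) then follows from continuity of $\psi_y$ in $(\beta, \beta_0)$ plus a standard envelope argument, which is exactly the content of Jennrich's theorem.
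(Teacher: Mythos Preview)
Your argument is correct and matches the paper's: both rewrite $S^y_n$ as the empirical mean of $\psi_y(X_i^\top\beta+\beta_0,Y_i)$ and verify the integrable-envelope condition needed for a (uniform) law of large numbers. The only cosmetic difference is that the paper applies Mickey's uniform LLN (Theorem~\ref{thm:mickey}) directly, with the explicit envelope $h(X,Y)=2\big(d_\Theta\sum_j|[X]_j|+d_\Theta\big)^2+2Y^2$ built from compactness of $\Theta$, whereas you first invoke Kolmogorov's SLLN pointwise and then note the uniform upgrade---which in the paper's labeling is Mickey's theorem, not Jennrich's.
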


\begin{proposition} \label{prop:asy-obj}
	Given the set of sample points $\{(X_i, Y_i)\}_{i = 1}^n$ from the underlying statistical model, as $n \rightarrow \infty$, the least square estimator $(\beta^{\text{OPT}}, \beta^{\text{OPT}}_0)$ which depends on $\{(X_i, Y_i)\}_{i = 1}^n$ obtained from \ref{eq:One-Node-ReLU} problem converges to the true parameter $(\beta^{\ast}, \beta^{\ast}_0)$ almost surely, i.e., the estimator $(\beta^{\text{OPT}}, \beta^{\text{OPT}}_0)$ of $(\beta^{\ast}, \beta^{\ast}_0)$ is said to be strongly consistent. Moreover, as $n \rightarrow \infty$,
	\begin{align*}
		\frac{1}{n} \sum_{i = 1}^n \left( \max\{0, X_i^{\top} \beta + \beta_0 \} - Y_i \right)^2 \rightarrow \mathbb{E}_{X \sim \mathcal{N}, \epsilon \sim \mathcal{D}} \left[\left( \max\{0, X^{\top} \beta + \beta_0 \} - Y \right)^2 \right],
	\end{align*}
	and 
	\begin{align*}
		\min_{(\beta, \beta_0) \in \Theta} \mathbb{E}_{X \sim \mathcal{N}, \epsilon \sim \mathcal{D}} \left[\left( \max\{0, X^{\top} \beta + \beta_0 \} - Y \right)^2 \right] = \mathbb{E}_{X \sim \mathcal{N}, \epsilon \sim \mathcal{D}} \left[\left( \max\{0, X^{\top} \beta^{\ast} + \beta^{\ast}_0 \} - Y \right)^2 \right] = \gamma^2. 
	\end{align*}
\end{proposition}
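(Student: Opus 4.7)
The plan is to establish the three claims of the proposition in sequence. The pointwise almost-sure convergence of the empirical risk follows immediately from the Strong Law of Large Numbers: for each fixed $(\beta, \beta_0) \in \Theta$, the summands $(\max\{0, X_i^{\top}\beta + \beta_0\} - Y_i)^2$ are i.i.d.\ with finite mean (since $\mathrm{Var}(X) = \Sigma$ and $\mathrm{Var}(\epsilon) = \gamma^2$ are finite and $\max\{0, \cdot\}$ is $1$-Lipschitz), so their average converges a.s.\ to the stated expectation.

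To identify the minimum value, I substitute $Y = \max\{0, X^{\top}\beta^{\ast} + \beta^{\ast}_0\} + \epsilon$ and use $\epsilon \perp X$, $\mathbb{E}[\epsilon] = 0$, and $\mathrm{Var}(\epsilon) = \gamma^2$ to rewrite
\begin{align*}
\mathbb{E}\bigl[(\max\{0, X^{\top}\beta + \beta_0\} - Y)^2\bigr] = \mathbb{E}\bigl[(\max\{0, X^{\top}\beta + \beta_0\} - \max\{0, X^{\top}\beta^{\ast} + \beta^{\ast}_0\})^2\bigr] + \gamma^2.
\end{align*}
The first term is nonnegative and vanishes at $(\beta, \beta_0) = (\beta^{\ast}, \beta^{\ast}_0)$, so the minimum value is $\gamma^2$. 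Uniqueness of the minimizer comes from the Unique Optimal Property in Assumption~\ref{assumption}: if $(\tilde\beta, \tilde\beta_0) \in \Theta$ is also a minimizer, then $\max\{0, X^{\top}\tilde\beta + \tilde\beta_0\} = \max\{0, X^{\top}\beta^{\ast} + \beta^{\ast}_0\}$ holds on $\mathrm{Supp}_{\mathcal{N}}$. Evaluating at the $p + 1$ vectors $v_1, \ldots, v_{p+1}$ guaranteed by the assumption (and invoking continuity in a small neighborhood of each support point, which carries positive probability), the fact that $v_i^{\top}\beta^{\ast} + \beta^{\ast}_0 > 0$ forces $v_i^{\top}\tilde\beta + \tilde\beta_0 > 0$ as well, and thus the two linear forms agree: $v_i^{\top}(\tilde\beta - \beta^{\ast}) + (\tilde\beta_0 - \beta^{\ast}_0) = 0$ for every $i$. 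Linear independence of $\{(v_i, 1)\}_{i=1}^{p+1}$ then forces $\tilde\beta = \beta^{\ast}$ and $\tilde\beta_0 = \beta^{\ast}_0$.

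Strong consistency of the least-squares estimator will follow from a classical Jennrich-type consistency theorem for nonlinear M-estimators: on a compact parameter space $\Theta$, if the empirical risk converges uniformly a.s.\ to the population risk and the population risk has a unique minimizer, then the empirical minimizer converges a.s.\ to that point. Continuity of the loss in $(\beta, \beta_0)$ and uniqueness are already in hand from the preceding step. The substantive remaining task is to upgrade the pointwise a.s.\ convergence to \emph{uniform} a.s.\ convergence on $\Theta$. I would obtain this via the dominating bound $(\max\{0, X^{\top}\beta + \beta_0\} - Y)^2 \le 2\|X\|^2 \|\beta\|^2 + 2\beta_0^2 + 2Y^2$, whose expectation is uniformly bounded over the compact $\Theta$, combined with equicontinuity of the family $(\beta, \beta_0) \mapsto (\max\{0, X^{\top}\beta + \beta_0\} - Y)^2$, which is immediate from the $1$-Lipschitz property of $\max\{0, \cdot\}$. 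The main technical obstacle is exactly this upgrade to uniform convergence: the nonsmoothness of the ReLU rules out off-the-shelf smooth-M-estimator theorems, so the equicontinuity/dominance argument must be handled explicitly, but the Lipschitz structure makes the verification routine, after which the consistency conclusion follows.
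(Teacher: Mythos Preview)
Your proposal is correct and follows essentially the same route as the paper. The paper invokes Jennrich's 1969 consistency theorem for nonlinear least squares (exactly the ``Jennrich-type'' result you name), uses the Unique Optimal Property to verify the identifiability condition $D((\beta,\beta_0),(\beta^{\ast},\beta^{\ast}_0))=0 \iff (\beta,\beta_0)=(\beta^{\ast},\beta^{\ast}_0)$, and obtains the uniform almost-sure convergence by citing Mickey's 1963 uniform law of large numbers (Theorem~\ref{thm:mickey} in the paper), whose hypotheses are precisely the dominating-envelope and continuity conditions you propose to verify by hand. Your explicit decomposition of the population risk as $\gamma^2$ plus a nonnegative term, and your spelled-out uniqueness argument via the $p+1$ affinely independent support points, give more detail than the paper's proof, which leaves these implicit; but the skeleton is the same.
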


Combining Proposition~\ref{prop:asy-sorting-obj} and Proposition~\ref{prop:asy-obj}, we show in Section~\ref{sec:stat} that Algorithm~\ref{algo:gen-approximation} guarantees an asymptotic bound as follows:

\begin{theorem}[Asymptotic Bound] \label{thm:asy-bound} Assuming the underlying statistical model~\ref{assumption}, let $z^{\text{asy}}$ be the optimal value of the asymptotic objective function $\mathbb{E}_{X \sim \mathcal{N}, \epsilon \sim \mathcal{D}} \left[ \psi_y (X^{\top} \beta + \beta_0, Y)\right]$ for all $y > 0$, i.e., 
		\begin{align*}
			z^{\text{asy}} = \min_{y \geq 0}\min_{(\beta, \beta_0) \in \Theta} \mathbb{E}_{X \sim \mathcal{N}, \epsilon \sim \mathcal{D}} \left[ \psi_y (X^{\top} \beta + \beta_0, Y)\right], 
		\end{align*}
		then $z^{\text{asy}}$ can be lower and upper bounded by the following:
		\begin{align*}
			\gamma^2 \leq z^{\text{asy}} \leq \frac{3 \gamma^2}{2} + \frac{2 + 2\Delta^2}{\sqrt{2 \pi}} \gamma
		\end{align*}
		where $\gamma$ and $\Delta$ are defined in the underlying statistical model~\ref{assumption}. 
\end{theorem}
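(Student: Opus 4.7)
The plan is to establish the two halves of the inequality separately.

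For the lower bound, I would first prove the pointwise inequality $\psi_y(x, y') \ge (\max\{0, x\} - y')^2$ for all $x, y' \in \mathbb{R}$ and all $y \ge 0$, via a three-case split on whether $y' \le 0$, $0 < y' \le y$, or $y' > y$. The first case is immediate from the definition of $\psi_y$; the second is exactly the inequality $\sigma(x, y') \ge (\max\{0, x\} - y')^2$ recorded just before $z^{\sigma}$ is introduced; and the third gives $\psi_y = (x - y')^2 \ge (\max\{0, x\} - y')^2$ by a short sign-of-$x$ check that uses $y' > y > 0$. Taking expectations and then minimising over $(\beta, \beta_0) \in \Theta$ gives $\min_{(\beta, \beta_0) \in \Theta} \mathbb{E}[\psi_y(X^\top\beta+\beta_0, Y)] \ge \min_{(\beta, \beta_0) \in \Theta} \mathbb{E}[(\max\{0, X^\top\beta + \beta_0\} - Y)^2] = \gamma^2$ by Proposition~\ref{prop:asy-obj}, and minimising over $y \ge 0$ delivers $z^{\text{asy}} \ge \gamma^2$.

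For the upper bound, I would evaluate the asymptotic objective at the feasible point $(\beta, \beta_0) = (\beta^*, \beta_0^*) \in \Theta$, which is the natural candidate since Proposition~\ref{prop:asy-obj} identifies it as the population least-squares minimiser. Writing $W^* = X^\top\beta^* + \beta_0^*$ and using the model $Y = \max\{0, W^*\} + \epsilon$, I would split on the sign of $W^*$ and on whether $Y$ lies in $(-\infty, 0]$, $(0, y]$, or $(y, \infty)$, and substitute each branch of $\psi_y$. A direct calculation shows that $\psi_y(W^*, Y) = \epsilon^2$ everywhere except on two sub-regions where $\psi_y$ overshoots: the Case-B region $\{W^* > 0,\ -W^*/2 \le \epsilon \le y - W^*\}$, on which $\sigma(W^*, Y) = Y^2 = (W^* + \epsilon)^2$, and the Case-C region $\{W^* \le 0,\ \epsilon > y\}$, on which $\psi_y = (W^* - \epsilon)^2$. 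This yields a clean decomposition $\mathbb{E}[\psi_y(W^*, Y)] = \gamma^2 + A(y) + B(y)$ with both excess terms non-negative and explicit.

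I would then bound the excess for an optimised choice of $y$. On the Case-B sub-region, $(W^*)^2 + 2W^*\epsilon = Y^2 - \epsilon^2 \le Y^2 \le y^2$, giving $A(y) \le y^2\,\Pr(0 < Y \le y,\, W^* > 0)$. For $B(y)$, independence of $X$ and $\epsilon$ factorises it into moments of $W^*$ restricted to $\{W^* \le 0\}$ times the Gaussian tail integrals $\mathbb{E}[\epsilon\,\mathbb{1}(\epsilon > y)] = \gamma\,\phi(y/\gamma) \le \gamma/\sqrt{2\pi}$ and $\Pr(\epsilon > y) \le 1/2$. Combining these with $\text{Var}(W^*) = \Delta^2$, and bounding the restricted moments via the elementary inequality $(W^*)^2\,\mathbb{1}(W^* \le 0) \le (W^* - \beta_0^*)^2$ when $\beta_0^* \ge 0$ (with the mirror image when $\beta_0^* < 0$), I expect $A(y) + B(y) \le \tfrac{\gamma^2}{2} + \tfrac{2(1+\Delta^2)}{\sqrt{2\pi}}\gamma$ for a suitable intermediate $y$, producing the stated upper bound.

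The main obstacle will be precisely this last conversion step: the raw second moment $\mathbb{E}[(W^*)^2]$ equals $\Delta^2 + (\beta_0^*)^2$ and hence depends on the intercept, whereas the target bound is intercept-free. I expect the argument to either exploit an algebraic cancellation between the $A(y)$ and $B(y)$ excess that cancels the $(\beta_0^*)^2$ contribution, or to split on the sign of $\beta_0^*$ and shift by $\beta_0^*$ inside the indicator so that the remaining moments only see the mean-zero random variable $W^* - \beta_0^*$, whose second moment is exactly $\Delta^2$.
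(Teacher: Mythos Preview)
Your plan matches the paper's proof. The lower bound is obtained there exactly as you propose, via the pointwise inequality $\psi_y \ge (\max\{0,\cdot\}-Y)^2$ together with Proposition~\ref{prop:asy-obj}. For the upper bound the paper likewise substitutes $(\beta^*,\beta_0^*)$ and splits the expectation into seven pieces $T_1,\dots,T_7$: your Case-B is its $T_1$, your Case-C is its $T_5$, the remaining five terms each reduce to an $\epsilon^2$-contribution and are bounded in aggregate by $\gamma^2$, and then $y=0$ is chosen to annihilate $T_1$ while $T_5$ is controlled by a separate lemma exploiting independence of $X$ and $\epsilon$ and the Gaussian tail integrals you list. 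Your identity $\gamma^2+A(y)+B(y)$ is a slightly tighter repackaging of the same decomposition, losing nothing at the paper's $T_2+T_3+T_4+T_6+T_7\le\gamma^2$ step.

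The obstacle you flag at the end is precisely the one place where the paper's argument is not fully justified. In its lemma for $T_5$ the paper simply asserts the intercept-free bounds $\int_{u\le 0} u^2\,f_{W^*}(u)\,du \le \Delta^2$ and $\bigl\lvert\int_{u\le 0} u\,f_{W^*}(u)\,du\bigr\rvert \le 1+\Delta^2$ without comment. Your proposed shift $u^2\le (u-\beta_0^*)^2$ on $\{u\le 0\}$ does recover the first inequality when $\beta_0^*\ge 0$, and the second then follows from $|u|\le 1+u^2$; but be aware that the ``mirror image'' you hint at for $\beta_0^*<0$ does not go through as stated, since a distribution of $W^*$ concentrated near a large negative $\beta_0^*$ already violates the first bound. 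You have correctly located the delicate step; the paper does not resolve it any more explicitly than you do.
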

Note that the upper bound for the asymptotic optimal value $z^{\text{asy}}$ only depends on the variance $\Delta^2$ and $\gamma^2$, therefore for any fixed underlying distribution $\mathcal{N}$ and $\mathcal{D}$, we have the following corollary:
\begin{corollary}[Asymptotic Approximation Ratio] \label{coro:asy-ratio}
	 Assuming the underlying statistical model~\ref{assumption}, as $n \rightarrow \infty$, the solution obtained from Approximation Algorithm~\ref{algo:gen-approximation} provides an asymptotic multiplicative approximation ratio 
	\begin{align*}
		\rho \leq \frac{3}{2} + \frac{2 + 2\Delta^2}{\sqrt{2 \pi}} \frac{1}{\gamma}
	\end{align*}
	which is independent of the sample size $n$. Moreover, this guarantee can be achieved by only computing $S^y_n(\beta, \beta_0)$ with $y = 0$. 
\end{corollary}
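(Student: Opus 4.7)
The plan is to chain the three preceding results together: Propositions \ref{prop:asy-sorting-obj} and \ref{prop:asy-obj}, with the explicit upper bound of Theorem \ref{thm:asy-bound}. The key observation is that Algorithm \ref{algo:gen-approximation}, for any $k \geq 1$, always inspects the case $j = 1,\ i_1 = 0$, producing the empty inactive set and active set $I = [m]$; the resulting convex sub-problem is exactly $\min_{(\beta,\beta_0)} f^{\sigma}_{[m]}(\beta,\beta_0) + \phi(\beta,\beta_0) = n \cdot \min_{(\beta,\beta_0)} S^0_n(\beta,\beta_0)$, where the last equality uses that $I(0) = \emptyset$ so that every $i \in I^+$ contributes a plain quadratic and every $i \in I^-$ contributes the ReLU loss (matching the three-piece definition of $\psi_0$). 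Consequently the output of the algorithm satisfies $\tfrac{1}{n} z^{\mathrm{approx}} \leq \min_{(\beta,\beta_0) \in \Theta} S^0_n(\beta,\beta_0)$.

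Next I would pass to the limit $n \to \infty$ on the numerator and denominator of $z^{\mathrm{approx}}/z^{\mathrm{OPT}}$ separately. Proposition \ref{prop:asy-obj} supplies $\tfrac{1}{n} z^{\mathrm{OPT}} \to \gamma^2$ almost surely. Proposition \ref{prop:asy-sorting-obj} at $y = 0$ supplies the pointwise a.s.\ convergence $S^0_n(\beta,\beta_0) \to \mathbb{E}[\psi_0(X^{\top}\beta + \beta_0, Y)]$. Since $S^0_n$ is convex in $(\beta,\beta_0)$ and $\Theta$ is convex and compact, pointwise convergence of convex functions upgrades to uniform convergence on $\Theta$, hence $\min_\Theta S^0_n \to \min_\Theta \mathbb{E}[\psi_0(\,\cdot\,)]$ almost surely, and the algorithm's rescaled value is asymptotically bounded above by $\min_\Theta \mathbb{E}[\psi_0(\,\cdot\,)]$.

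Finally I would invoke the analysis underlying Theorem \ref{thm:asy-bound}: its upper bound $\tfrac{3\gamma^2}{2} + \tfrac{2+2\Delta^2}{\sqrt{2\pi}}\gamma$ on $z^{\mathrm{asy}}$ is established precisely by controlling $\min_\Theta \mathbb{E}[\psi_0(\,\cdot\,)]$, which is exactly what the corollary's closing sentence (``achieved by only computing $S^y_n$ with $y=0$'') is pointing at. Assembling the pieces,
\begin{align*}
\rho \;=\; \lim_{n\to\infty} \frac{z^{\mathrm{approx}}}{z^{\mathrm{OPT}}} \;\leq\; \frac{\min_\Theta \mathbb{E}[\psi_0(\,\cdot\,)]}{\gamma^2} \;\leq\; \frac{1}{\gamma^2} \left( \frac{3\gamma^2}{2} + \frac{2 + 2\Delta^2}{\sqrt{2\pi}} \gamma \right) \;=\; \frac{3}{2} + \frac{2 + 2\Delta^2}{\sqrt{2\pi}} \cdot \frac{1}{\gamma},
\end{align*}
independent of $n$ and of $k$, and witnessed by the single $y=0$ convex sub-problem.

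The main obstacle I anticipate is the lift from pointwise a.s.\ convergence in Proposition \ref{prop:asy-sorting-obj} to convergence of the minima over $\Theta$; the routine cure is the classical fact that a sequence of convex functions converging pointwise on an open convex set converges uniformly on compact subsets, which combined with compactness of $\Theta$ yields $\min_\Theta S^0_n \to \min_\Theta \mathbb{E}[\psi_0(\,\cdot\,)]$ and makes the ratio computation rigorous. Everything else is bookkeeping: checking the equality between the $I = [m]$ branch of Algorithm \ref{algo:gen-approximation} and the $y=0$ evaluation of $S^y_n$, and noting that the witness for the upper bound in Theorem \ref{thm:asy-bound} is attained at $y=0$ so that a single convex solve suffices, as claimed in the corollary.
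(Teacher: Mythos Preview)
Your proposal is correct and follows essentially the same route as the paper: the corollary is treated there as an immediate consequence of Theorem~\ref{thm:asy-bound}, obtained by dividing the upper bound $\tfrac{3\gamma^2}{2} + \tfrac{2+2\Delta^2}{\sqrt{2\pi}}\gamma$ (which, as you observe, is witnessed at $y=0$) by the lower bound $\gamma^2$ from Proposition~\ref{prop:asy-obj}. One small remark: the obstacle you anticipate, namely lifting the convergence in Proposition~\ref{prop:asy-sorting-obj} to convergence of the minima, is already handled in the paper's proof of that proposition via Mickey's theorem, which gives \emph{uniform} convergence on the compact $\Theta$; your convexity argument is a valid alternative but unnecessary here.
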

We note the following: as the variance of noise tends to zero, the multiplicative approximation ratio $\rho$ obtained in Corollary~\ref{coro:asy-ratio} goes to infinity. However, since the upper bound of $z^{\text{asy}}$ is in the order $O(\gamma)$, $z^{\text{asy}}$ will also tend to zero. 

\paragraph{Comparision with related results in literature}  \cite{hinton2006fast} gave a fast, greedy algorithm that can find a fairly good set of parameters quickly based on good initialization using ``complementary priors'' in a reasonable time. Later, \cite{zhang2016understanding} gave empirical evidence that simple two-layer neural networks have good sample expressivity in the over-parameterized case.  However, none of these papers provide any theoretical guarantees. Soltanolkotabi in \cite{soltanolkotabi2017learning}, and Kalan et.al. in \cite{kalan2019fitting} only study the problem in the zero noise case. Kakade et al. \cite{kakade2011efficient} provided algorithms for learning Generalized Linear and Single Index Models to obtain provable performance, which are both computationally and statistically efficient. In \cite{brutzkus2017globally}, Brutzkus, Globerson showed that when the input distribution is Gaussian, in noiseless case, a one-hidden layer neural network with ReLU activation function can be trained exactly in polynomial time with gradient descent. Du et al. in paper \cite{du2017gradient} (also see \cite{du2017convolutional}) showed that: learning a one-hidden layer ReLU neural network, (1) with a specific randomized initialization, the gradient descent converges to the ground truth with high probability, (2) the objective function does have a spurious local minimum (i.e., the local minimum plays a non-trivial role in the dynamics of gradient descent). Note that these two papers \cite{du2017gradient, du2017convolutional} need a good initialization. Goel, Klivans, and Meka in \cite{goel2018learning} presented an algorithm--\textit{Convotron}--which requires no special initialization or learning-rate tuning to converge to the global optimum. The proof of their results heavily depends on there being no constant term in the input to the ReLU function and the input distribution being centrally symmetric around the origin.

\section{Main Computational Results}\label{section:numerical-exp}
In this section, we empirically compare the numerical results of four methods: sorting method (a simplified version of Algorithm~\ref{algo:gen-approximation}, we describe this below in this section), sorting followed by an iterative heuristics (we describe this heuristic below in this section), gradient descent method starting at origin, sorting followed by gradient descent method, stochastic gradient method on synthetic instances for the \ref{eq:One-Node-ReLU} problem. A feasible solution $\hat{\beta}$ that obtained from above methods is evaluated in terms of its prediction error, objective value, recovery error and generalization error. In the rest of this section, details and settings are presented. 

\subsection{Hardware \& Software}
All numerical experiments are implemented on MacBookPro13 with 2 GHz Intel Core i5 CPU and 8 GB 1867 MHz LPDDR3 Memory. Each optimization step of sorting method (Algorithm~\ref{algo:gen-approximation}) and each optimization step of iterative method (Algorithm~\ref{algo:iter-method}) are solved using Gurobi 7.0.2. in python 3.5.3.
 
\subsection{Synthetic Instances} 
We perform numerical experiments on the following type of instances.  
\begin{enumerate}
	\item Given a vector $\mu \in \mathbb{R}^p$, and a positive semidefinite matrix $\Sigma \in \mathbb{R}^{p \times p}$, the true solution $\beta^{\ast}$ is generated from the Gaussian distribution $N(\mu, \Sigma)$. Specifically, $\beta^{\ast}$ in Figure~[\ref{fig:Figure_Compare_10_200}, \ref{fig:Figure_Compare_20_400}, \ref{fig:Figure_Compare_50_1000}] are generated from $N(0.5 \cdot \mathbf{0}_p, 10 \cdot I_p)$. 
	\item Both training set and testing set contain $n$ sample points. For each sample point $(X_i, Y_i) \in \mathbb{R}^{p} \times \mathbb{R}$ in training set, the observation sample $X_i = (X_{ij})_{j = 1}^p$ is generated by setting each component $X_{ij} = 1, X_{ij} = -1$ with probability $\frac{P}{2}$ and $X_{ij} = 0$ with probability $1 - P$ independently. In the rest of this sections, we denote the probability $ P = \mathbb{P}(\{X_{ij} = 1\} \cup \{X_{ij} = - 1\})$ as the level of \textit{sparsity}, i.e., the higher the level of sparsity, the more number of non-zero components exists in $X_i$ in expectation. 

		Moreover, in the realizable case we perturbed the data to guarantee that the global optimal solution is unique. Assuming that $\beta^*_i \neq 0$ for all $i \in [p]$, the first $p$  samples $X_i$ are obtained as $X_i \gets e_i \cdot \text{sgn}(\beta_i^{\ast})$ for all $i = 1, \ldots, p$ in training set, in which $e_i \in\mathbb{R}^p$ is a vector with 1 on its $i^{\text{th}}$ component and 0 on rest, and $\text{sgn}(x) = \left\{ \begin{array}{lll} 1 & \text{ if } x > 0 \\ - 1 & \text{ otherwise} \end{array} \right.$.

	\item Note the constant term $\beta_0^{\ast}$ can be achieved via adding one additional dimension with component 1 to each $X_i$. To simplify, we deiced to use $\beta^*_0 = 0$. The response sample $Y_i$ is therefore computed as  $Y_i = \max\{0, X_i^{\top} \beta^{\ast} \} + \epsilon_i$ with $\epsilon_i \sim N(0, \rho\sigma)$ where $\sigma$ and $\rho$ are set in the following way:
\begin{description}
	\item[$\sigma$:] (1) Let $Z_i \gets X_i^{\top} \beta^{\ast}$ for all $i = 1, \ldots, n$. (2) Compute the average of $\{Z_i\}_{i = 1}^n$ as $\bar{Z} \gets \frac{1}{n}\sum_{i = 1}^n Z_i$. (3) Set $\sigma^2 \gets \frac{1}{n} \sum_{i = 1}^n (Z_i - \bar{Z})^2$.
	\item[$\rho$:] 
Since the noise levels are commonly measured in signal-to-noise decibels (dB), we examined dB values $\{6, 10, 20, 30, \infty\}$, then the value of signal-to-noise ratio $\rho$ can be computed from 
		\begin{align*}
			\text{dB} \triangleq 10 \log_{10} \left( \frac{\sigma^2}{\rho^2 \sigma^2} \right) \in \{6, 10, 20, 30, \infty\}
		\end{align*}
		which corresponds to $\rho \approx \{0.5, 0.32, 0.1, 0.032, 0\}$.
\end{description} 
	\item For each sample point $(\tilde{X}_i, \tilde{Y}_i) \in \mathbb{R}^{p} \times \mathbb{R}$ in testing set, generate $\tilde{X}_i, \tilde{Y}_i$ in the same way as the training set.   
\end{enumerate}

\subsection{Algorithms}
In this section, we briefly describe the algorithms that were tested in the numerical experiments. 
\subsubsection{Method 1: Sorting Method (sorting)}\label{section:nr-sorting}
The sorting method is a simplied and slightly cruder version of Algorithm~\ref{algo:gen-approximation} with parameter $k = 1$. Essentially, in order to reduce the running time,  instead of $i_1$ taking all values from $1$ to $n$, we limit the values of $i_1$ that are used, see details in  Section~\ref{section:sorting-method}.


\subsubsection{Method 2: Sorting method followed by an Iterative Method (sorting + iterative)}\label{section:nr-sorting+Iter}

A natural iterative improving algorithm is the following: Fix $I$ and minimize $f^{\sigma}(I)$. Now, examine the solution and update the choice of $I$ which so the $f^{\sigma}$ and $f^{*}$ match for the current solution. Repeat until some stopping criteria is meet. See in  Algorithm \ref{algo:iter-method} in Section~\ref{section:iterative-method} for details. 

We use this heuristic to improve the solution obtained from the sorting method. 
After obtaining a feasible solution $\hat{\beta}^{\text{sorting}}$, we set the initial point of iterative heurisitic as $\hat{\beta}^{\text{sorting}}$. 

\subsubsection{Method 3: Gradient Descent Method (GD) }\label{section:nr-GD}
The gradient descent method used in numerical experiments is presented in Section~\ref{section:GD}, see Algorithm~\ref{algo:GD-method}. Given the initial point $\beta^0 \gets \mathbf{0}_p$, and set $\beta^t$ as the updated solution obtained in $(t - 1)^{\text{th}}$ iteration, the gradient used in $t^{\text{th}}$ iteration is:
\begin{align*}
	\frac{1}{n}\nabla_{\beta} L(\beta^t) = \frac{1}{n} \sum_{i = 1}^n (\max\{0, X_i^{\top}\beta^t\} - Y_i)(1 + \text{sgn}(X_i^{\top} \beta^t)) X_i
\end{align*}
where $L(\beta) \gets \sum_{i = 1}^n (\max\{0, X_i^{\top} \beta\} - Y_i)^2$ and $\text{sgn}(x) = \left\{ \begin{array}{lll} 1 & \text{ if } x > 0 \\ - 1 & \text{ if } x \leq 0 \end{array} \right. $.

\subsubsection{Method 4: Sorting followed by Gradient Descent Method (sorting + GD)}\label{section:nr-Sorting+GD}
Similar to the sorting followed by gradient descent method,  in this method we run the sorting method and then use the final solution of the sorting method as the starting point for gradient descent. 

\subsubsection{Method 5: Stochastic Gradient Descent Method (SGD)}
The initial point $\beta^0$ used in stochastic gradient descent (SGD) method is the same as gradient descent (GD) method, i.e. the origin. The only difference between SGD and GD is that: in $t^{\text{th}}$ iteration, we uniformly pick a mini-batch $B^t$ of size $m$ from the set of samples $\{(X_i, Y_i)\}_{i = 1}^n$ at random, then the gradient used in $t^{\text{th}}$ iteration is: 
\begin{align*}
	\frac{1}{m} \sum_{i \in S^t} (\max\{0, X_i^{\top}\beta^t\} - Y_i)(1 + \text{sgn}(X_i^{\top} \beta^t)) X_i, 
\end{align*}
with $\text{sgn}(x) = \left\{ \begin{array}{lll} 1 & \text{ if } x > 0 \\ - 1 & \text{ if } x \leq 0 \end{array} \right. $. See Algorithm~\ref{algo:SGD-method}. 

\subsection{Measures} 
The feasible solutions $\hat{\beta}$ obtained from the above five methods are evaluated in terms of their prediction error, objective value, recovery error, generalization error. The formal definitions of these three types of errors are:
\begin{description}
	\item[Prediction Error:] The prediction error is defined based on the provided solution $\hat{\beta}$, i.e.,   
$$\text{PE} \triangleq \sum_{i = 1}^n \left(\max\{0, X_i^{\top} \hat{\beta}\} - \max\{0, X_i^{\top} \beta^{\ast}\} \right)^2, \quad  \textup{where} \{X_i, Y_i\}_{i = 1}^{n} \textup{ is training data}.$$
	\item[Objective Value:] Note that the prediction error defined as above is not the objective value obtained from above five methods. Actually, in practice, when $\beta^{\ast}$ is unknown, the prediction error cannot be obtained exactly, thus one may use objective value (Obj) 
	\begin{align*}
		\text{Obj} \triangleq \sum_{i = 1}^n \left(\max\{0, X_i^{\top} \hat{\beta}\} - Y_i \right)^2, \quad  \textup{where} \{X_i, Y_i\}_{i = 1}^{n} \textup{ is training data}.
	\end{align*}
	as an alternative. 
	\item[Recovery Error:] The recovery error measures the distance between the solution $\hat{\beta}$ we obtained and the ground truth $\beta^{\ast}$, which is $$\text{RE} \triangleq \|\hat{\beta} - \beta^{\ast}\|_2.$$
	\item[Generalization Error:] The generalization error measures how good the solution $\hat{\beta}$ is, when using the objective function with resect to testing set, i.e. 
$$\text{GE} \triangleq \sum_{i = 1}^n \left(\max\{0, \tilde{X}_i^{\top} \hat{\beta}\} - \tilde{Y}_i\right)^2, \quad  \textup{where} \{X_i, Y_i\}_{i = 1}^{n} \textup{ is testing data}.$$
\end{description}
We note that in order to see the comparison between different methods more clearly, the prediction error and generalization error  are not divided by the size of sample set $n$. 

\subsection{Numerical Results: Notation and Parameters} \label{section:NR-notations}
The numerical results in Figure~[\ref{fig:Figure_Compare_50_1000}] of this section, and in Figure~[\ref{fig:Figure_Compare_10_200}, \ref{fig:Figure_Compare_20_400}] in Appendix~\ref{section:app-computational-results} present how these measures (prediction error, recovery error, generalization error) and the running time change depending on different standard deviation of noise empirically. The detailed realizable cases in Appendix~\ref{section:realizable-cases} provide an empirical result of the performances of previous four methods. Below we present  notations and the parameters that used for numerical experiments:
\begin{itemize}
	\item Each line presented in Figure~[\ref{fig:Figure_Compare_10_200}, \ref{fig:Figure_Compare_20_400}, \ref{fig:Figure_Compare_50_1000}] represents the average of the measures or running time obtained from 20 instances under the same settings. 
	\item The first column of each Table in Appendix~\ref{section:realizable-cases} is a tuple of 4 elements $(p, n, \rho; \text{index})$ which represents the dimension of $\beta$, the number of training samples, the ratio used for noise $\epsilon_i$, and the index of the instance with such settings respectively.
	\item For the Sorting Method (Algorithm~\ref{algo:sorting-method}), $N$ (the number of splits) used is $10$. 
	\item For the Sorting (Algorithm~\ref{algo:sorting-method}) + Iterative Method (Algorithm~\ref{algo:iter-method}), $N$ (the number of split) is set to $10$, and let $\hat{\beta}^{\text{sorting}}$ be the solution obtained from Sorting Method, then the parameters of Iterative Method are set to be:
		\begin{align*}
			(\{(X_i, Y_i)\}_{i = 1}^n,\beta^0, T) \gets (\{(X_i, Y_i)\}_{i = 1}^n, \hat{\beta}^{\text{sorting}}, 20)
		\end{align*}
		where $\beta^0$ denotes the starting point, $T$ denotes the maximum number of iterations. 
	\item For the Gradient Descent Method (Algorithm~\ref{algo:GD-method}), the parameters are set to be
		\begin{align*}
			(\{(X_i, Y_i)\}_{i = 1}^n, \beta^0, T, \epsilon, \eta_0, \gamma, \alpha) \gets (\{(X_i, Y_i)\}_{i = 1}^n, \mathbf{0}_p, 1000, 0.01, 1, 0.03, 0.6)
		\end{align*}
		where $\beta^0$ denotes the starting point, $T$ denotes the maximum number of iterations, $\epsilon$ is a termination criteria parameter, $\eta_0$ denotes the initial stepsize, $\gamma, \alpha$ are parameters used to adjust stepsize in each iteration.  
	\item For the Sorting (Algorithm~\ref{algo:sorting-method}) + Gradient Descent Method (Algorithm~\ref{algo:GD-method}),  $N$ (the number of split) is set to be $10$, and let $\hat{\beta}^{\text{sorting}}$ is as above, and the parameters of the Gradient Descent Method are set to be:
		\begin{align*}
			(\{(X_i, Y_i)\}_{i = 1}^n, \beta^0, T, \epsilon, \eta_0, \gamma, \alpha) \gets (\{(X_i, Y_i)\}_{i = 1}^n, \hat{\beta}^{\text{sorting}}, 1000, 0.01, 1, 0.03, 0.6).
		\end{align*}
	\item For the Stochastic Gradient Descent Method (Algorithm~\ref{algo:SGD-method}), parameters are set to be:
		\begin{align*}
			(\{(X_i, Y_i)\}_{i = 1}^n, \beta^0, T, \epsilon, \eta_0, \gamma, \alpha, m) \gets (\{(X_i, Y_i)\}_{i = 1}^n, \mathbf{0}_p, 1000, 0.01, 1, 0.03, 0.6, \lfloor 0.1 n \rfloor).
		\end{align*}
\end{itemize}

\begin{figure*}
    \centering
    \begin{subfigure}[b]{0.24\textwidth}
        \centering
        \includegraphics[width=\textwidth]{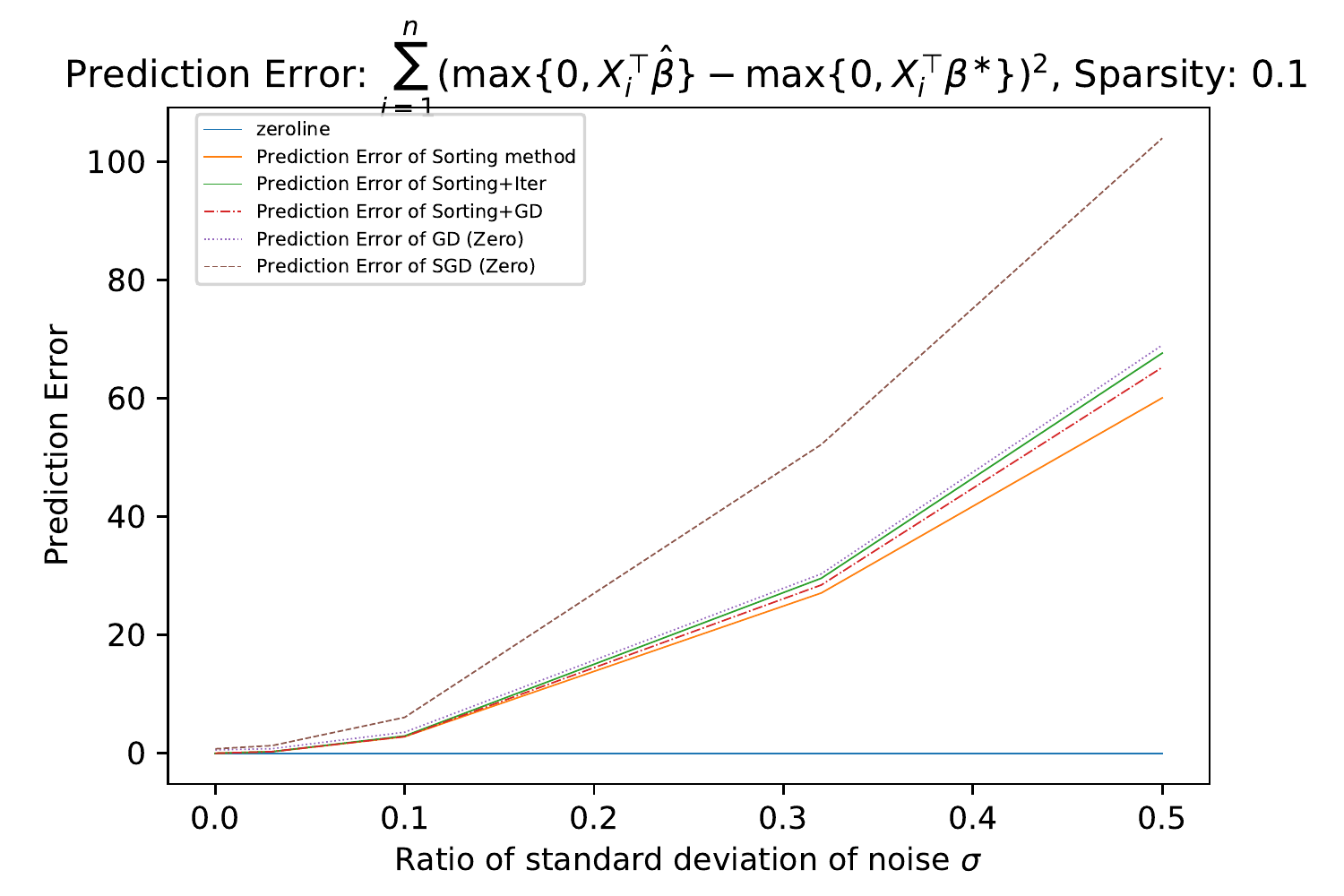}
        \caption[Network1]%
        {{\small Prediction Error}}    
        \label{fig:Figure_Compare_PE_50_1000_10}
    \end{subfigure}
    \hfill
    \begin{subfigure}[b]{0.24\textwidth}   
        \centering 
        \includegraphics[width=\textwidth]{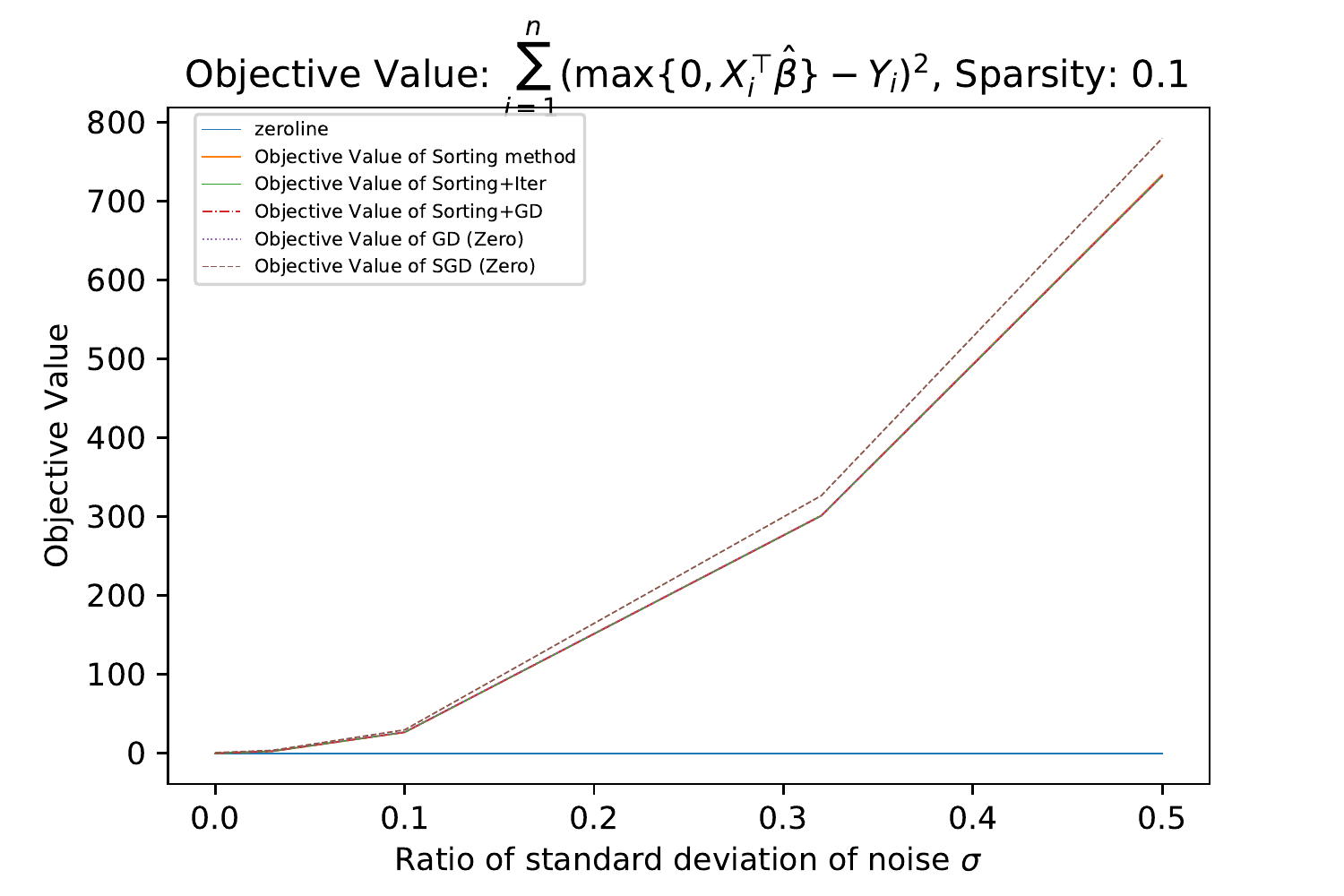}
        \caption[]%
        {{\small Objective Value}}    
        \label{fig:Figure_Compare_OB_50_1000_10}
    \end{subfigure}
    \hfill
    \begin{subfigure}[b]{0.24\textwidth}  
        \centering 
        \includegraphics[width=\textwidth]{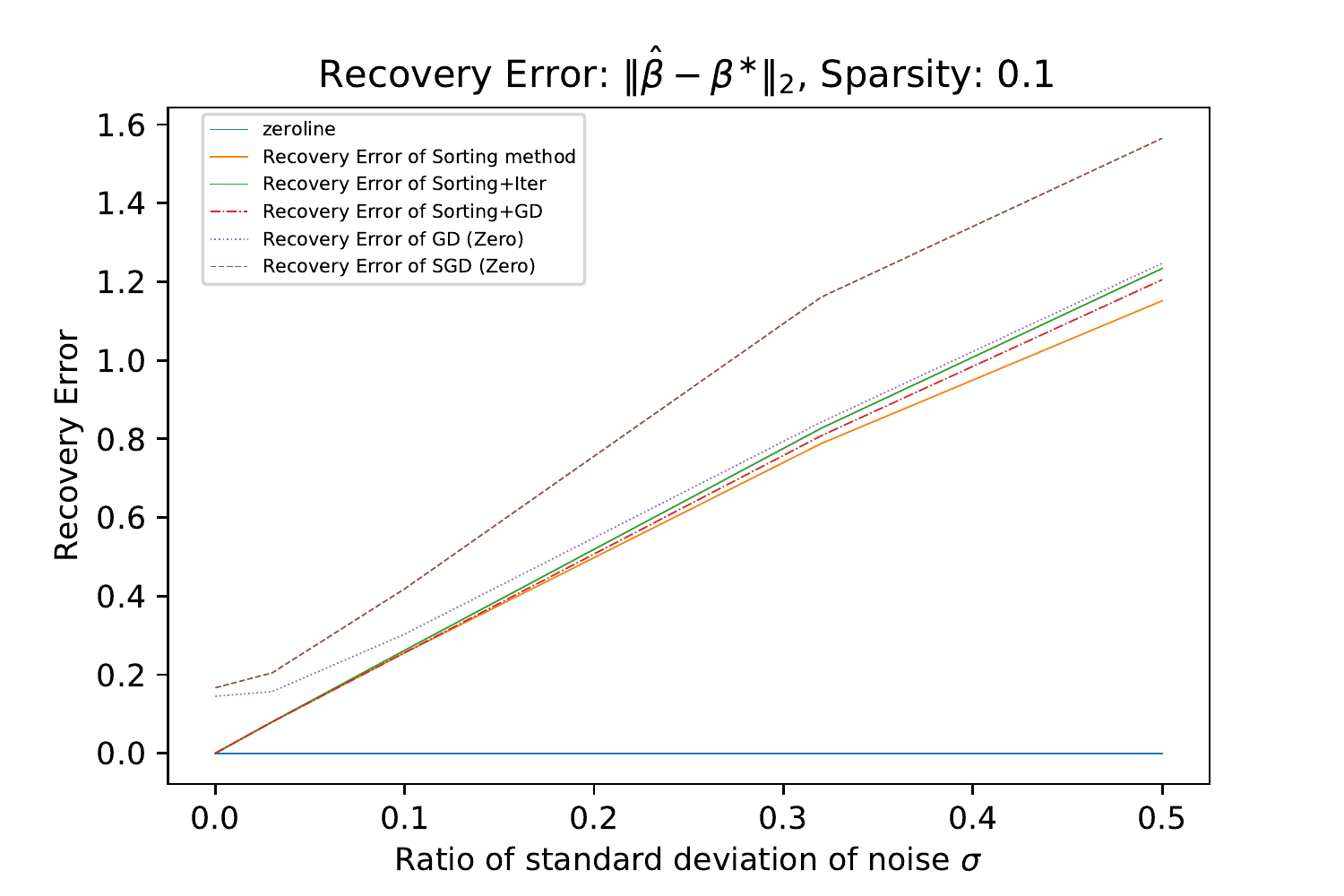}
        \caption[]%
        {{\small Recovery Error}}    
        \label{fig:Figure_Compare_RE_50_1000_10}
    \end{subfigure}
    \hfill
    \begin{subfigure}[b]{0.24\textwidth}   
        \centering 
        \includegraphics[width=\textwidth]{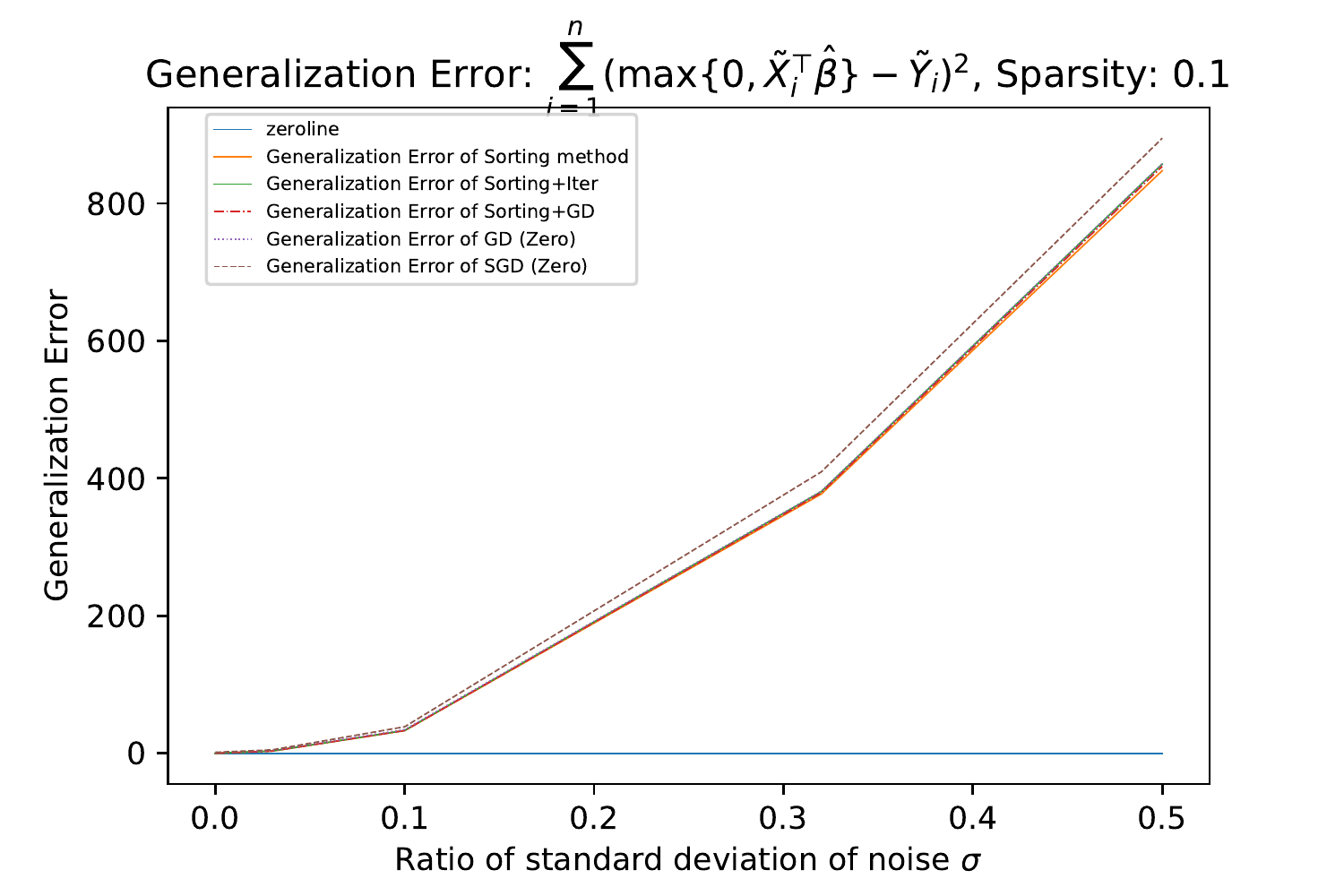}
        \caption[]%
        {{\small Generalization Error}}    
        \label{fig:Figure_Compare_GE_50_1000_10}
    \end{subfigure}

    \vskip\baselineskip
    
    \centering
    \begin{subfigure}[b]{0.24\textwidth}
        \centering
        \includegraphics[width=\textwidth]{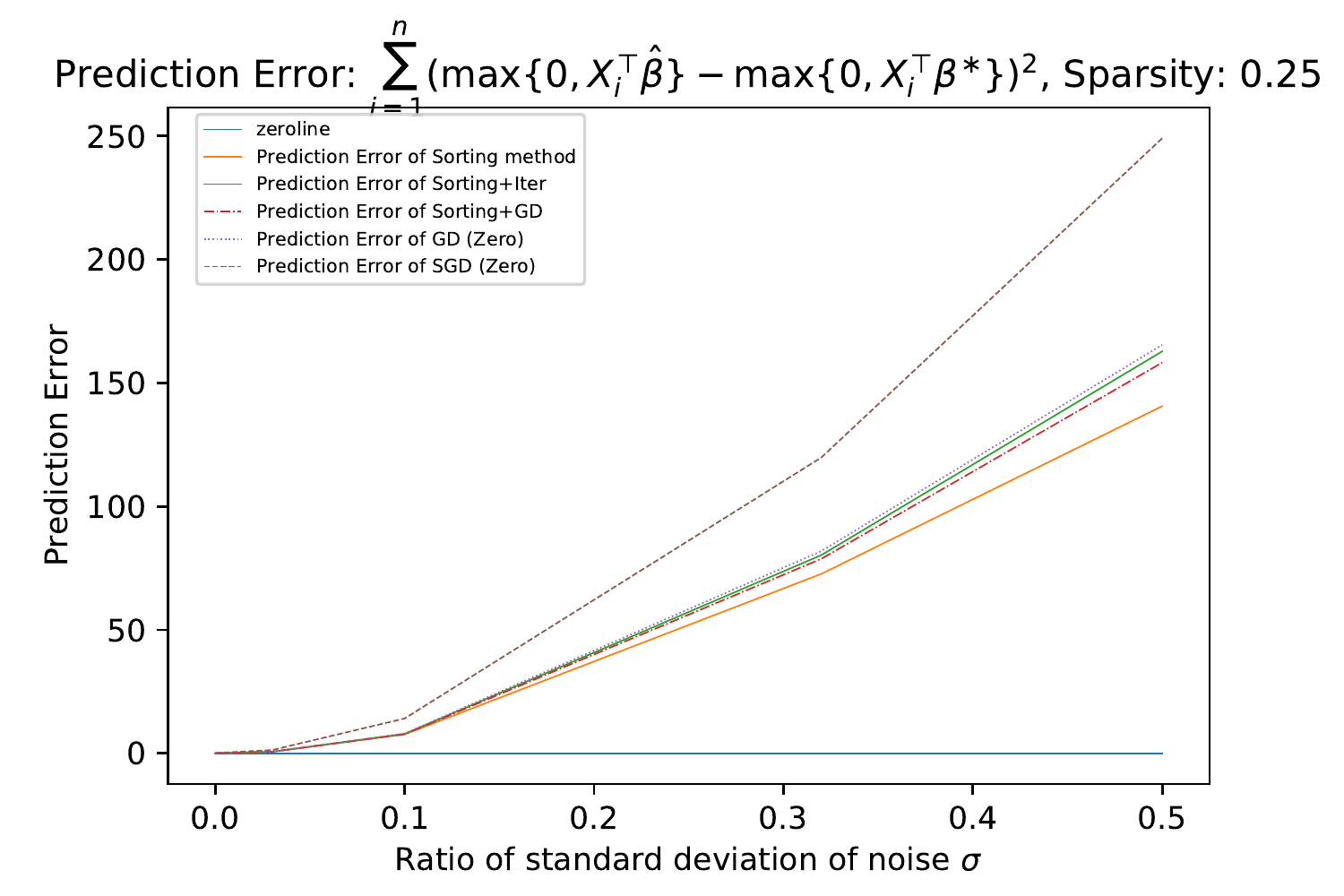}
        \caption[Network1]%
        {{\small Prediction Error}}    
        \label{fig:Figure_Compare_PE_50_1000_25}
    \end{subfigure}
    \hfill
    \begin{subfigure}[b]{0.24\textwidth}   
        \centering 
        \includegraphics[width=\textwidth]{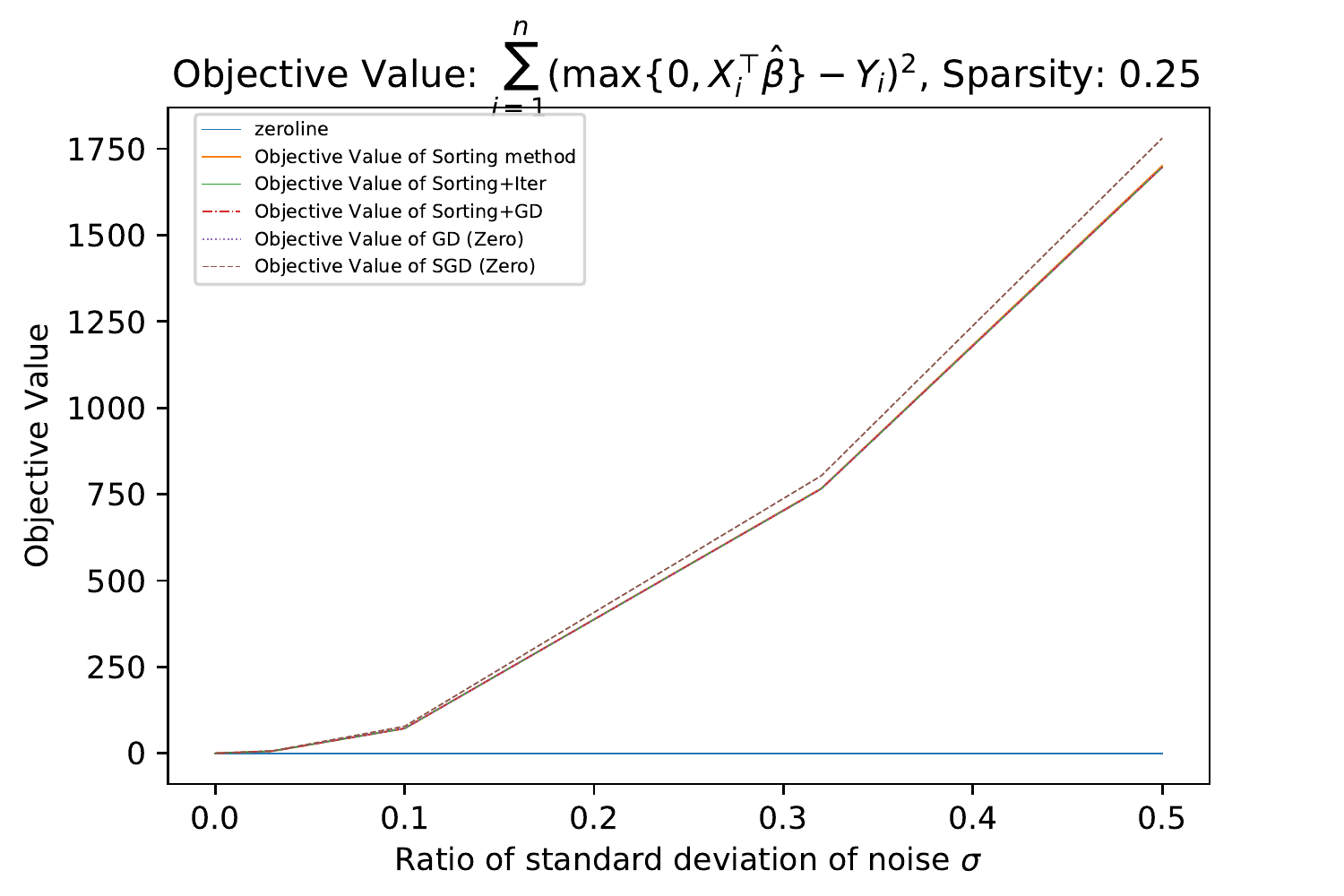}
        \caption[]%
        {{\small Objective Value}}    
        \label{fig:Figure_Compare_OB_50_1000_25}
    \end{subfigure}
    \hfill
    \begin{subfigure}[b]{0.24\textwidth}  
        \centering 
        \includegraphics[width=\textwidth]{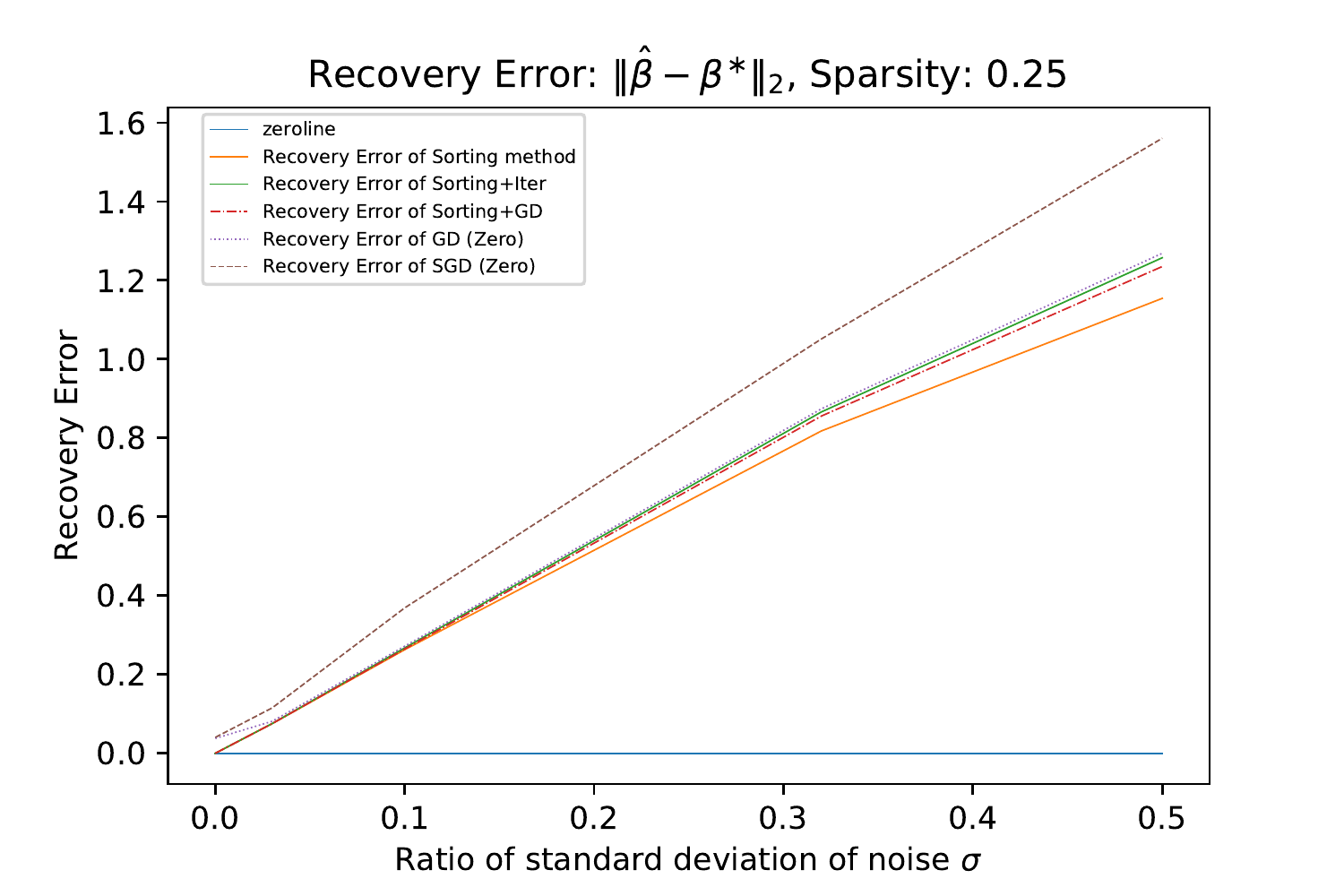}
        \caption[]%
        {{\small Recovery Error}}    
        \label{fig:Figure_Compare_RE_50_1000_25}
    \end{subfigure}
    \hfill
    \begin{subfigure}[b]{0.24\textwidth}   
        \centering 
        \includegraphics[width=\textwidth]{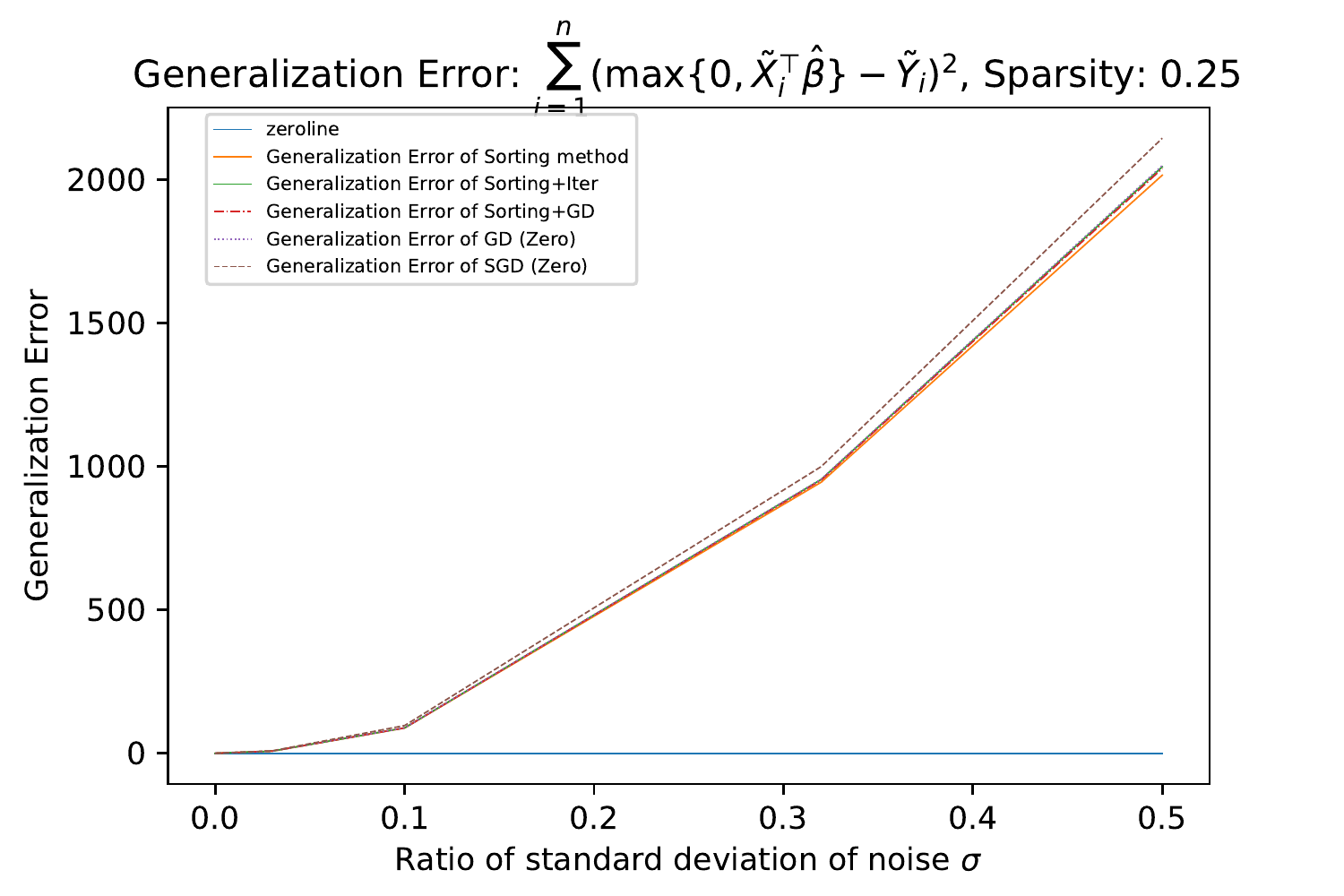}
        \caption[]%
        {{\small Generalization Error}}    
        \label{fig:Figure_Compare_GE_50_1000_25}
    \end{subfigure}
    
    \vskip\baselineskip
    
    \centering
    \begin{subfigure}[b]{0.24\textwidth}
        \centering
        \includegraphics[width=\textwidth]{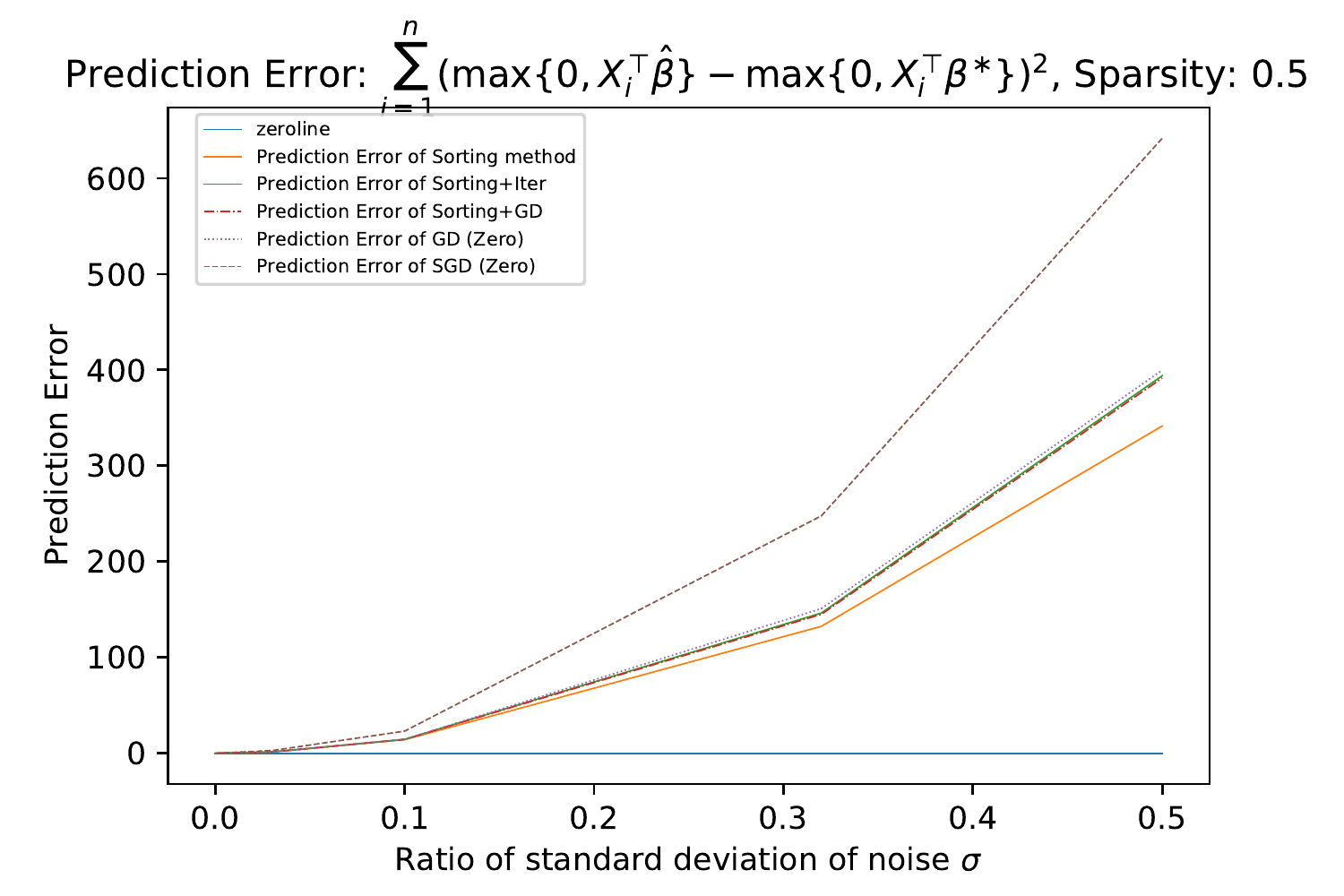}
        \caption[Network1]%
        {{\small Prediction Error}}    
        \label{fig:Figure_Compare_PE_50_1000_50}
    \end{subfigure}
    \hfill
    \begin{subfigure}[b]{0.24\textwidth}   
        \centering 
        \includegraphics[width=\textwidth]{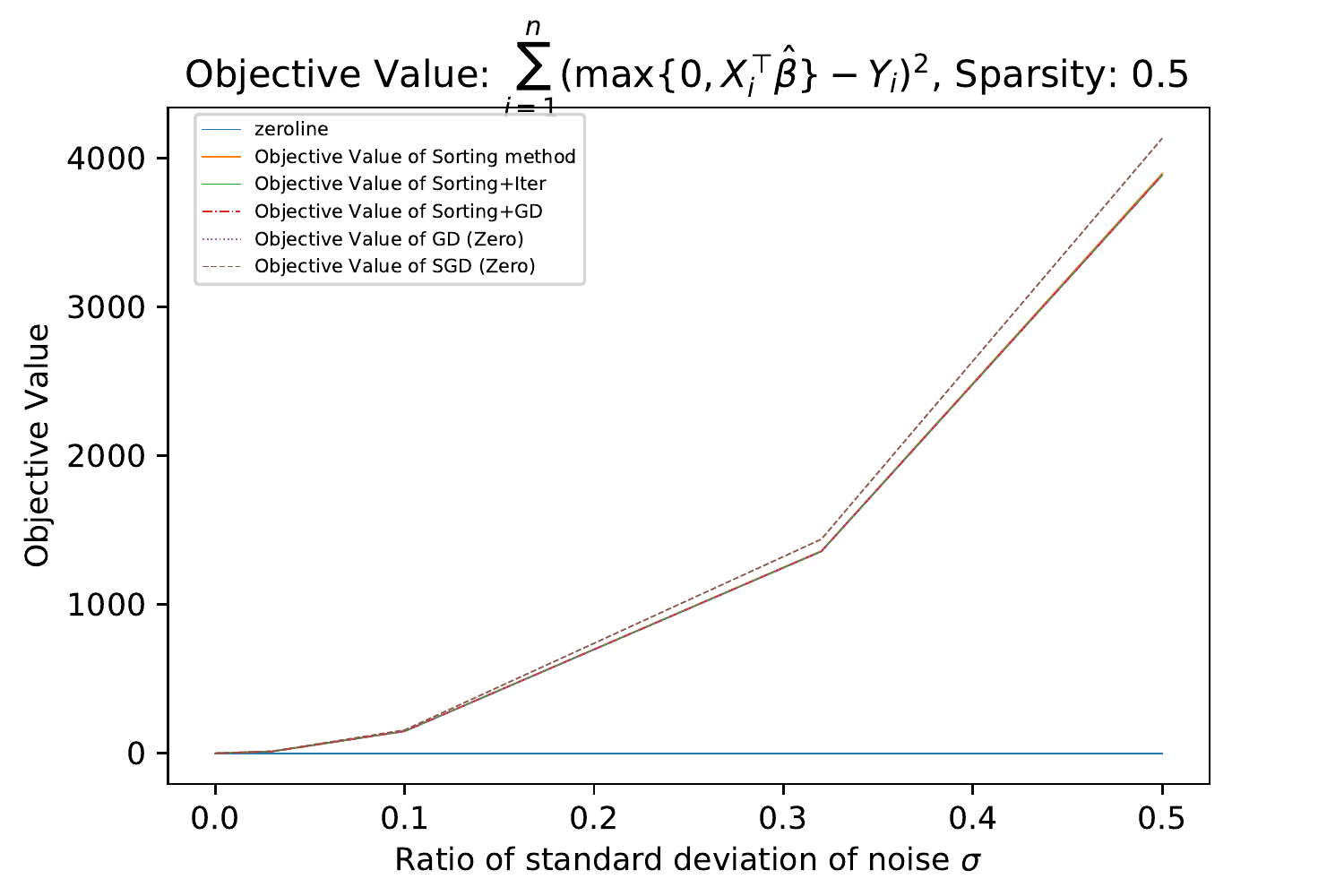}
        \caption[]%
        {{\small Objective Value}}    
        \label{fig:Figure_Compare_OB_50_1000_50}
    \end{subfigure}
    \hfill
    \begin{subfigure}[b]{0.24\textwidth}  
        \centering 
        \includegraphics[width=\textwidth]{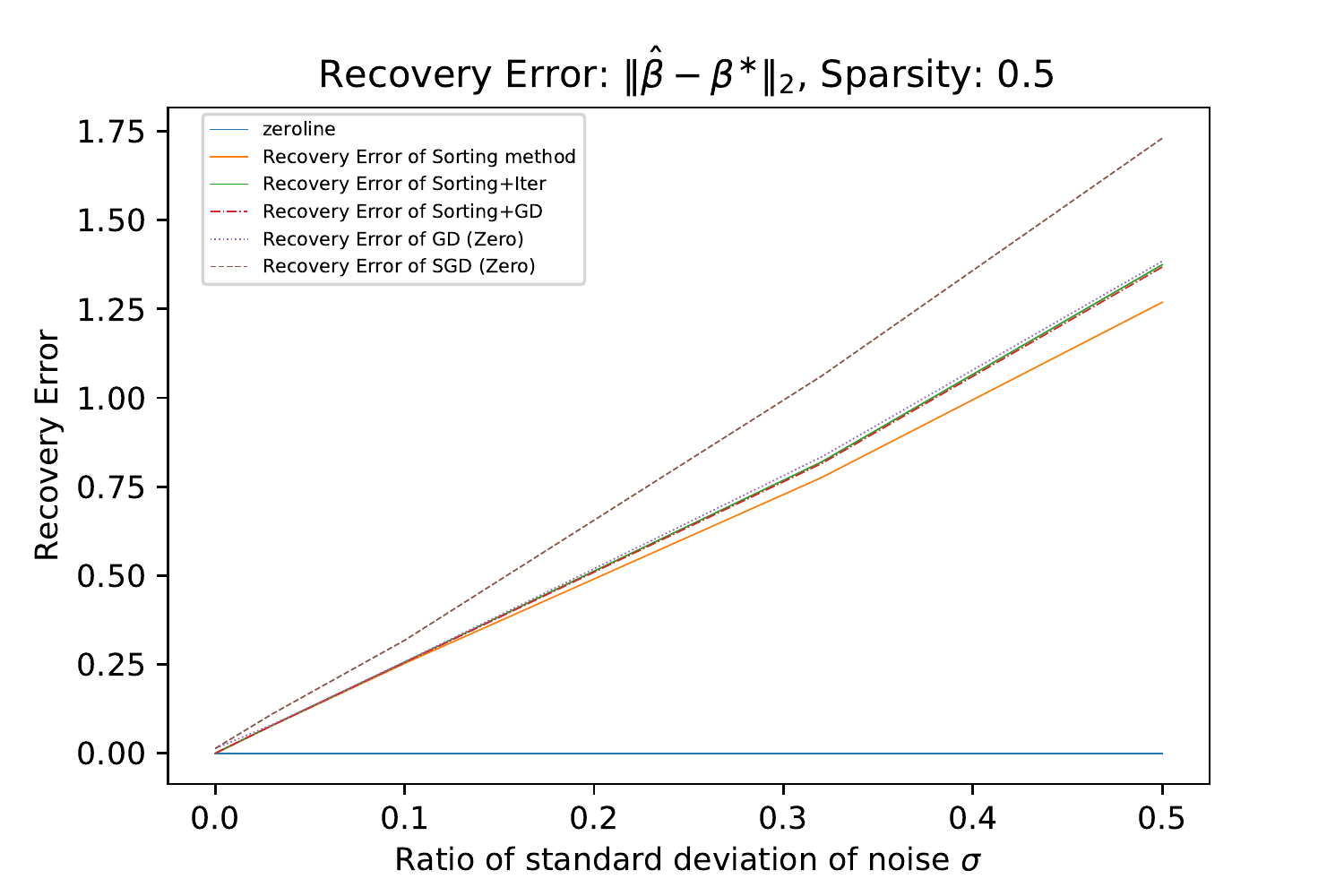}
        \caption[]%
        {{\small Recovery Error}}    
        \label{fig:Figure_Compare_RE_50_1000_50}
    \end{subfigure}
    \hfill
    \begin{subfigure}[b]{0.24\textwidth}   
        \centering 
        \includegraphics[width=\textwidth]{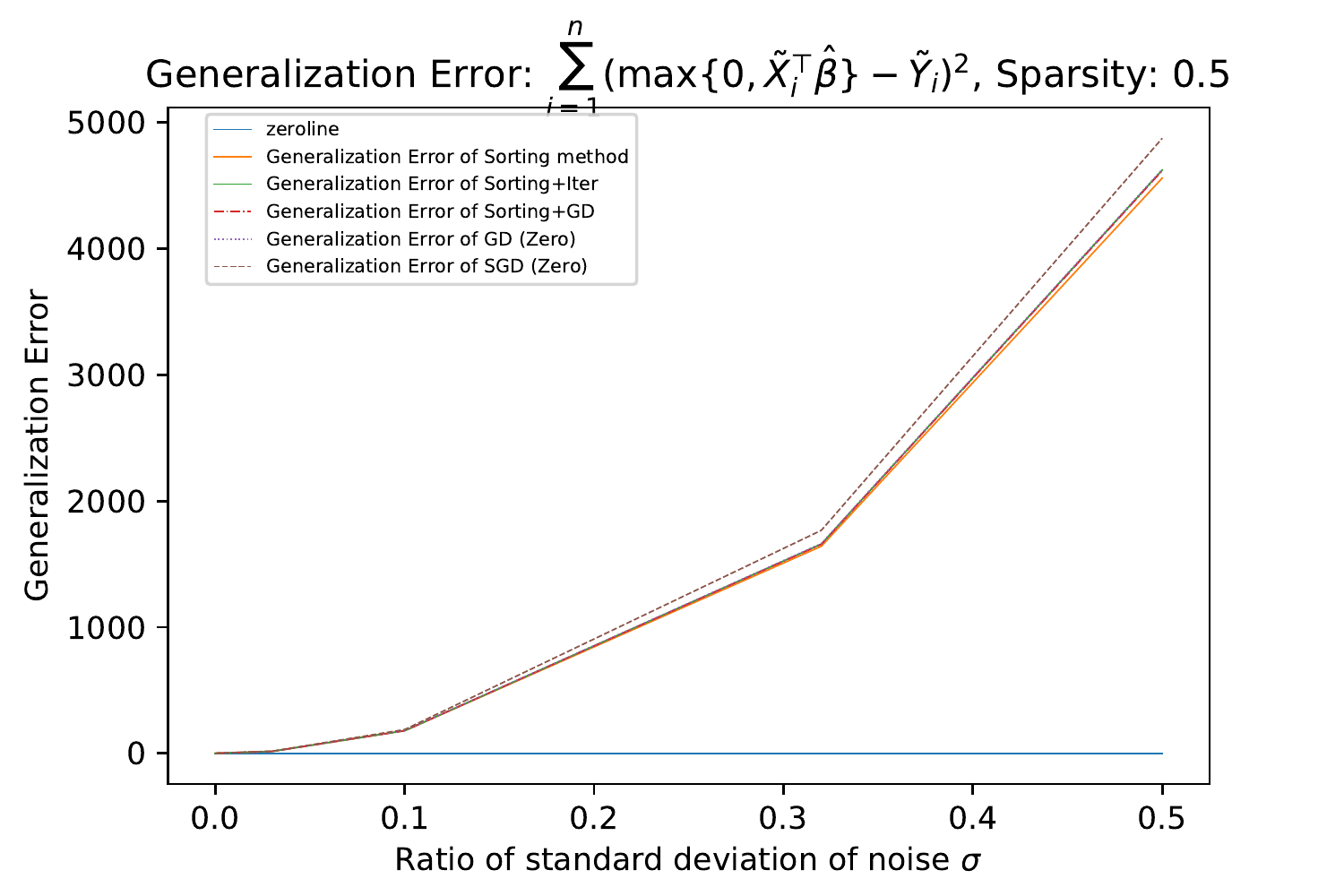}
        \caption[]%
        {{\small Generalization Error}}    
        \label{fig:Figure_Compare_GE_50_1000_50}
    \end{subfigure}
    
    \vskip\baselineskip
    
    \centering
    \begin{subfigure}[b]{0.24\textwidth}
        \centering
        \includegraphics[width=\textwidth]{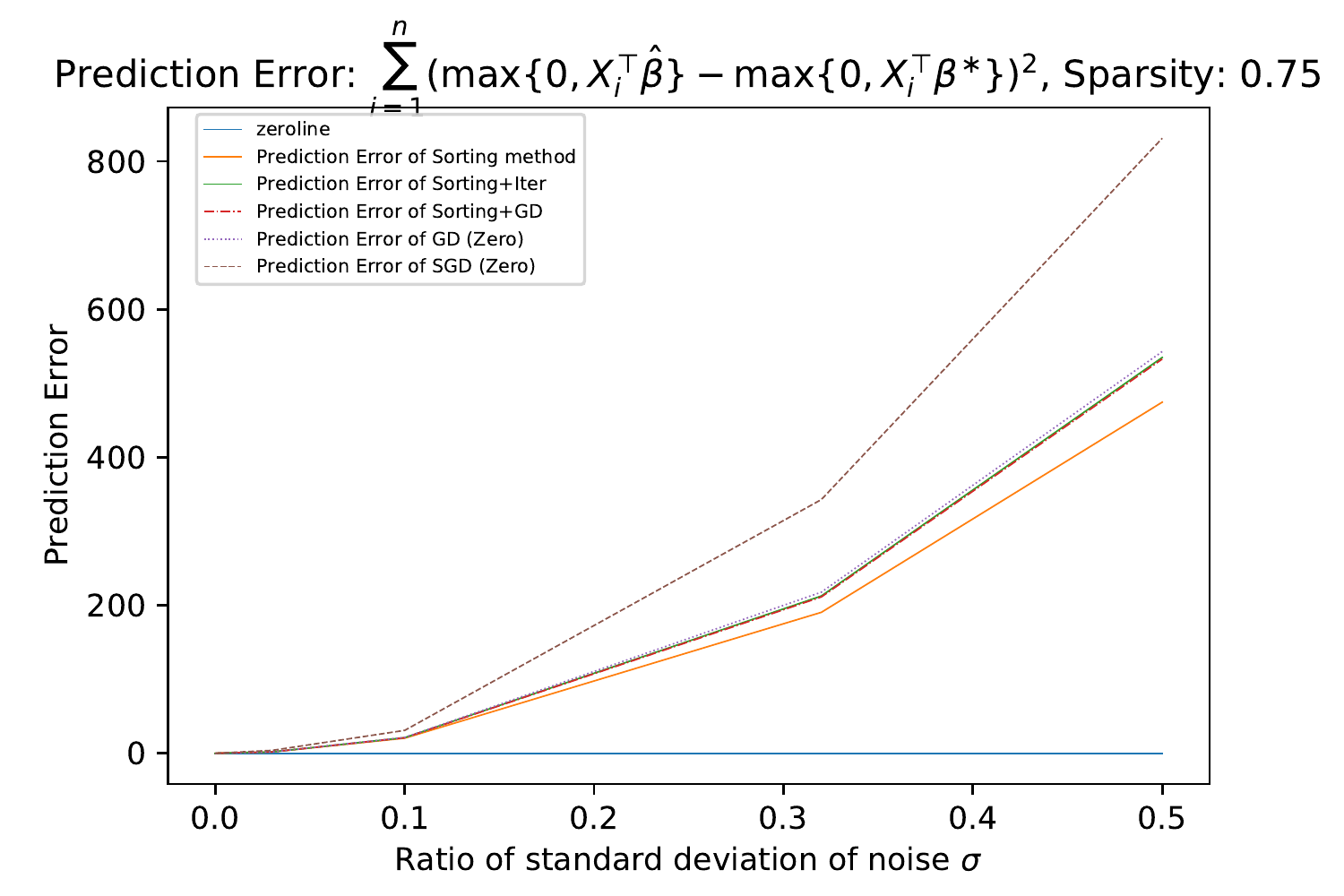}
        \caption[Network1]%
        {{\small Prediction Error}}    
        \label{fig:Figure_Compare_PE_50_1000_75}
    \end{subfigure}
    \hfill
    \begin{subfigure}[b]{0.24\textwidth}   
        \centering 
        \includegraphics[width=\textwidth]{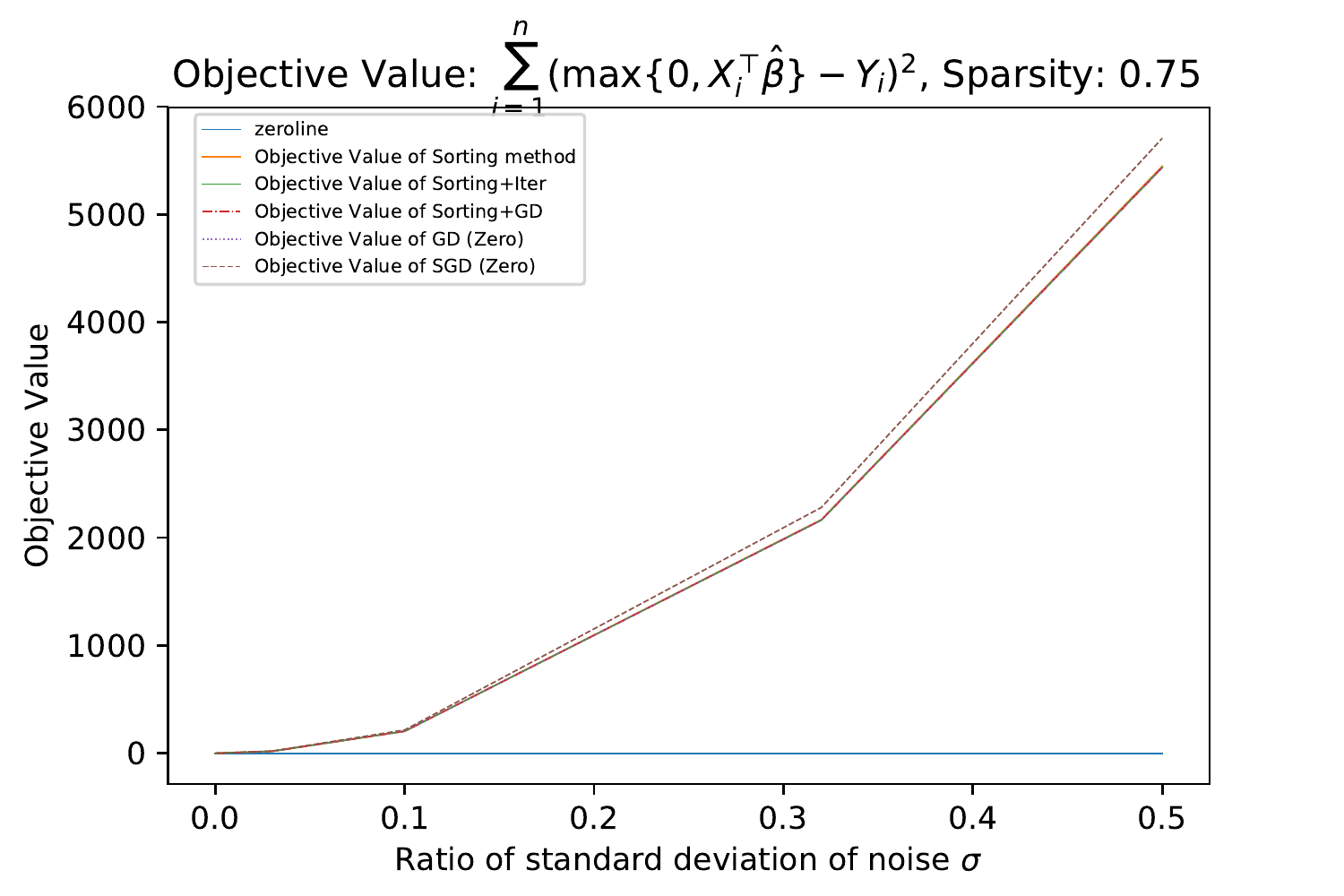}
        \caption[]%
        {{\small Objective Value}}    
        \label{fig:Figure_Compare_OB_50_1000_75}
    \end{subfigure}
    \hfill
    \begin{subfigure}[b]{0.24\textwidth}  
        \centering 
        \includegraphics[width=\textwidth]{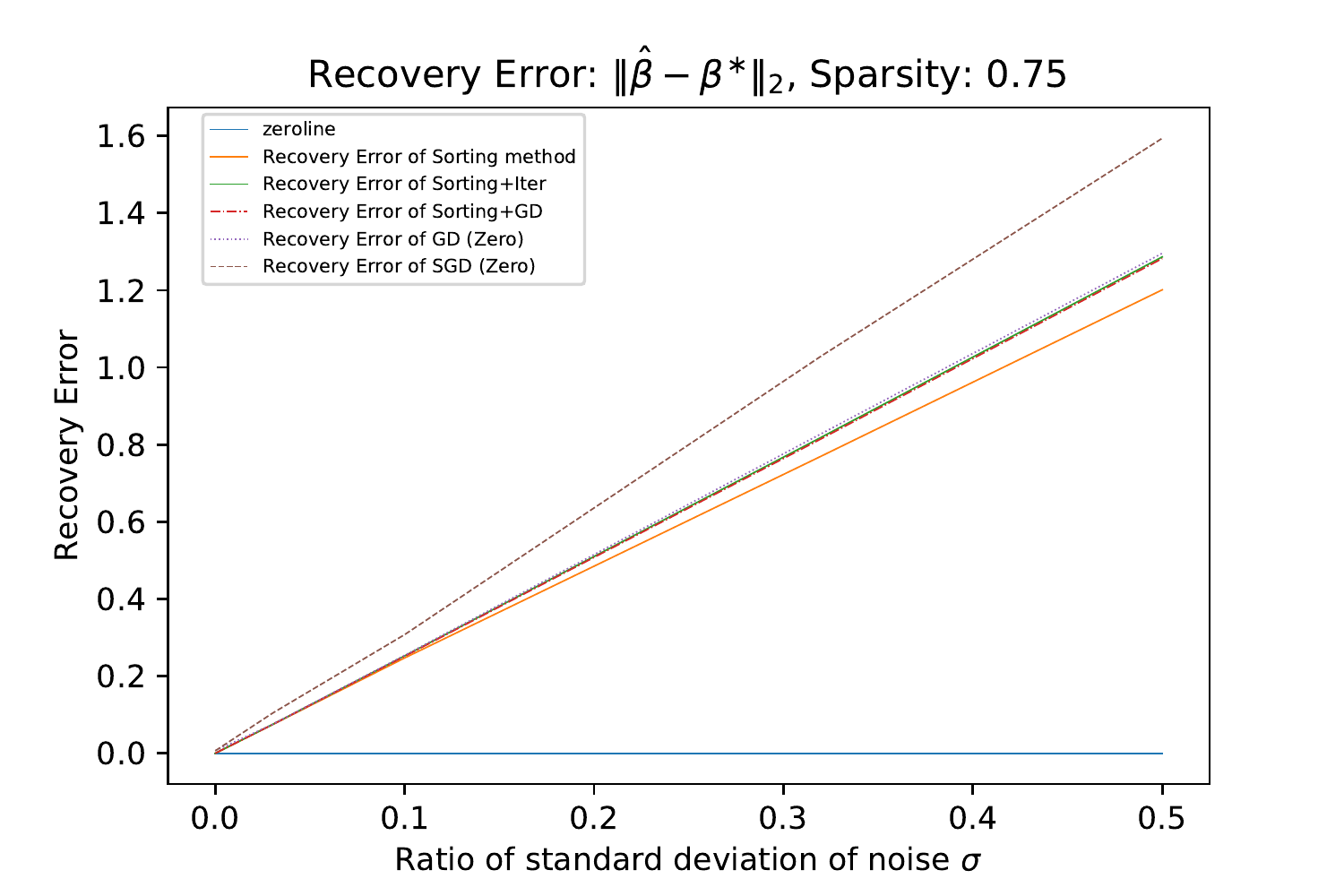}
        \caption[]%
        {{\small Recovery Error}}    
        \label{fig:Figure_Compare_RE_50_1000_75}
    \end{subfigure}
    \hfill
    \begin{subfigure}[b]{0.24\textwidth}   
        \centering 
        \includegraphics[width=\textwidth]{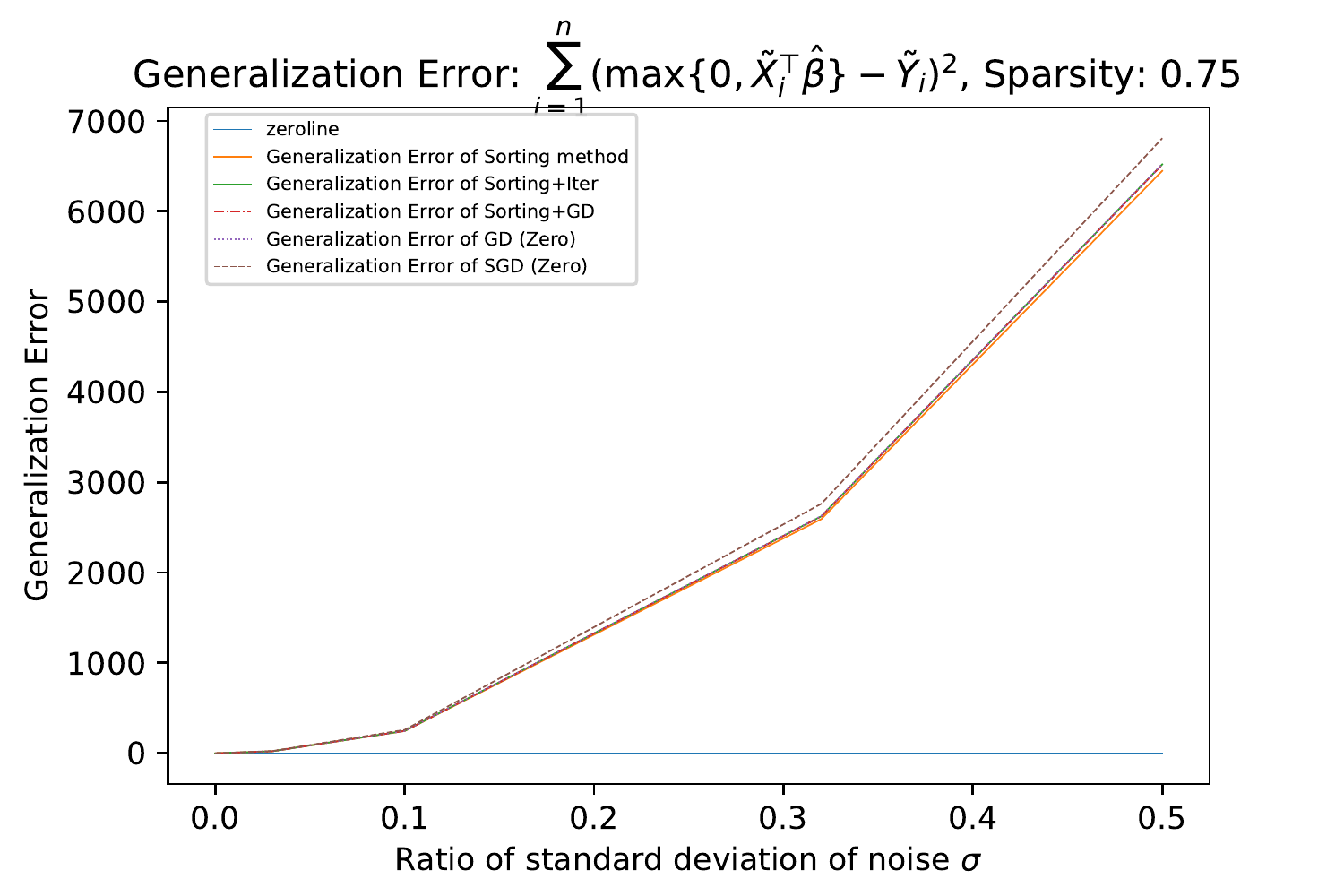}
        \caption[]%
        {{\small Generalization Error}}    
        \label{fig:Figure_Compare_GE_50_1000_75}
    \end{subfigure}
    
    \vskip\baselineskip
    
    \centering
    \begin{subfigure}[b]{0.24\textwidth}
        \centering
        \includegraphics[width=\textwidth]{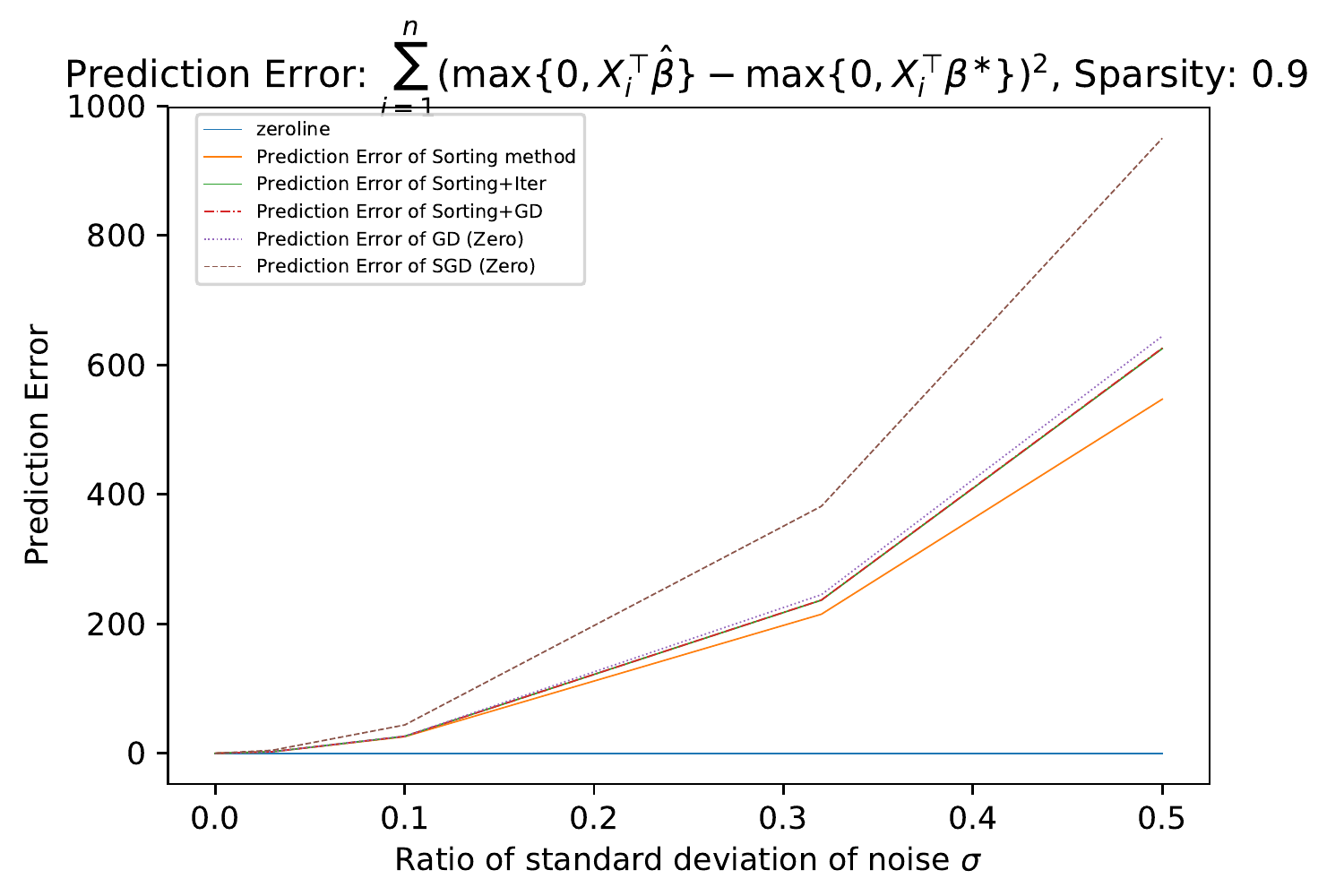}
        \caption[Network1]%
        {{\small Prediction Error}}    
        \label{fig:Figure_Compare_PE_50_1000_90}
    \end{subfigure}
    \hfill
    \begin{subfigure}[b]{0.24\textwidth}   
        \centering 
        \includegraphics[width=\textwidth]{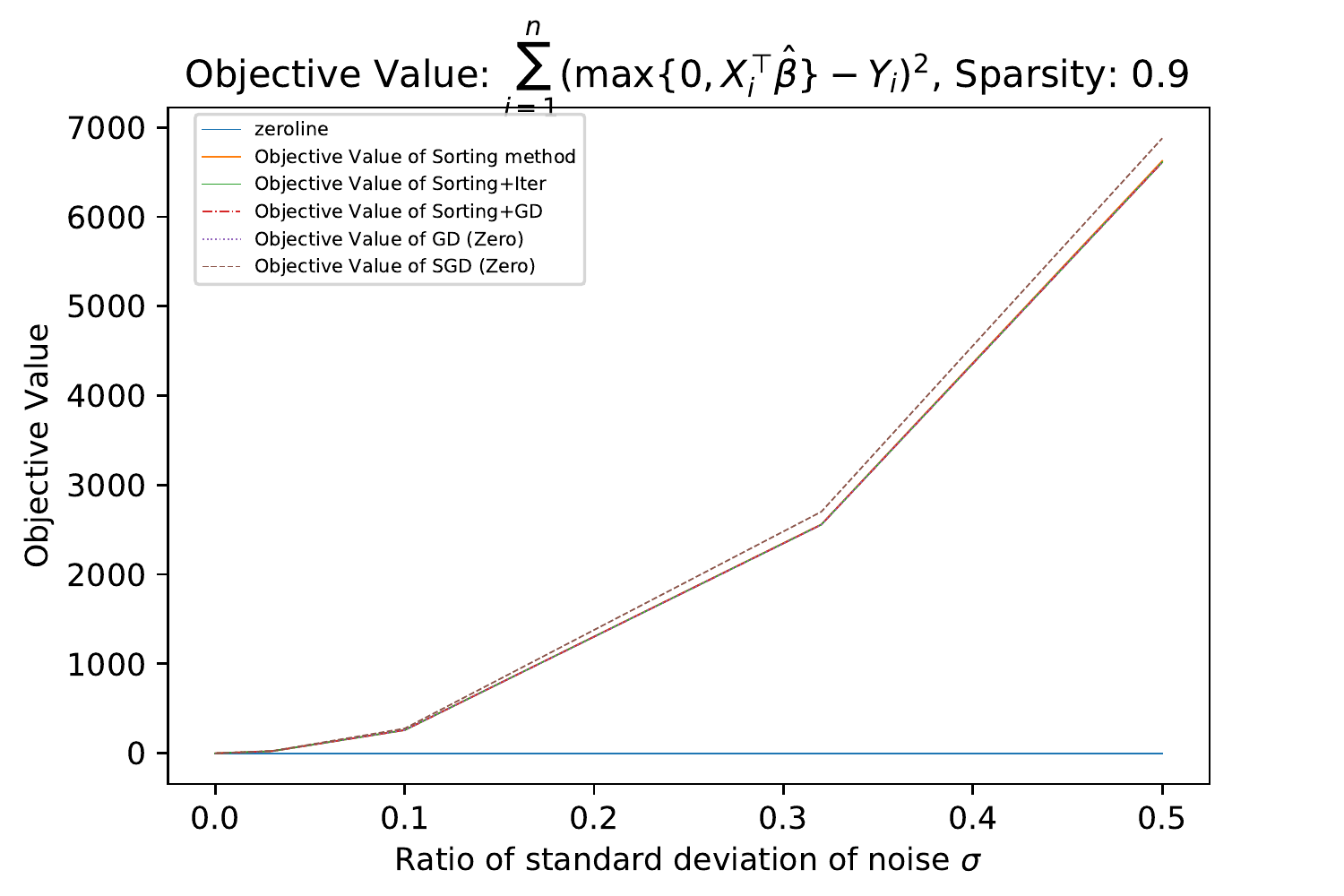}
        \caption[]%
        {{\small Objective Value}}    
        \label{fig:Figure_Compare_OB_50_1000_90}
    \end{subfigure}
    \hfill
    \begin{subfigure}[b]{0.24\textwidth}  
        \centering 
        \includegraphics[width=\textwidth]{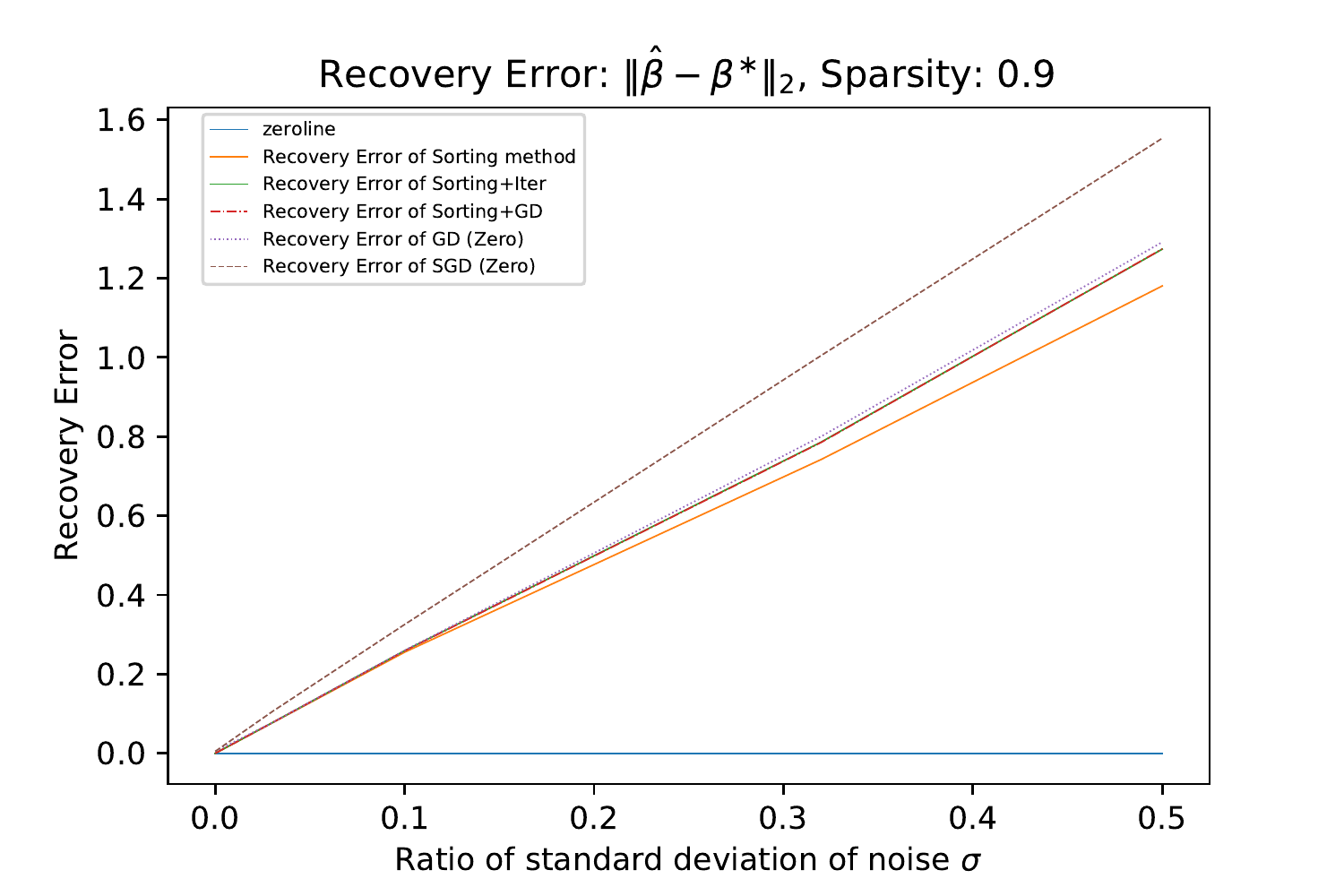}
        \caption[]%
        {{\small Recovery Error}}    
        \label{fig:Figure_Compare_RE_50_1000_90}
    \end{subfigure}
    \hfill
    \begin{subfigure}[b]{0.24\textwidth}   
        \centering 
        \includegraphics[width=\textwidth]{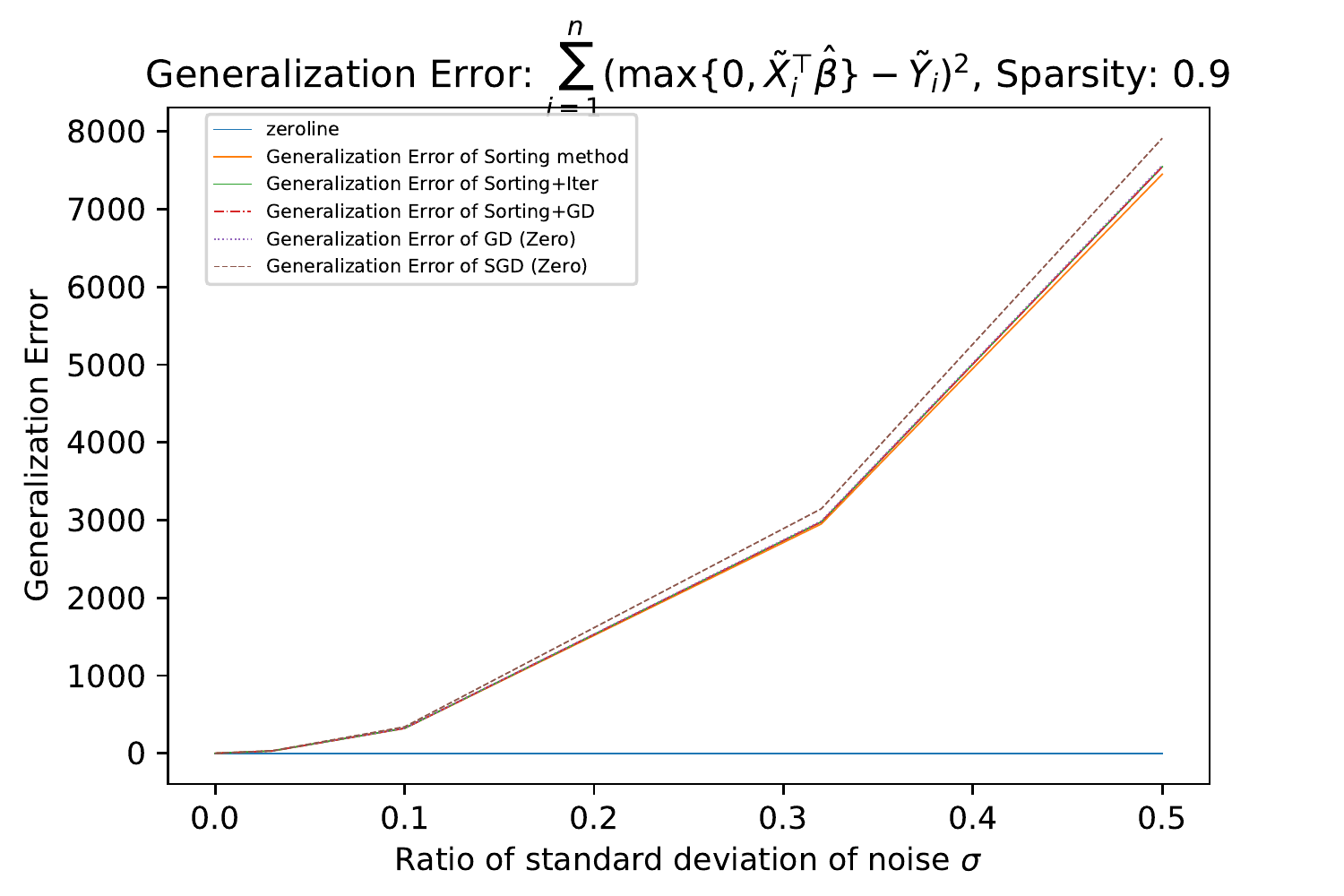}
        \caption[]%
        {{\small Generalization Error}}    
        \label{fig:Figure_Compare_GE_50_1000_90}
    \end{subfigure}
    
    \caption[Numerical Results of sample size $(p, n)= (50, 1000)$]
    {\small Numerical Results of sample size $(p, n) = (50, 1000)$ and $\beta^{\ast} \sim N(0.5 \cdot \mathbf{1}_p, 50 \cdot I_p)$ with sparsity $\{0.1, 0.25, 0.5, 0.75, 0.9\}$} 
    \label{fig:Figure_Compare_50_1000}
\end{figure*}

\subsection{Summary of Numerical Experiments}
Based on the results reported in Figure~[\ref{fig:Figure_Compare_10_200}, \ref{fig:Figure_Compare_20_400}, \ref{fig:Figure_Compare_50_1000}] and Tables in Appendix~\ref{section:realizable-cases}, some preliminary conclusions can be draw as follows:
\begin{description}
	\item[Prediction Error:] The empirical prediction error satisfies the following: 
	\begin{align*}
		\text{PE}^{\text{sorting}} \leq \text{PE}^{\text{sorting + iter}} \leq \text{PE}^{\text{sorting + GD}} \leq \text{PE}^{\text{GD}} \leq \text{PE}^{\text{SGD}}
	\end{align*}
	where the differences between $\text{PE}^{\text{sorting + iter}}, \text{PE}^{\text{sorting + GD}}, \text{PE}^{\text{GD}}$ are relative small than the differences between $\text{PE}^{\text{sorting}}, \text{PE}^{\text{sorting + iter}}$ and $\text{PE}^{\text{GD}}, \text{PE}^{\text{SGD}}$. These empirical results show that the when the output samples $\{Y_i\}$ follows the correct underlying model (which may not be for some real applications), the sorting method performs well in practice. 
	\item[Objective Value:] In most of the cases, objective value satisfies 
	\begin{align*}
		& \text{Obj}^{\text{sorting + iter}} \leq \text{Obj}^{\text{sorting + GD}} \leq \text{Obj}^{\text{sorting}} \leq \text{Obj}^{\text{GD}} \leq \text{Obj}^{\text{SGD}}.
	\end{align*}
	The difference between the SGD method and the GD method are large in general since SGD cannot always find out the local minimum solution in reasonable time. The gaps between GD method and the rest three methods (sorting, sorting + GD, sorting + iter) are relatively larger than the differences within the rest three methods. The objective value of sorting method, when the standard deviation of noise grows, increases most. The sorting + iterative method and sorting + gradient descent method perform almost the same for objective value, which implies that: (1) doing iterative method after the sorting method really benefits the optimization (comparing with sorting method with smaller objective value); (2) initializing with $\hat{\beta}^{\text{sorting}}$ will improve the performances of gradient descent (comparing with GD/SGD with smaller objective value). 
	
	\item[Recovery Error:] When the standard deviation of noise is small, the recovery error satisfies that $$\text{RE}^{\text{sorting + iter}} \leq \text{RE}^{\text{sorting + GD}} \leq \text{RE}^{\text{sorting}} \leq \text{RE}^{\text{GD}} \leq \text{RE}^{\text{SGD}}.$$ As the standard deviation of noise increases, the recovery error obtained from gradient descent method will not increases as much as the rest three types of methods, and finally becomes the best at the point with $\rho = 0.32$. 
	
	\item[Generalization Error:] The performances of generalization error is very similar to the performances of prediction error. Hence the sorting + iterative method has the strongest generalization power. 
	
	\item[Running Time:] Empirically, the running time of sorting method, sorting + iterative method, sorting + GD method and SGD method satisfies the following:
	\begin{align*}
		T^{\text{SGD}} \leq T^{\text{sorting}} \leq T^{\text{sorting + iter}} \approx T^{\text{sorting + GD}}
	\end{align*}
	in most of the cases. One possible result of the least running time of SGD method is that SGD cannot find out the local minimum and stops early with fewer iterations.  For GD method with $\mathbf{0}_p$ initial point, as the size of instances increases, its running time increases faster than the rest four methods. Moreover, the sparsity level, in empirical, has significant influence on the running time of GD method.   
\end{description}

\section{Conclusions}\label{sec:conclusion}
After showing that that~\ref{eq:One-Node-ReLU} is NP-hard, we presented an approximation algorithm for this problem. We showed that for arbitrary data this algorithm gives a multiplicative guarantee of $\frac{n}{k}$ where $n$ is the number of samples and $k$ is a fixed integer. An important consequence of this result is that in the realizable case~\ref{eq:One-Node-ReLU} can be solved in polynomial time.  In the more natural ``statistical model" of training data, where the data comes from an underlying single node with relu function where the output is perturbed with a Gaussian noise, we are able to show that the algorithm promises guarantees that are independent of $n$. \emph{To the best of our knowledge, these are best theoretical performance guarantees for the solving \ref{eq:One-Node-ReLU}, especially in the realizable case and in the case of statistical data model. }

Computational experiments show that Algorithm together with a heuristic performs better that gradient descent and stochastic gradient algorithm. Very importantly, starting gradient descent from the solution of the approximation algorithm, performs significantly better than gradient descent algorithm. In our opinion, this is a very important empirical observation in the following sense: \emph{there is value in coming up with specialized approximation algorithms for various non-convex problems (for which we intend to use gradient descent), since such algorithms due to their theoretical guarantees provide a good starting point for gradient descent, usually a requirement for the gradient descent algorithm to work well.} 

Many open questions remain. In the case of arbitrary training data model, there is big gap between multiplicative guarantee of $\frac{n}{k}$ and known lower bound of $n^{\frac{1}{loglog n}}$. In the statistical model, we believe that our Algorithm is optimal, i.e. performance guarantees cannot be improved. Proving or disproving this conjecture is important. Another important direction of research is to extend these results to multi-node networks.  

\section{Proofs of Results Presented in Section~\ref{sec:theory}}\label{sec:proofs}
\subsection{Proof of Proposition \ref{prop:convex} \label{section:Prop-convex}}
\begin{proof}
Since $\phi(\beta, \beta_0) = \sum_{i \in \{m + 1, \ldots, n\}} (\max\{0, X_i^{\top} \beta + \beta_0\} - Y_i)^2$, then it is sufficient to show that $(\max\{0, X_i^{\top} \beta + \beta_0\} - Y_i)^2$ is convex for each $i = m + 1, \ldots, n$. Let $\theta(x) = (\max\{0, x\} - Y_i)^2 = (\max\{0, x\})^2 + Y_i^2 - 2Y_i \max\{0, x\}$ with $Y_i < 0$. Note that $\theta(x)$ is convex over $x \in \mathbb{R}$. Let $L(\beta, \beta_0) = X_i^{\top} \beta + \beta_0$ be an affine function. Then $(\max\{0, X_i^{\top} \beta + \beta_0\} - Y_i)^2 = \theta(L(\beta, \beta_0))$ is convex. 
\end{proof}

\subsection{Proof of Theorem \ref{thm:NP-hard} \label{section:NP-hard}}

In order to prove Theorem~\ref{thm:NP-hard}, we show that the subset sum problem can be polynomially reduced to a special case of \ref{eq:One-Node-ReLU} problem. We begin a definition of the subset sum problem. 
\begin{definition}
\textbf{Subset sum problem:} Given $p$ non-negative integers $a_1, \ldots, a_p$, the subset sum problem is to find out whether there exists a subset $S \subseteq [p]$ such that $\sum_{i \in S} a_i = \frac{1}{2}\sum_{i =1}^pa_i$.
\end{definition}

Note that the subset sum problem is equivalent to find out a feasible solution $x \in \{0,1\}^p$ such that $\sum_{i = 1}^n a_i x_i = \frac{1}{2}\sum_{i =1}^pa_i$. Therefore, the following $\{\pm 1\}-$subset sum problem is still NP-hard. 

\begin{definition}
\textbf{$\{\pm 1\}-$subset sum problem:} Given  $p$ nonnegative integers $a_1, \ldots, a_p$, the $\{\pm 1\}-$subset sum problem is to decide if there exists a solution $x \in \{\pm 1\}^p$ such that $\sum_{i = 1}^p a_i x_i = \frac{1}{2}\sum_{i =1}^pa_i$.
\end{definition}

\begin{proposition}
The decision problem $\{\pm 1\}-$subset sum problem is in NP-complete. 
\end{proposition}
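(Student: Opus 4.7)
The plan is standard: establish membership in NP, then show NP-hardness by a polynomial-time reduction from the (partition-style) subset sum problem defined just above. Membership in NP is immediate, since any candidate $x \in \{\pm 1\}^p$ serves as a certificate of polynomial size, and evaluating $\sum_{i=1}^p a_i x_i$ and comparing it with $\frac{1}{2}\sum_{i=1}^p a_i$ takes time polynomial in the bit length of the input.

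For NP-hardness I would reduce from the subset sum problem. Given an instance $(a_1,\ldots,a_p)$ with $\sum_{i=1}^p a_i = 2T$, I would construct a $\{\pm 1\}$-subset sum instance on $p+1$ nonnegative integers $b_1,\ldots,b_{p+1}$ defined by $b_i = a_i$ for $i \in [p]$ and $b_{p+1} = \sum_{i=1}^p a_i = 2T$. The new total is $B := \sum_{i=1}^{p+1} b_i = 4T$, and the $\{\pm 1\}$-subset sum target becomes $B/2 = 2T$. This transformation is clearly computable in polynomial time in the input encoding.

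The correctness argument splits into two directions. For the forward direction, if a partition $S \subseteq [p]$ with $\sum_{i \in S} a_i = T$ exists, set $x_i = 1$ for $i \in S$, $x_i = -1$ for $i \in [p]\setminus S$, and $x_{p+1} = 1$; then
$$\sum_{i=1}^{p+1} b_i x_i \;=\; \Big(\sum_{i \in S} a_i - \sum_{i \notin S} a_i\Big) + 2T \;=\; 0 + 2T \;=\; B/2.$$
For the converse, suppose $x \in \{\pm 1\}^{p+1}$ satisfies $\sum_{i=1}^{p+1} b_i x_i = 2T$. If $x_{p+1} = -1$, then $\sum_{i \le p} a_i x_i = 4T$, which is impossible whenever $T > 0$ because $\bigl|\sum_{i \le p} a_i x_i\bigr| \le \sum_{i=1}^p a_i = 2T$; the degenerate case $T = 0$ forces every $a_i = 0$ so the partition $S = \emptyset$ trivially exists. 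Otherwise $x_{p+1} = 1$ and $\sum_{i \le p} a_i x_i = 0$, so taking $S = \{i \in [p] : x_i = 1\}$ yields a valid partition.

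The main obstacle is choosing the auxiliary integer $b_{p+1}$ to serve two purposes simultaneously: it must be small enough that the reduction remains polynomial in the input size (the choice $b_{p+1} = \sum_i a_i$ has bit length bounded by $\log_2 p + \log_2 \max_i a_i + O(1)$), yet large enough that the case $x_{p+1} = -1$ drives $\sum_{i \le p} a_i x_i$ outside the achievable range $[-2T, 2T]$. Setting $b_{p+1} = \sum_{i=1}^p a_i$ threads this needle: it effectively pins $x_{p+1} = 1$ and collapses the $\{\pm 1\}$-subset sum condition onto the exact partition condition on $(a_1,\ldots,a_p)$.
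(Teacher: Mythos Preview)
Your proof is correct and follows essentially the same reduction as the paper: append an extra integer $b_{p+1}=\sum_{i=1}^p a_i$ to the subset sum instance, argue that any feasible $\{\pm 1\}$-solution must have $x_{p+1}=1$, and observe that the resulting condition $\sum_{i\le p} a_i x_i=0$ is exactly the partition condition. Your treatment is slightly more careful than the paper's in that you explicitly handle the degenerate case $T=0$ (all $a_i=0$), which the paper glosses over.
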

\begin{proof} 
Clearly, {$\{\pm 1\}-$subset sum problem} is in NP.  In order to show that  \textbf{$\{\pm 1\}-$subset sum problem} is in NP-complete, we show that the instance of subset sum corresponding to $(a_1, \dots, a_p)$ is feasible 
if and only if the \textbf{$\{\pm 1\}-$subset sum} instance $(a_1, \dots, a_p, a_{p+1})$ with $a_{p +1} = \sum_{i =1}^p a_i$ is feasible.

Clearly if the subset set instance is feasible, then there exists a subset $S \subseteq [p]$ such that $\sum_{i \in S} a_i = \frac{1}{2}\sum_{i = 1}^p a_i$. Then setting $x_i = 1$ for $i \in S \cup \{p +1\}$ and $x_i = -1$ for $i \in [p]\setminus S$ gives us: $\sum_{i = 1}^{p+1}a_i x_i = \frac{1}{2}\sum_{i = 1}^{p+1}a_i$. 

On the other hand if the $\{\pm 1\}-$subset sum is feasible, there exists some $x_i \in \{-1, 1\}^{p+1}$ such that $\sum_{i = 1}^{p+1}a_i x_i = \frac{1}{2}\sum_{i = 1}^{p+1}a_i$. First observe that $x_{p +1}$ cannot be $-1$ since then we would have that $\sum_{i = 1}^p a_i x_i = 2 \sum_{i = 1}^p a_i$. Thus, we have that $\sum_{i = 1}^pa_ix_i = 0$ implying that there exists $S \subseteq [p]$ such that $\sum_{i \in S} a_i = \frac{1}{2}\sum_{i = 1}^p a_i$.
\end{proof}

Now we show the equivalence between $\{\pm 1\}-$subset sum problem and a special case of \ref{eq:One-Node-ReLU} problem. Consider the following auxiliary function 
\begin{align*}
	\theta(x, \beta_0) = & (\max\{0, x + \beta_0 \} - 1)^2 + (\max\{0, - x + \beta_0 \} - 1)^2 
\end{align*}
(See Figure \ref{fig:theta}).
\begin{figure}[h]
	\begin{center}
		\includegraphics[width= 0.6 \textwidth]{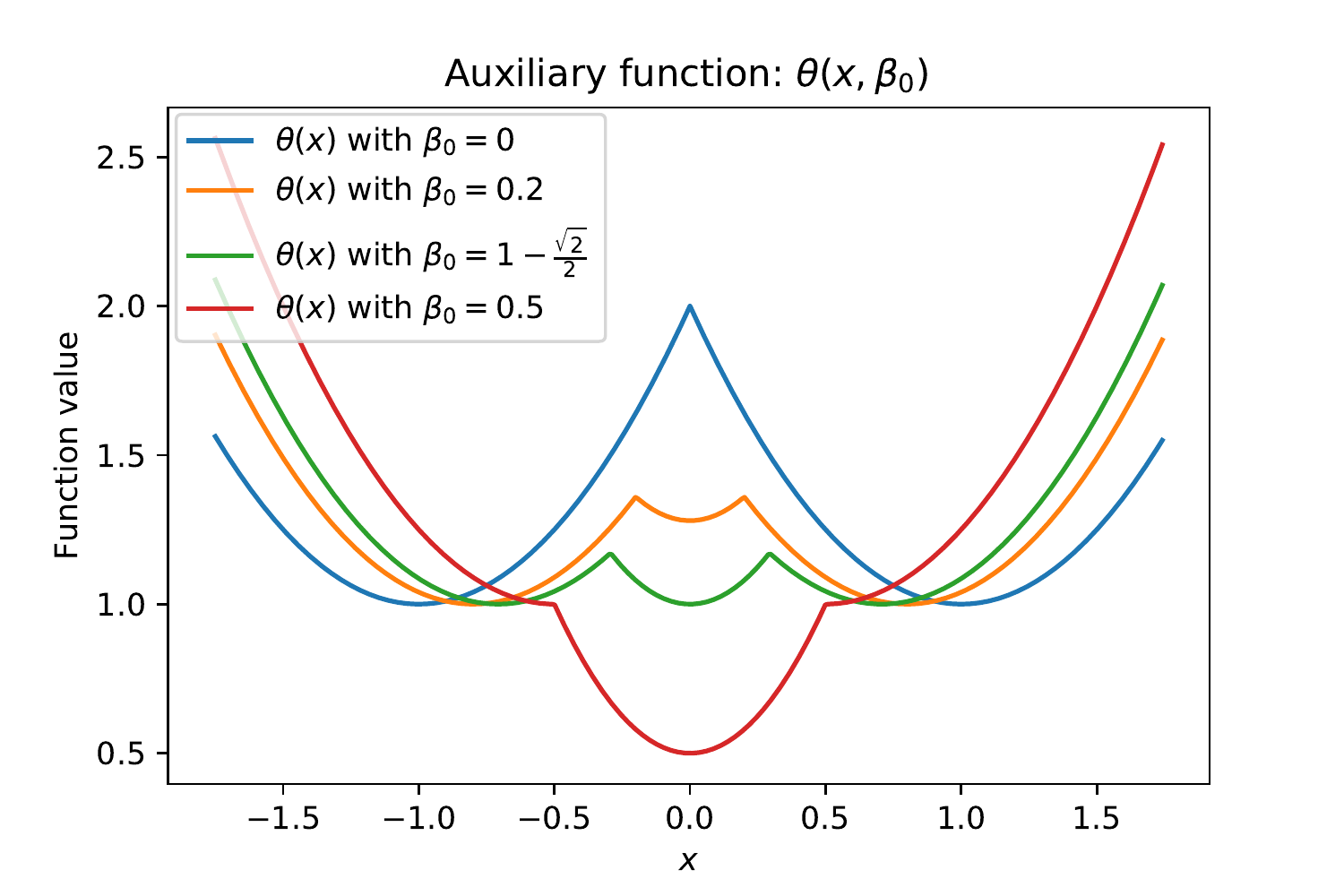}
	\end{center}
	\caption{Function $\theta(x, \beta_0)$}
	\label{fig:theta}
\end{figure}
For a fixed $x$, let $g(\beta_0) = \min_x \theta(x, \beta_0)$. We construct our affine \ref{eq:One-Node-ReLU} problem as follows: 
\begin{align*}
	\min_{\beta, \beta_0 \in \mathbb{R}^{p + 1}} \underbrace{\bigg( \max \bigg\{0, \sum_{i = 1}^p a_i \beta_i + \beta_0 \bigg\} - \frac{1}{2}\sum_{i = 1}^pa_i \bigg)^2 + \bigg( \max \bigg\{0, \sum_{i = 1}^p 2 \cdot a_i \beta_i + \beta_0 \bigg\} - \sum_{i = 1}^pa_i \bigg)^2}_{=: \tau(\beta, \beta_0)} \\
+\sum_{i = 1}^p \theta( e_j^{\top} \beta, \beta_0) +  \bigg( \max\{0, \beta_0\} + 10 p \bigg)^2 . \tag{ReLU} \label{eq:subset-sum}
\end{align*}
Observe that solving (\ref{eq:subset-sum}) is equivalent to training a \ref{eq:One-Node-ReLU} where the data samples are:
\begin{enumerate}
\item $X_1 = [a_1, \dots, a_p]$, $Y_1 = \frac{1}{2}\sum_{i =1}^p a_i$
\item $X_2 = [2 \cdot a_1, \dots, 2 \cdot a_p]$, $Y_1 = \sum_{i =1}^p a_i$
\item $X_{2i + 1} = e_i$, $Y_{2i + 1} = 1$, $X_{2i +2} = -e_i$, $Y_{2i +2} = 1$ for $i \in \{1, \dots, p\}$. 
\item $X_{2p +3} = 0$, $Y_{2p + 3} = 10p$.
\end{enumerate}
Now we verify Theorem~\ref{thm:NP-hard}  by showing that the $\{\pm 1\}-$subset sum problem iff the training error in solving (\ref{eq:subset-sum}) is $p + 100p^2$.

Thus 
\begin{itemize}
	\item Suppose the $\{\pm 1\}-$subset sum problem with non-negative parameters $a_1, \ldots, a_p$ has a feasible solution $x \in \{\pm 1\}^p$ such that $\sum_{i = 1}^p a_i x_i = \frac{1}{2}\sum_{i =1}^p a_i$. Let $\beta = x$ and $\beta_0 = 0$, we have that the objective function value of  (\ref{eq:subset-sum}) is $p + 100p^2$.
	\item Suppose the $\{\pm 1\}-$subset sum problem does not have a feasible solution. Let $\beta, \beta_0$ be the optimal solution to (\ref{eq:subset-sum}).
Then, observe that 
\begin{align*}
g(\beta_0) = \left\{
					\begin{array}{lll}
						2\beta_0^2 - 4 \beta_0 + 2 & \text{ when } \beta_0 \geq 1 - \frac{\sqrt{2}}{2} \\
						1 & \text{ when } \beta_0 \leq 1 - \frac{\sqrt{2}}{2} ~ (\leq \frac{1}{2})
					\end{array}
					\right. .
				\end{align*}
We consider four cases:
\begin{enumerate}
\item $\beta_0 \geq 1 - \frac{\sqrt{2}}{2}$: In this case $\tau(\beta, \beta_0) + p \cdot g(\beta_0) + (\max\{0, \beta_0\} + 10p)^2 \geq 0 + p(2\beta_0^2 - 4 \beta_0 + 2) + (\beta_0 + 10p)^2 = (2p + 1)\beta_0^2 + 16p \beta_0 + 2p + 100p^2 > p + 100p^2$.
\item $0 < \beta_0 \leq 1 - \frac{\sqrt{2}}{2}$: In this case $\tau(\beta, \beta_0) + p \cdot g(\beta_0) + (\max\{0, \beta_0\} + 10p)^2 \geq 0 + p*1 + (\beta_0 + 10p)^2 > p + 100p^2$.
\item $\beta_0 < 0$: Note that $\frac{1}{2}\sum_{i = 1}^p a_i >0$ and therefore $\tau(\beta, \beta_0) =0$ iff $\beta_0 = 0$. In particular in this case $\tau(\beta, \beta_0) > 0$. Therefore, we have $\tau(\beta, \beta_0) + p \cdot g(\beta_0) + (\max\{0, \beta_0\} + 10p)^2 >  0 + p \cdot 1 + 100p^2$.
\item $\beta_ 0 = 0$: In this case, observe that $$\tau(\beta, \beta_0) + \sum_{i =1}^p \theta (e_j ^{\top}\beta, \beta_0) + (\max\{0, \beta_0\} + 10p)^2 \geq  0 + p \cdot 1 + 100p^2.$$ However, for equality to hold  in the above inequality, we must have $\theta (e_j ^{\top}\beta, \beta_0) = 1$ for $j \in [p]$ and $\tau(\beta, \beta_0 ) = 0$, which implies we must have $\beta_j \in \{-1, 1\}$ and $\sum_{i = 1}^p a_i \beta_i = \frac{1}{2} \sum_{i = 1}^p a_i$. However since there is no solution to the $\{\pm 1\}-$subset sum problem, we obtain that $\tau(\beta, \beta_0) + \sum_{i =1}^p \theta (e_j ^{\top}\beta, \beta_0) + (\max\{0, \beta_0\} + 10p)^2 >  0 + p \cdot 1 + 100p^2.$
\end{enumerate}
\end{itemize}

\subsection{Proof of Proposition \ref{prop:LB} \label{section:Prop-LB}}
\begin{proof}
	Re-written $$ \|\max\{\mathbf{0}, X^{\top}\beta + \beta_0 \mathbf{1}\} - Y\|_2^2 = \sum_{i = 1}^m(\max\{0, X_i^{\top} \beta + \beta_0 \} - Y_i)^2 + \phi(\beta, \beta_0).$$ Note that $Y_i > 0$ for all $i \in [m]$, then we have: 
	\begin{align*}
		& (\max\{0, X_i^{\top} \beta + \beta_0 \} - Y_i)^2 \leq (X_i^{\top} \beta + \beta_0 - Y_i)^2 \\
		& (\max\{0, X_i^{\top} \beta + \beta_0 \} - Y_i)^2 \leq \sigma(X_i^{\top} \beta + \beta_0, Y_i)
	\end{align*}
	holds for all $i \in [m]$. Since for any $I \subseteq [m]$
	$$\sum_{i = 1}^m(\max\{0, X_i^{\top} \beta + \beta_0 \} - Y_i)^2 + \phi(\beta, \beta_0) \leq \sum_{i \in I} (X_i^{\top} \beta + \beta_0 - Y_i)^2 + \sum_{i \in [m]\backslash I} \sigma(X_i^{\top} \beta + \beta_0, Y_i) + \phi(\beta, \beta_0), $$ 
	then taking minimum on both side implies $\min_{(\beta, \beta_0) \in \mathbb{R}^p \times \mathbb{R}} \|\max\{\mathbf{0}, X^{\top}\beta + \beta_0 \mathbf{1}\} - Y\|_2^2 \leq z^{\sigma}(I)$.
	
	Moreover, recall $I^{\text{OPT}}$ is the active set corresponding to a global optimal solution $(\beta^{\text{OPT}}, \beta^{\text{OPT}}_0)$ as defined above. We have:
	\begin{align*}
		z^{\sigma} (I^{\text{OPT}}) \leq & ~ \sum_{i \in I^{\text{OPT}}} (\underbrace{ X_i^{\top} \beta^{\text{OPT}} + \beta^{\text{OPT}}_0}_{\geq 0} - Y_i)^2 + \sum_{i \in [m] \backslash I^{\text{OPT}}} \sigma(\underbrace{ X_i^{\top} \beta^{\text{OPT}} + \beta^{\text{OPT}}_0}_{< 0}, Y_i) + \phi(\beta^{\text{OPT}}, \beta^{\text{OPT}}_0) \\
		= & ~ \sum_{i \in I^{\text{OPT}}} (X_i^{\top} \beta^{\text{OPT}} + \beta^{\text{OPT}}_0 - Y_i)^2 + \sum_{i \in [m] \backslash I^{\text{OPT}}} Y_i^2 + \phi(\beta^{\text{OPT}}, \beta^{\text{OPT}}_0) \\
		= & ~ z^{\text{OPT}}. 
	\end{align*}
	Combine with $z^{\text{OPT}} = \|\max\{\mathbf{0}, X^{\top}\beta + \beta_0 \mathbf{1}\} - Y\|_2^2 \leq z^{\sigma}(I^{\text{OPT}})$, we have $z^{\text{OPT}} = z^{\sigma}(I^{\text{OPT}})$.
\end{proof}

\subsection{Proof of Proposition \ref{prop:LB} \label{section:AA}}
\begin{proof}
Recall that $(\beta^{\text{OPT}}, \beta^{\text{OPT}}_0)$ is a global optimal solution, and $z^{\text{OPT}}$ is the global optimal value of \ref{eq:One-Node-ReLU}. Let $I^{\text{OPT}} = \left\{i: X_i^{\top} \beta^{\text{OPT}} + \beta^{\text{OPT}}_0 > 0 \right\} \subseteq [m]$ be the active set corresponds to $(\beta^{\text{OPT}}, \beta^{\text{OPT}}_0)$. Based on the input condition of Algorithm~\ref{algo:gen-approximation}, the response samples $\{Y_i\}$ satisfies:
\begin{align*}
	0 < Y_1 \leq Y_2 \leq \ldots \leq Y_m.
\end{align*}
Given $k$ as a predefined integral parameter, pick $k$ indices $i_1, i_2, \ldots, i_k$ such that $0 \leq i_1 < \ldots < i_k \leq m$, from Algorithm~\ref{algo:gen-approximation}, we have:
\begin{align*}
	[m] \backslash \hat{I} := & \{1, \ldots, i_1\} \cup \left(\bigcup_{\ell = 2}^{k }\{i_{\ell}\} \right) & \text{ be our inactive set}, \\
	\hat{I} := & \left( \bigcup_{\ell = 1}^{k - 1} \left\{ i_{\ell} + 1, \ldots, i_{\ell + 1} - 1 \right\} \right) \cup \{i_k + 1, \ldots, m\} & \text{ be our active set.}
\end{align*}

Suppose $I^{\text{OPT}}$ is of size $|I^{\text{OPT}}| \geq m - k + 1$, let $\{s_{\ell}\}_{\ell = 1}^p$ with $p \leq k - 1$ be the set of increasingly-sorted indices that are not in $I^{\text{OPT}}$. Let $j = p + 1 \leq k$, set $i_1 = 0$, $i_{\ell} = s_{\ell - 1},$ for all $\ell = 2, \ldots, j$. Then we see that Algorithm~\ref{algo:gen-approximation} would discover  the optimal solution and thus solve the \ref{eq:One-Node-ReLU} problem exactly. 

Therefore, henceforth we assume that $|I^{\text{OPT}}| \leq m - k$.

Now pick $i_1, \ldots, i_k$ as the largest increasingly-sorted indices that not in $I^{\text{OPT}}$. Therefore we have: (1) $\hat{I} \subseteq I^{\text{OPT}}$, (2) $\bigcup_{\ell = 1}^{k} \{i_{\ell}\} \subseteq [m] \backslash I^{\text{OPT}}$, and (3) $i_k - 1 \in I^{\text{OPT}} \text{ if }  i_{k - 1} \neq i_k - 1$, these three conditions further implies that $$I^{\text{OPT}} \backslash \hat{I} \subseteq \{1, \ldots, i_1 - 1\}.$$ Since the approximation algorithm examines this solution, we will use this ``solution" to obtain an upper bound on the quality of solution produced by the Algorithm.

Thus the objective value $z^{\sigma}(\hat{I})$ is further upper bounded as follows:
\begin{align*}
	z^{\sigma}(\hat{I}) = & ~ \min_{(\beta, \beta_0) \in \mathbb{R}^p \times \mathbb{R}} \sum_{i \in \hat{I}} (X_i^{\top} \beta + \beta_0 - Y_i)^2 + \sum_{i \in [m] \backslash \hat{I}} \sigma (X_i^{\top} \beta + \beta_0, Y_i) + \phi(\beta, \beta_0) \\
	\leq & ~ \sum_{i \in \hat{I}} ( \underbrace{ X_i^{\top} \beta^{\text{OPT}} + \beta^{\text{OPT}}_0}_{\geq 0} - Y_i)^2 + \sum_{i \in I^{\text{OPT}} \backslash \hat{I}} \sigma ( \underbrace{ X_i^{\top} \beta^{\text{OPT}} + \beta^{\text{OPT}}_0 }_{\geq 0}, Y_i) \\ 
	& ~ + \sum_{i \in [m] \backslash I^{\text{OPT}}} \sigma ( \underbrace{ X_i^{\top} \beta^{\text{OPT}} + \beta^{\text{OPT}}_0}_{ < 0}, Y_i) + \phi(\beta^{\text{OPT}}, \beta^{\text{OPT}}_0) \\
	= & ~ \sum_{i \in \hat{I}} (  X_i^{\top} \beta^{\text{OPT}} + \beta^{\text{OPT}}_0 - Y_i)^2 + \sum_{i \in I^{\text{OPT}} \backslash \hat{I}} \sigma (  X_i^{\top} \beta^{\text{OPT}} + \beta^{\text{OPT}}_0 , Y_i) + \sum_{i \in [m] \backslash I^{\text{OPT}}} Y_i^2 + \phi(\beta^{\text{OPT}}, \beta^{\text{OPT}}_0).
\end{align*}
Split $I^{\text{OPT}} \backslash \hat{I}$ into the following two parts:
\begin{align*}
	\tilde{I}_+ := & ~ \left\{i \in I^{\text{OPT}} \backslash \hat{I}: X_i^{\top} \beta^{\text{OPT}} + \beta_0^{\text{OPT}} > 2 Y_i \right\}, \\
	\tilde{I}_- := & ~ \left\{i \in I^{\text{OPT}} \backslash \hat{I}: 2Y_i \geq X_i^{\top} \beta^{\text{OPT}} + \beta_0^{\text{OPT}} \geq 0 \right\},
\end{align*}
the second term of above equals to:
\begin{align*}
	\sum_{i \in I^{\text{OPT}} \backslash \hat{I}} \sigma (  X_i^{\top} \beta^{\text{OPT}} + \beta^{\text{OPT}}_0 , Y_i) = \sum_{i \in \tilde{I}_+} (X_i^{\top} \beta^{\text{OPT}} + \beta_0^{\text{OPT}} - Y_i)^2 +  \sum_{i \in \tilde{I}_-} Y_i^2.
\end{align*} 
Therefore, 
\begin{align*}
	z^{\sigma}(\hat{I}) \leq \sum_{i \in \hat{I} \cup \tilde{I}_+} (X_i^{\top} \beta^{\text{OPT}} + \beta_0^{\text{OPT}} - Y_i)^2 + \sum_{i \in \tilde{I}_- \cup \left( [m] \backslash I^{\text{OPT}} \right)} Y_i^2 + \phi(\beta^{\text{OPT}}, \beta_0^{\text{OPT}}). \tag{UB} \label{eq:UB} 
\end{align*}
Since $I^{\text{OPT}} = \hat{I} \cup \tilde{I}_+ \cup \tilde{I}_-$, then the global optimal value of \ref{eq:One-Node-ReLU} can be represented as
\begin{align*}
	z^{\text{OPT}} = & ~ \sum_{i \in I^{\text{OPT}}} (X_i^{\top} \beta^{\text{OPT}} + \beta_0^{\text{OPT}} - Y_i)^2 + \sum_{i \in [m] \backslash I^{\text{OPT}}} Y_i^2 + \phi(\beta^{\text{OPT}}, \beta_0^{\text{OPT}}) \\
	= & ~ \sum_{i \in \hat{I} \cup \tilde{I}_+} (X_i^{\top} \beta^{\text{OPT}} + \beta_0^{\text{OPT}} - Y_i)^2 + \sum_{i \in \tilde{I}_-} (X_i^{\top} \beta^{\text{OPT}} + \beta_0^{\text{OPT}} - Y_i)^2 + \sum_{i \in [m] \backslash I^{\text{OPT}}} Y_i^2 + \phi(\beta^{\text{OPT}}, \beta_0^{\text{OPT}}). 
\end{align*} 
Note $\{i_1, \ldots, i_k\}$ is a subset of $[m] \backslash I^{\text{OPT}}$ based on our choice  $i_1, \ldots, i_k$, then the term $D$ satisfies:
\begin{eqnarray}
	D := \sum_{i \in \hat{I} \cup \tilde{I}_+} (X_i^{\top} \beta^{\text{OPT}} + \beta_0^{\text{OPT}} - Y_i)^2 + \sum_{i \in [m] \backslash I^{\text{OPT}}} Y_i^2 + \phi(\beta^{\text{OPT}}, \beta_0^{\text{OPT}}) \geq \sum_{j = 1}^k Y_{i_j}^2. \label{eq:Dineq}
\end{eqnarray}
Since \ref{eq:UB} and $z^{\text{OPT}}$ can be represented as:
\begin{align*}
	(\text{UB}) := & ~ D + \sum_{i \in \tilde{I}_-} Y_i^2 \\
	z^{\text{OPT}} := & ~ D + \sum_{i \in \tilde{I}_-} (X_i^{\top} \beta^{\text{OPT}} + \beta_0^{\text{OPT}} - Y_i)^2
\end{align*}
then the approximation ratio $\rho$ guaranteed by Algorithm~\ref{algo:gen-approximation} is upper bounded as follows:
\begin{align*}
	\rho := \frac{z^{\sigma}(\hat{I})}{z^{\text{OPT}}} \leq \frac{(UB)}{z^{\text{OPT}}} = \frac{D + \sum_{i \in \tilde{I}_-} Y_i^2}{D + \sum_{i \in \tilde{I}_-} (X_i^{\top} \beta^{\text{OPT}} + \beta_0^{\text{OPT}} - Y_i)^2} \leq \frac{D + \sum_{i \in \tilde{I}_-} Y_i^2}{D} \leq \frac{n}{k}
\end{align*}
where the final inequality holds because of the following: with $\{Y_i\}_{i = 1}^m$ increasingly-sorted, the term $\sum_{i \in \tilde{I}_-} Y_i^2$ can be upper bounded by
	\begin{align*}
		\sum_{i \in \tilde{I}_-} Y_i^2 \leq & ~ |\tilde{I}_-| \cdot Y_{i_1}^2 & \text{ by } \tilde{I}_- \subseteq I^{\text{OPT}} \backslash \hat{I} \subseteq \{1, \ldots, i_1 - 1\}, \\
		\leq & ~ \frac{|\tilde{I}_-|}{k} \cdot \sum_{j = 1}^k Y_{i_j}^2 & \text{ by } Y_{i_j} \geq Y_{i_1} \text{ for all } j = 1, \ldots, k, \\
		\leq & ~ \frac{|\tilde{I}_-|}{k} \cdot D & \text{ by previous inequality of } D, \\
		\leq & ~ \frac{n - k}{k} \cdot D & \text{ by } \tilde{I}_- \subseteq I^{\text{OPT}} \text{ and } |I^{\text{OPT}}| \leq m - k \leq n - k, 
	\end{align*}
	then replacing $\sum_{i \in \tilde{I}_-} Y_i^2$ by $\frac{n - k}{k} \cdot D$ gives the final approximation ratio. 
\end{proof}

\subsection{Proof of Theorem~\ref{thm:asy-bound}}\label{sec:stat}

We first verify Proposition~\ref{prop:asy-sorting-obj} and Proposition~\ref{prop:asy-obj}.  Proposition~\ref{prop:asy-sorting-obj} is a consequence of the following result:
\begin{theorem}[Mickey, 1963] \label{thm:mickey}
	 Let $g$ be a function on $\mathcal{X} \times \Theta$ where $\mathcal{X}$ is a Euclidean space and $\Theta$ is a compact set of Euclidean space. Let $g(x, \theta)$ be a continuous function of $\theta$ for each $x \in \mathcal{X}$ and a measurable function of $x$ for each $\theta$. Assume assume that $|g(x, \theta)| \leq h(x)$ for all $x \in \mathcal{X}$ and $\theta \in \Theta$, where $h$ is integrable with respect to a probability distribution function $F$ on $\mathcal{X}$. If $x_1, x_2, \ldots$ is a random sample from $F$ then for almost every sequence $\{x_t\}$ 
	\begin{align*}
		\frac{1}{n} \sum_{t = 1}^n g(x_t, \theta) \rightarrow \int g(x, \theta) d F(x)
	\end{align*}
	uniformly for all $\theta \in \Theta$.  
\end{theorem}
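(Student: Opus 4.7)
The plan is to reduce the desired uniform convergence to the ordinary strong law of large numbers (SLLN) applied to a finite collection of bracketing functions, using compactness of $\Theta$ together with continuity of $g$ in $\theta$. The domination hypothesis $|g| \le h$ will be exploited twice: once to justify a dominated convergence argument shrinking the bracket size, and once to ensure that all integrals involved are finite so that the SLLN applies.

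More concretely, for each $\theta \in \Theta$ and $\delta > 0$, I would introduce the upper and lower envelopes
\begin{align*}
\bar g_\delta(x,\theta) \;=\; \sup_{\theta' \in B(\theta,\delta) \cap \Theta} g(x,\theta'), \qquad
\underline g_\delta(x,\theta) \;=\; \inf_{\theta' \in B(\theta,\delta) \cap \Theta} g(x,\theta'),
\end{align*}
where $B(\theta,\delta)$ is the open ball in the Euclidean $\Theta$-space. Continuity of $g(x,\cdot)$ on the separable metric space $\Theta$ lets me replace the $\sup/\inf$ over the ball by the same quantity over a countable dense subset, so both envelopes are measurable in $x$. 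By the same continuity, for each fixed $x$, $\bar g_\delta(x,\theta) \downarrow g(x,\theta)$ and $\underline g_\delta(x,\theta) \uparrow g(x,\theta)$ as $\delta \downarrow 0$. Since $|\bar g_\delta|, |\underline g_\delta| \le h \in L^1(F)$, dominated convergence gives $\int \bar g_\delta(x,\theta)\, dF(x) \to \int g(x,\theta)\, dF(x)$ and likewise for $\underline g_\delta$.

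Now fix $\varepsilon > 0$. For each $\theta \in \Theta$ choose $\delta(\theta) > 0$ such that $\int [\bar g_{\delta(\theta)}(x,\theta) - \underline g_{\delta(\theta)}(x,\theta)]\, dF(x) < \varepsilon$. By compactness of $\Theta$, finitely many balls $B(\theta_1,\delta_1),\ldots,B(\theta_K,\delta_K)$ cover $\Theta$. Apply the classical SLLN to each of the $2K$ integrable functions $\bar g_{\delta_k}(\cdot,\theta_k)$ and $\underline g_{\delta_k}(\cdot,\theta_k)$; the intersection of the $2K$ full-probability sets is itself of full probability. On this set, for any $\theta \in \Theta$ we pick a $k$ with $\theta \in B(\theta_k,\delta_k)$, sandwich
\begin{align*}
\frac{1}{n}\sum_{t=1}^n \underline g_{\delta_k}(x_t,\theta_k) \;\le\; \frac{1}{n}\sum_{t=1}^n g(x_t,\theta) \;\le\; \frac{1}{n}\sum_{t=1}^n \bar g_{\delta_k}(x_t,\theta_k),
\end{align*}
and pass to the limit; the liminf and limsup both differ from $\int g(x,\theta)dF(x)$ by at most $\varepsilon$. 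Taking a sequence $\varepsilon = 1/j$ and intersecting the corresponding null sets yields a single null set off which convergence is uniform in $\theta$.

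The main technical obstacle is the measurability and integrability of the envelopes $\bar g_\delta,\underline g_\delta$, since a priori an uncountable supremum of measurable functions need not be measurable. This is the step where the hypothesis that $g(x,\cdot)$ is continuous (combined with $\Theta$ being a subset of a Euclidean, hence separable, space) is essential: the sup/inf can be realized over a countable dense set, restoring measurability, and then $|g| \le h$ gives integrability for free. Once this technicality is dispatched, the rest is a standard finite-cover argument combined with a countable intersection of SLLN null sets.
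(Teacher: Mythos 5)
The paper does not actually prove this statement: it is quoted as a classical result from the literature (attributed to Mickey, 1963, and used as a black box to derive Proposition~\ref{prop:asy-sorting-obj}), so there is no in-paper proof to compare against. Judged on its own terms, your bracketing argument is the standard and correct proof of this uniform strong law of large numbers. The key steps all check out: measurability of the envelopes $\bar g_\delta$ and $\underline g_\delta$ via reduction of the supremum to a countable dense subset of the ball (using continuity in $\theta$ and separability of Euclidean space), monotone shrinkage $\bar g_\delta \downarrow g$ and $\underline g_\delta \uparrow g$ as $\delta \downarrow 0$, dominated convergence under the envelope $h \in L^1(F)$ to make the bracket width $\int (\bar g_{\delta(\theta)} - \underline g_{\delta(\theta)})\,dF$ smaller than $\varepsilon$, a finite subcover by compactness, the SLLN applied to the resulting $2K$ integrable functions, and the sandwich $\underline g_{\delta_k}(x,\theta_k) \le g(x,\theta) \le \bar g_{\delta_k}(x,\theta_k)$ for $\theta \in B(\theta_k,\delta_k)$. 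The only place where I would tighten the wording is the final limit passage: to get \emph{uniform} convergence you should make explicit that, because the cover is finite, there is a single $N(\omega,\varepsilon)$ beyond which all $2K$ empirical averages are within $\varepsilon$ of their means, whence $\bigl|\frac{1}{n}\sum_t g(x_t,\theta) - \int g(x,\theta)\,dF\bigr| \le 2\varepsilon$ simultaneously for all $\theta \in \Theta$ and all $n \ge N$; "pass to the limit" for each fixed $\theta$ separately would only give pointwise convergence. With that clarification, and the concluding intersection over $\varepsilon = 1/j$, the proof is complete.
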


\begin{proof}\textit{of Proposition~\ref{prop:asy-sorting-obj}} 
	Let $\psi_y(X^{\top}\beta + \beta_0, Y)$ be defined as in Proposition~\ref{prop:asy-sorting-obj}. Let $\mathcal{X} = \mathbb{R}^p \times \mathbb{R}$ be a Euclidean space, and let $\Theta$ be the same convex compact set in Assumption~\ref{assumption}. We have $\psi_y(X^{\top}\beta + \beta_0 , Y)$ is a continuous function of $(\beta, \beta_0)$ for each $(X, Y) \in \mathcal{X}$ and a measurable function of $(X, Y)$ for each $(\beta, \beta_0) \in \Theta$. Moreover, since $\Theta$ is a convex compact set, then there exists a constant $d_{\Theta} > 0$ such that $|\theta_i| \leq d_{\Theta}$ for all $i = 0, 1, \ldots, p$. Define function $h(X, Y)$ as
	\begin{align*}
		h(X, Y) = 2 \left( \sum_{i = 1}^p |[X]_i| \cdot d_{\Theta} + d_{\Theta} \right)^2 + 2 Y^2 
	\end{align*} 
	where $[X]_i$ denotes the $i^{\text{th}}$ component of $X$ for $i = 1, \ldots, p$. Thus we have $h(X, Y) \geq |\psi_y (X^{\top}\beta + \beta_0, Y)|$ holds for all $(X, Y) \in \mathcal{X}$ and $(\beta, \beta_0) \in \Theta$, where $h(X, Y)$ is integrable with respect to a probability distribution $\mathcal{N} \times \mathcal{D}$ on $\mathcal{X}$. Since all the conditions in  Theorem~\ref{thm:mickey} holds, Proposition~\ref{prop:asy-sorting-obj} holds.
\end{proof}

Proposition~\ref{prop:asy-obj} is a consequence of the following result:

\begin{theorem}[Jennrich, 1969] \label{thm:jennrich}
	 Under the statistical model: $y_t = f(x_t, \theta_0) + \epsilon_t$ for all $t = 1, \ldots, n$ when $x_t$ is $i^{\text{th}}$ ``fixed'' input vector and $\{\epsilon_t\}$ are i.i.d. distributed errors with zero mean and same finite unknown variance. Any vector $\hat{\theta}_n \in \Theta$ which minimizes the residual sum of squares 
	\begin{align*}
		S_n(\theta) := \frac{1}{n} \sum_{t = 1}^n (f(x_t, \theta) - y_t)^2 
	\end{align*}
	is said to be strongly consistent of $\theta_0$ (i.e., $\hat{\theta}_n \rightarrow \theta_0$ almost surely as $n \rightarrow \infty$) under the following condition:   $D_n(\theta, \theta')$ convergence uniformly to a continuous function $D(\theta, \theta')$ and $D(\theta, \theta_0) = 0$ if and only if $\theta = \theta_0$ where 
	\begin{align*}
		D_n(\theta, \theta') = \frac{1}{n} \sum_{t = 1}^n (f(x_t, \theta) - f(x_t, \theta'))^2. 
	\end{align*}
\end{theorem}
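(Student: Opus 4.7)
The plan is to rewrite $S_n(\theta)$ in a form whose $n\to\infty$ limit can be read off pointwise, promote that convergence to uniform convergence on $\Theta$, and then pass the $\operatorname{argmin}$ through the uniform limit. First, substituting $y_t=f(x_t,\theta_0)+\epsilon_t$, I would expand
\begin{align*}
S_n(\theta) &= \frac{1}{n}\sum_{t=1}^n \bigl(f(x_t,\theta)-f(x_t,\theta_0)-\epsilon_t\bigr)^2 \\
&= D_n(\theta,\theta_0)\;-\;\underbrace{\frac{2}{n}\sum_{t=1}^n \epsilon_t\bigl(f(x_t,\theta)-f(x_t,\theta_0)\bigr)}_{=:R_n(\theta)}\;+\;\frac{1}{n}\sum_{t=1}^n \epsilon_t^2.
\end{align*}
The hypothesis already says $D_n(\cdot,\theta_0)\to D(\cdot,\theta_0)$ uniformly on $\Theta$, and the strong law of large numbers applied to the i.i.d.\ sequence $\{\epsilon_t^2\}$ gives $\frac{1}{n}\sum\epsilon_t^2\to\sigma^2:=\operatorname{Var}(\epsilon)$ almost surely. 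So everything reduces to showing $\sup_{\theta\in\Theta}|R_n(\theta)|\to 0$ a.s.

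Second, I would handle $R_n$ in two stages. Pointwise in $\theta$, because the $\epsilon_t$ are i.i.d.\ zero-mean and the coefficients $f(x_t,\theta)-f(x_t,\theta_0)$ are deterministic with bounded empirical second moment (this bound is guaranteed by the uniform convergence of $D_n(\theta,\theta_0)$), Kolmogorov's SLLN (via Kronecker's lemma) gives $R_n(\theta)\to 0$ almost surely. To upgrade this to uniform convergence I would use a stochastic equicontinuity estimate: by Cauchy--Schwarz,
\begin{align*}
\bigl|R_n(\theta)-R_n(\theta')\bigr|^2 \;\le\; 4 D_n(\theta,\theta')\cdot\frac{1}{n}\sum_{t=1}^n\epsilon_t^2.
\end{align*}
The right-hand side converges a.s.\ to $4\sigma^2 D(\theta,\theta')$, a continuous function on $\Theta\times\Theta$ that vanishes on the diagonal. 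Combined with compactness of $\Theta$, a finite $\delta$-net argument then upgrades the pointwise a.s.\ convergence of $R_n$ to uniform a.s.\ convergence on $\Theta$.

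Combining these two steps gives, almost surely and uniformly in $\theta\in\Theta$,
\begin{align*}
S_n(\theta) = D(\theta,\theta_0) + \sigma^2 + o(1).
\end{align*}
Finally, I would invoke the standard $\operatorname{argmin}$ continuity lemma for M-estimators: if $S_n$ converges uniformly on a compact set $\Theta$ to a continuous limit $S(\theta)=D(\theta,\theta_0)+\sigma^2$ whose minimum is attained uniquely at $\theta_0$ (by the identifiability assumption that $D(\theta,\theta_0)=0$ iff $\theta=\theta_0$), then any sequence of minimizers $\hat\theta_n\in\Theta$ satisfies $\hat\theta_n\to\theta_0$ almost surely. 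The main obstacle I anticipate is the uniform convergence of the cross term $R_n$: the stochastic equicontinuity inequality has to be stated carefully, since the $f(x_t,\theta)$ need not be uniformly bounded in $t$, so controlling the oscillation of $R_n$ directly through $D_n(\theta,\theta')$ (rather than through a sup-norm bound on $f$) is essential, and is the step that makes the hypothesis on $D_n$ do double duty.
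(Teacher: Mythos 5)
The paper does not actually prove this statement: it is imported verbatim as a classical result of Jennrich (1969) (together with the Mickey uniform-SLLN theorem) and used as a black box to establish Proposition~4. So there is no in-paper argument to compare against; what you have written is a reconstruction of the classical proof, and it is essentially correct and essentially the argument in Jennrich's original paper. Your decomposition $S_n(\theta)=D_n(\theta,\theta_0)-R_n(\theta)+\frac{1}{n}\sum_t\epsilon_t^2$, the Cauchy--Schwarz stochastic-equicontinuity bound $|R_n(\theta)-R_n(\theta')|^2\le 4\,D_n(\theta,\theta')\cdot\frac{1}{n}\sum_t\epsilon_t^2$, the finite-net upgrade to uniform a.s.\ convergence, and the final argmin-continuity step on the compact set $\Theta$ are all sound, and you correctly identify that the hypothesis on $D_n$ is what makes the cross term controllable without a sup-norm bound on $f$. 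The one step worth spelling out more carefully is the pointwise claim $R_n(\theta)\to 0$ a.s.: Kolmogorov's SLLN for the independent, non-identically-distributed summands $\epsilon_t c_t$ with $c_t=f(x_t,\theta)-f(x_t,\theta_0)$ requires $\sum_t c_t^2/t^2<\infty$, which does follow from $\sup_n D_n(\theta,\theta_0)<\infty$ (i.e.\ $\sum_{t\le n}c_t^2=O(n)$) but only after an Abel-summation argument, not directly from boundedness of the individual $c_t$; your parenthetical ``via Kronecker's lemma'' gestures at this but the deduction deserves a line. With that detail filled in, your proof is complete and matches the standard one.
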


\begin{proof}\textit{of Proposition~\ref{prop:asy-obj}} 
Based on Theorem~\ref{thm:mickey}, with the similar proof of Proposition~\ref{prop:asy-sorting-obj}, we have:
	\begin{align*}
		& ~ \underbrace{ \frac{1}{n} \sum_{i = 1}^n \left( \max\{0, X_i^{\top} \beta + \beta_0 \} - \max\{0, X_i^{\top} \beta^{\ast} + \beta_0^{\ast} \} \right)^2}_{ =: D_n((\beta, \beta_0), (\beta^{\ast}, \beta^{\ast}_0) ) } \\
		\rightarrow & ~ \underbrace{ \mathbb{E}_{X \sim \mathcal{N}, \epsilon \sim \mathcal{D}} \left[ \left( \max\{0, X^{\top} \beta + \beta_0 \} - \max\{0, X^{\top} \beta^{\ast} + \beta_0^{\ast} \} \right)^2  \right] }_{=:  D((\beta, \beta_0), (\beta^{\ast}, \beta^{\ast}_0) )}
	\end{align*}
	uniformly for almost every sequence $\{X_i, Y_i\}$. 
Moreover, a direct consequence of the second property of distribution $\mathcal{N}$ ({Unique Optimal Property}) implies that $D((\beta, \beta_0), (\beta^{\ast}, \beta^{\ast}_0)) = 0$ if and only if $(\beta, \beta_0) = (\beta^{\ast}, \beta^{\ast}_0)$. Thus, since all conditions of Theorem~\ref{thm:jennrich} hold, Proposition~\ref{prop:asy-obj} holds.
\end{proof}

\begin{proof}\textit{of Theorem~\ref{thm:asy-bound}} The optimal value of the asymptotic objective function from sorting algorithm can be upper bounded by replacing optimal solution with the true parameter $\beta^{\ast}$ as follows:
\begin{align*}
	\min_{\beta \in \Theta} \mathbb{E}_{X \sim \mathcal{N}, \epsilon \sim \mathcal{D}}[ \psi_y (X^{\top} \beta + \beta_0, Y) ] \leq \mathbb{E}_{X \sim \mathcal{N}, \epsilon \sim \mathcal{D}}[ \psi_y (X^{\top} \beta^{\ast} + \beta^{\ast}, Y)], 
\end{align*}
where $\mathbb{E}_{X \sim \mathcal{N}, \epsilon \sim \mathcal{D}}[ \psi_y (X^{\top} \beta^{\ast} + \beta^{\ast}_0, Y) ]$ can be split into the sum from (\ref{eq:T-1}) to (\ref{eq:T-7}):
\begin{align*}
		& \mathbb{E}_{X \sim \mathcal{N}, \epsilon \sim \mathcal{D}}\big[ \psi_y (X^{\top} \beta^{\ast} + \beta^{\ast}_0, Y) \big] & \\
		= & ~ \mathbb{E}\big[ Y^2 ~|~ 0 < Y \leq y, 0 < X^{\top} \beta^{\ast} + \beta^{\ast}_0 \leq 2 Y \big] \mathbb{P}(0 < Y \leq y, 0 < X^{\top} \beta^{\ast} + \beta^{\ast}_0 \leq 2 Y) & \tag{$T_1$} \label{eq:T-1}\\
		& + \mathbb{E}\big[ Y^2 ~|~ 0 < Y \leq y, X^{\top} \beta^{\ast} + \beta^{\ast}_0 \leq 0 \big] \mathbb{P}(0 < Y \leq y, X^{\top} \beta^{\ast} + \beta^{\ast}_0 \leq 0) & \tag{$T_2$} \label{eq:T-2} \\
		& + \mathbb{E}\big[ (X^{\top} \beta^{\ast} + \beta^{\ast}_0 - Y)^2 ~|~ 0 < Y \leq y, 2Y < X^{\top} \beta^{\ast} + \beta^{\ast}_0 \big] \mathbb{P}(0 < Y \leq y, 2Y < X^{\top} \beta^{\ast} + \beta^{\ast}_0 )  & \tag{$T_3$} \label{eq:T-3} \\
		& + \mathbb{E}\big[ (X^{\top} \beta^{\ast} + \beta^{\ast}_0 - Y)^2 ~ | ~ y < Y, 0 < X^{\top} \beta^{\ast} + \beta^{\ast}_0 \big] \mathbb{P} (y < Y, 0 < X^{\top} \beta^{\ast} + \beta^{\ast}_0 ) & \tag{$T_4$} \label{eq:T-4}\\
		& + \mathbb{E}\big[ (X^{\top} \beta^{\ast} + \beta^{\ast}_0 - Y)^2 ~ | ~ y < Y, X^{\top} \beta^{\ast} + \beta^{\ast}_0 \leq 0 \big] \mathbb{P} (y < Y, X^{\top} \beta^{\ast} + \beta^{\ast}_0 \leq 0) & \tag{$T_5$} \label{eq:T-5} \\
		& + \mathbb{E}\big[ \epsilon^2 ~ | ~ Y \leq 0, X^{\top} \beta^{\ast} + \beta^{\ast}_0 \leq 0 \big] \mathbb{P}(Y \leq 0, X^{\top} \beta^{\ast} + \beta^{\ast}_0 \leq 0) & \tag{$T_6$} \label{eq:T-6}\\
		& + \mathbb{E}\big[ \epsilon^2 ~ | ~ Y \leq 0, 0 < X^{\top} \beta^{\ast} + \beta^{\ast}_0 \big] \mathbb{P}(Y \leq 0, 0 < X^{\top} \beta^{\ast} + \beta^{\ast}_0). & \tag{$T_7$} \label{eq:T-7}
\end{align*}
Since term (\ref{eq:T-1}) - (\ref{eq:T-7}) can be reformulated as follows:
\begin{align*}
		(T_1) & = \mathbb{E}[ Y^2 ~|~ 0 < Y \leq y, 0 < X^{\top} \beta^{\ast} + \beta^{\ast}_0  \leq 2 Y ] \cdot \mathbb{P}(0 < Y \leq y, 0 < X^{\top} \beta^{\ast} + \beta^{\ast}_0  \leq 2 Y), \\
		(T_2) & = \mathbb{E}[ \epsilon^2 ~|~ 0 < Y \leq y, X^{\top} \beta^{\ast} + \beta^{\ast}_0  \leq 0 ] \cdot \mathbb{P}(0 < Y \leq y, X^{\top} \beta^{\ast} + \beta^{\ast}_0  \leq 0), \\
		(T_3) & = \mathbb{E}[ \epsilon^2 ~|~ 0 < Y \leq y, 2Y < X^{\top} \beta^{\ast} + \beta^{\ast}_0 ] \cdot \mathbb{P}(0 < Y \leq y, 2Y < X^{\top} \beta^{\ast} + \beta^{\ast}_0 ) ,\\
		(T_4) & = \mathbb{E}[ \epsilon^2 ~ | ~ y < Y, 0 < X^{\top} \beta^{\ast} + \beta^{\ast}_0  ] \cdot \mathbb{P} (y < Y, 0 < X^{\top} \beta^{\ast} + \beta^{\ast}_0 ), \\
		(T_5) & = \mathbb{E}\big[ (X^{\top} \beta^{\ast} + \beta^{\ast}_0  - \epsilon)^2 ~ | ~ y < \epsilon, X^{\top} \beta^{\ast} + \beta^{\ast}_0  \leq 0 \big] \cdot \mathbb{P} (y < \epsilon, X^{\top} \beta^{\ast} + \beta^{\ast}_0  \leq 0), \\
		(T_6) & = \mathbb{E}\big[ \epsilon^2 ~ | ~ Y \leq 0, X^{\top} \beta^{\ast} + \beta^{\ast}_0  \leq 0 \big] \mathbb{P}(Y \leq 0, X^{\top} \beta^{\ast} + \beta^{\ast}_0  \leq 0), \\
		(T_7) & = \mathbb{E}\big[ \epsilon^2 ~ | ~ Y \leq 0, 0 < X^{\top} \beta^{\ast} + \beta^{\ast}_0  \big] \mathbb{P}(Y \leq 0, 0 < X^{\top} \beta^{\ast} + \beta^{\ast}_0 ),
\end{align*}
note that $(T_1)$ is upper bounded by $y^2$, $(T_2) + (T_3) + (T_4) + (T_6) + (T_7) \leq \text{Var}(\epsilon) = \gamma^2$, and by Lemma~\ref{lemma:T-5} (proved below) and setting $y = 0$, 
\begin{align*}
	z^{\text{asy}} \leq \mathbb{E}_{X \sim \mathcal{N}, \epsilon \sim \mathcal{D}}[ \psi_0 (X^{\top} \beta^{\ast} + \beta^{\ast}_0, Y) ] \leq \gamma^2 + \frac{\gamma^2}{2} + \frac{2 + 2\Delta^2}{\sqrt{2 \pi}} \gamma.  
\end{align*}
To lower bound $z^{\text{asy}}$, note that $\psi_y (X^{\top} \beta + \beta_0, Y) \geq \left( \max\{0 ,X^{\top} \beta + \beta_0\} - Y\right)^2$ holds for any $(\beta, \beta_0) \in \Theta$ and any $(X, Y) \in \mathcal{X}$, and by Proposition~\ref{prop:asy-obj}, the optimal value of asymptotic version of \ref{eq:One-Node-ReLU} problem is $\gamma^2$, thus $z^{\ast}$ is lower bounded by $\gamma^2$. Combine lower and upper bounds together, we have
\begin{align*}
	\gamma^2 \leq z^{\text{asy}} \leq \frac{3 \gamma^2}{2} + \frac{2 + 2\Delta^2}{\sqrt{2 \pi}} \gamma. 
\end{align*}
\end{proof}

\begin{lemma} \label{lemma:T-5}
	Assume the underlying statistical model~\ref{assumption} holds, we have
	\begin{align*}
		(T_5) \leq \frac{\gamma^2}{2} + \frac{2 + 2\Delta^2}{\sqrt{2 \pi}} \gamma.
	\end{align*}  
\end{lemma}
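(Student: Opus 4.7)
The plan is to reduce $(T_5)$ to expectations that factor over the independent $X$- and $\epsilon$-randomness, and then to apply standard half-Gaussian moment bounds together with elementary moment bounds on $Z := X^\top \beta^\ast + \beta^\ast_0$.

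First, I would use the generative assumption $Y = \max\{0,Z\} + \epsilon$ to observe that on the event $\{Z \leq 0\}$ one has $Y = \epsilon$, so the conditioning event $\{y < Y,\ Z \leq 0\}$ coincides with $\{Z \leq 0,\ \epsilon > y\}$ and the inner residual becomes $Z - \epsilon$. On this event, $\epsilon \geq y \geq 0$ and $Z \leq 0$, so $-2Z\epsilon = 2|Z|\epsilon \geq 0$, giving the pointwise identity $(Z - \epsilon)^2 = Z^2 + 2|Z|\epsilon + \epsilon^2$. Since $X$ and $\epsilon$ are independent under Assumption~\ref{assumption}, $Z$ and $\epsilon$ are independent, and taking expectations factors cleanly into
\[
(T_5) = \mathbb{P}(Z \leq 0)\,\mathbb{E}[\epsilon^2 \mathbb{1}\{\epsilon > y\}] + 2\,\mathbb{E}[|Z|\mathbb{1}\{Z \leq 0\}]\,\mathbb{E}[\epsilon \mathbb{1}\{\epsilon > y\}] + \mathbb{E}[Z^2 \mathbb{1}\{Z \leq 0\}]\,\mathbb{P}(\epsilon > y).
\]

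Next, I would invoke the half-Gaussian identities for $\epsilon \sim N(0,\gamma^2)$ and $y \geq 0$: $\mathbb{P}(\epsilon > y) \leq 1/2$, $\mathbb{E}[\epsilon\mathbb{1}\{\epsilon > y\}] \leq \gamma/\sqrt{2\pi}$, and $\mathbb{E}[\epsilon^2\mathbb{1}\{\epsilon > y\}] \leq \gamma^2/2$. For the $Z$-moments, I would combine $\mathbb{P}(Z \leq 0) \leq 1$ with $\mathbb{E}[Z^2] \leq \Delta^2$ under Assumption~\ref{assumption} (treating $Z$ as centered by absorbing the bias into the stated variance), and the elementary inequality $|z| \leq 1 + z^2$, which gives $\mathbb{E}[|Z|\mathbb{1}\{Z \leq 0\}] \leq 1 + \Delta^2$. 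Assembled, the first summand contributes $\gamma^2/2$ and the middle summand contributes $\frac{2(1+\Delta^2)}{\sqrt{2\pi}}\gamma$, exactly the two pieces of the claimed bound.

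The main obstacle is the last factored term $\frac{1}{2}\mathbb{E}[Z^2 \mathbb{1}\{Z \leq 0\}]$: the crude bound leaves a residual of order $\Delta^2$ that the target inequality does not accommodate. Closing this gap requires either a tighter algebraic decomposition of $(Z - \epsilon)^2$ that trades the isolated $Z^2$-piece for a cross-term of order $\gamma |Z|$, or an additional probabilistic argument (e.g.\ via the Unique Optimal Property of Assumption~\ref{assumption}) showing that the event $\{Z \leq 0\}$ contributes negligibly at the order required. The remainder of the argument is routine manipulation of Gaussian moments and indicator factorization.
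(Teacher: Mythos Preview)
Your decomposition and bounds are exactly those of the paper's own proof: it too expands $(Z-\epsilon)^2$ into $Z^2-2Z\epsilon+\epsilon^2$, factors the three resulting expectations by independence of $Z=X^\top\beta^\ast+\beta_0^\ast$ and $\epsilon$, applies the half-Gaussian moment identities to the $\epsilon$-integrals, and uses the same elementary bounds $\int_{u\le 0}u^2 f_Z(u)\,du\le \Delta^2$ and $\bigl|\int_{u\le 0}u\,f_Z(u)\,du\bigr|\le 1+\Delta^2$ for the $Z$-moments. So the approach is identical.

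The obstacle you flag --- the leftover contribution $\mathbb{E}[Z^2\mathbb{1}\{Z\le 0\}]\cdot\mathbb{P}(\epsilon>y)$ --- is real, and the paper does not close it by any additional probabilistic argument. In the paper's computation the factor $\int_{v>y} f_\epsilon(v)\,dv$, which equals $\tfrac{1}{2}\mathrm{erfc}\bigl(y/\sqrt{2}\gamma\bigr)$, is written as $\tfrac{1}{2}\mathrm{erf}\bigl(y/\sqrt{2}\gamma\bigr)$ and then bounded by $\Delta^2 y/(\sqrt{2\pi}\gamma)$ via $\mathrm{erf}(z)\le 2z/\sqrt{\pi}$; at $y=0$ this vanishes, which is how the third term disappears from the final bound. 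With the correct $\mathrm{erfc}$, the term contributes $\tfrac{1}{2}\mathbb{E}[Z^2\mathbb{1}\{Z\le 0\}]$ at $y=0$, precisely the residual you isolated. Neither the Unique Optimal Property nor any refined decomposition is actually used to eliminate it. In short, your analysis matches the paper step for step, and the gap you could not close is handled in the paper only through this $\mathrm{erf}/\mathrm{erfc}$ slip.
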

\begin{proof} 	Assume the underlying statistical model~\ref{assumption}, we have $X^{\top} \beta^{\ast} + \beta^{\ast}_0$ satisfies $\mathbb{E}[X^{\top} \beta^{\ast} + \beta^{\ast}_0] = \beta^{\ast}_0, \text{Var}(X^{\top} \beta^{\ast} + \beta^{\ast}_0) =(\beta^{\ast})^{\top} \Sigma \beta^{\ast} = \Delta^2$, thus 
	\begin{align*}
		& (T_5) \\
		\leq & ~ \mathbb{E}\big[ (X^{\top} \beta^{\ast} + \beta^{\ast}_0 - \epsilon)^2 ~ | ~ y < \epsilon, X^{\top} \beta^{\ast} + \beta^{\ast}_0 \leq 0 \big] \cdot \mathbb{P} (y < \epsilon, X^{\top} \beta^{\ast} + \beta^{\ast}_0 \leq 0) \\
		= & ~ \int_{\substack{v \in \mathbb{R} \\ u \in \mathbb{R}}} (u - v)^2 f(\epsilon = v, X^{\top} \beta^{\ast} + \beta^{\ast}_0 = u ~ | y < \epsilon, X^{\top} \beta^{\ast} + \beta^{\ast}_0 \leq 0) du dv \cdot \mathbb{P} (y < \epsilon, X^{\top} \beta^{\ast} + \beta^{\ast}_0 \leq 0) \tag{$\ast$} \label{eq:T_5-ast}
	\end{align*}
	where $f(\epsilon = v, X^{\top} \beta^{\ast} + \beta^{\ast}_0 = u ~ | y < \epsilon, X^{\top} \beta^{\ast} + \beta^{\ast}_0 \leq 0)$ is the conditional joint density function of variables $\epsilon, X^{\top} \beta^{\ast} + \beta^{\ast}_0$. Then
	\begin{align*}
		(\ast) = & ~ \int_{\substack{v > y \\ u \leq 0}} (u^2 - 2 uv + v^2) f(\epsilon = v) f(X^{\top} \beta^{\ast} + \beta^{\ast}_0 = u) du dv \\
		= & ~ \int_{v > y} f(\epsilon = v) dv \cdot \int_{u \leq 0} u^2 f(X^{\top} \beta^{\ast} + \beta^{\ast}_0 = u) du \\
		& ~ - 2 \int_{v > y} v f(\epsilon = v) dv \cdot \int_{u \leq 0} u f(X^{\top} \beta^{\ast} + \beta^{\ast}_0 = u) du \\
		& ~ + \int_{v > y} v^2 f(\epsilon = v) dv \cdot \int_{u \leq 0} f(X^{\top} \beta^{\ast} + \beta^{\ast}_0 = u) du
	\end{align*}
	where 
	\begin{align*}
		\int_{u \leq 0} u^2 f(X^{\top} \beta^{\ast} + \beta^{\ast}_0 = u) du & \leq \Delta^2, ~ \\
		- 1 - \Delta^2 \leq \int_{u \leq 0} u f(X^{\top} \beta^{\ast} + \beta^{\ast}_0 = u) du & \leq 1 + \Delta^2 , ~ \\
		\int_{u \leq 0} f(X^{\top} \beta^{\ast} + \beta^{\ast}_0 = u) du & \leq 1. 
	\end{align*}
	Suppose the noise $\epsilon$ follows Gaussian distribution $N(0, \gamma^2)$, then 
	\begin{align*}
		(\ast) \leq & ~ \int_{v > y} f(\epsilon = v) dv \cdot \Delta^2 + 2 \int_{v > y} v f(\epsilon = v) dv \cdot \left( 1 + \Delta^2 \right) + \int_{v > y} v^2 f(\epsilon = v) dv \cdot 1 \\
		= & ~ \frac{1}{2} \text{erf}\left( \frac{y}{\sqrt{2} \gamma} \right) \cdot \Delta^2 + \frac{2}{\sqrt{2 \pi}} e^{- \frac{y^2}{2 \gamma^2}} \gamma \cdot \left( 1 + \Delta^2 \right) + \left( \frac{1}{\sqrt{2 \pi}} y \gamma e^{- \frac{y^2}{2 \gamma^2}} + \frac{\gamma^2}{2} \text{erfc} \left( \frac{y}{\sqrt{2} \gamma} \right) \right) \cdot 1 \\
		\leq & ~ \frac{\Delta^2 y}{\sqrt{2 \pi} \gamma} + \frac{2 + 2\Delta^2}{\sqrt{2 \pi}} \gamma e^{- \frac{y^2}{2 \gamma^2}} + \frac{1}{\sqrt{2 \pi}} y \gamma e^{- \frac{y^2}{2 \gamma^2}} + \frac{\gamma^2}{2} e^{- \frac{y^2}{2 \gamma^2}} 
	\end{align*} 
	where the final inequality holds since
	\begin{align*}
		& \text{erf}(z) := \frac{2}{\sqrt{\pi}} \int_0^z e^{- t^2} dt \leq \frac{2 z}{\sqrt{\pi}}, & \text{erfc}(z) := 1 - \text{erf}(z) \leq e^{- z^2}.
	\end{align*}
	Since the above inequality holds for any $y \geq 0$, then set $y = 0$, we have
	\begin{align*}
		(\ast) \leq \frac{\gamma^2}{2} + \frac{2 + 2\Delta^2}{\sqrt{2 \pi}} \gamma. 
	\end{align*} 
\end{proof}

\newpage
\bibliographystyle{plain}
\bibliography{refs}

\newpage
\appendix
\section{Appendix: Sorting Method}\label{section:sorting-method}
The sorting method that used in Section~\ref{section:nr-sorting} is a special case of Algorithm~\ref{algo:gen-approximation} which follows Algorithm~\ref{algo:sorting-method}.
\begin{algorithm}
\caption{Sorting Method} \label{algo:sorting-method}
\begin{algorithmic}[1]
\BState \emph{Input}: Set of sample points $\{(X_i, Y_i)\}_{i = 1}^n \in \mathbb{R}^p \times \mathbb{R}$, integer $1 \leq N \leq n$.
\BState \emph{Output}: A feasible solution $\hat{\beta}$.
\Function{Sorting Method}{$\{(X_i, Y_i)\}_{i = 1}^n$} \label{function:sorting-method}
\State Without loss of generality, sort $\{Y_i\}_{i = 1}^n$ as $Y_1 \leq Y_2 \leq \ldots \leq Y_n$.
\For{$t = 0, 1, \ldots, N$}
\State Set $\mathcal{I}^t \gets \{\lfloor \frac{t}{N} n \rfloor + 1, \ldots, N\} \subseteq \{1, \ldots, N\}$ for $t = 0, 1, \ldots, N-1$, and $\mathcal{I}^N \gets \emptyset$. 
\State Set $\beta^{t} \gets \arg \min_{\beta \in \mathbb{R}^p} f_{\mathcal{I}^t}^{\sigma}(\beta)$.
\State Compute the objective value of the \ref{eq:One-Node-ReLU} with $\beta^t$ as 
	\begin{align*}
		\text{OPT}^t \gets \sum_{i = 1}^n (\max\{0, X_i^{\top} \beta^t\} - Y_i)^2.
	\end{align*}
\EndFor
\State \Return $\hat{\beta}$ where $\hat{\beta}$ is a feasible solution with the minimum $\text{OPT}^t$.
\EndFunction
\end{algorithmic}
\end{algorithm}
Based on the result from Paper \cite{dey2018relu}, the above sorting method is a special case of Algorithm~\ref{algo:gen-approximation} with parameter $k = 1$ and subset
\begin{align*}
	\{i\} = \left\{
	\begin{array}{lll}
		\{\lfloor \frac{t}{N} n \rfloor \} & \text{ if } t = 1, \ldots, N \\
		\emptyset & \text{ if } t = 0
 	\end{array}
	\right.
\end{align*}
which implies the term corresponds to $\lfloor \frac{t}{N} n \rfloor^{\text{th}}$ index in the objective function of \ref{eq:One-Node-ReLU} is not in the quadratic part (i.e., not active) but in the $\sigma$ function part.

\section{Appendix: Iterative Method}\label{section:iterative-method}
Given any feasible solution $\beta$ of the \ref{eq:One-Node-ReLU} problem, let the \textit{iterative set} $\mathcal{I}(\beta) \gets \{i \in [n]: X_i^{\top} \beta > 0\}$ be the set of indices that in the linearity part of ReLU function $\max\{0, X_i^{\top} \beta\}$. The iterative method that used in Section~\ref{section:nr-sorting+Iter} follows Algorithm~\ref{algo:iter-method}.
\begin{algorithm}
\caption{Iterative Heuristic} \label{algo:iter-method}
\begin{algorithmic}[1]
\BState \emph{Input}: Set of sample points $\{(X_i, Y_i)\}_{i = 1}^n \in \mathbb{R}^p \times \mathbb{R}$, initial feasible solution $\beta^0 \in \mathbb{R}^p$, maximum number of iterations $T$.  
\BState \emph{Output}: A feasible solution $\hat{\beta}$.
\Function{Iterative Heuristic}{$\{(X_i, Y_i)\}_{i = 1}^n, \beta^0, T$} \label{function:Iterative}
\State Initialize $t = 0$.
\State Set the past iterative set set $\mathcal{I}^{-1} \gets \emptyset$. 
\State Set the initial iterative set set $\mathcal{I}^0 \gets \mathcal{I}(\beta^0) := \{i \in [n] : X_i^{\top} \beta^0 > 0 \}$
\State Denote the iterative set in $t^{\text{th}}$ iteration be $\mathcal{I}^t$.
\While{$t < T$ and $\mathcal{I}^{t} \neq \mathcal{I}^{t - 1}$}
\State Set $\beta^{t + 1} \gets \arg\min_{\beta \in \mathbb{R}^p} f_{\mathcal{I}^t}^{\sigma} (\beta)$.
\State Set $\mathcal{I}^{t + 1} \gets \mathcal{I}(\beta^{t + 1}).$
\State Set $t \gets t + 1$.
\EndWhile
\State \Return $\hat{\beta}$ where $\hat{\beta}$ is the final feasible solution obtained in the loop. 
\EndFunction
\end{algorithmic}
\end{algorithm}
Based on the result from Paper \cite{dey2018relu}, the iterative heuristic method guarantees the decreasing of objective value in each iteration, i.e., $\min_{\beta \in \mathbb{R}^p} f_{\mathcal{I}^t}^{\sigma} (\beta) \leq \min_{\beta \in \mathbb{R}^p} f_{\mathcal{I}^{t + 1}}^{\sigma} (\beta)$ for $t = 0, 1, 2, \ldots$. Moreover, the algorithm~\ref{algo:iter-method} terminates in finite number of iterations. 

\section{Appendix: Gradient Descent Method} \label{section:GD}
The gradient descent method that used in Section \ref{section:nr-GD} and \ref{section:nr-Sorting+GD} is Algorithm~\ref{algo:GD-method}. 
\begin{algorithm}
\caption{Gradient Descent Method} \label{algo:GD-method}
\begin{algorithmic}[1]
\BState \emph{Input}: Set of sample points $\{(X_i, Y_i)\}_{i = 1}^n \in \mathbb{R}^p \times \mathbb{R}$, initial feasible solution $\beta^0 \in \mathbb{R}^p$, maximum number of iterations $T$, termination criteria parameter $\epsilon > 0$, initial stepsize $\eta_0 > 0$, stepsize parameter $\gamma > 0$, back track parameter $\alpha \in (0,1)$.  
\BState \emph{Output}: A feasible solution $\hat{\beta}$.
\Function{Gradient Descent Method}{$\{(X_i, Y_i)\}_{i = 1}^n, \beta^0, T, \epsilon, \eta_0, \gamma, \alpha$} \label{function:GD-method}
\State Initialize $t = 0$, $L^{- 1} \gets + \infty$, $L^0 \gets \sum_{i = 1}^n (\max\{0, X_i^{\top} \beta^0\} - Y_i)^2$.
\State Set $\beta^t$ as the solution obtained in $t^{\text{th}}$ iteration.
\State Set $\eta_t$ as the stepsize used in $t^{\text{th}}$ iteration.
\State Set $L(\beta) \gets \sum_{i = 1}^n (\max\{0, X_i^{\top} \beta\} - Y_i)^2$.
\While{$t < T$ and $L(\beta^{t - 1}) - L(\beta^t) > \epsilon$}
\State Set temporary solution $\bar{\beta}$ be $\bar{\beta} \gets \beta^t - \eta_t \cdot \frac{1}{n} \nabla_{\beta} L(\beta^t)$.
\While{$L(\bar{\beta}) \geq L(\beta^t)$}
\State Update $\eta_t \gets \alpha \cdot \eta_t$
\State Update $\bar{\beta} \gets \beta^t - \eta_t \cdot \frac{1}{n} \nabla_{\beta} L(\beta^t)$.
\EndWhile
\State Set $\beta^{t + 1} \gets \bar{\beta}$. 
\State Set $\eta_t \gets \frac{\eta_0}{1 + \gamma t}$.
\State Set $t \gets t + 1$.
\EndWhile
\State \Return $\hat{\beta}$ where $\hat{\beta}$ is the final feasible solution obtained in the loop. 
\EndFunction
\end{algorithmic}
\end{algorithm}
Note that the outer while-loop follows a standard gradient descent method with gradient $\frac{1}{n} \nabla_{\beta} L(\beta^t)$ and stepsize $\eta_t$, and the inner while-loop uses a back search method that guarantee the decreasing of objective value in each outer iteration. 

\section{Appendix: Stochastic Gradient Descent Method}
The stochastic gradient descent method used in this paper is presented below. This algorithm follows a similar updating rule of the gradient descent method (Algorithm~\ref{algo:GD-method}), the only difference is that in each iteration, the stochastic gradient descent method uniformly picks a mini-batch of size $m$ from the given set of samples $\{X_i\}_{i = 1}^n$. 
\begin{algorithm}
\caption{Stochastic Gradient Descent Method} \label{algo:SGD-method}
\begin{algorithmic}[1]
\BState \emph{Input}: Set of sample points $\{(X_i, Y_i)\}_{i = 1}^n \in \mathbb{R}^p \times \mathbb{R}$, initial feasible solution $\beta^0 \in \mathbb{R}^p$, maximum number of iterations $T$, termination criteria parameter $\epsilon > 0$, initial stepsize $\eta_0 > 0$, stepsize parameter $\gamma > 0$, back track parameter $\alpha \in (0,1)$, size of mini-batch $1 \leq m \leq n$.  
\BState \emph{Output}: A feasible solution $\hat{\beta}$.
\Function{Stochastic Gradient Descent Method}{$\{(X_i, Y_i)\}_{i = 1}^n, \beta^0, T, \epsilon, \eta_0, \gamma, \alpha, m$} \label{function:SGD-method}
\State Initialize $t = 0$, $S^0$ uniformly picked from $\{1, \ldots, n\}$ with size $m$, $L^{-1} \gets + \infty$, $L^0 \gets \sum_{i \in S^0} (\max\{0, X_i^{\top} \beta^0\} - Y_i)^2$.
\State Set $\beta^t$ as the solution obtained in $t^{\text{th}}$ iteration.
\State Set $\eta_t$ as the stepsize used in $t^{\text{th}}$ iteration.
\State Set $S^t$ as the mini-batch of size $m$ in $t^{\text{th}}$ iteration.
\State Set $L(S, \beta) \gets \sum_{i \in S} (\max\{0, X_i^{\top} \beta\} - Y_i)^2$.
\While{$t < T$ and $L(S^{t - 1}, \beta^{t - 1}) - L(S^t, \beta^t) > \epsilon$}
\State Set $S^{t + 1}$ uniformly from $\{1, \ldots, n\}$ with size $m$. 
\State Set temporary solution $\bar{\beta}$ be $\bar{\beta} \gets \beta^t - \eta_t \cdot \frac{1}{m} \nabla_{\beta} L(S^{t + 1}, \beta^t)$.
\While{$L(\bar{\beta}) \geq L(\beta^t)$}
\State Update $\eta_t \gets \alpha \cdot \eta_t$
\State Update $\bar{\beta} \gets \beta^t - \eta_t \cdot \frac{1}{m} \nabla_{\beta} L(S^{t + 1}, \beta^t)$.
\EndWhile
\State Set $\beta^{t + 1} \gets \bar{\beta}$. 
\State Set $\eta_t \gets \frac{\eta_0}{1 + \gamma t}$.
\State Set $t \gets t + 1$.
\EndWhile
\State \Return $\hat{\beta}$ where $\hat{\beta}$ is the final feasible solution obtained in the loop. 
\EndFunction
\end{algorithmic}
\end{algorithm}

\section{Appendix: Main Computational Results, Continued} \label{section:app-computational-results}
Figure~[\ref{fig:Figure_Compare_10_200}, \ref{fig:Figure_Compare_20_400}] are the continued numerical results that presented in section~\ref{section:NR-notations}. 
\begin{figure*}
    \centering
    \begin{subfigure}[b]{0.24\textwidth}
        \centering
        \includegraphics[width=\textwidth]{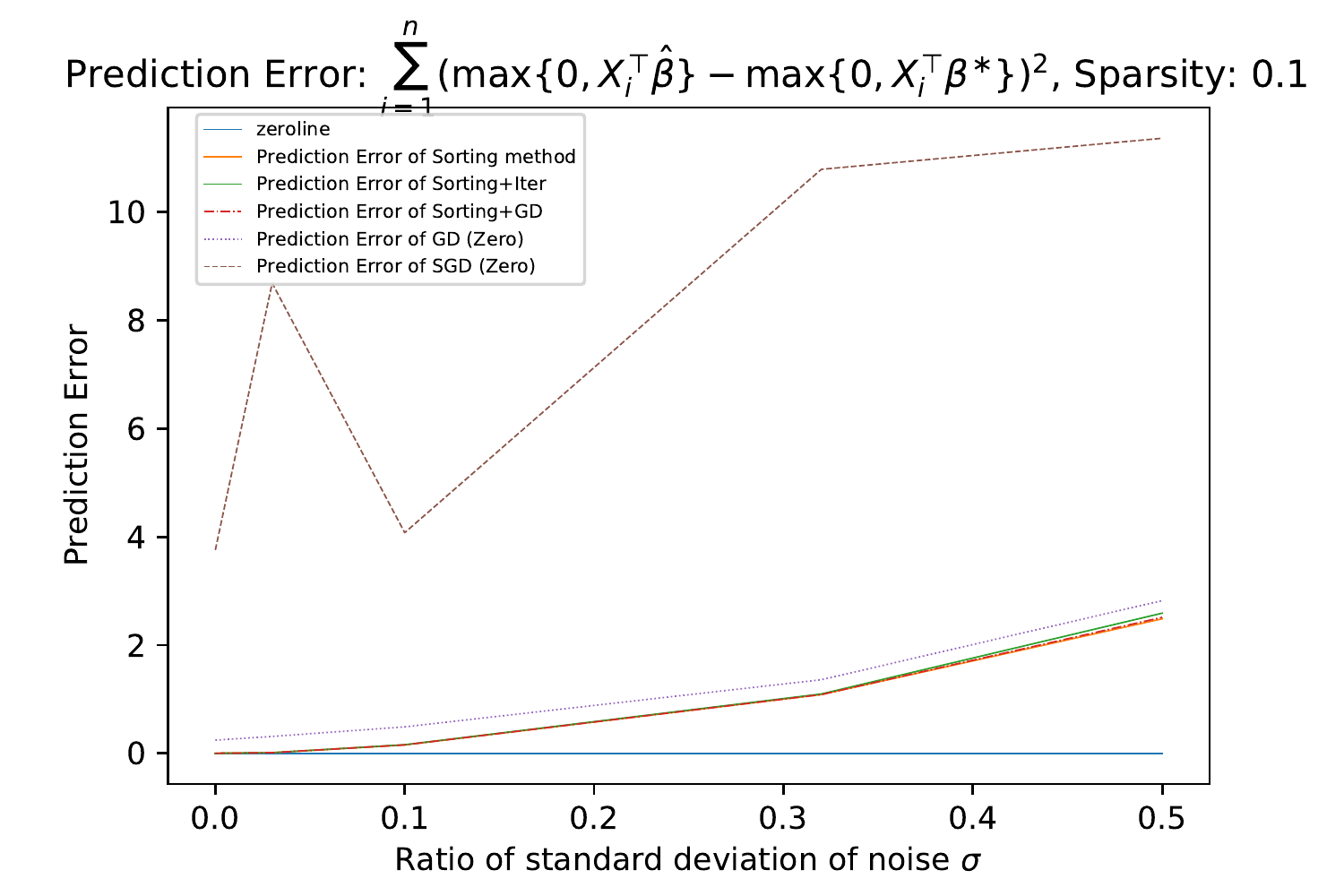}
        \caption[Network1]%
        {{\small Prediction Error}}    
        \label{fig:Figure_Compare_PE_10_200_10}
    \end{subfigure}
    \hfill
    \begin{subfigure}[b]{0.24\textwidth}   
        \centering 
        \includegraphics[width=\textwidth]{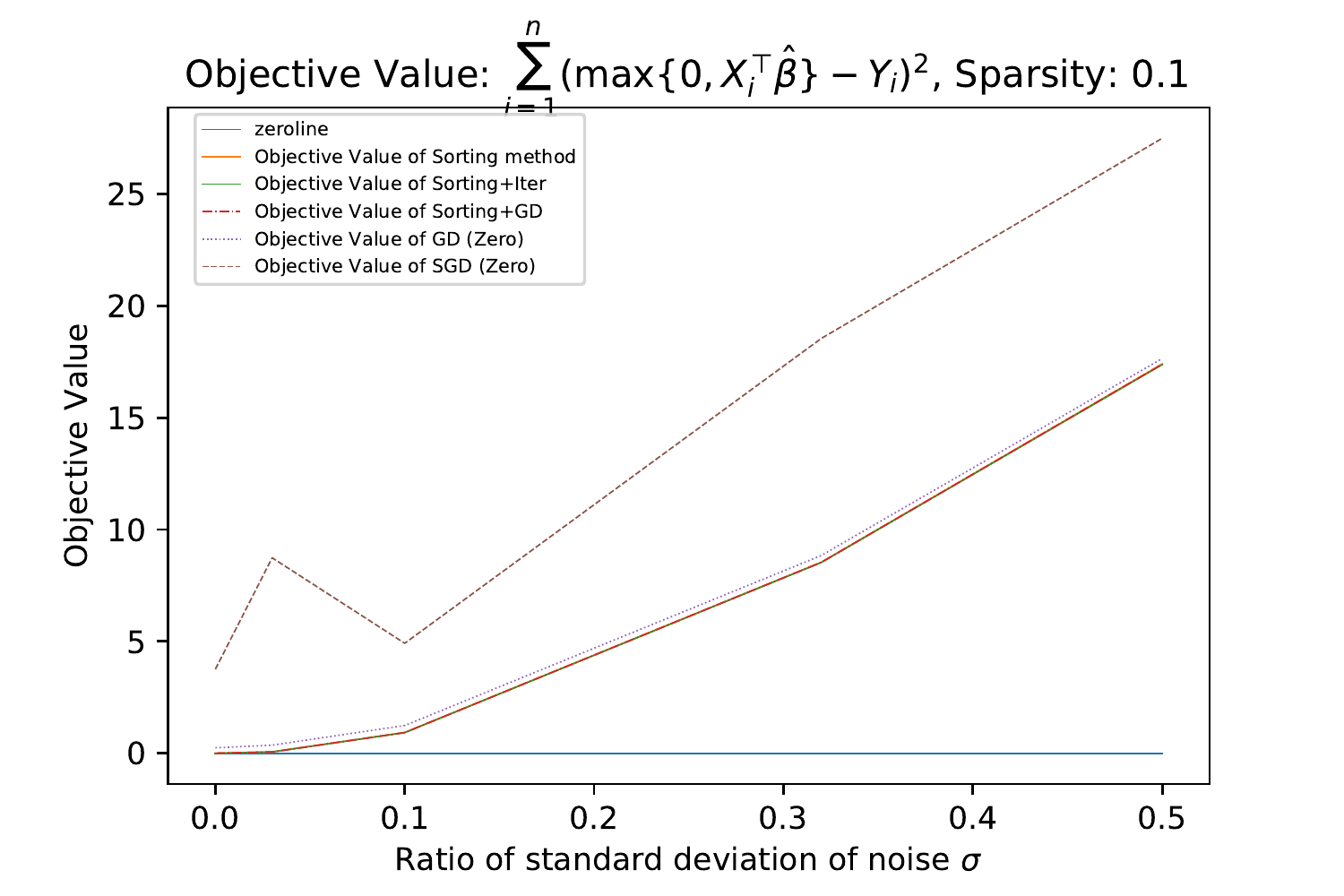}
        \caption[]%
        {{\small Objective Value}}    
        \label{fig:Figure_Compare_OB_10_200_10}
    \end{subfigure}
    \hfill
    \begin{subfigure}[b]{0.24\textwidth}  
        \centering 
        \includegraphics[width=\textwidth]{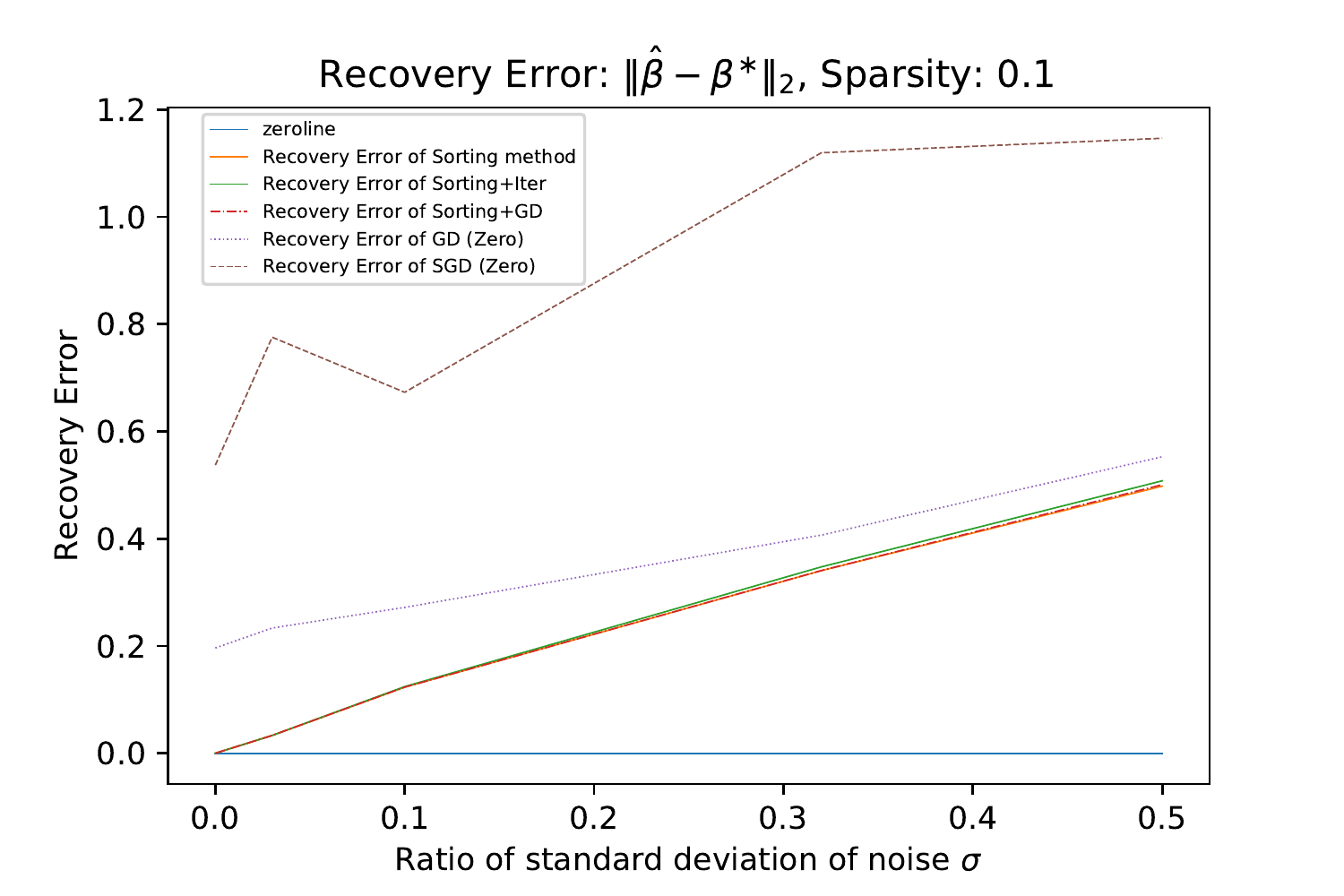}
        \caption[]%
        {{\small Recovery Error}}    
        \label{fig:Figure_Compare_RE_10_200_10}
    \end{subfigure}
    \hfill
    \begin{subfigure}[b]{0.24\textwidth}   
        \centering 
        \includegraphics[width=\textwidth]{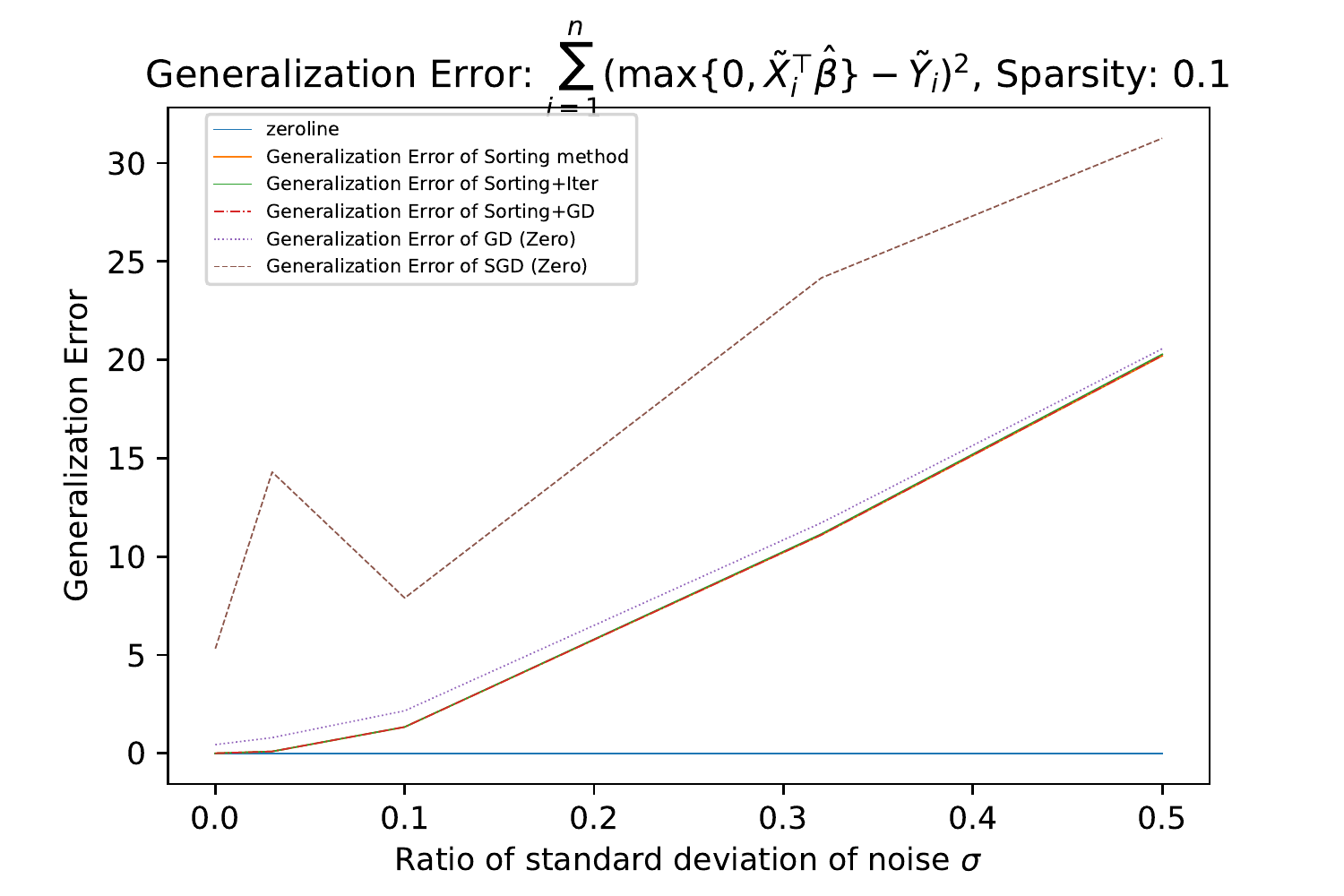}
        \caption[]%
        {{\small Generalization Error}}    
        \label{fig:Figure_Compare_GE_10_200_10}
    \end{subfigure}

    \vskip\baselineskip
    
    \centering
    \begin{subfigure}[b]{0.24\textwidth}
        \centering
        \includegraphics[width=\textwidth]{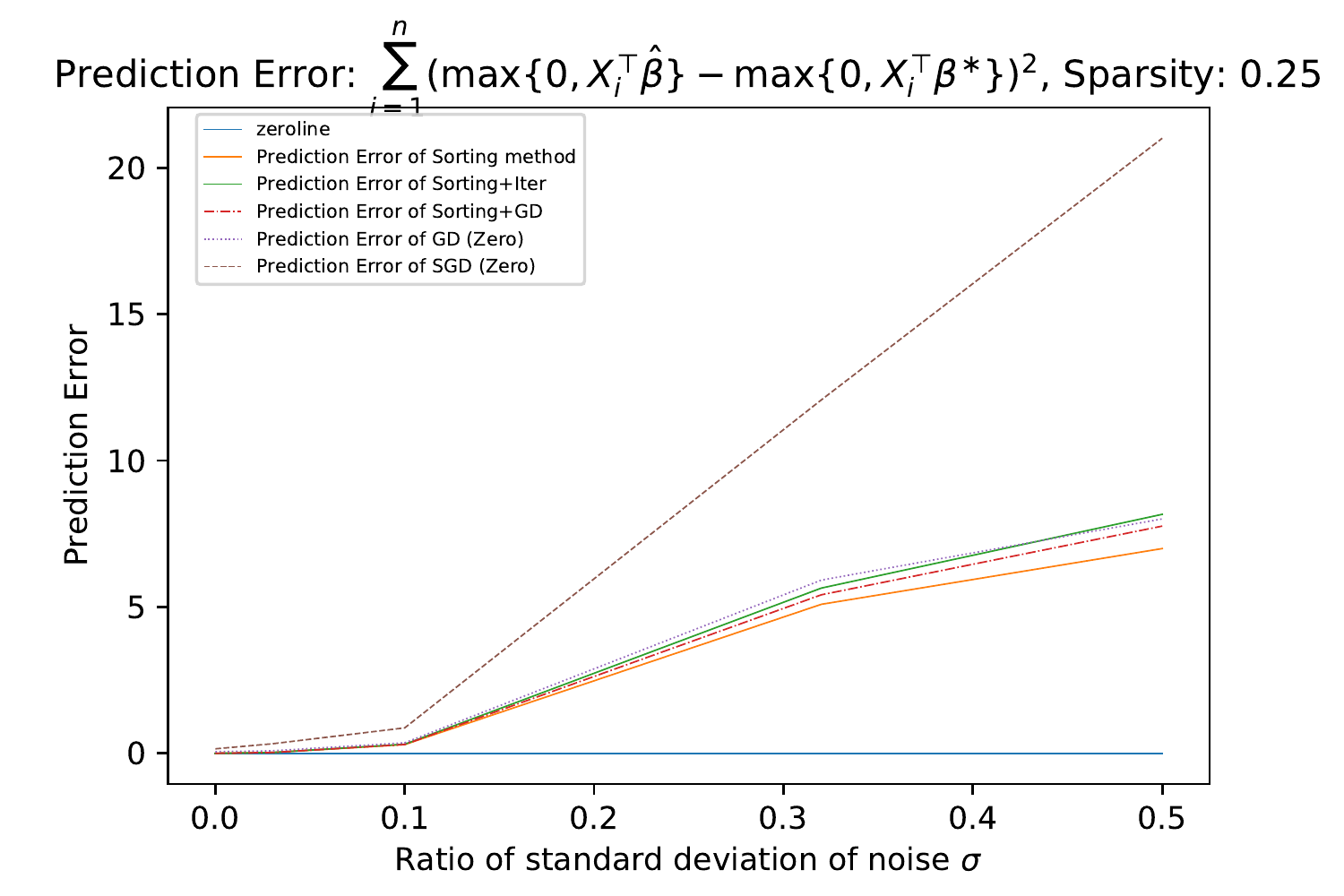}
        \caption[Network1]%
        {{\small Prediction Error}}    
        \label{fig:Figure_Compare_PE_10_200_25}
    \end{subfigure}
    \hfill
    \begin{subfigure}[b]{0.24\textwidth}   
        \centering 
        \includegraphics[width=\textwidth]{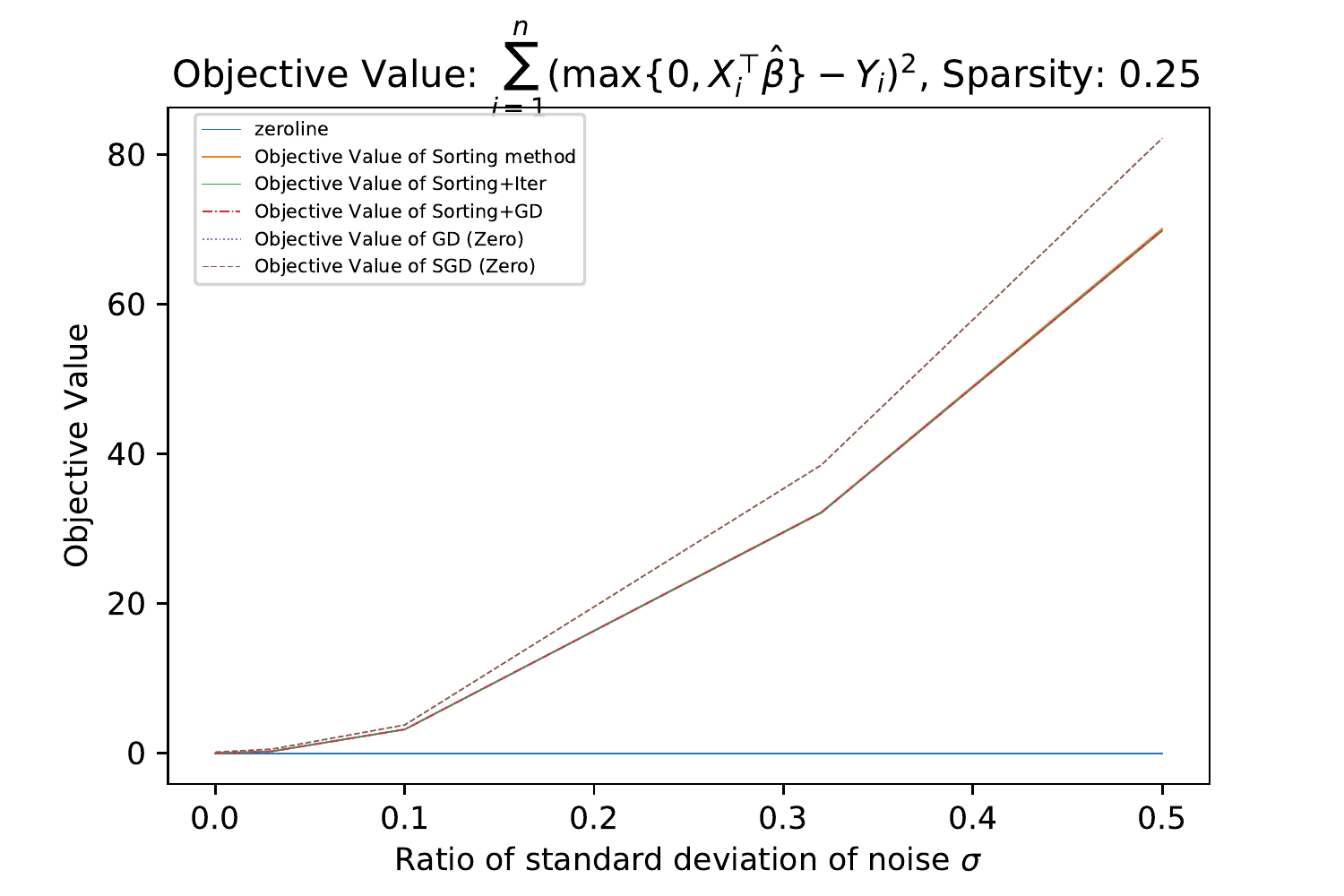}
        \caption[]%
        {{\small Objective Value}}    
        \label{fig:Figure_Compare_OB_10_200_25}
    \end{subfigure}
    \hfill
    \begin{subfigure}[b]{0.24\textwidth}  
        \centering 
        \includegraphics[width=\textwidth]{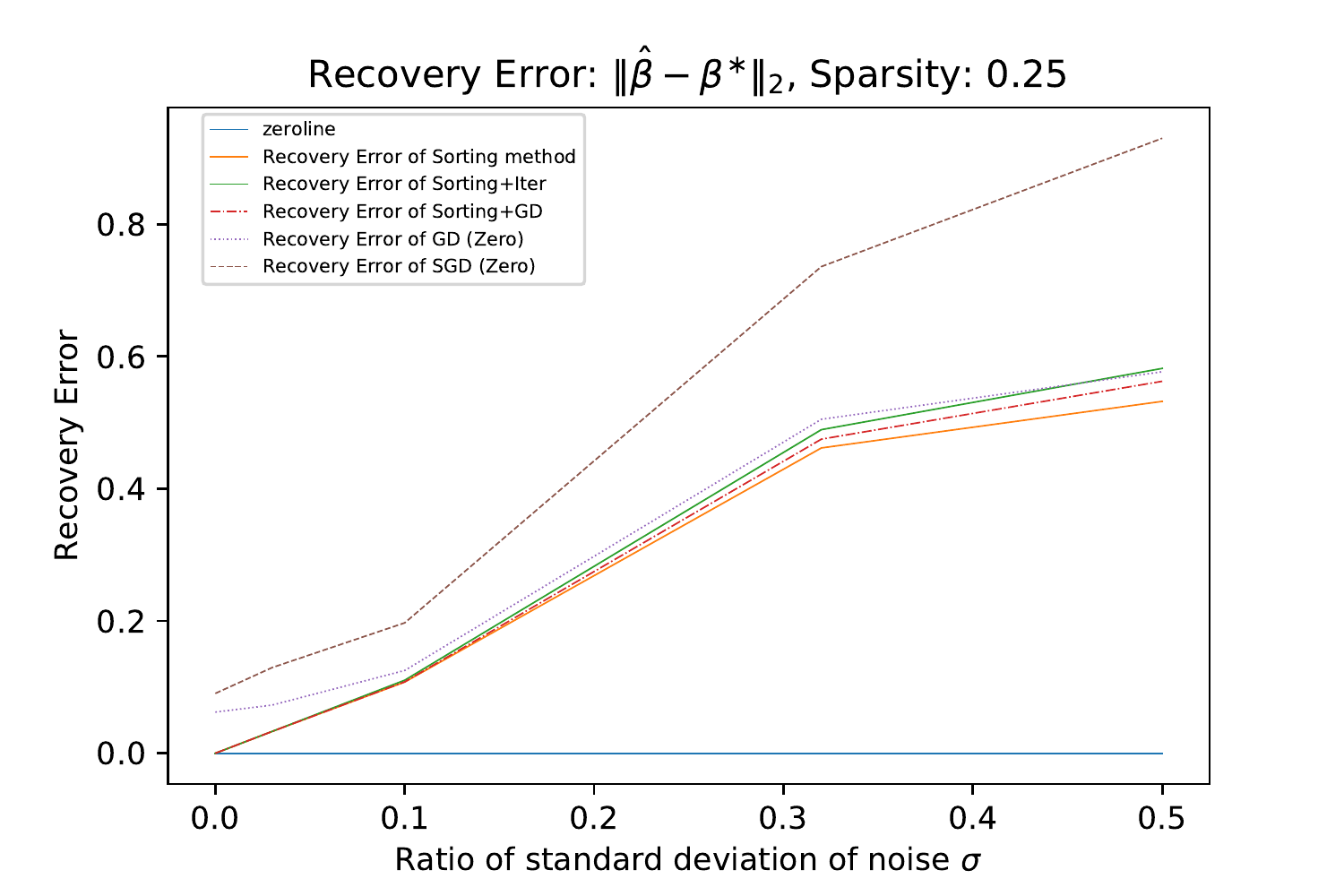}
        \caption[]%
        {{\small Recovery Error}}    
        \label{fig:Figure_Compare_RE_10_200_25}
    \end{subfigure}
    \hfill
    \begin{subfigure}[b]{0.24\textwidth}   
        \centering 
        \includegraphics[width=\textwidth]{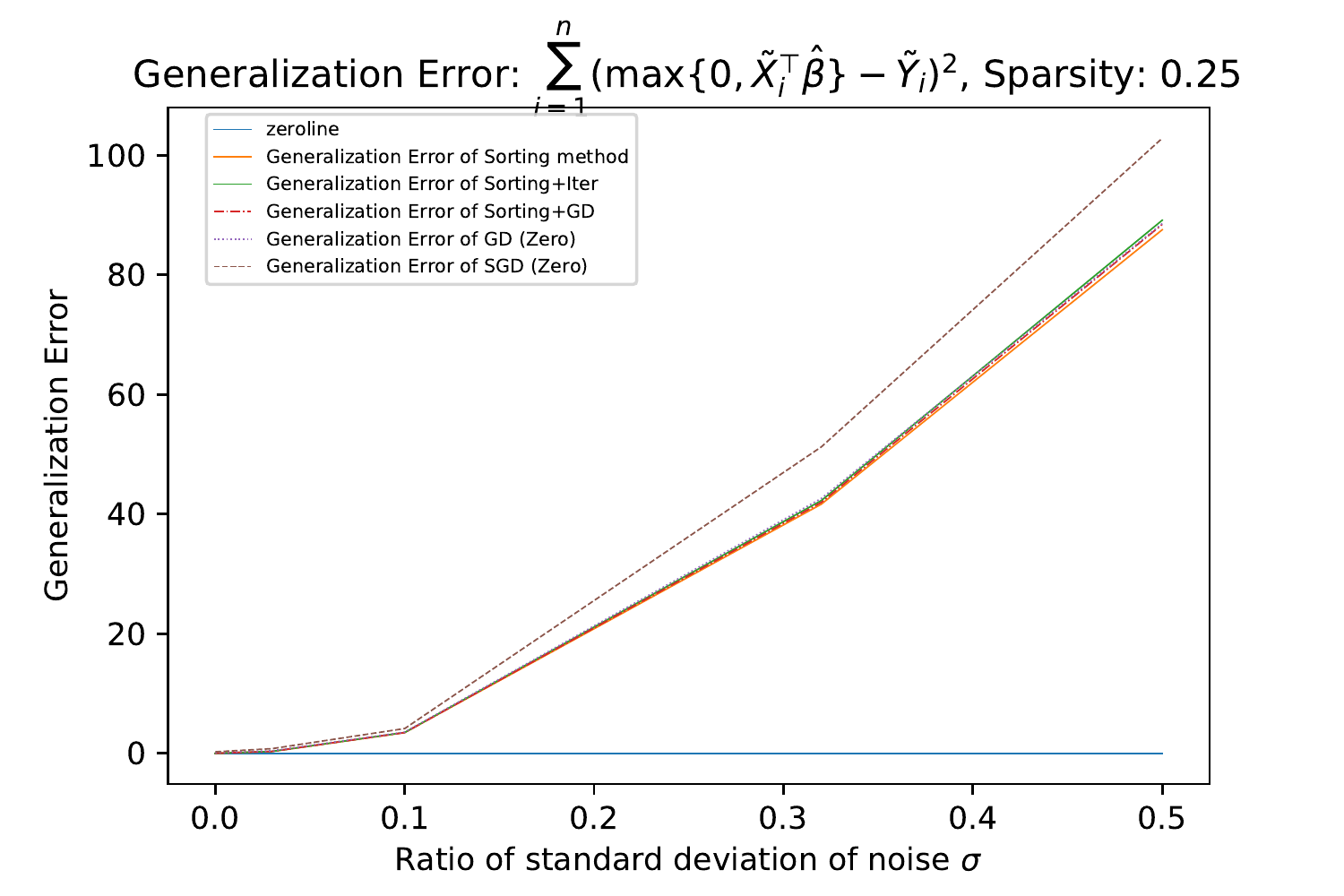}
        \caption[]%
        {{\small Generalization Error}}    
        \label{fig:Figure_Compare_GE_10_200_25}
    \end{subfigure}
    
    \vskip\baselineskip
    
    \centering
    \begin{subfigure}[b]{0.24\textwidth}
        \centering
        \includegraphics[width=\textwidth]{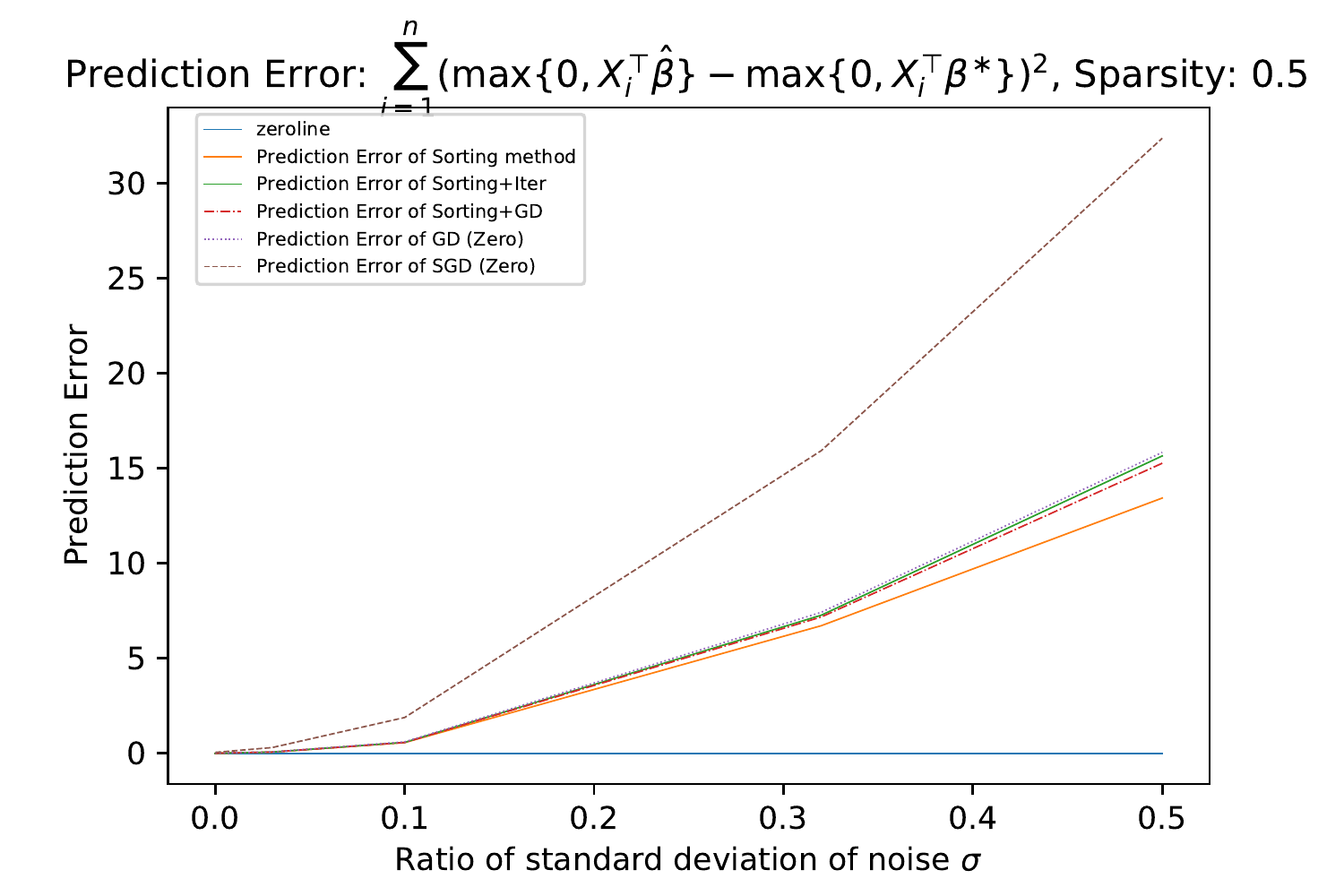}
        \caption[Network1]%
        {{\small Prediction Error}}    
        \label{fig:Figure_Compare_PE_10_200_50}
    \end{subfigure}
    \hfill
    \begin{subfigure}[b]{0.24\textwidth}   
        \centering 
        \includegraphics[width=\textwidth]{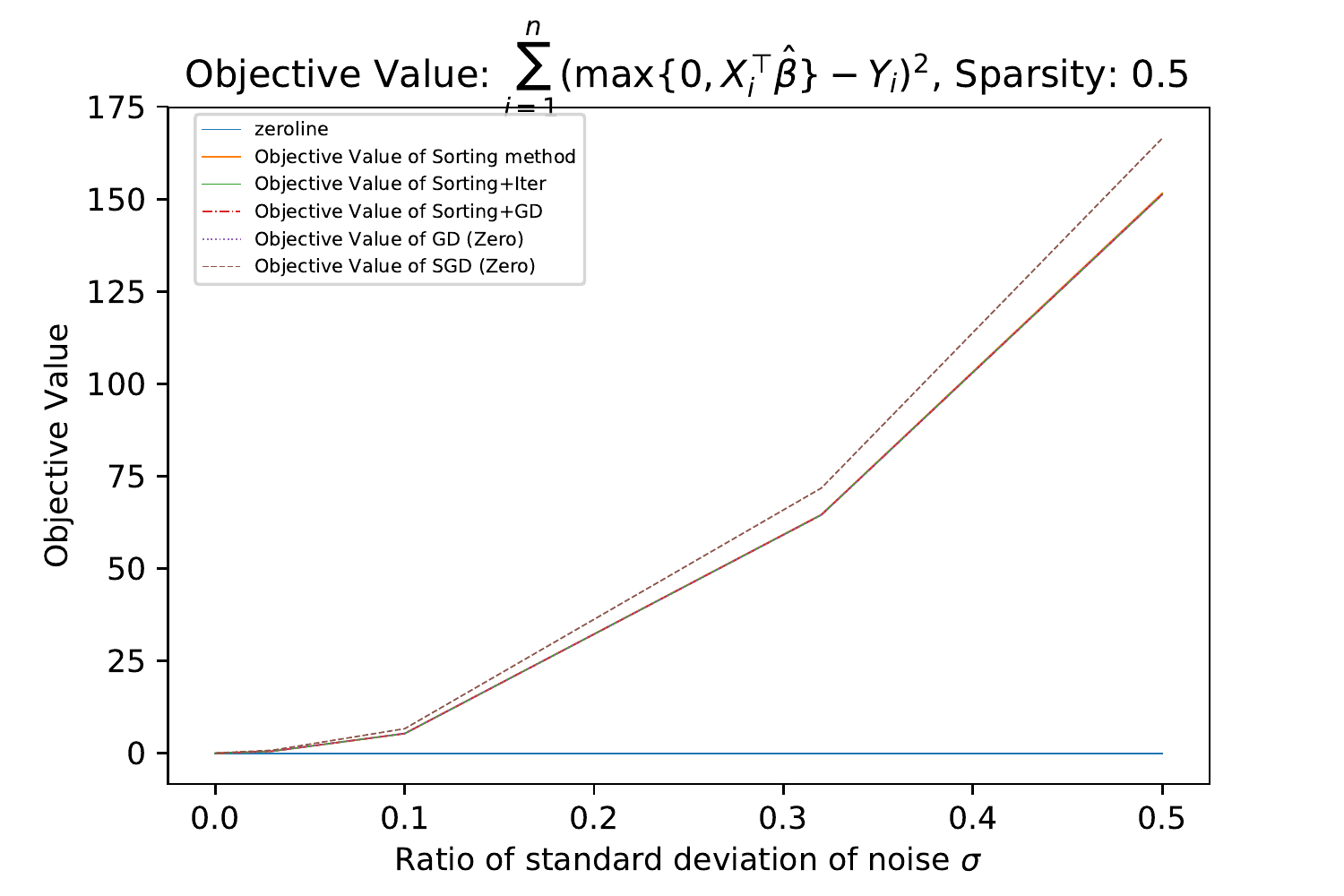}
        \caption[]%
        {{\small Objective Value}}    
        \label{fig:Figure_Compare_OB_10_200_50}
    \end{subfigure}
    \hfill
    \begin{subfigure}[b]{0.24\textwidth}  
        \centering 
        \includegraphics[width=\textwidth]{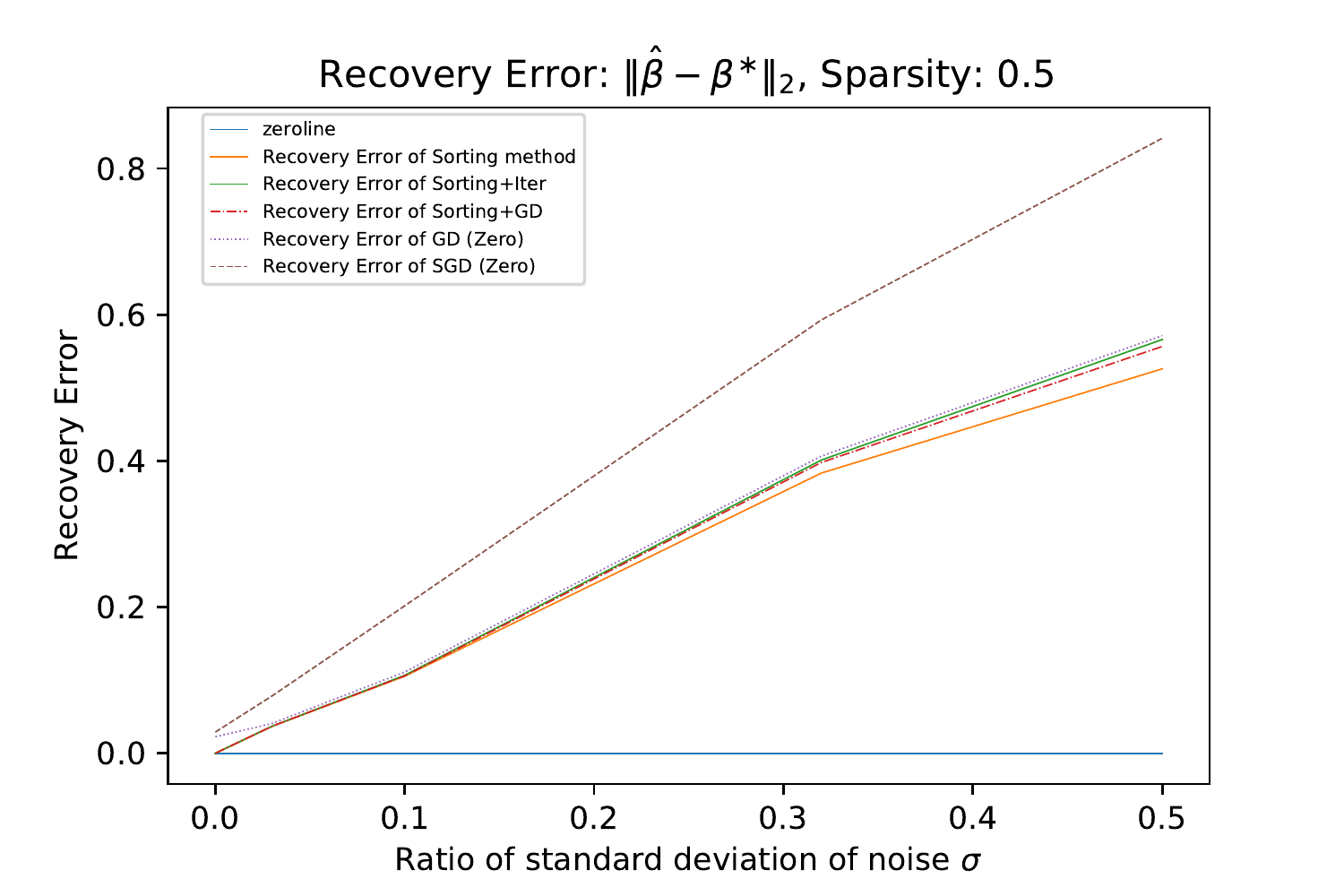}
        \caption[]%
        {{\small Recovery Error}}    
        \label{fig:Figure_Compare_RE_10_200_50}
    \end{subfigure}
    \hfill
    \begin{subfigure}[b]{0.24\textwidth}   
        \centering 
        \includegraphics[width=\textwidth]{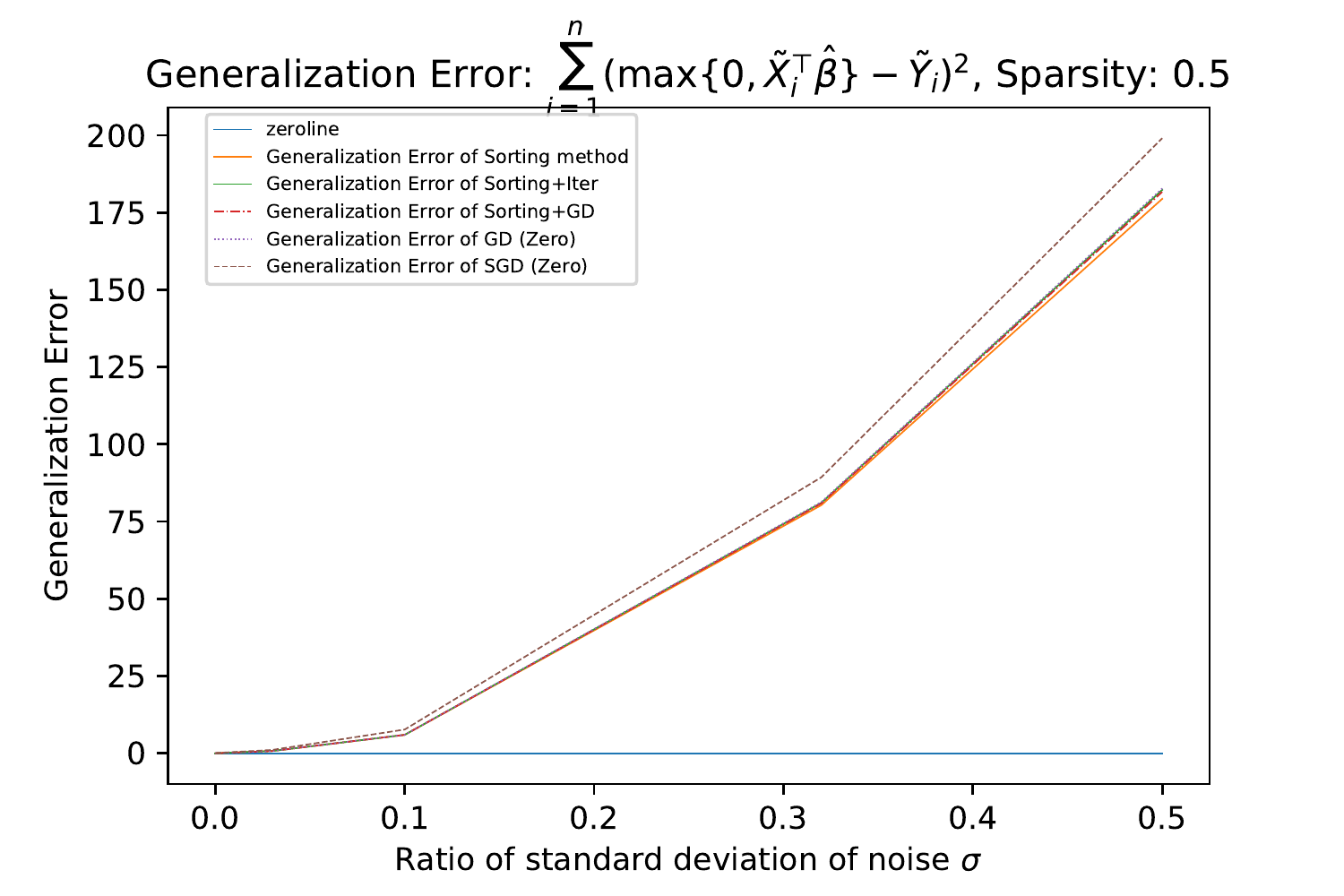}
        \caption[]%
        {{\small Generalization Error}}    
        \label{fig:Figure_Compare_GE_10_200_50}
    \end{subfigure}
    
    \vskip\baselineskip
    
    \centering
    \begin{subfigure}[b]{0.24\textwidth}
        \centering
        \includegraphics[width=\textwidth]{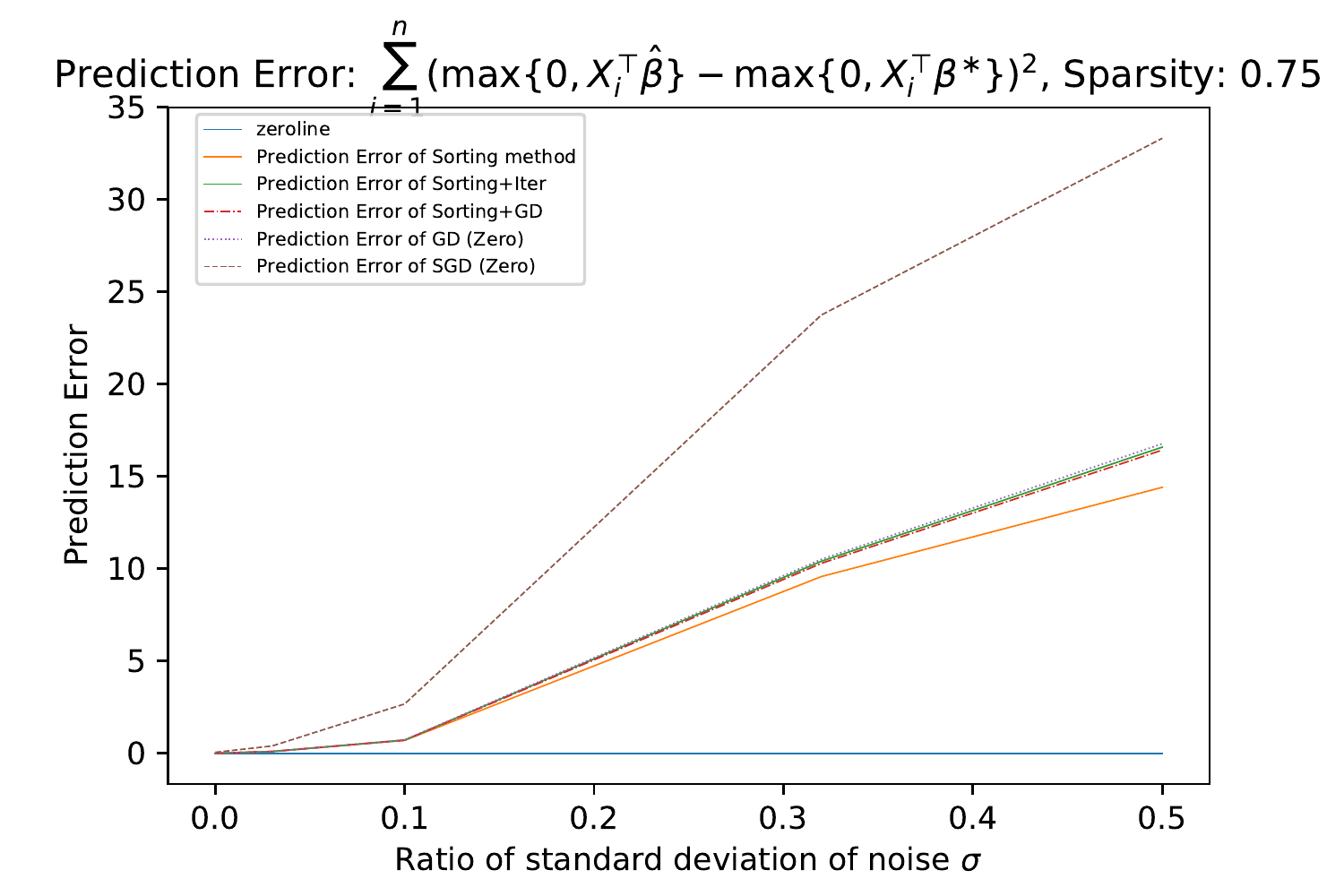}
        \caption[Network1]%
        {{\small Prediction Error}}    
        \label{fig:Figure_Compare_PE_10_200_75}
    \end{subfigure}
    \hfill
    \begin{subfigure}[b]{0.24\textwidth}   
        \centering 
        \includegraphics[width=\textwidth]{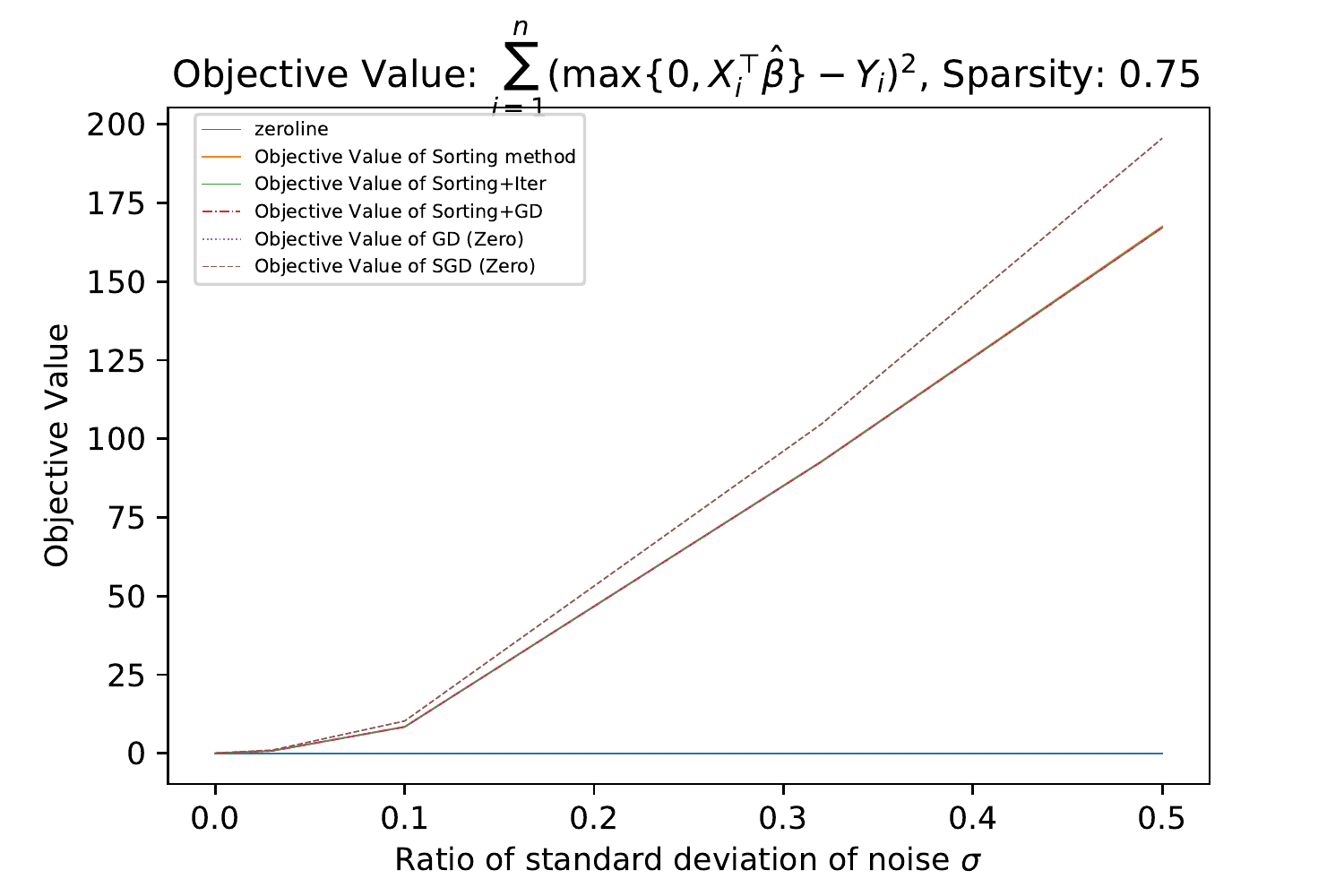}
        \caption[]%
        {{\small Objective Value}}    
        \label{fig:Figure_Compare_OB_10_200_75}
    \end{subfigure}
    \hfill
    \begin{subfigure}[b]{0.24\textwidth}  
        \centering 
        \includegraphics[width=\textwidth]{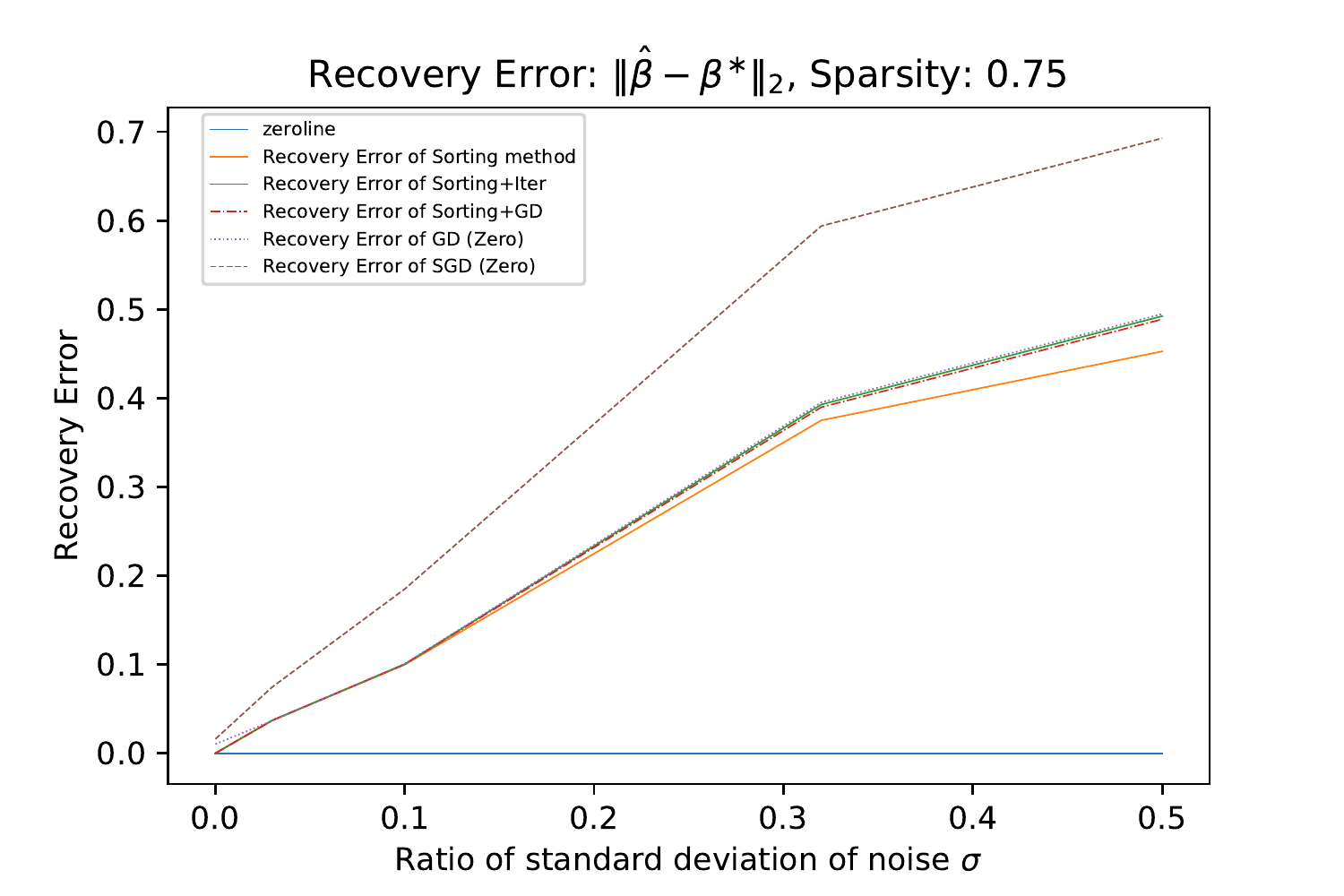}
        \caption[]%
        {{\small Recovery Error}}    
        \label{fig:Figure_Compare_RE_10_200_75}
    \end{subfigure}
    \hfill
    \begin{subfigure}[b]{0.24\textwidth}   
        \centering 
        \includegraphics[width=\textwidth]{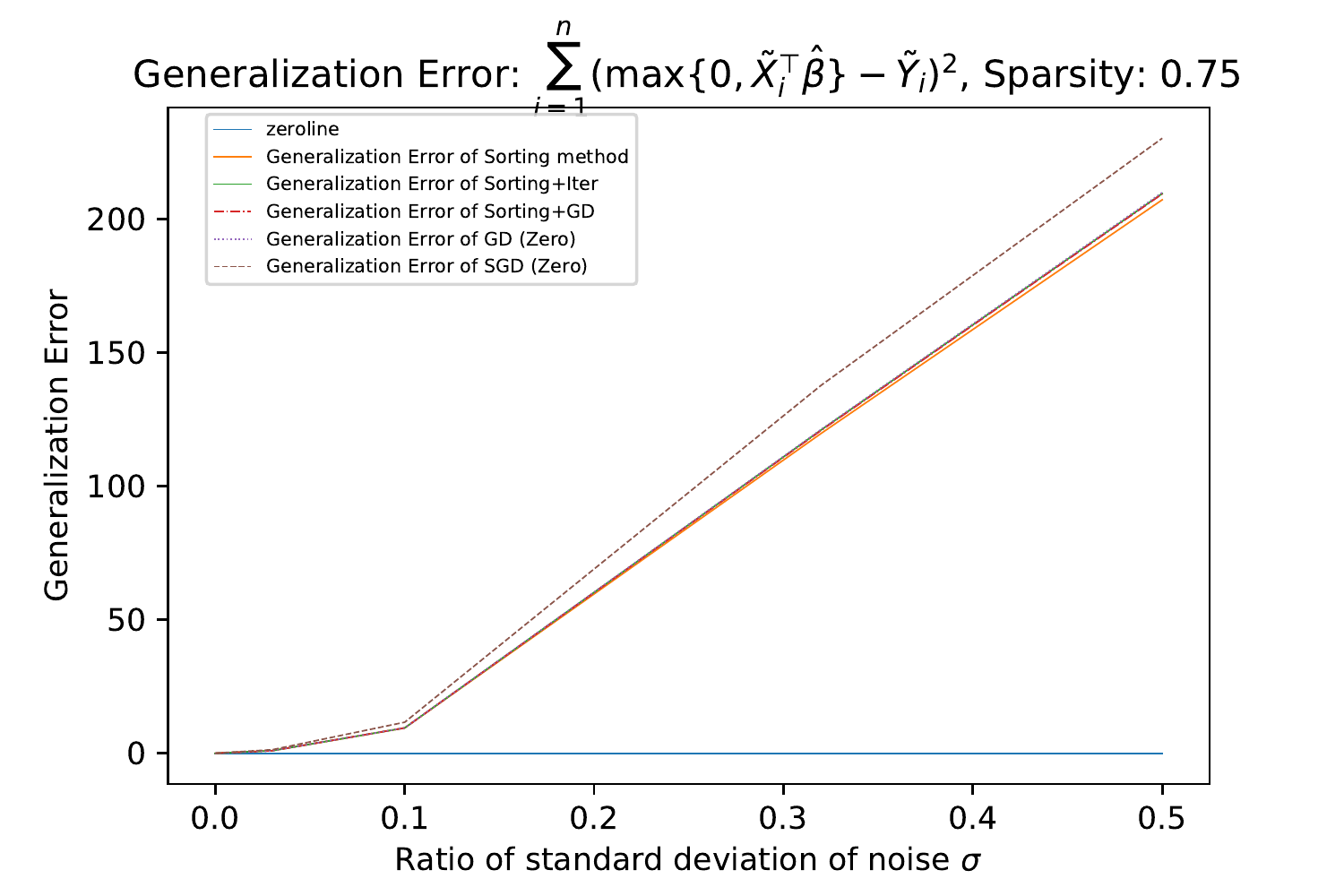}
        \caption[]%
        {{\small Generalization Error}}    
        \label{fig:Figure_Compare_GE_10_200_75}
    \end{subfigure}
    
    \vskip\baselineskip
    
    \centering
    \begin{subfigure}[b]{0.24\textwidth}
        \centering
        \includegraphics[width=\textwidth]{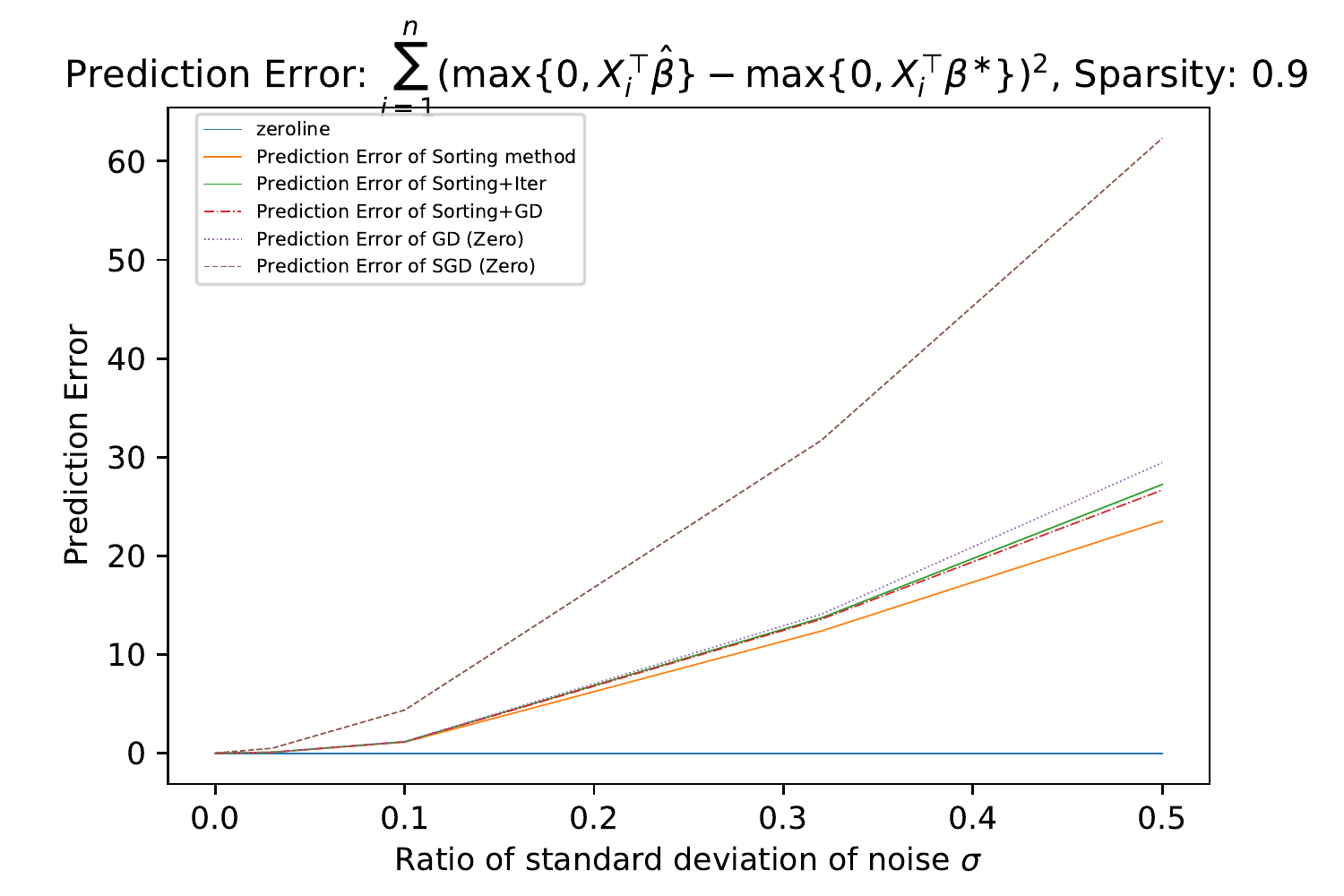}
        \caption[Network1]%
        {{\small Prediction Error}}    
        \label{fig:Figure_Compare_PE_10_200_90}
    \end{subfigure}
    \hfill
    \begin{subfigure}[b]{0.24\textwidth}   
        \centering 
        \includegraphics[width=\textwidth]{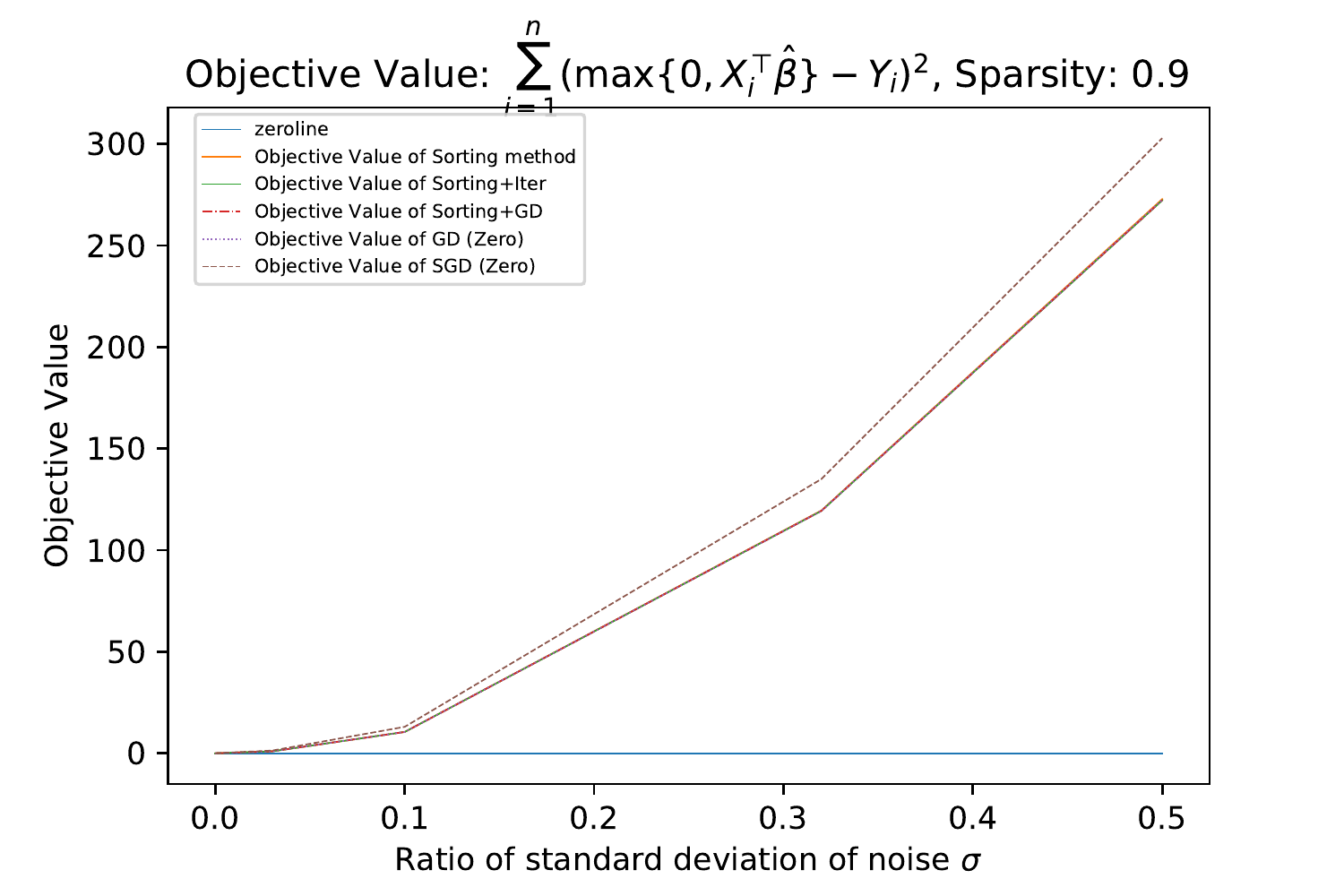}
        \caption[]%
        {{\small Objective Value}}    
        \label{fig:Figure_Compare_OB_10_200_90}
    \end{subfigure}
    \hfill
    \begin{subfigure}[b]{0.24\textwidth}  
        \centering 
        \includegraphics[width=\textwidth]{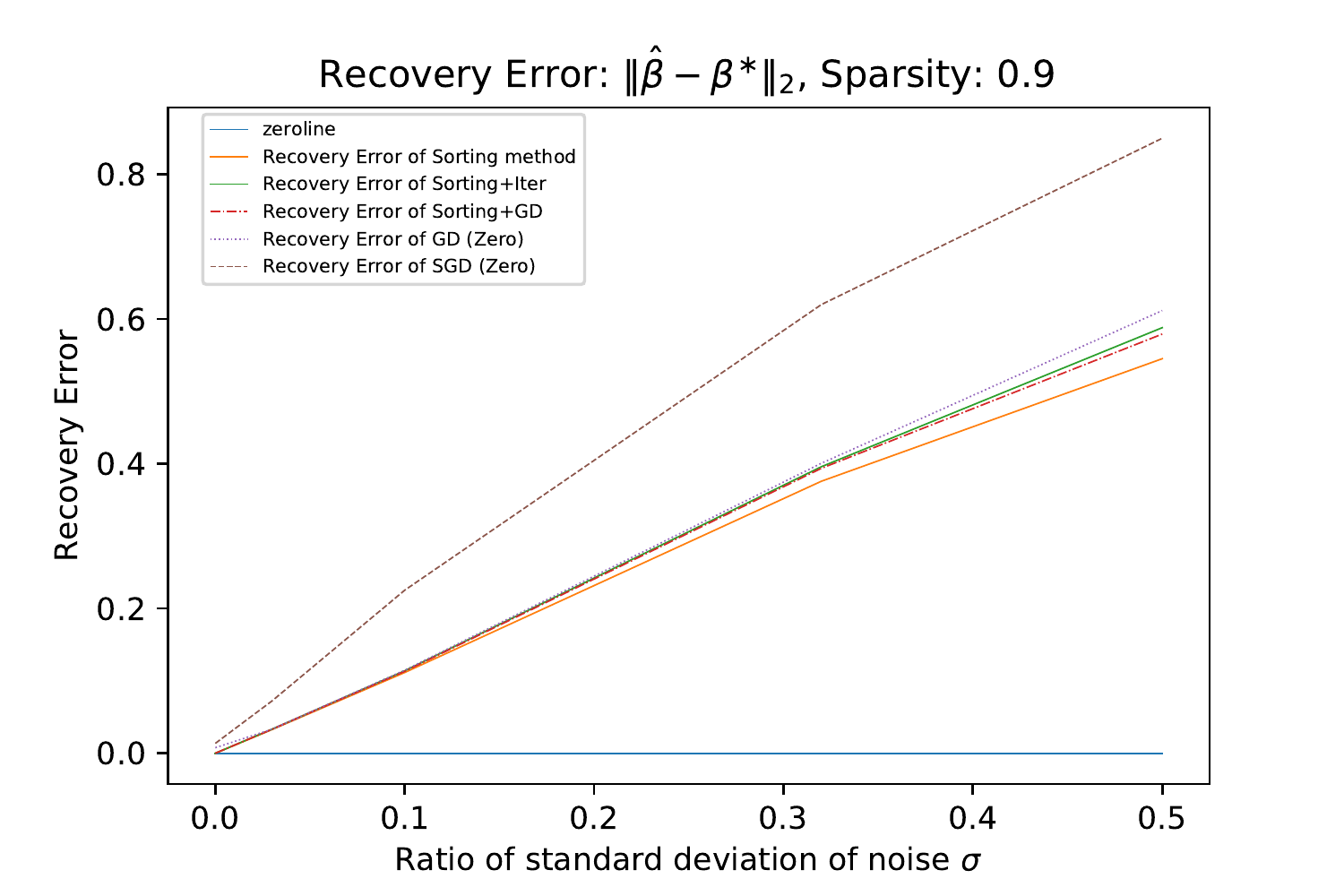}
        \caption[]%
        {{\small Recovery Error}}    
        \label{fig:Figure_Compare_RE_10_200_90}
    \end{subfigure}
    \hfill
    \begin{subfigure}[b]{0.24\textwidth}   
        \centering 
        \includegraphics[width=\textwidth]{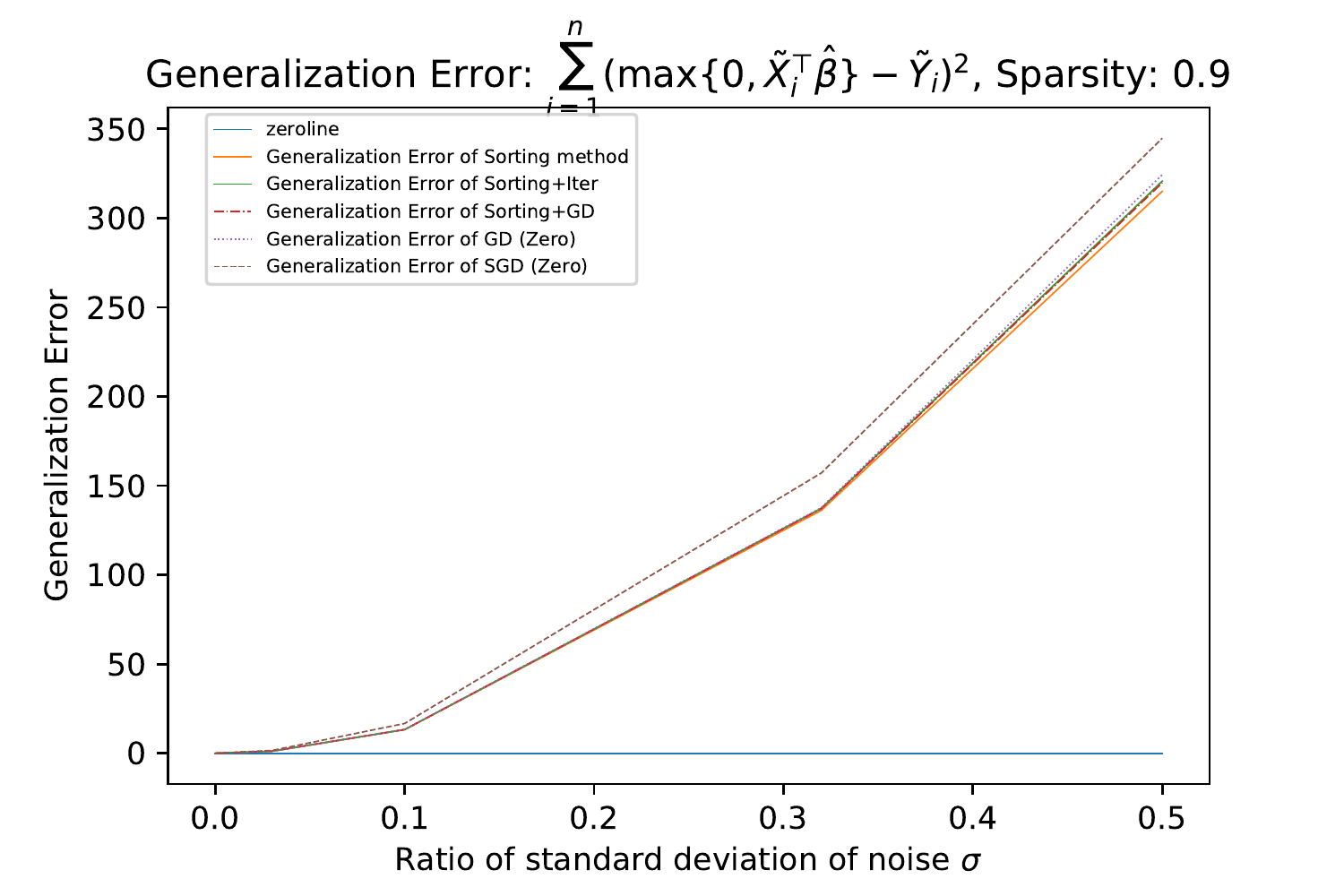}
        \caption[]%
        {{\small Generalization Error}}    
        \label{fig:Figure_Compare_GE_10_200_90}
    \end{subfigure}
    
    \caption[Numerical Results of sample size $(p, n)= (10, 200)$]
    {\small Numerical Results of sample size $(p, n) = (10, 200)$ and $\beta^{\ast} \sim N(0.5 \cdot \mathbf{1}_p, 10 \cdot I_p)$ with sparsity $\{0.1, 0.25, 0.5, 0.75, 0.9\}$} 
    \label{fig:Figure_Compare_10_200}
\end{figure*}

\begin{figure*}
    \centering
    \begin{subfigure}[b]{0.24\textwidth}
        \centering
        \includegraphics[width=\textwidth]{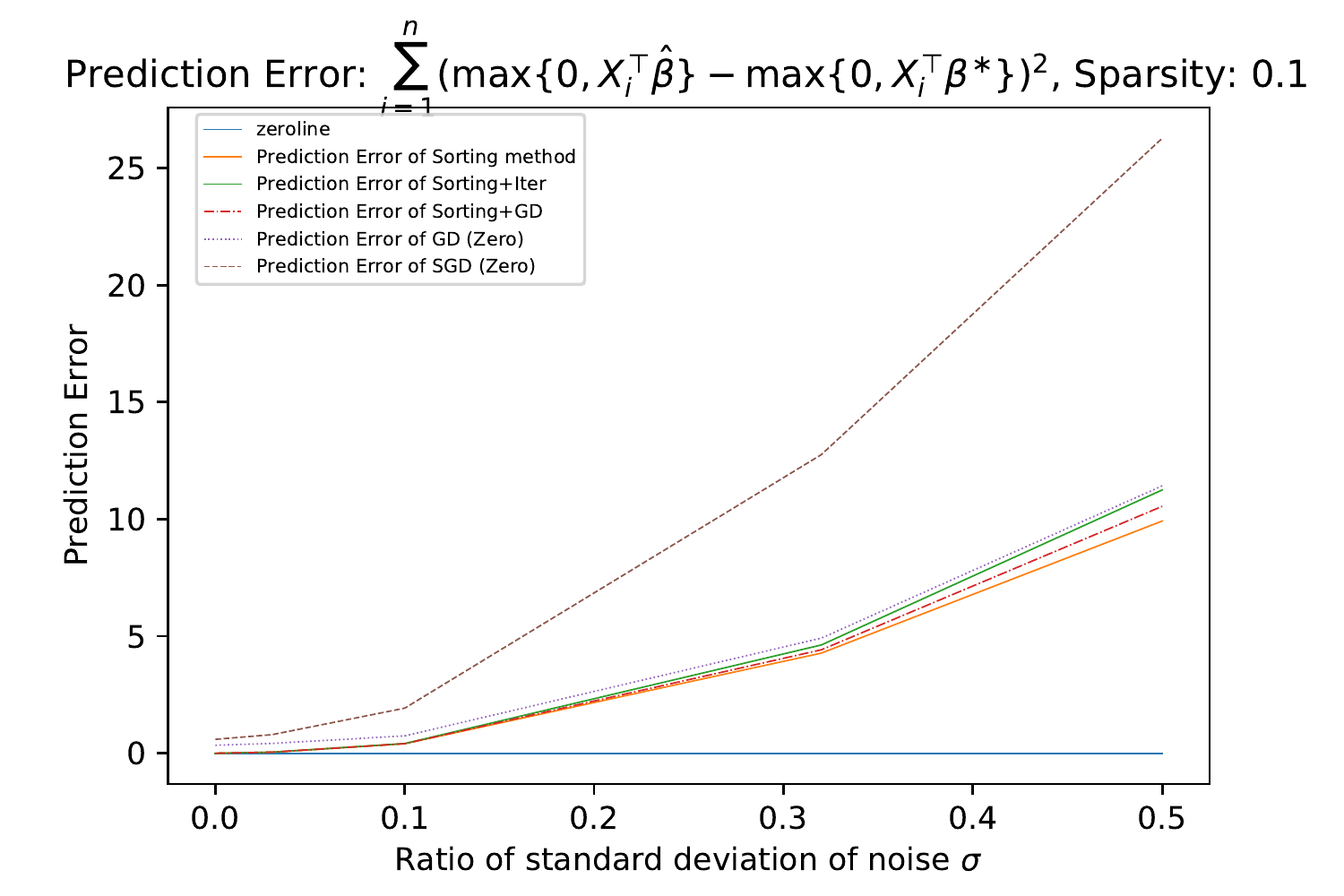}
        \caption[Network1]%
        {{\small Prediction Error}}    
        \label{fig:Figure_Compare_PE_20_400_10}
    \end{subfigure}
    \hfill
    \begin{subfigure}[b]{0.24\textwidth}   
        \centering 
        \includegraphics[width=\textwidth]{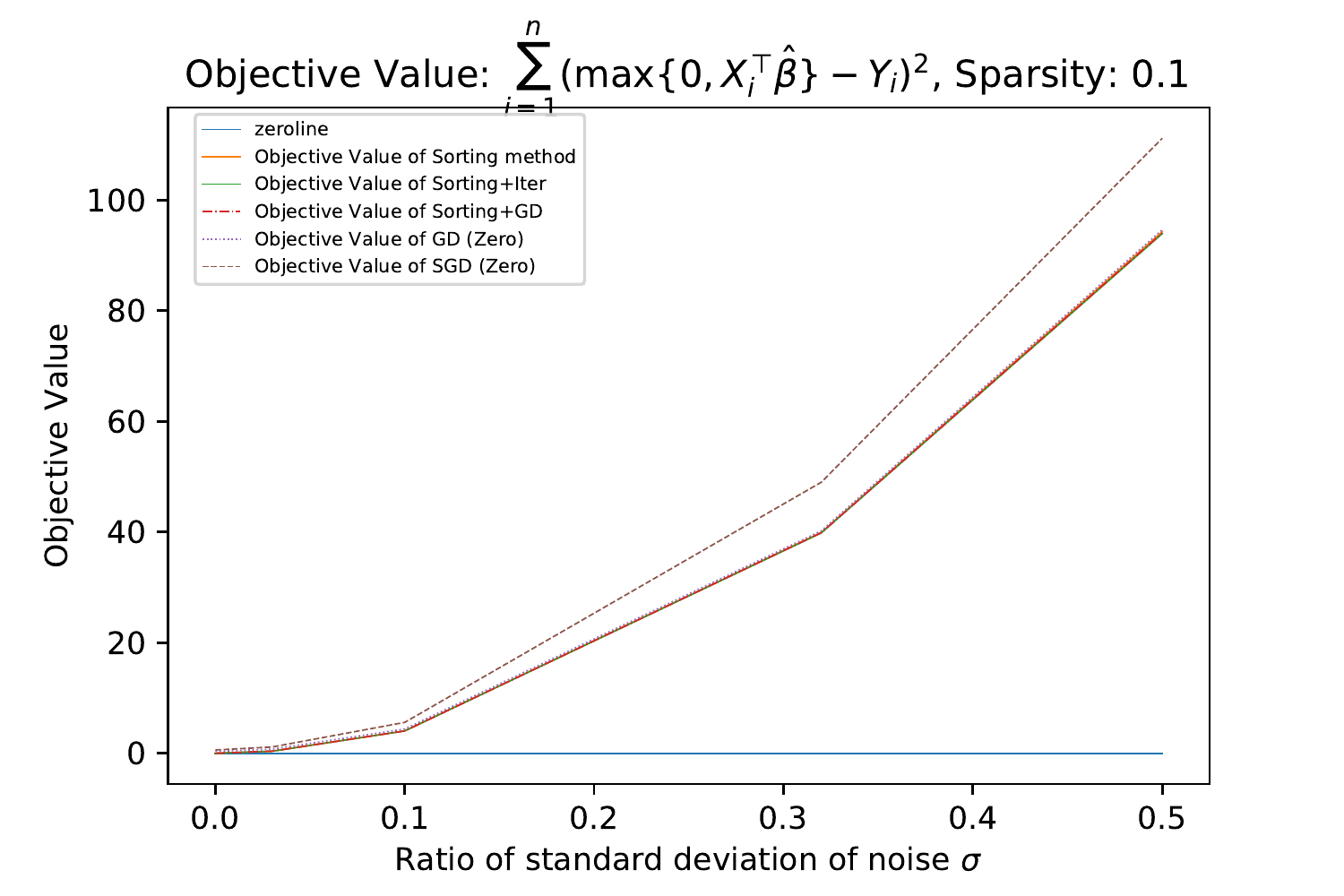}
        \caption[]%
        {{\small Objective Value}}    
        \label{fig:Figure_Compare_OB_20_400_10}
    \end{subfigure}
    \hfill
    \begin{subfigure}[b]{0.24\textwidth}  
        \centering 
        \includegraphics[width=\textwidth]{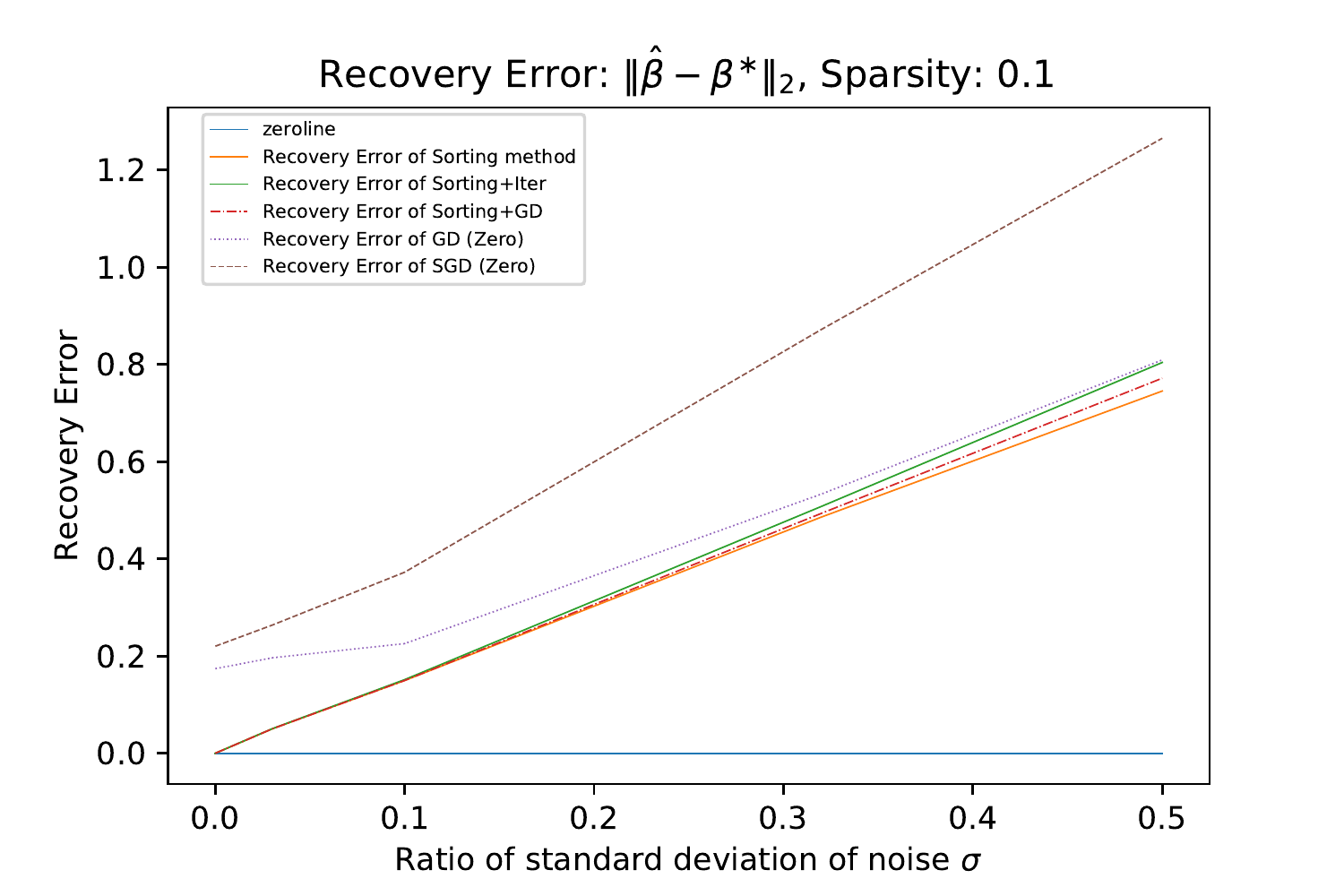}
        \caption[]%
        {{\small Recovery Error}}    
        \label{fig:Figure_Compare_RE_20_400_10}
    \end{subfigure}
    \hfill
    \begin{subfigure}[b]{0.24\textwidth}   
        \centering 
        \includegraphics[width=\textwidth]{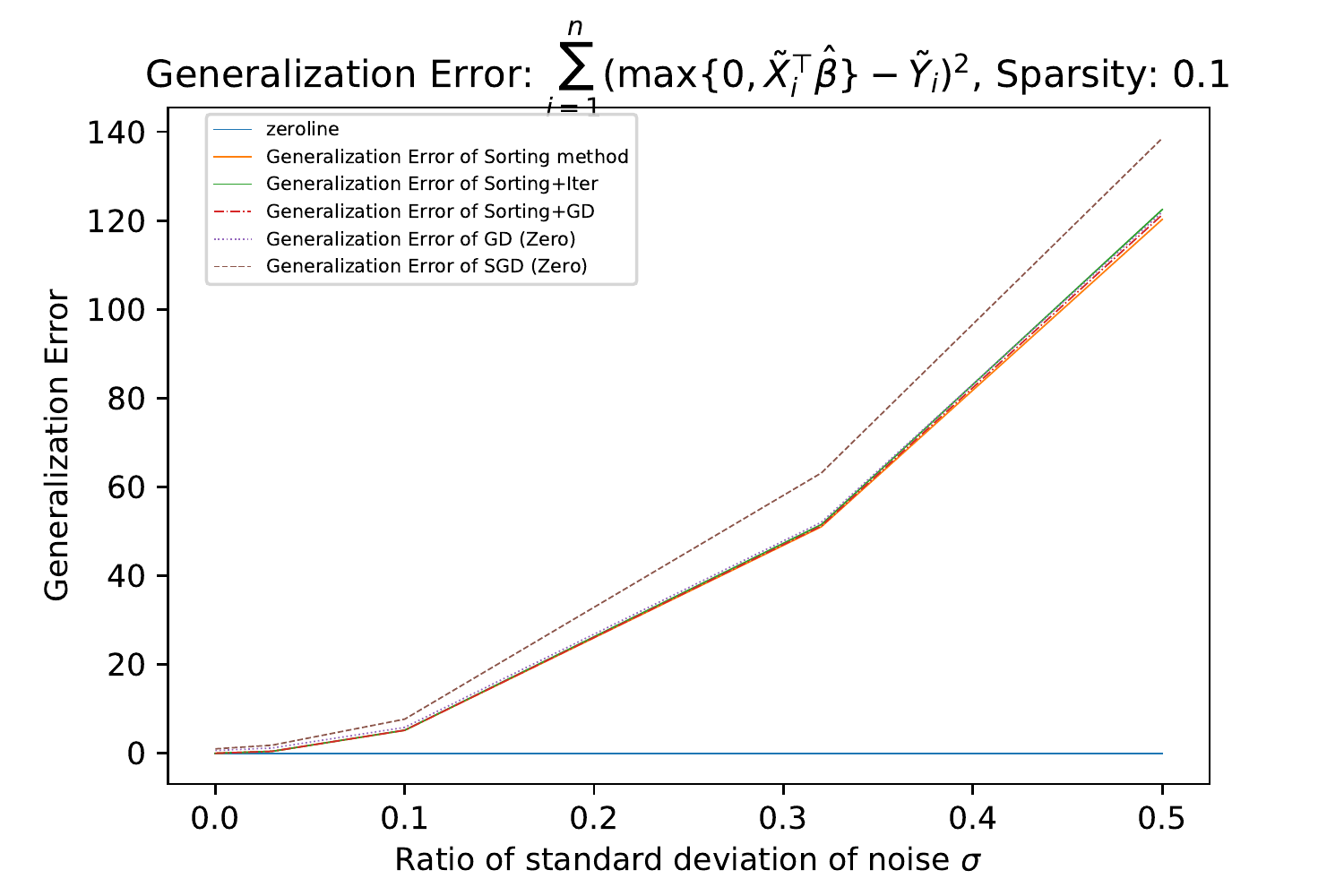}
        \caption[]%
        {{\small Generalization Error}}    
        \label{fig:Figure_Compare_GE_20_400_10}
    \end{subfigure}

    \vskip\baselineskip
    
    \centering
    \begin{subfigure}[b]{0.24\textwidth}
        \centering
        \includegraphics[width=\textwidth]{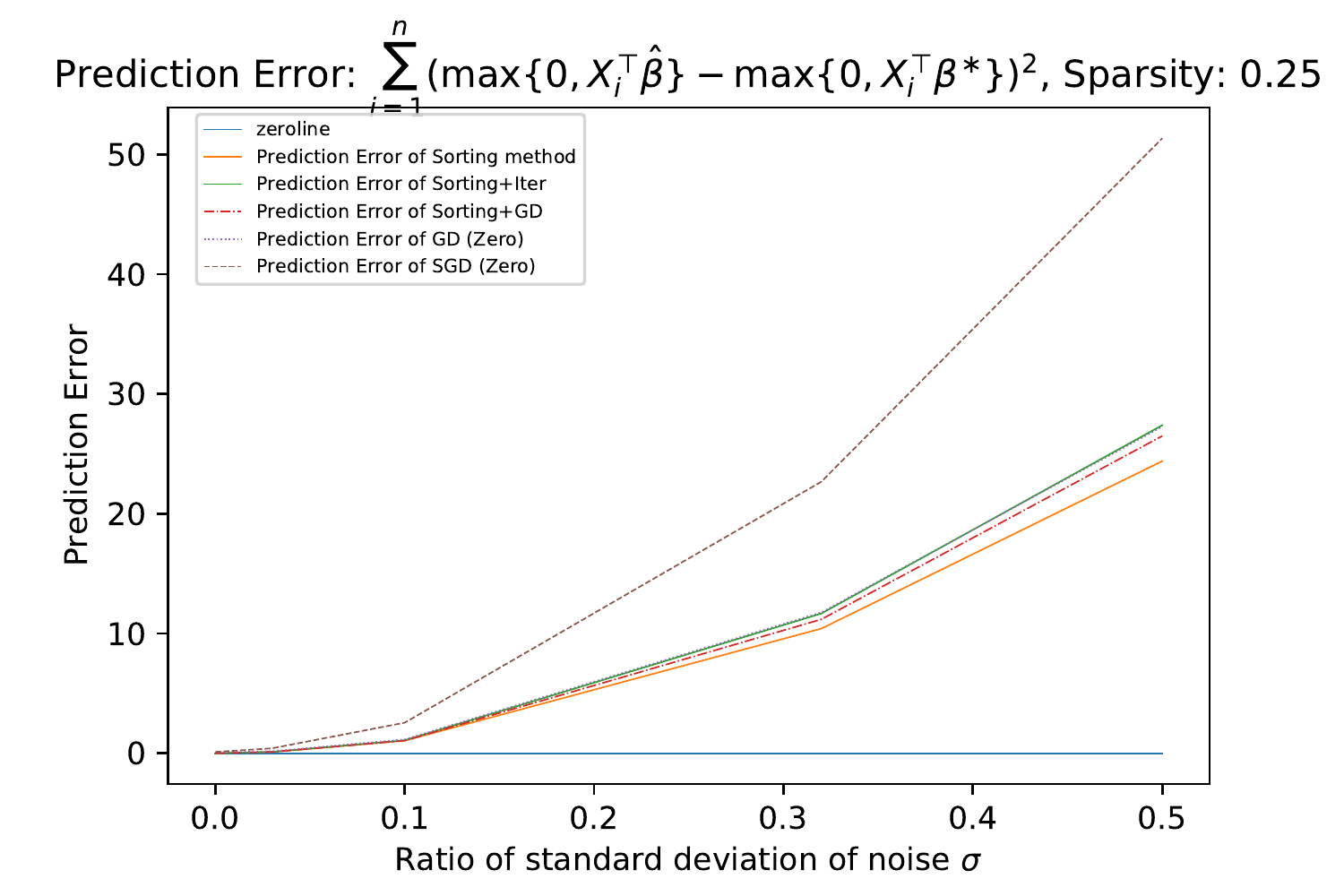}
        \caption[Network1]%
        {{\small Prediction Error}}    
        \label{fig:Figure_Compare_PE_20_400_25}
    \end{subfigure}
    \hfill
    \begin{subfigure}[b]{0.24\textwidth}   
        \centering 
        \includegraphics[width=\textwidth]{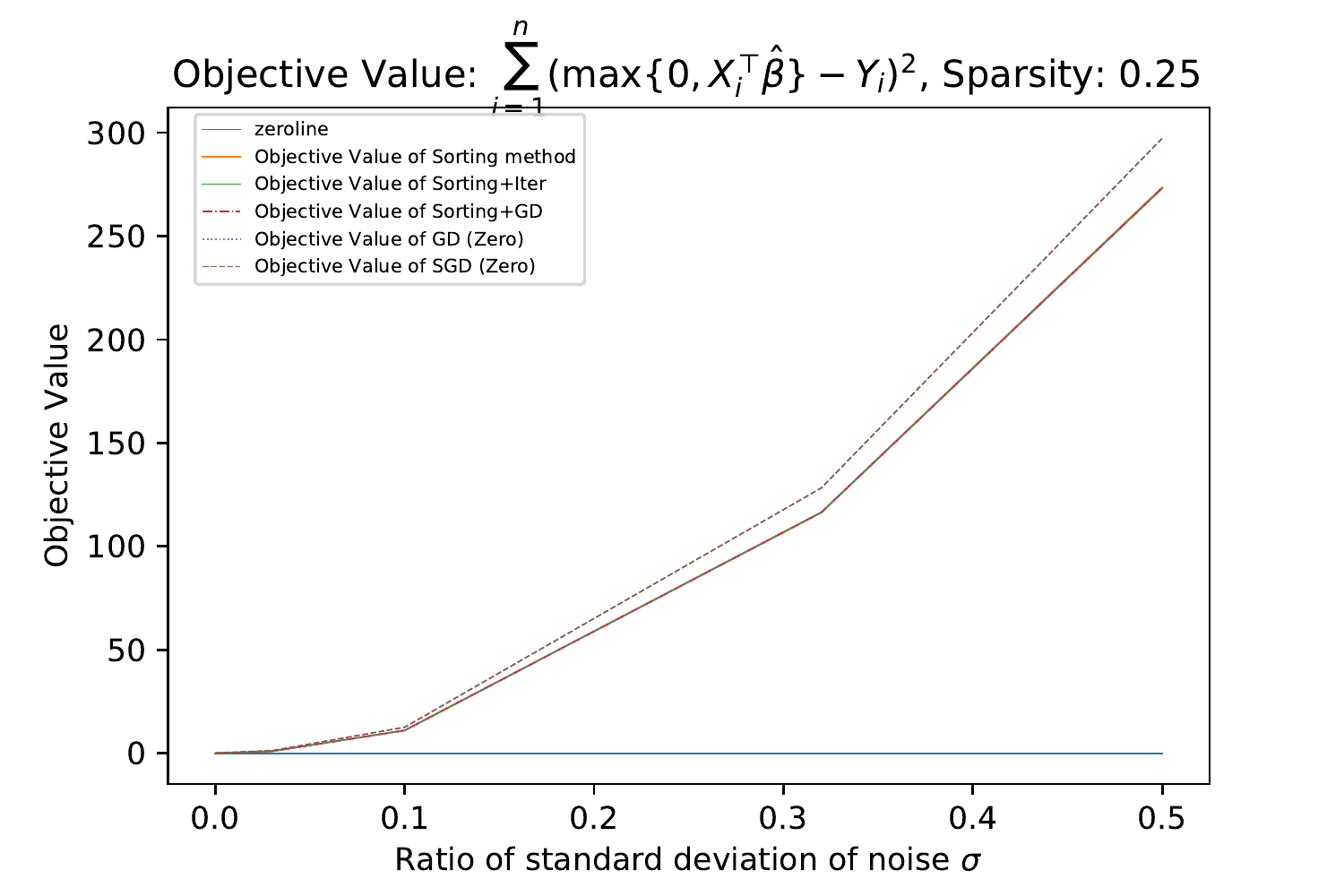}
        \caption[]%
        {{\small Objective Value}}    
        \label{fig:Figure_Compare_OB_20_400_25}
    \end{subfigure}
    \hfill
    \begin{subfigure}[b]{0.24\textwidth}  
        \centering 
        \includegraphics[width=\textwidth]{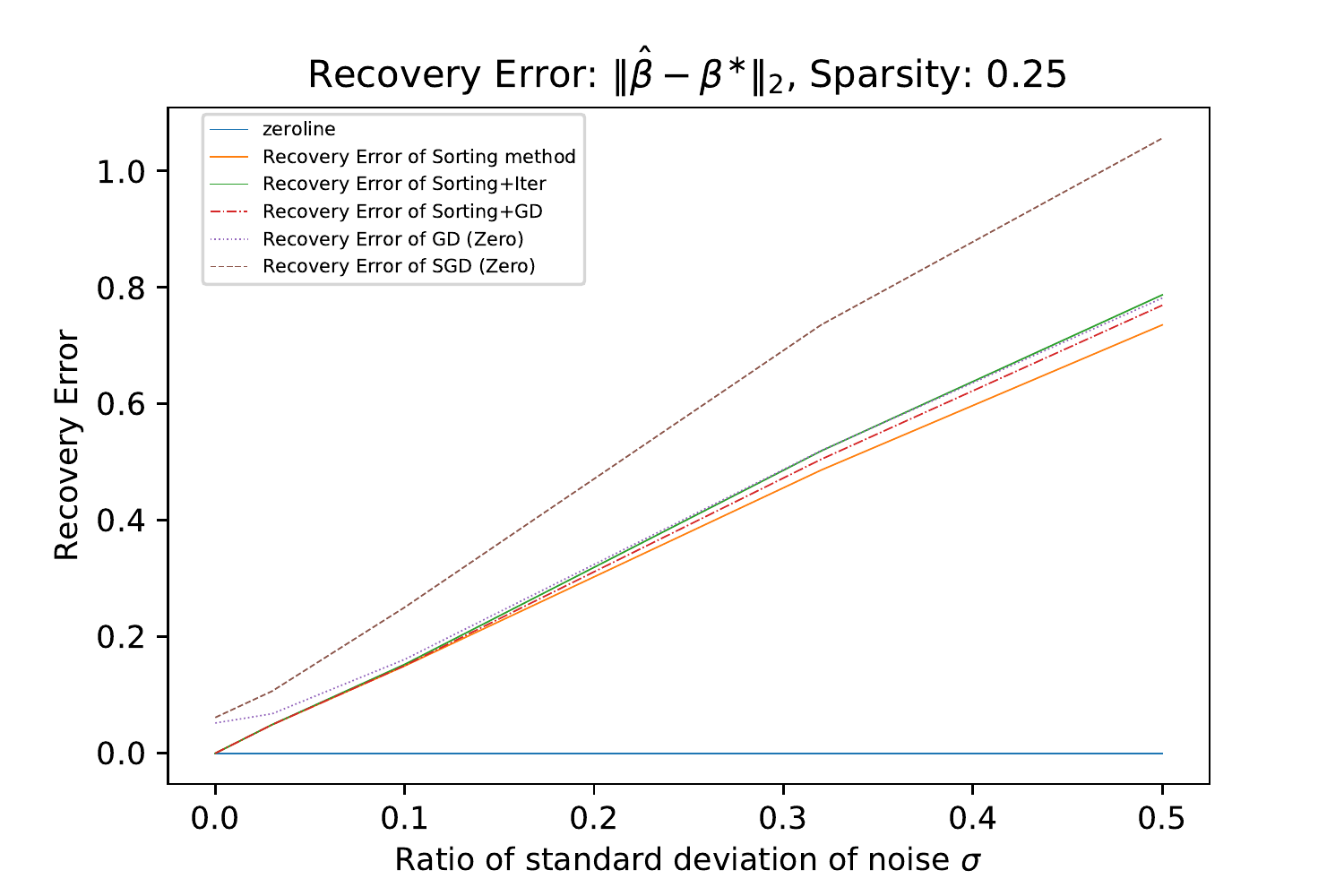}
        \caption[]%
        {{\small Recovery Error}}    
        \label{fig:Figure_Compare_RE_20_400_25}
    \end{subfigure}
    \hfill
    \begin{subfigure}[b]{0.24\textwidth}   
        \centering 
        \includegraphics[width=\textwidth]{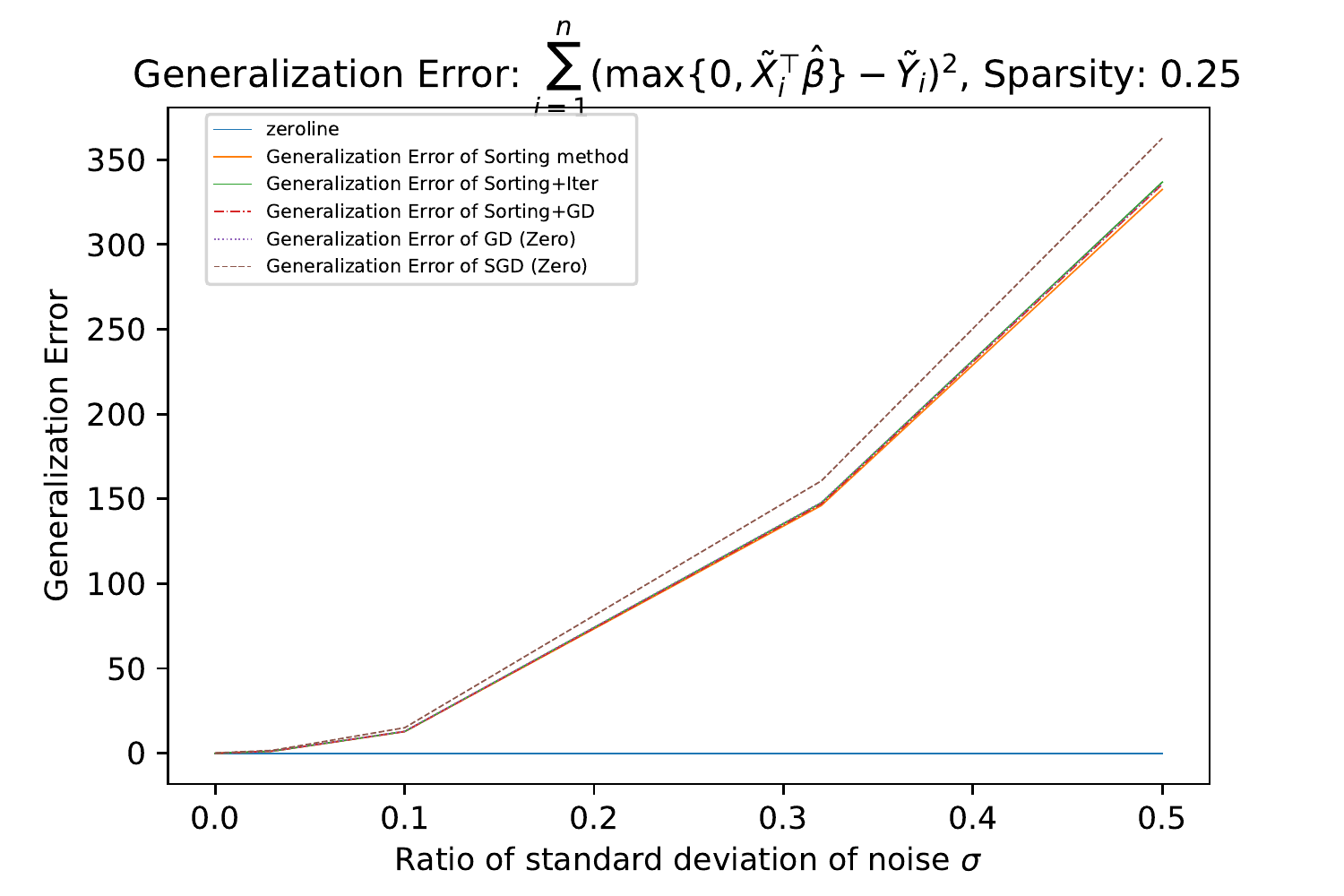}
        \caption[]%
        {{\small Generalization Error}}    
        \label{fig:Figure_Compare_GE_20_400_25}
    \end{subfigure}
    
    \vskip\baselineskip
    
    \centering
    \begin{subfigure}[b]{0.24\textwidth}
        \centering
        \includegraphics[width=\textwidth]{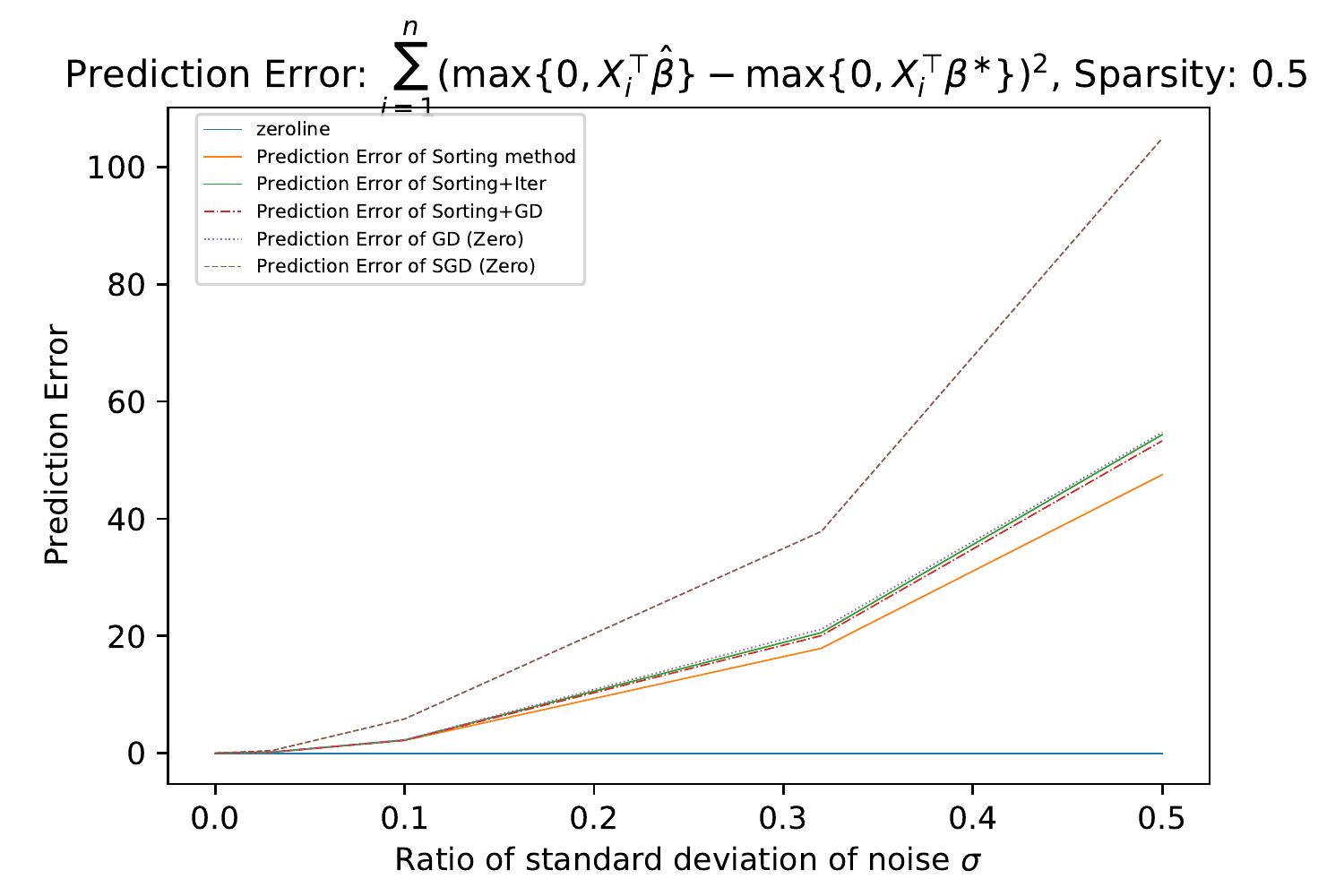}
        \caption[Network1]%
        {{\small Prediction Error}}    
        \label{fig:Figure_Compare_PE_20_400_50}
    \end{subfigure}
    \hfill
    \begin{subfigure}[b]{0.24\textwidth}   
        \centering 
        \includegraphics[width=\textwidth]{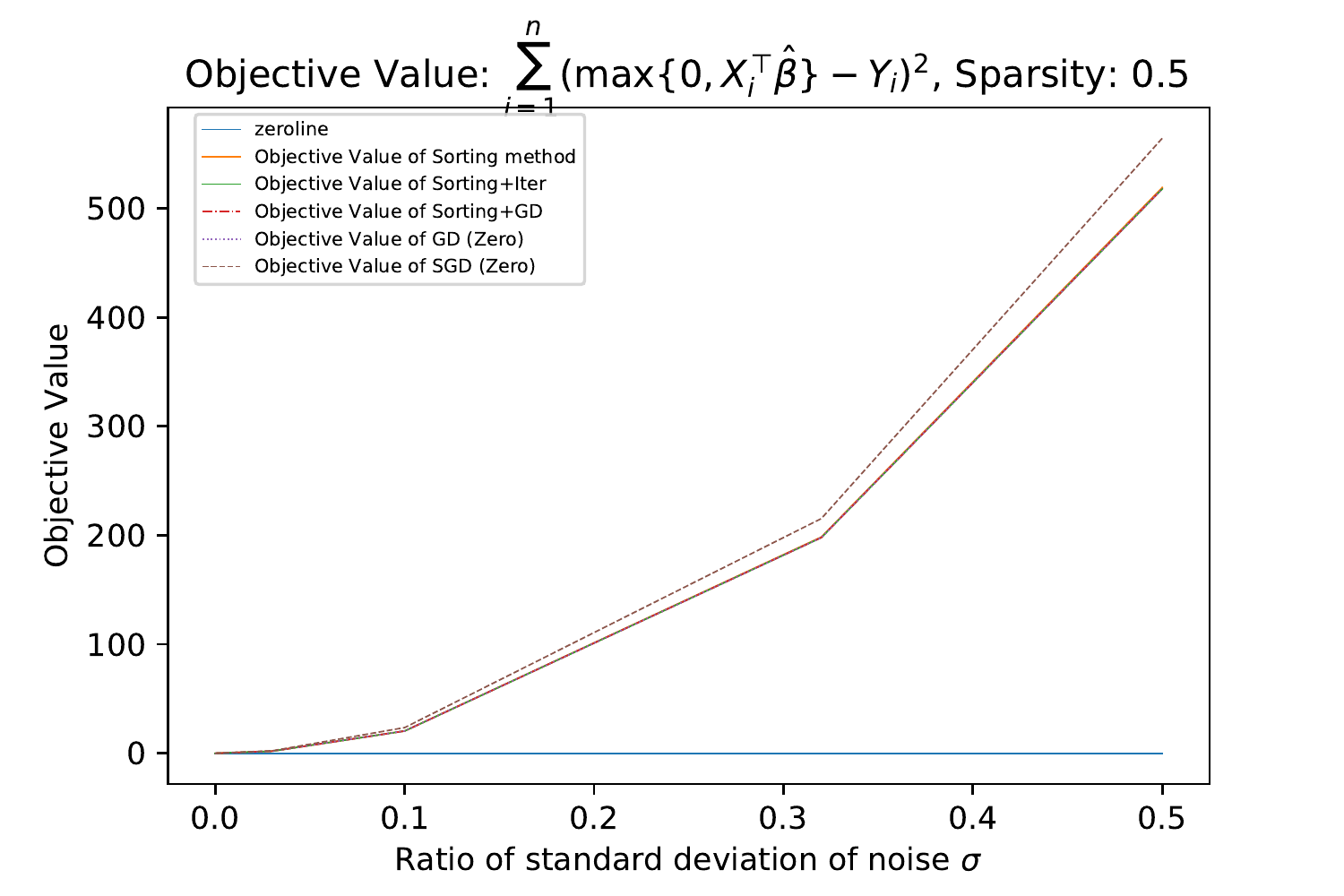}
        \caption[]%
        {{\small Objective Value}}    
        \label{fig:Figure_Compare_OB_20_400_50}
    \end{subfigure}
    \hfill
    \begin{subfigure}[b]{0.24\textwidth}  
        \centering 
        \includegraphics[width=\textwidth]{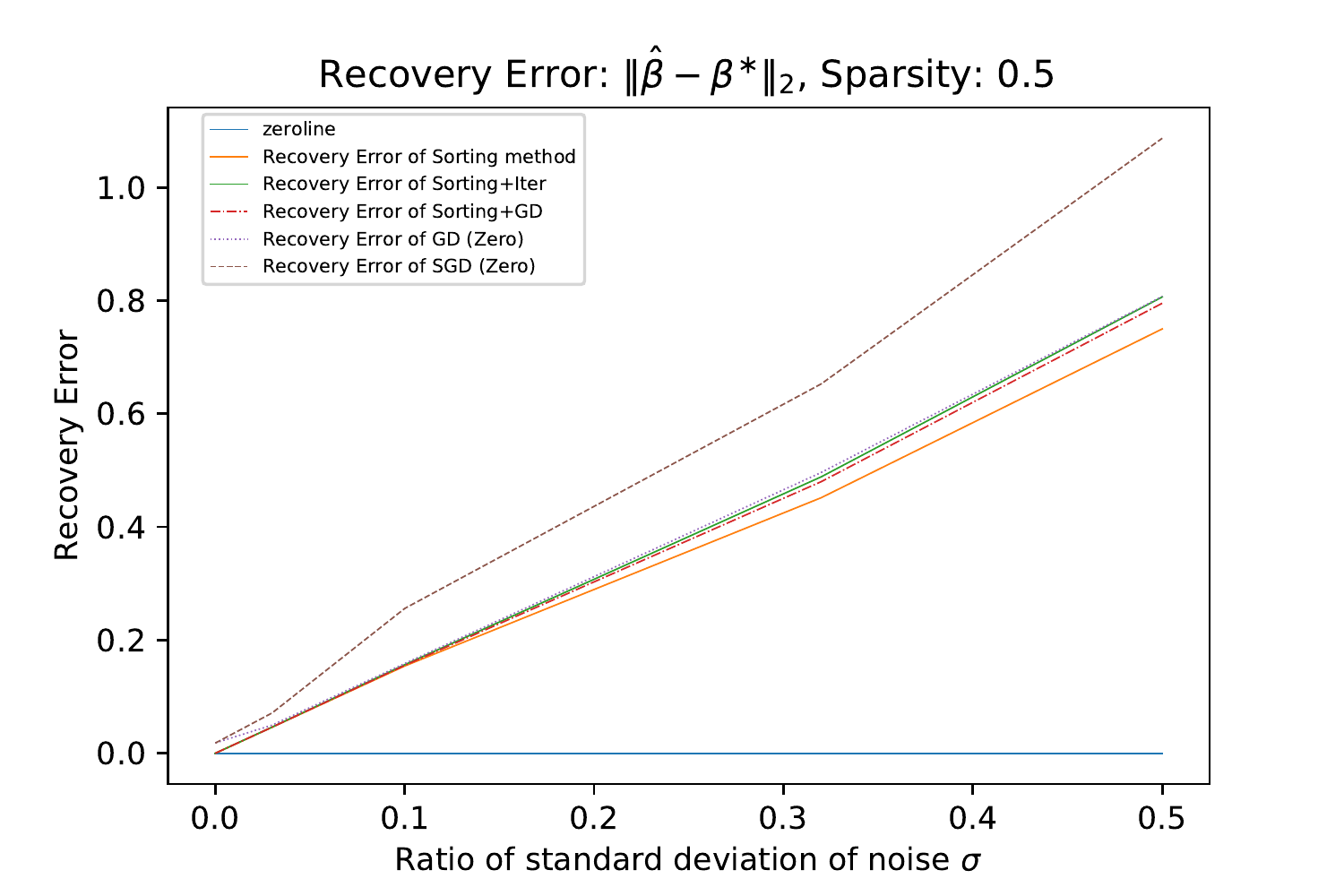}
        \caption[]%
        {{\small Recovery Error}}    
        \label{fig:Figure_Compare_RE_20_400_50}
    \end{subfigure}
    \hfill
    \begin{subfigure}[b]{0.24\textwidth}   
        \centering 
        \includegraphics[width=\textwidth]{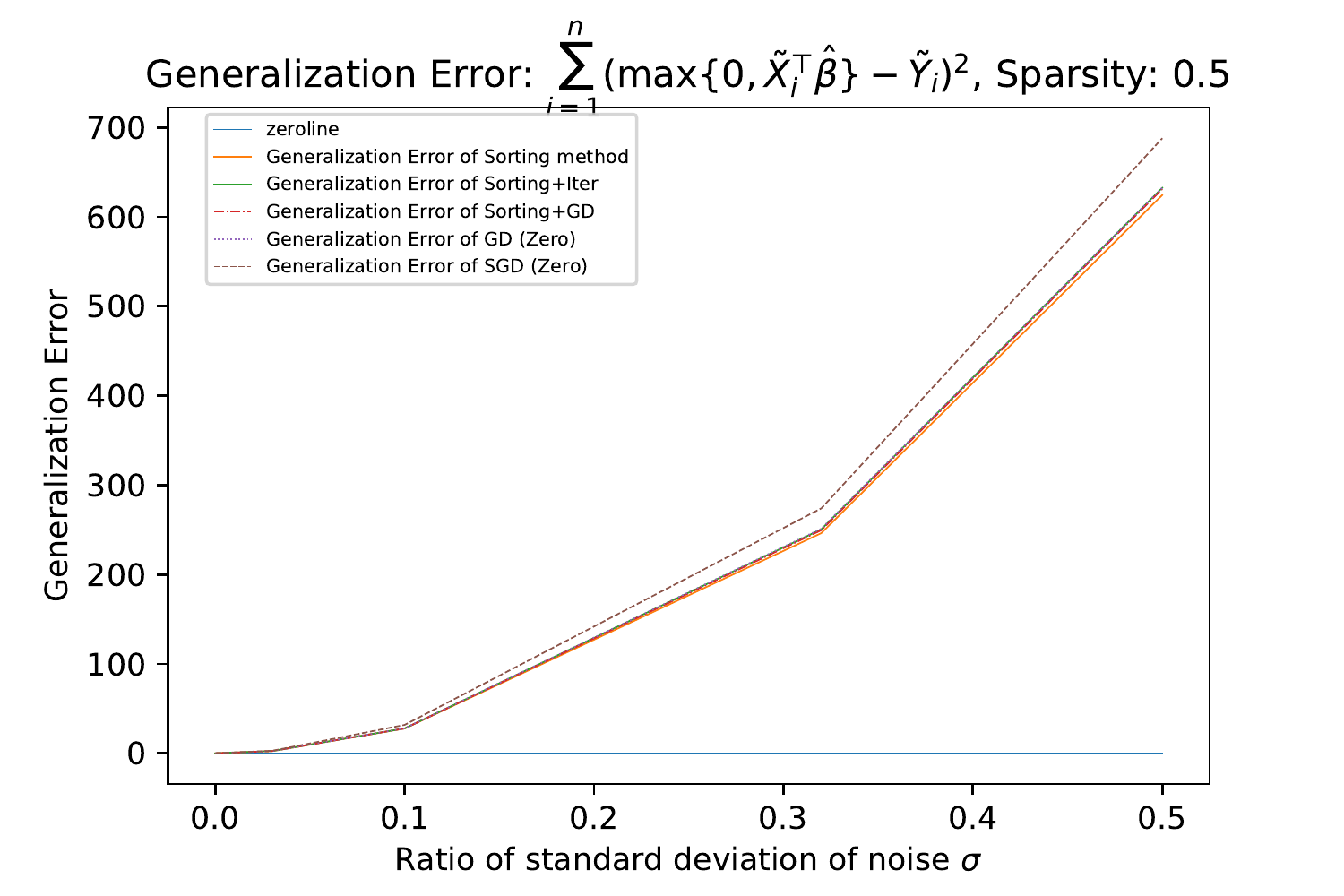}
        \caption[]%
        {{\small Generalization Error}}    
        \label{fig:Figure_Compare_GE_20_400_50}
    \end{subfigure}
    
    \vskip\baselineskip
    
    \centering
    \begin{subfigure}[b]{0.24\textwidth}
        \centering
        \includegraphics[width=\textwidth]{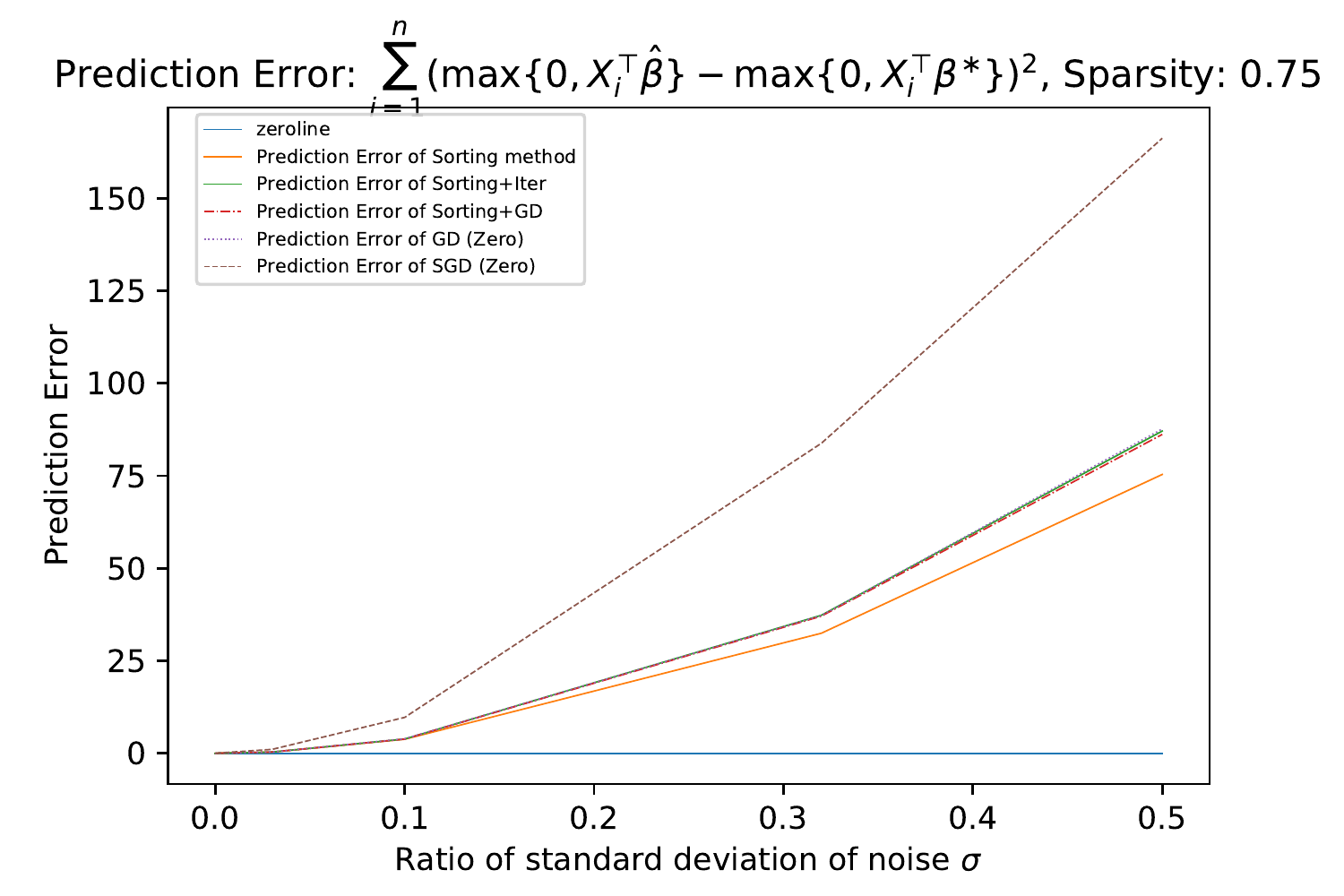}
        \caption[Network1]%
        {{\small Prediction Error}}    
        \label{fig:Figure_Compare_PE_20_400_75}
    \end{subfigure}
    \hfill
    \begin{subfigure}[b]{0.24\textwidth}   
        \centering 
        \includegraphics[width=\textwidth]{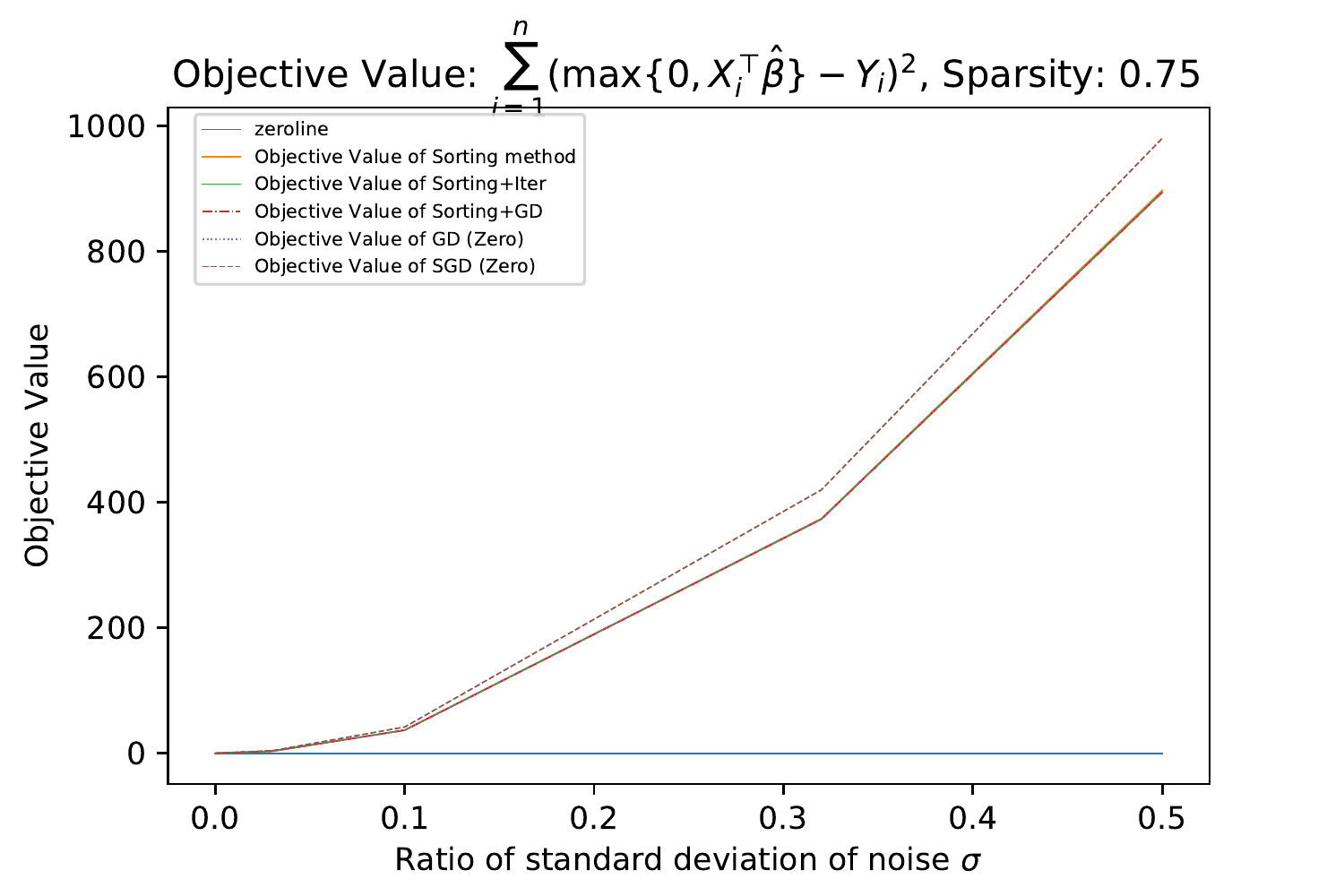}
        \caption[]%
        {{\small Objective Value}}    
        \label{fig:Figure_Compare_OB_20_400_75}
    \end{subfigure}
    \hfill
    \begin{subfigure}[b]{0.24\textwidth}  
        \centering 
        \includegraphics[width=\textwidth]{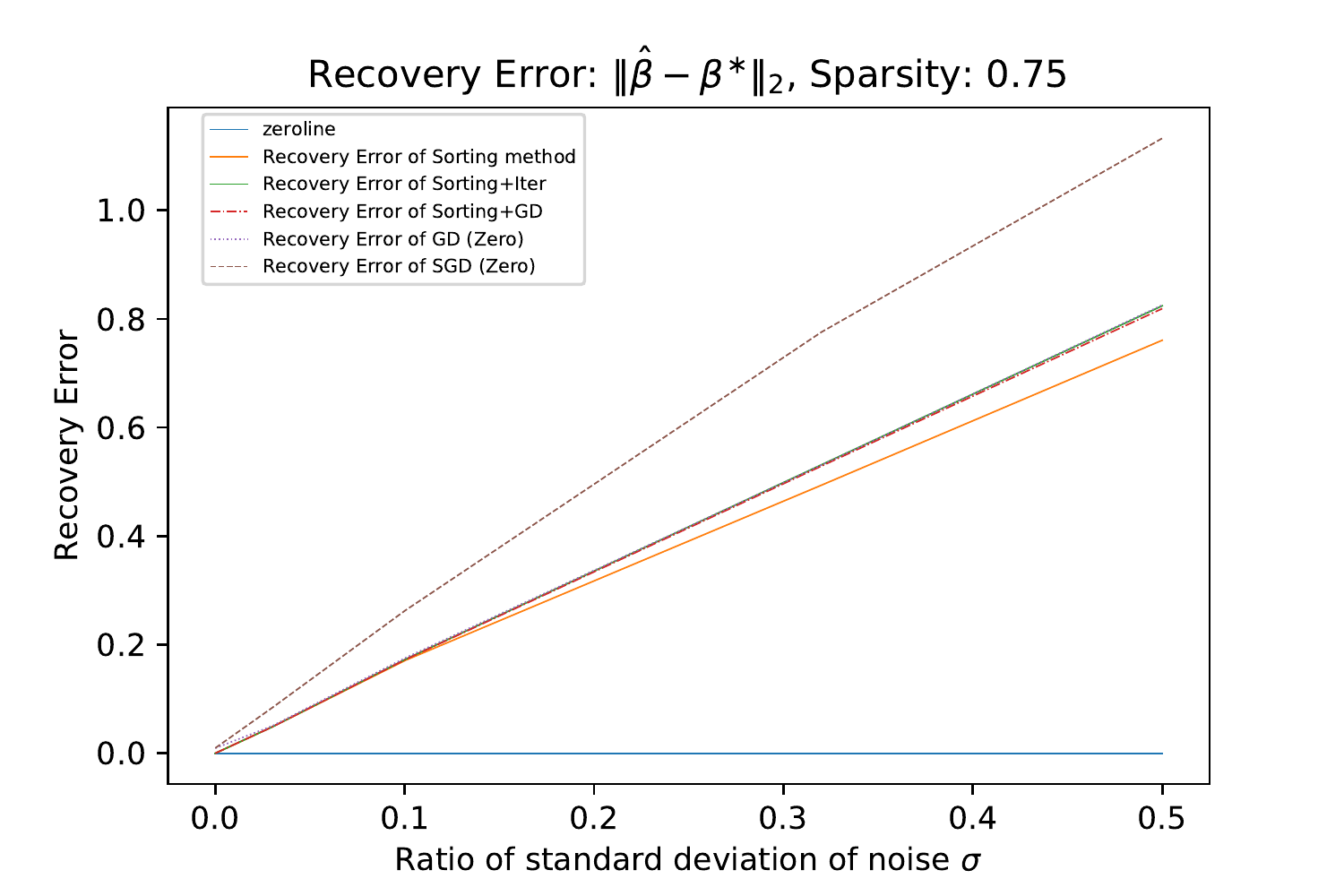}
        \caption[]%
        {{\small Recovery Error}}    
        \label{fig:Figure_Compare_RE_20_400_75}
    \end{subfigure}
    \hfill
    \begin{subfigure}[b]{0.24\textwidth}   
        \centering 
        \includegraphics[width=\textwidth]{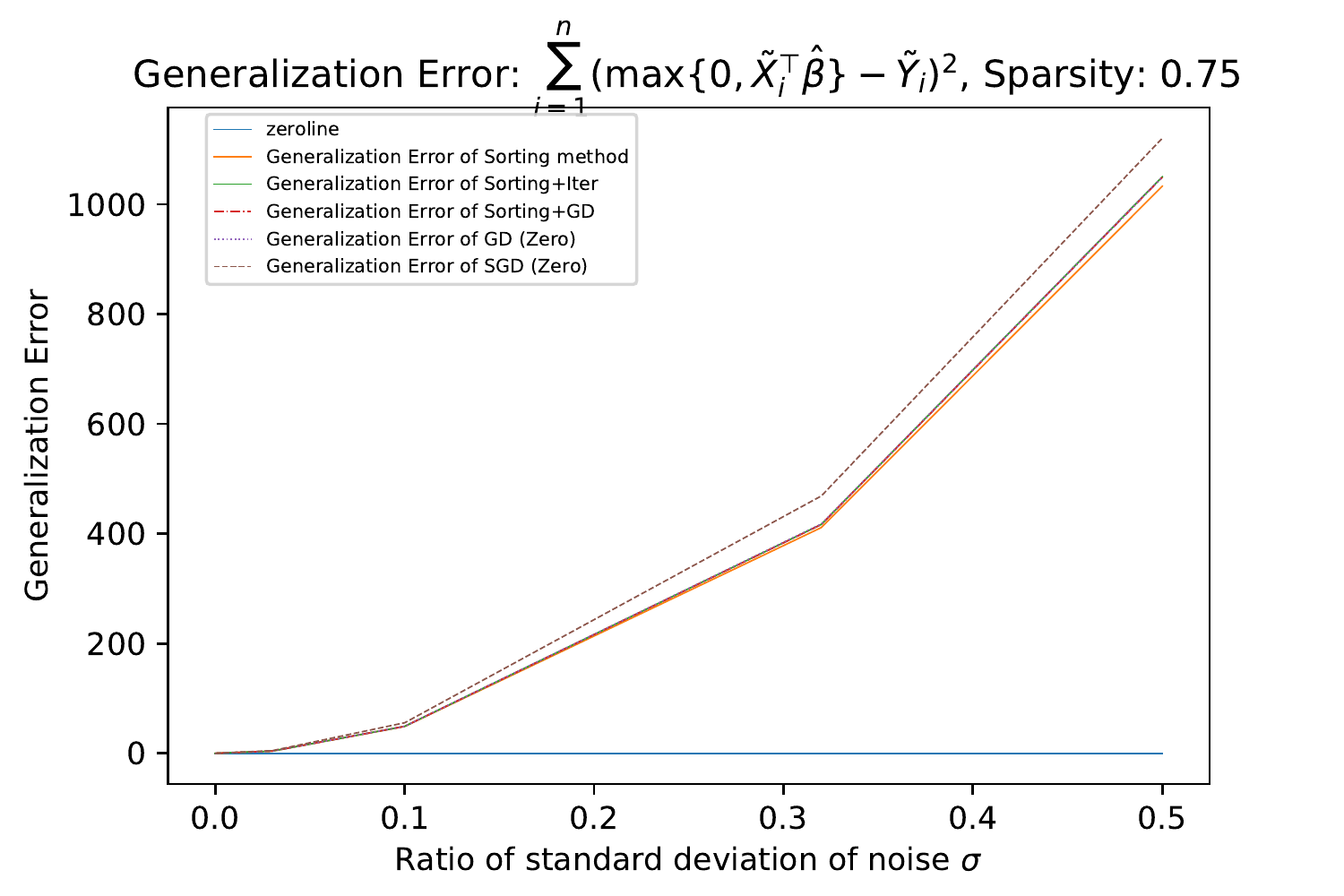}
        \caption[]%
        {{\small Generalization Error}}    
        \label{fig:Figure_Compare_GE_20_400_75}
    \end{subfigure}
    
    \vskip\baselineskip
    
    \centering
    \begin{subfigure}[b]{0.24\textwidth}
        \centering
        \includegraphics[width=\textwidth]{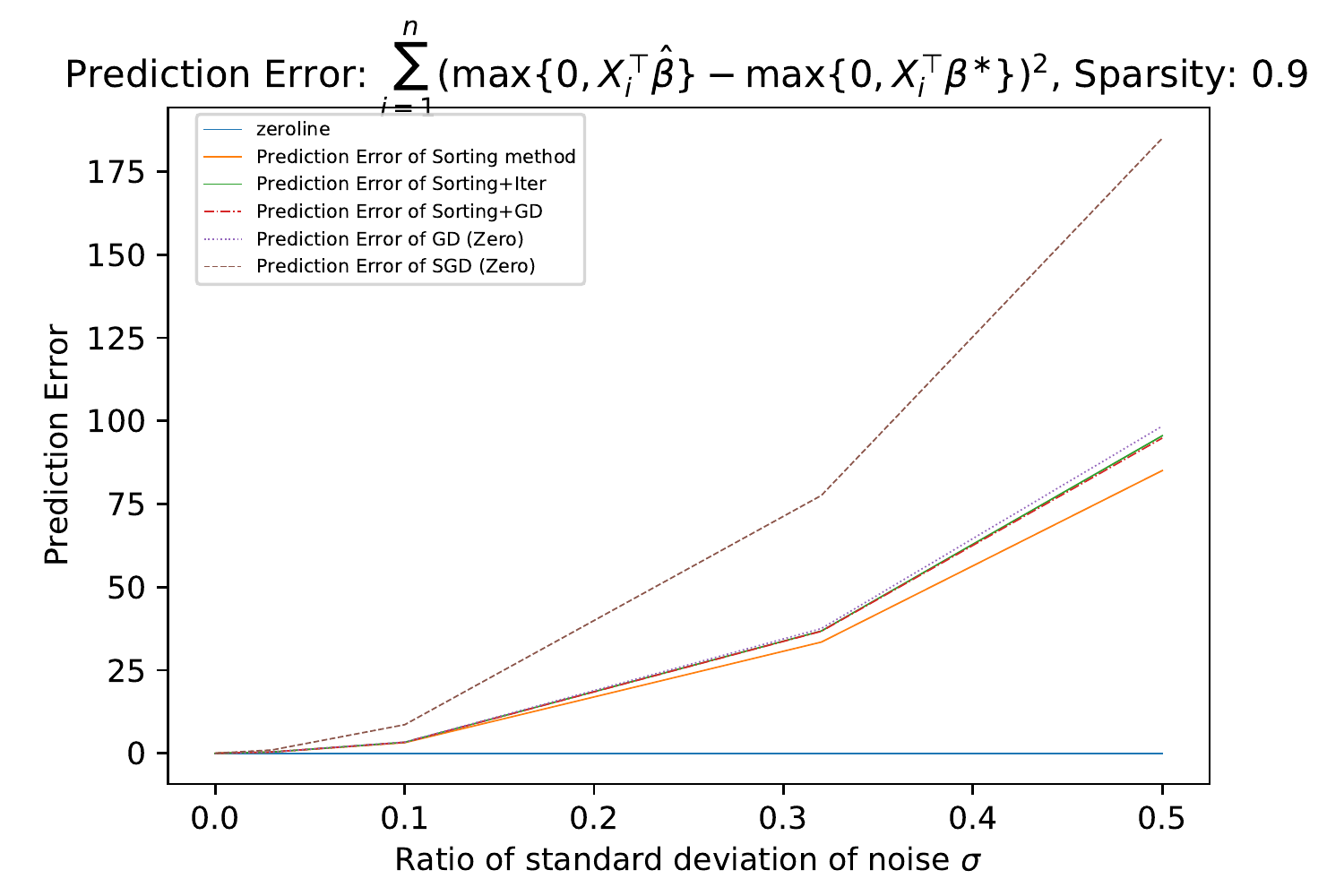}
        \caption[Network1]%
        {{\small Prediction Error}}    
        \label{fig:Figure_Compare_PE_20_400_90}
    \end{subfigure}
    \hfill
    \begin{subfigure}[b]{0.24\textwidth}   
        \centering 
        \includegraphics[width=\textwidth]{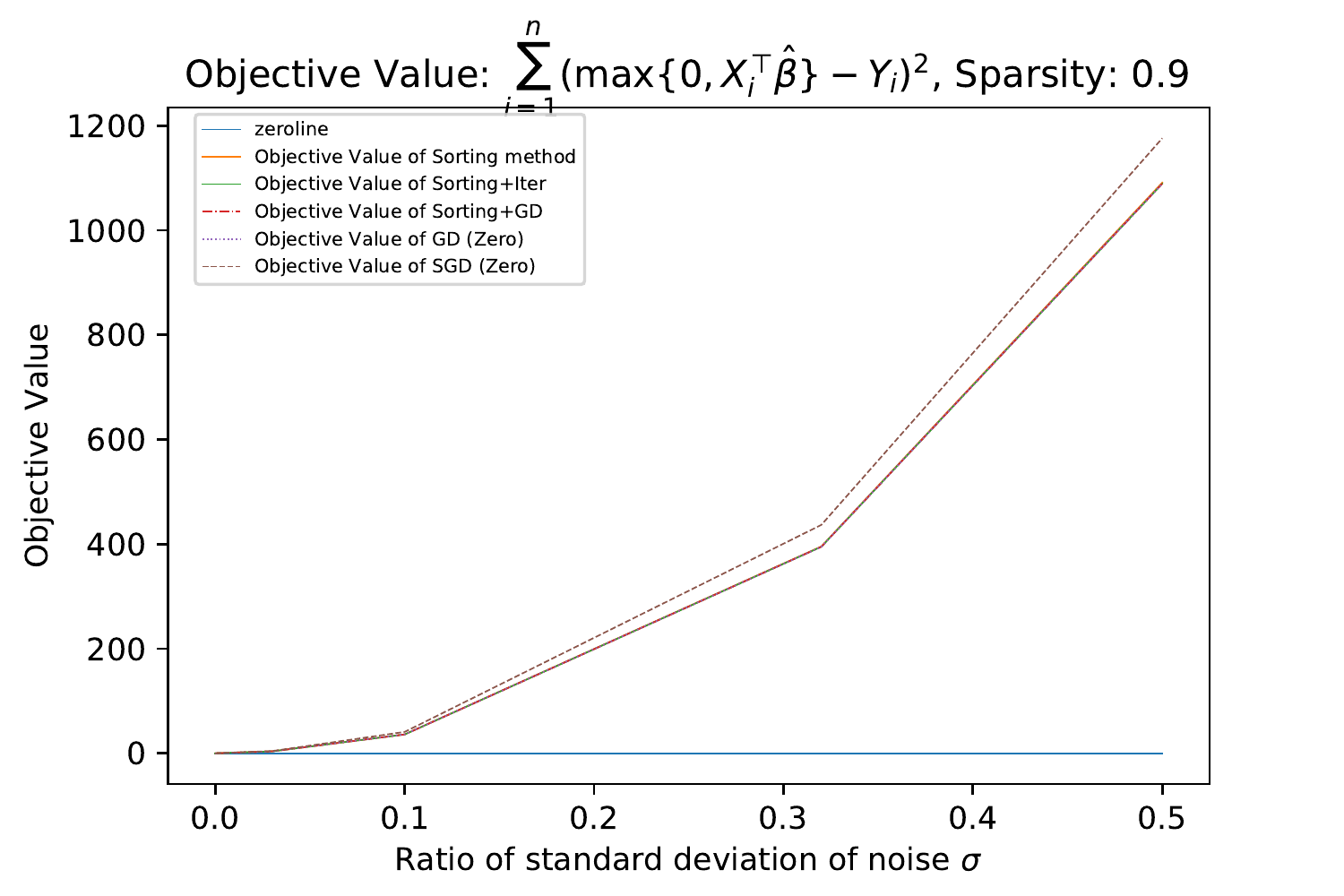}
        \caption[]%
        {{\small Objective Value}}    
        \label{fig:Figure_Compare_OB_20_400_90}
    \end{subfigure}
    \hfill
    \begin{subfigure}[b]{0.24\textwidth}  
        \centering 
        \includegraphics[width=\textwidth]{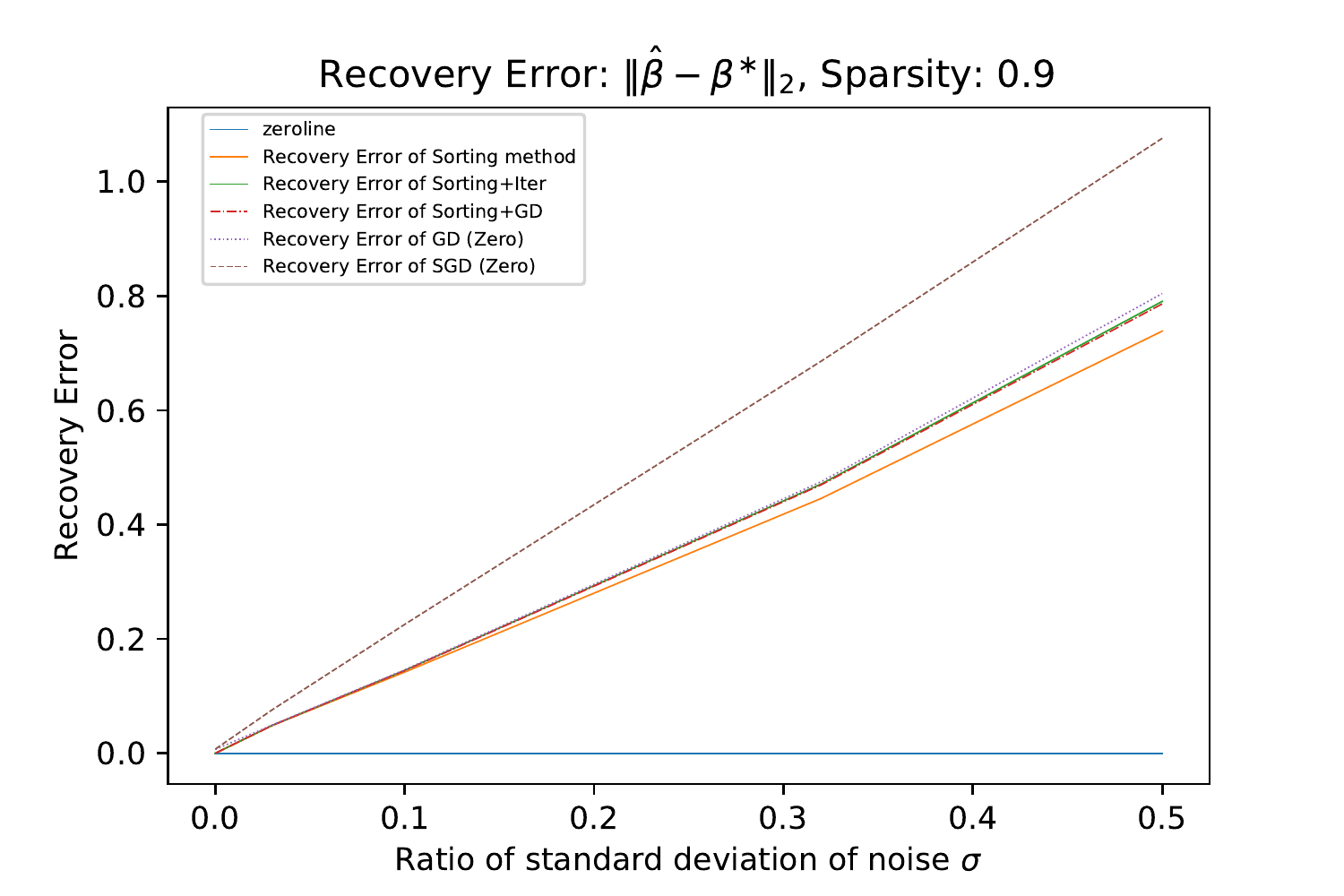}
        \caption[]%
        {{\small Recovery Error}}    
        \label{fig:Figure_Compare_RE_20_400_90}
    \end{subfigure}
    \hfill
    \begin{subfigure}[b]{0.24\textwidth}   
        \centering 
        \includegraphics[width=\textwidth]{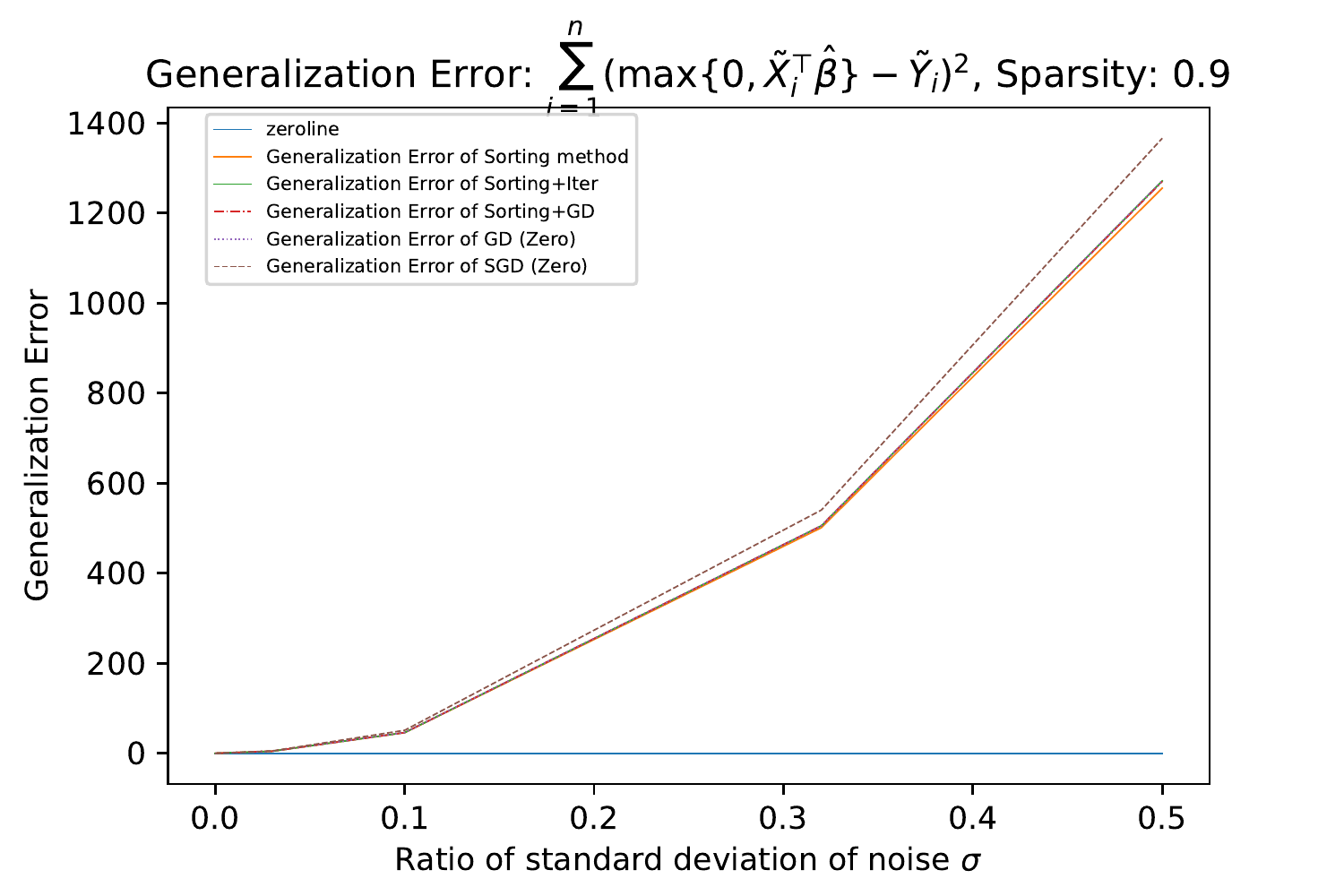}
        \caption[]%
        {{\small Generalization Error}}    
        \label{fig:Figure_Compare_GE_20_400_90}
    \end{subfigure}
    
    \caption[Numerical Results of sample size $(p, n)= (20, 400)$]
    {\small Numerical Results of sample size $(p, n) = (20, 400)$ and $\beta^{\ast} \sim N(0.5 \cdot \mathbf{1}_p, 20 \cdot I_p)$ with sparsity $\{0.1, 0.25, 0.5, 0.75, 0.9\}$} 
    \label{fig:Figure_Compare_20_400}
\end{figure*}

\section{Appendix: Realizable Cases} \label{section:realizable-cases}
Note that in realizable cases, since the observation samples $\{X_i\}_{i = 1}^n$ are constructed to guarantee the full column rank, i.e., the global optimal solution is unique, then finding a solution with 0 prediction error is equivalent to achieving 0 recovery error. In Figure~[\ref{fig:Figure_Compare_RE_20_400_10}, \ref{fig:Figure_Compare_RE_50_1000_10}, \ref{fig:Figure_Compare_RE_50_1000_25}], the averages of the recovery errors of realizable cases are not zero, however their corresponding prediction errors are very small, this may happen when the methods cannot find out the global optimal solutions. The details of realizable cases are presented in Table~[\ref{tab:realizable_10_200_10}, \ref{tab:realizable_10_200_25}, \ref{tab:realizable_10_200_50}, \ref{tab:realizable_10_200_75}, \ref{tab:realizable_10_200_90}, \ref{tab:realizable_20_400_10}, \ref{tab:realizable_20_400_25}, \ref{tab:realizable_20_400_50}, \ref{tab:realizable_20_400_75}, \ref{tab:realizable_20_400_90}, \ref{tab:realizable_50_1000_10}, \ref{tab:realizable_50_1000_25}, \ref{tab:realizable_50_1000_50}, \ref{tab:realizable_50_1000_75}, \ref{tab:realizable_50_1000_90}].

\begin{table}[ht]
\caption {\small Realizable Case $p = 10, n = 200, \text{sparsity} = 0.1$ with $\beta^{\ast} \sim N(0.5 \cdot \mathbf{1}_p, 10 \cdot I_p)$} \label{tab:realizable_10_200_10}
\begin{center}
\resizebox{0.7 \textwidth}{!}{\begin{tabular}{|l|l|l|l|l|l|l|l|l|l|l|} \hline
	\multirow{2}*{\bf Settings }& \multicolumn{2}{|l|}{\bf Sorting } & \multicolumn{2}{|l|}{\bf Sorting + Iter} & \multicolumn{2}{|l|}{\bf Sorting + GD } & \multicolumn{2}{|l|}{\bf GD } & \multicolumn{2}{|l|}{\bf SGD }  \\ \cline{2 - 11}
	& Prediction & Recovery & Prediction & Recovery & Prediction & Recovery &  Prediction & Recovery  & Prediction & Recovery \\ \specialrule{.15em}{.05em}{.05em} 

(10, 200, 0.0; 1) & 0.0 & 0.0 & 0.0 & 0.0 & 0.0 & 0.0 & 0.373 & 0.265 & 1.803 & 0.601 \\  \hline 
(10, 200, 0.0; 2) & 0.0 & 0.0 & 0.0 & 0.0 & 0.0 & 0.0 & 0.191 & 0.17 & 0.5 & 0.265 \\  \hline 
(10, 200, 0.0; 3) & 0.0 & 0.0 & 0.0 & 0.0 & 0.0 & 0.0 & 0.284 & 0.24 & 7.214 & 1.025 \\  \hline 
(10, 200, 0.0; 4) & 0.0 & 0.0 & 0.0 & 0.0 & 0.0 & 0.0 & 0.152 & 0.138 & 0.433 & 0.23 \\  \hline 
(10, 200, 0.0; 5) & 0.0 & 0.0 & 0.0 & 0.0 & 0.0 & 0.0 & 0.335 & 0.261 & 1.398 & 0.467 \\  \hline 
(10, 200, 0.0; 6) & 0.0 & 0.0 & 0.0 & 0.0 & 0.0 & 0.0 & 0.135 & 0.127 & 0.226 & 0.167 \\  \hline 
(10, 200, 0.0; 7) & 0.0 & 0.0 & 0.0 & 0.0 & 0.0 & 0.0 & 0.165 & 0.149 & 0.461 & 0.226 \\  \hline 
(10, 200, 0.0; 8) & 0.0 & 0.0 & 0.0 & 0.0 & 0.0 & 0.0 & 0.145 & 0.141 & 0.334 & 0.2 \\  \hline 
(10, 200, 0.0; 9) & 0.0 & 0.0 & 0.0 & 0.0 & 0.0 & 0.0 & 0.196 & 0.161 & 0.745 & 0.299 \\  \hline 
(10, 200, 0.0; 10) & 0.0 & 0.0 & 0.0 & 0.0 & 0.0 & 0.0 & 0.426 & 0.294 & 39.097 & 2.593 \\  \hline 
(10, 200, 0.0; 11) & 0.0 & 0.0 & 0.0 & 0.0 & 0.0 & 0.0 & 0.306 & 0.24 & 0.588 & 0.314 \\  \hline 
(10, 200, 0.0; 12) & 0.0 & 0.0 & 0.0 & 0.0 & 0.0 & 0.0 & 0.339 & 0.281 & 4.137 & 0.832 \\  \hline 
(10, 200, 0.0; 13) & 0.0 & 0.0 & 0.0 & 0.0 & 0.0 & 0.0 & 0.189 & 0.165 & 0.191 & 0.168 \\  \hline 
(10, 200, 0.0; 14) & 0.0 & 0.0 & 0.0 & 0.0 & 0.0 & 0.0 & 0.386 & 0.25 & 0.619 & 0.297 \\  \hline 
(10, 200, 0.0; 15) & 0.0 & 0.0 & 0.0 & 0.0 & 0.0 & 0.0 & 0.192 & 0.16 & 0.664 & 0.291 \\  \hline 
(10, 200, 0.0; 16) & 0.0 & 0.0 & 0.0 & 0.0 & 0.0 & 0.0 & 0.291 & 0.218 & 1.646 & 0.501 \\  \hline 
(10, 200, 0.0; 17) & 0.0 & 0.0 & 0.0 & 0.0 & 0.0 & 0.0 & 0.205 & 0.186 & 0.66 & 0.312 \\  \hline 
(10, 200, 0.0; 18) & 0.0 & 0.0 & 0.0 & 0.0 & 0.0 & 0.0 & 0.18 & 0.161 & 1.234 & 0.421 \\  \hline 
(10, 200, 0.0; 19) & 0.0 & 0.0 & 0.0 & 0.0 & 0.0 & 0.0 & 0.219 & 0.184 & 0.715 & 0.333 \\  \hline 
(10, 200, 0.0; 20) & 0.0 & 0.0 & 0.0 & 0.0 & 0.0 & 0.0 & 0.142 & 0.136 & 12.466 & 1.2 \\  \hline 

\end{tabular}}
\end{center}
\end{table}

\begin{table}[ht]
\caption {\small Realizable Case $p = 10, n = 200, \text{sparsity} = 0.25$ with $\beta^{\ast} \sim N(0.5 \cdot \mathbf{1}_p, 10 \cdot I_p)$} \label{tab:realizable_10_200_25}
\begin{center}
\resizebox{0.7 \textwidth}{!}{\begin{tabular}{|l|l|l|l|l|l|l|l|l|l|l|} \hline
	\multirow{2}*{\bf Settings }& \multicolumn{2}{|l|}{\bf Sorting } & \multicolumn{2}{|l|}{\bf Sorting + Iter} & \multicolumn{2}{|l|}{\bf Sorting + GD } & \multicolumn{2}{|l|}{\bf GD } & \multicolumn{2}{|l|}{\bf SGD }  \\ \cline{2 - 11}
	& Prediction & Recovery & Prediction & Recovery & Prediction & Recovery &  Prediction & Recovery  & Prediction & Recovery \\ \specialrule{.15em}{.05em}{.05em} 

(10, 200, 0.0; 1) & 0.0 & 0.0 & 0.0 & 0.0 & 0.0 & 0.0 & 0.049 & 0.058 & 0.107 & 0.084 \\  \hline 
(10, 200, 0.0; 2) & 0.0 & 0.0 & 0.0 & 0.0 & 0.0 & 0.0 & 0.045 & 0.054 & 0.088 & 0.073 \\  \hline 
(10, 200, 0.0; 3) & 0.0 & 0.0 & 0.0 & 0.0 & 0.0 & 0.0 & 0.09 & 0.088 & 0.147 & 0.111 \\  \hline 
(10, 200, 0.0; 4) & 0.0 & 0.0 & 0.0 & 0.0 & 0.0 & 0.0 & 0.025 & 0.037 & 0.068 & 0.065 \\  \hline 
(10, 200, 0.0; 5) & 0.0 & 0.0 & 0.0 & 0.0 & 0.0 & 0.0 & 0.06 & 0.066 & 0.929 & 0.247 \\  \hline 
(10, 200, 0.0; 6) & 0.0 & 0.0 & 0.0 & 0.0 & 0.0 & 0.0 & 0.046 & 0.058 & 0.098 & 0.087 \\  \hline 
(10, 200, 0.0; 7) & 0.0 & 0.0 & 0.0 & 0.0 & 0.0 & 0.0 & 0.044 & 0.055 & 0.065 & 0.06 \\  \hline 
(10, 200, 0.0; 8) & 0.0 & 0.0 & 0.0 & 0.0 & 0.0 & 0.0 & 0.062 & 0.07 & 0.11 & 0.085 \\  \hline 
(10, 200, 0.0; 9) & 0.0 & 0.0 & 0.0 & 0.0 & 0.0 & 0.0 & 0.05 & 0.059 & 0.197 & 0.111 \\  \hline 
(10, 200, 0.0; 10) & 0.0 & 0.0 & 0.0 & 0.0 & 0.0 & 0.0 & 0.083 & 0.084 & 0.061 & 0.069 \\  \hline 
(10, 200, 0.0; 11) & 0.0 & 0.0 & 0.0 & 0.0 & 0.0 & 0.0 & 0.08 & 0.087 & 0.125 & 0.099 \\  \hline 
(10, 200, 0.0; 12) & 0.0 & 0.0 & 0.0 & 0.0 & 0.0 & 0.0 & 0.056 & 0.064 & 0.063 & 0.067 \\  \hline 
(10, 200, 0.0; 13) & 0.0 & 0.0 & 0.0 & 0.0 & 0.0 & 0.0 & 0.054 & 0.065 & 0.083 & 0.072 \\  \hline 
(10, 200, 0.0; 14) & 0.0 & 0.0 & 0.0 & 0.0 & 0.0 & 0.0 & 0.042 & 0.053 & 0.285 & 0.115 \\  \hline 
(10, 200, 0.0; 15) & 0.0 & 0.0 & 0.0 & 0.0 & 0.0 & 0.0 & 0.032 & 0.039 & 0.032 & 0.037 \\  \hline 
(10, 200, 0.0; 16) & 0.0 & 0.0 & 0.0 & 0.0 & 0.0 & 0.0 & 0.073 & 0.081 & 0.083 & 0.072 \\  \hline 
(10, 200, 0.0; 17) & 0.0 & 0.0 & 0.0 & 0.0 & 0.0 & 0.0 & 0.05 & 0.057 & 0.071 & 0.073 \\  \hline 
(10, 200, 0.0; 18) & 0.0 & 0.0 & 0.0 & 0.0 & 0.0 & 0.0 & 0.035 & 0.044 & 0.099 & 0.081 \\  \hline 
(10, 200, 0.0; 19) & 0.0 & 0.0 & 0.0 & 0.0 & 0.0 & 0.0 & 0.038 & 0.048 & 0.107 & 0.077 \\  \hline 
(10, 200, 0.0; 20) & 0.0 & 0.0 & 0.0 & 0.0 & 0.0 & 0.0 & 0.071 & 0.076 & 0.207 & 0.124 \\  \hline

\end{tabular}}
\end{center}
\end{table}

\begin{table}[ht]
\caption {\small Realizable Case $p = 10, n = 200, \text{sparsity} = 0.5$ with $\beta^{\ast} \sim N(0.5 \cdot \mathbf{1}_p, 10 \cdot I_p)$} \label{tab:realizable_10_200_50}
\begin{center}
\resizebox{0.7 \textwidth}{!}{\begin{tabular}{|l|l|l|l|l|l|l|l|l|l|l|} \hline
	\multirow{2}*{\bf Settings }& \multicolumn{2}{|l|}{\bf Sorting } & \multicolumn{2}{|l|}{\bf Sorting + Iter} & \multicolumn{2}{|l|}{\bf Sorting + GD } & \multicolumn{2}{|l|}{\bf GD } & \multicolumn{2}{|l|}{\bf SGD }  \\ \cline{2 - 11}
	& Prediction & Recovery & Prediction & Recovery & Prediction & Recovery &  Prediction & Recovery  & Prediction & Recovery \\ \specialrule{.15em}{.05em}{.05em} 
(10, 200, 0.0; 1) & 0.0 & 0.0 & 0.0 & 0.0 & 0.0 & 0.0 & 0.015 & 0.022 & 0.043 & 0.031 \\  \hline 
(10, 200, 0.0; 2) & 0.0 & 0.0 & 0.0 & 0.0 & 0.0 & 0.0 & 0.01 & 0.017 & 0.013 & 0.02 \\  \hline 
(10, 200, 0.0; 3) & 0.0 & 0.0 & 0.0 & 0.0 & 0.0 & 0.0 & 0.016 & 0.025 & 0.03 & 0.029 \\  \hline 
(10, 200, 0.0; 4) & 0.0 & 0.0 & 0.0 & 0.0 & 0.0 & 0.0 & 0.009 & 0.017 & 0.025 & 0.026 \\  \hline 
(10, 200, 0.0; 5) & 0.0 & 0.0 & 0.0 & 0.0 & 0.0 & 0.0 & 0.017 & 0.024 & 0.019 & 0.024 \\  \hline 
(10, 200, 0.0; 6) & 0.0 & 0.0 & 0.0 & 0.0 & 0.0 & 0.0 & 0.015 & 0.022 & 0.017 & 0.02 \\  \hline 
(10, 200, 0.0; 7) & 0.0 & 0.0 & 0.0 & 0.0 & 0.0 & 0.0 & 0.014 & 0.023 & 0.075 & 0.046 \\  \hline 
(10, 200, 0.0; 8) & 0.0 & 0.0 & 0.0 & 0.0 & 0.0 & 0.0 & 0.02 & 0.028 & 0.016 & 0.024 \\  \hline 
(10, 200, 0.0; 9) & 0.0 & 0.0 & 0.0 & 0.0 & 0.0 & 0.0 & 0.025 & 0.035 & 0.048 & 0.044 \\  \hline 
(10, 200, 0.0; 10) & 0.0 & 0.0 & 0.0 & 0.0 & 0.0 & 0.0 & 0.015 & 0.024 & 0.017 & 0.026 \\  \hline 
(10, 200, 0.0; 11) & 0.0 & 0.0 & 0.0 & 0.0 & 0.0 & 0.0 & 0.015 & 0.024 & 0.002 & 0.006 \\  \hline 
(10, 200, 0.0; 12) & 0.0 & 0.0 & 0.0 & 0.0 & 0.0 & 0.0 & 0.008 & 0.016 & 0.007 & 0.013 \\  \hline 
(10, 200, 0.0; 13) & 0.0 & 0.0 & 0.0 & 0.0 & 0.0 & 0.0 & 0.015 & 0.024 & 0.018 & 0.022 \\  \hline 
(10, 200, 0.0; 14) & 0.0 & 0.0 & 0.0 & 0.0 & 0.0 & 0.0 & 0.008 & 0.015 & 0.016 & 0.018 \\  \hline 
(10, 200, 0.0; 15) & 0.0 & 0.0 & 0.0 & 0.0 & 0.0 & 0.0 & 0.014 & 0.023 & 0.009 & 0.015 \\  \hline 
(10, 200, 0.0; 16) & 0.0 & 0.0 & 0.0 & 0.0 & 0.0 & 0.0 & 0.007 & 0.014 & 0.024 & 0.036 \\  \hline 
(10, 200, 0.0; 17) & 0.0 & 0.0 & 0.0 & 0.0 & 0.0 & 0.0 & 0.025 & 0.034 & 0.008 & 0.018 \\  \hline 
(10, 200, 0.0; 18) & 0.0 & 0.0 & 0.0 & 0.0 & 0.0 & 0.0 & 0.011 & 0.02 & 0.026 & 0.027 \\  \hline 
(10, 200, 0.0; 19) & 0.0 & 0.0 & 0.0 & 0.0 & 0.0 & 0.0 & 0.011 & 0.019 & 0.007 & 0.015 \\  \hline 
(10, 200, 0.0; 20) & 0.0 & 0.0 & 0.0 & 0.0 & 0.0 & 0.0 & 0.016 & 0.025 & 0.51 & 0.12 \\  \hline 

\end{tabular}}
\end{center}
\end{table}

\begin{table}[ht]
\caption {\small Realizable Case $p = 10, n = 200, \text{sparsity} = 0.75$ with $\beta^{\ast} \sim N(0.5 \cdot \mathbf{1}_p, 10 \cdot I_p)$} \label{tab:realizable_10_200_75}
\begin{center}
\resizebox{0.7 \textwidth}{!}{\begin{tabular}{|l|l|l|l|l|l|l|l|l|l|l|} \hline
	\multirow{2}*{\bf Settings }& \multicolumn{2}{|l|}{\bf Sorting } & \multicolumn{2}{|l|}{\bf Sorting + Iter} & \multicolumn{2}{|l|}{\bf Sorting + GD } & \multicolumn{2}{|l|}{\bf GD } & \multicolumn{2}{|l|}{\bf SGD }  \\ \cline{2 - 11}
	& Prediction & Recovery & Prediction & Recovery & Prediction & Recovery &  Prediction & Recovery  & Prediction & Recovery \\ \specialrule{.15em}{.05em}{.05em} 

(10, 200, 0.0; 1) & 0.0 & 0.0 & 0.0 & 0.0 & 0.0 & 0.0 & 0.002 & 0.006 & 0.006 & 0.009 \\  \hline 
(10, 200, 0.0; 2) & 0.0 & 0.0 & 0.0 & 0.0 & 0.0 & 0.0 & 0.008 & 0.016 & 0.009 & 0.012 \\  \hline 
(10, 200, 0.0; 3) & 0.0 & 0.0 & 0.0 & 0.0 & 0.0 & 0.0 & 0.005 & 0.01 & 0.01 & 0.013 \\  \hline 
(10, 200, 0.0; 4) & 0.0 & 0.0 & 0.0 & 0.0 & 0.0 & 0.0 & 0.004 & 0.009 & 0.005 & 0.009 \\  \hline 
(10, 200, 0.0; 5) & 0.0 & 0.0 & 0.0 & 0.0 & 0.0 & 0.0 & 0.006 & 0.012 & 0.019 & 0.017 \\  \hline 
(10, 200, 0.0; 6) & 0.0 & 0.0 & 0.0 & 0.0 & 0.0 & 0.0 & 0.004 & 0.009 & 0.013 & 0.015 \\  \hline 
(10, 200, 0.0; 7) & 0.0 & 0.0 & 0.0 & 0.0 & 0.0 & 0.0 & 0.005 & 0.011 & 0.031 & 0.019 \\  \hline 
(10, 200, 0.0; 8) & 0.0 & 0.0 & 0.0 & 0.0 & 0.0 & 0.0 & 0.001 & 0.005 & 0.006 & 0.009 \\  \hline 
(10, 200, 0.0; 9) & 0.0 & 0.0 & 0.0 & 0.0 & 0.0 & 0.0 & 0.005 & 0.01 & 0.005 & 0.008 \\  \hline 
(10, 200, 0.0; 10) & 0.0 & 0.0 & 0.0 & 0.0 & 0.0 & 0.0 & 0.007 & 0.012 & 0.008 & 0.012 \\  \hline 
(10, 200, 0.0; 11) & 0.0 & 0.0 & 0.0 & 0.0 & 0.0 & 0.0 & 0.005 & 0.011 & 0.009 & 0.012 \\  \hline 
(10, 200, 0.0; 12) & 0.0 & 0.0 & 0.0 & 0.0 & 0.0 & 0.0 & 0.001 & 0.005 & 0.003 & 0.007 \\  \hline 
(10, 200, 0.0; 13) & 0.0 & 0.0 & 0.0 & 0.0 & 0.0 & 0.0 & 0.007 & 0.014 & 0.001 & 0.004 \\  \hline 
(10, 200, 0.0; 14) & 0.0 & 0.0 & 0.0 & 0.0 & 0.0 & 0.0 & 0.003 & 0.007 & 0.007 & 0.009 \\  \hline 
(10, 200, 0.0; 15) & 0.0 & 0.0 & 0.0 & 0.0 & 0.0 & 0.0 & 0.003 & 0.008 & 0.007 & 0.01 \\  \hline 
(10, 200, 0.0; 16) & 0.0 & 0.0 & 0.0 & 0.0 & 0.0 & 0.0 & 0.004 & 0.009 & 0.012 & 0.012 \\  \hline 
(10, 200, 0.0; 17) & 0.0 & 0.0 & 0.0 & 0.0 & 0.0 & 0.0 & 0.008 & 0.014 & 0.85 & 0.101 \\  \hline 
(10, 200, 0.0; 18) & 0.0 & 0.0 & 0.0 & 0.0 & 0.0 & 0.0 & 0.006 & 0.013 & 0.017 & 0.019 \\  \hline 
(10, 200, 0.0; 19) & 0.0 & 0.0 & 0.0 & 0.0 & 0.0 & 0.0 & 0.003 & 0.008 & 0.006 & 0.009 \\  \hline 
(10, 200, 0.0; 20) & 0.0 & 0.0 & 0.0 & 0.0 & 0.0 & 0.0 & 0.01 & 0.016 & 0.014 & 0.013 \\  \hline

\end{tabular}}
\end{center}
\end{table}

\begin{table}[ht]
\caption {\small Realizable Case $p = 10, n = 200, \text{sparsity} = 0.9$ with $\beta^{\ast} \sim N(0.5 \cdot \mathbf{1}_p, 10 \cdot I_p)$} \label{tab:realizable_10_200_90}
\begin{center}
\resizebox{0.7 \textwidth}{!}{\begin{tabular}{|l|l|l|l|l|l|l|l|l|l|l|} \hline
	\multirow{2}*{\bf Settings }& \multicolumn{2}{|l|}{\bf Sorting } & \multicolumn{2}{|l|}{\bf Sorting + Iter} & \multicolumn{2}{|l|}{\bf Sorting + GD } & \multicolumn{2}{|l|}{\bf GD } & \multicolumn{2}{|l|}{\bf SGD }  \\ \cline{2 - 11}
	& Prediction & Recovery & Prediction & Recovery & Prediction & Recovery &  Prediction & Recovery  & Prediction & Recovery \\ \specialrule{.15em}{.05em}{.05em} 

(10, 200, 0.0; 1) & 0.0 & 0.0 & 0.0 & 0.0 & 0.0 & 0.0 & 0.006 & 0.011 & 0.005 & 0.01 \\  \hline 
(10, 200, 0.0; 2) & 0.0 & 0.0 & 0.0 & 0.0 & 0.0 & 0.0 & 0.009 & 0.016 & 0.028 & 0.02 \\  \hline 
(10, 200, 0.0; 3) & 0.0 & 0.0 & 0.0 & 0.0 & 0.0 & 0.0 & 0.005 & 0.011 & 0.044 & 0.022 \\  \hline 
(10, 200, 0.0; 4) & 0.0 & 0.0 & 0.0 & 0.0 & 0.0 & 0.0 & 0.001 & 0.002 & 0.012 & 0.013 \\  \hline 
(10, 200, 0.0; 5) & 0.0 & 0.0 & 0.0 & 0.0 & 0.0 & 0.0 & 0.004 & 0.009 & 0.002 & 0.005 \\  \hline 
(10, 200, 0.0; 6) & 0.0 & 0.0 & 0.0 & 0.0 & 0.0 & 0.0 & 0.0 & 0.001 & 0.007 & 0.008 \\  \hline 
(10, 200, 0.0; 7) & 0.0 & 0.0 & 0.0 & 0.0 & 0.0 & 0.0 & 0.005 & 0.011 & 0.019 & 0.019 \\  \hline 
(10, 200, 0.0; 8) & 0.0 & 0.0 & 0.0 & 0.0 & 0.0 & 0.0 & 0.004 & 0.009 & 0.011 & 0.013 \\  \hline 
(10, 200, 0.0; 9) & 0.0 & 0.0 & 0.0 & 0.0 & 0.0 & 0.0 & 0.001 & 0.005 & 0.006 & 0.008 \\  \hline 
(10, 200, 0.0; 10) & 0.0 & 0.0 & 0.0 & 0.0 & 0.0 & 0.0 & 0.002 & 0.006 & 0.007 & 0.009 \\  \hline 
(10, 200, 0.0; 11) & 0.0 & 0.0 & 0.0 & 0.0 & 0.0 & 0.0 & 0.002 & 0.006 & 0.004 & 0.007 \\  \hline 
(10, 200, 0.0; 12) & 0.0 & 0.0 & 0.0 & 0.0 & 0.0 & 0.0 & 0.001 & 0.005 & 0.01 & 0.011 \\  \hline 
(10, 200, 0.0; 13) & 0.0 & 0.0 & 0.0 & 0.0 & 0.0 & 0.0 & 0.005 & 0.011 & 0.026 & 0.019 \\  \hline 
(10, 200, 0.0; 14) & 0.0 & 0.0 & 0.0 & 0.0 & 0.0 & 0.0 & 0.001 & 0.005 & 0.004 & 0.006 \\  \hline 
(10, 200, 0.0; 15) & 0.0 & 0.0 & 0.0 & 0.0 & 0.0 & 0.0 & 0.002 & 0.006 & 0.01 & 0.011 \\  \hline 
(10, 200, 0.0; 16) & 0.0 & 0.0 & 0.0 & 0.0 & 0.0 & 0.0 & 0.004 & 0.01 & 0.02 & 0.013 \\  \hline 
(10, 200, 0.0; 17) & 0.0 & 0.0 & 0.0 & 0.0 & 0.0 & 0.0 & 0.0 & 0.001 & 0.118 & 0.031 \\  \hline 
(10, 200, 0.0; 18) & 0.0 & 0.0 & 0.0 & 0.0 & 0.0 & 0.0 & 0.003 & 0.009 & 0.034 & 0.018 \\  \hline 
(10, 200, 0.0; 19) & 0.0 & 0.0 & 0.0 & 0.0 & 0.0 & 0.0 & 0.002 & 0.006 & 0.023 & 0.016 \\  \hline 
(10, 200, 0.0; 20) & 0.0 & 0.0 & 0.0 & 0.0 & 0.0 & 0.0 & 0.006 & 0.011 & 0.014 & 0.012 \\  \hline 

\end{tabular}}
\end{center}
\end{table}

\begin{table}[ht]
\caption {\small Realizable Case $p = 20, n = 400, \text{sparsity} = 0.1$ with $\beta^{\ast} \sim N(0.5 \cdot \mathbf{1}_p, 10 \cdot I_p)$} \label{tab:realizable_20_400_10}
\begin{center}
\resizebox{0.7 \textwidth}{!}{\begin{tabular}{|l|l|l|l|l|l|l|l|l|l|l|} \hline
	\multirow{2}*{\bf Settings }& \multicolumn{2}{|l|}{\bf Sorting } & \multicolumn{2}{|l|}{\bf Sorting + Iter} & \multicolumn{2}{|l|}{\bf Sorting + GD } & \multicolumn{2}{|l|}{\bf GD } & \multicolumn{2}{|l|}{\bf SGD }  \\ \cline{2 - 11}
	& Prediction & Recovery & Prediction & Recovery & Prediction & Recovery &  Prediction & Recovery  & Prediction & Recovery \\ \specialrule{.15em}{.05em}{.05em} 

(20, 400, 0.0; 1) & 0.0 & 0.0 & 0.0 & 0.0 & 0.0 & 0.0 & 0.256 & 0.142 & 0.391 & 0.178 \\  \hline 
(20, 400, 0.0; 2) & 0.0 & 0.0 & 0.0 & 0.0 & 0.0 & 0.0 & 0.436 & 0.214 & 0.79 & 0.281 \\  \hline 
(20, 400, 0.0; 3) & 0.0 & 0.0 & 0.0 & 0.0 & 0.0 & 0.0 & 0.295 & 0.157 & 0.401 & 0.18 \\  \hline 
(20, 400, 0.0; 4) & 0.0 & 0.0 & 0.0 & 0.0 & 0.0 & 0.0 & 0.332 & 0.171 & 0.325 & 0.158 \\  \hline 
(20, 400, 0.0; 5) & 0.0 & 0.0 & 0.0 & 0.0 & 0.0 & 0.0 & 0.288 & 0.157 & 0.598 & 0.22 \\  \hline 
(20, 400, 0.0; 6) & 0.0 & 0.0 & 0.0 & 0.0 & 0.0 & 0.0 & 0.43 & 0.196 & 0.633 & 0.232 \\  \hline 
(20, 400, 0.0; 7) & 0.0 & 0.0 & 0.0 & 0.0 & 0.0 & 0.0 & 0.257 & 0.146 & 0.431 & 0.19 \\  \hline 
(20, 400, 0.0; 8) & 0.0 & 0.0 & 0.0 & 0.0 & 0.0 & 0.0 & 0.48 & 0.226 & 0.667 & 0.259 \\  \hline 
(20, 400, 0.0; 9) & 0.0 & 0.0 & 0.0 & 0.0 & 0.0 & 0.0 & 0.457 & 0.204 & 1.063 & 0.31 \\  \hline 
(20, 400, 0.0; 10) & 0.0 & 0.0 & 0.0 & 0.0 & 0.0 & 0.0 & 0.226 & 0.135 & 0.404 & 0.177 \\  \hline 
(20, 400, 0.0; 11) & 0.0 & 0.0 & 0.0 & 0.0 & 0.0 & 0.0 & 0.301 & 0.16 & 0.481 & 0.189 \\  \hline 
(20, 400, 0.0; 12) & 0.0 & 0.0 & 0.0 & 0.0 & 0.0 & 0.0 & 0.383 & 0.183 & 1.135 & 0.31 \\  \hline 
(20, 400, 0.0; 13) & 0.0 & 0.0 & 0.0 & 0.0 & 0.0 & 0.0 & 0.33 & 0.162 & 1.007 & 0.278 \\  \hline 
(20, 400, 0.0; 14) & 0.0 & 0.0 & 0.0 & 0.0 & 0.0 & 0.0 & 0.336 & 0.173 & 0.478 & 0.201 \\  \hline 
(20, 400, 0.0; 15) & 0.0 & 0.0 & 0.0 & 0.0 & 0.0 & 0.0 & 0.305 & 0.163 & 0.5 & 0.199 \\  \hline 
(20, 400, 0.0; 16) & 0.0 & 0.0 & 0.0 & 0.0 & 0.0 & 0.0 & 0.272 & 0.152 & 0.5 & 0.198 \\  \hline 
(20, 400, 0.0; 17) & 0.0 & 0.0 & 0.0 & 0.0 & 0.0 & 0.0 & 0.313 & 0.169 & 0.274 & 0.146 \\  \hline 
(20, 400, 0.0; 18) & 0.0 & 0.0 & 0.0 & 0.0 & 0.0 & 0.0 & 0.288 & 0.154 & 0.338 & 0.169 \\  \hline 
(20, 400, 0.0; 19) & 0.0 & 0.0 & 0.0 & 0.0 & 0.0 & 0.0 & 0.548 & 0.255 & 1.103 & 0.355 \\  \hline 
(20, 400, 0.0; 20) & 0.0 & 0.0 & 0.0 & 0.0 & 0.0 & 0.0 & 0.313 & 0.16 & 0.383 & 0.172 \\  \hline 

\end{tabular}}
\end{center}
\end{table}

\begin{table}[ht]
\caption {\small Realizable Case $p = 20, n = 400, \text{sparsity} = 0.25$ with $\beta^{\ast} \sim N(0.5 \cdot \mathbf{1}_p, 10 \cdot I_p)$} \label{tab:realizable_20_400_25}
\begin{center}
\resizebox{0.7 \textwidth}{!}{\begin{tabular}{|l|l|l|l|l|l|l|l|l|l|l|} \hline
	\multirow{2}*{\bf Settings }& \multicolumn{2}{|l|}{\bf Sorting } & \multicolumn{2}{|l|}{\bf Sorting + Iter} & \multicolumn{2}{|l|}{\bf Sorting + GD } & \multicolumn{2}{|l|}{\bf GD } & \multicolumn{2}{|l|}{\bf SGD }  \\ \cline{2 - 11}
	& Prediction & Recovery & Prediction & Recovery & Prediction & Recovery &  Prediction & Recovery  & Prediction & Recovery \\ \specialrule{.15em}{.05em}{.05em} 

(20, 400, 0.0; 1) & 0.0 & 0.0 & 0.0 & 0.0 & 0.0 & 0.0 & 0.064 & 0.05 & 0.061 & 0.046 \\  \hline 
(20, 400, 0.0; 2) & 0.0 & 0.0 & 0.0 & 0.0 & 0.0 & 0.0 & 0.051 & 0.04 & 0.039 & 0.035 \\  \hline 
(20, 400, 0.0; 3) & 0.0 & 0.0 & 0.0 & 0.0 & 0.0 & 0.0 & 0.069 & 0.05 & 0.119 & 0.063 \\  \hline 
(20, 400, 0.0; 4) & 0.0 & 0.0 & 0.0 & 0.0 & 0.0 & 0.0 & 0.061 & 0.045 & 0.078 & 0.051 \\  \hline 
(20, 400, 0.0; 5) & 0.0 & 0.0 & 0.0 & 0.0 & 0.0 & 0.0 & 0.059 & 0.046 & 0.065 & 0.046 \\  \hline 
(20, 400, 0.0; 6) & 0.0 & 0.0 & 0.0 & 0.0 & 0.0 & 0.0 & 0.061 & 0.046 & 0.248 & 0.089 \\  \hline 
(20, 400, 0.0; 7) & 0.0 & 0.0 & 0.0 & 0.0 & 0.0 & 0.0 & 0.11 & 0.074 & 0.245 & 0.109 \\  \hline 
(20, 400, 0.0; 8) & 0.0 & 0.0 & 0.0 & 0.0 & 0.0 & 0.0 & 0.054 & 0.043 & 0.12 & 0.06 \\  \hline 
(20, 400, 0.0; 9) & 0.0 & 0.0 & 0.0 & 0.0 & 0.0 & 0.0 & 0.079 & 0.055 & 0.087 & 0.057 \\  \hline 
(20, 400, 0.0; 10) & 0.0 & 0.0 & 0.0 & 0.0 & 0.0 & 0.0 & 0.065 & 0.052 & 0.078 & 0.054 \\  \hline 
(20, 400, 0.0; 11) & 0.0 & 0.0 & 0.0 & 0.0 & 0.0 & 0.0 & 0.093 & 0.067 & 0.111 & 0.066 \\  \hline 
(20, 400, 0.0; 12) & 0.0 & 0.0 & 0.0 & 0.0 & 0.0 & 0.0 & 0.082 & 0.058 & 0.118 & 0.073 \\  \hline 
(20, 400, 0.0; 13) & 0.0 & 0.0 & 0.0 & 0.0 & 0.0 & 0.0 & 0.067 & 0.05 & 0.085 & 0.057 \\  \hline 
(20, 400, 0.0; 14) & 0.0 & 0.0 & 0.0 & 0.0 & 0.0 & 0.0 & 0.05 & 0.042 & 0.06 & 0.042 \\  \hline 
(20, 400, 0.0; 15) & 0.0 & 0.0 & 0.0 & 0.0 & 0.0 & 0.0 & 0.065 & 0.05 & 0.12 & 0.067 \\  \hline 
(20, 400, 0.0; 16) & 0.0 & 0.0 & 0.0 & 0.0 & 0.0 & 0.0 & 0.093 & 0.065 & 0.117 & 0.07 \\  \hline 
(20, 400, 0.0; 17) & 0.0 & 0.0 & 0.0 & 0.0 & 0.0 & 0.0 & 0.097 & 0.065 & 0.133 & 0.072 \\  \hline 
(20, 400, 0.0; 18) & 0.0 & 0.0 & 0.0 & 0.0 & 0.0 & 0.0 & 0.067 & 0.049 & 0.09 & 0.057 \\  \hline 
(20, 400, 0.0; 19) & 0.0 & 0.0 & 0.0 & 0.0 & 0.0 & 0.0 & 0.073 & 0.054 & 0.084 & 0.057 \\  \hline 
(20, 400, 0.0; 20) & 0.0 & 0.0 & 0.0 & 0.0 & 0.0 & 0.0 & 0.049 & 0.039 & 0.108 & 0.057 \\  \hline  

\end{tabular}}
\end{center}
\end{table}

\begin{table}[ht]
\caption {\small Realizable Case $p = 20, n = 400, \text{sparsity} = 0.5$ with $\beta^{\ast} \sim N(0.5 \cdot \mathbf{1}_p, 10 \cdot I_p)$} \label{tab:realizable_20_400_50}
\begin{center}
\resizebox{0.7 \textwidth}{!}{\begin{tabular}{|l|l|l|l|l|l|l|l|l|l|l|} \hline
	\multirow{2}*{\bf Settings }& \multicolumn{2}{|l|}{\bf Sorting } & \multicolumn{2}{|l|}{\bf Sorting + Iter} & \multicolumn{2}{|l|}{\bf Sorting + GD } & \multicolumn{2}{|l|}{\bf GD } & \multicolumn{2}{|l|}{\bf SGD }  \\ \cline{2 - 11}
	& Prediction & Recovery & Prediction & Recovery & Prediction & Recovery &  Prediction & Recovery  & Prediction & Recovery \\ \specialrule{.15em}{.05em}{.05em} 

(20, 400, 0.0; 1) & 0.0 & 0.0 & 0.0 & 0.0 & 0.0 & 0.0 & 0.028 & 0.026 & 0.021 & 0.02 \\  \hline 
(20, 400, 0.0; 2) & 0.0 & 0.0 & 0.0 & 0.0 & 0.0 & 0.0 & 0.018 & 0.018 & 0.026 & 0.021 \\  \hline 
(20, 400, 0.0; 3) & 0.0 & 0.0 & 0.0 & 0.0 & 0.0 & 0.0 & 0.015 & 0.016 & 0.023 & 0.018 \\  \hline 
(20, 400, 0.0; 4) & 0.0 & 0.0 & 0.0 & 0.0 & 0.0 & 0.0 & 0.018 & 0.019 & 0.034 & 0.024 \\  \hline 
(20, 400, 0.0; 5) & 0.0 & 0.0 & 0.0 & 0.0 & 0.0 & 0.0 & 0.01 & 0.013 & 0.014 & 0.014 \\  \hline 
(20, 400, 0.0; 6) & 0.0 & 0.0 & 0.0 & 0.0 & 0.0 & 0.0 & 0.02 & 0.019 & 0.017 & 0.016 \\  \hline 
(20, 400, 0.0; 7) & 0.0 & 0.0 & 0.0 & 0.0 & 0.0 & 0.0 & 0.01 & 0.013 & 0.011 & 0.012 \\  \hline 
(20, 400, 0.0; 8) & 0.0 & 0.0 & 0.0 & 0.0 & 0.0 & 0.0 & 0.022 & 0.021 & 0.022 & 0.018 \\  \hline 
(20, 400, 0.0; 9) & 0.0 & 0.0 & 0.0 & 0.0 & 0.0 & 0.0 & 0.016 & 0.018 & 0.005 & 0.009 \\  \hline 
(20, 400, 0.0; 10) & 0.0 & 0.0 & 0.0 & 0.0 & 0.0 & 0.0 & 0.016 & 0.017 & 0.029 & 0.02 \\  \hline 
(20, 400, 0.0; 11) & 0.0 & 0.0 & 0.0 & 0.0 & 0.0 & 0.0 & 0.022 & 0.021 & 0.043 & 0.025 \\  \hline 
(20, 400, 0.0; 12) & 0.0 & 0.0 & 0.0 & 0.0 & 0.0 & 0.0 & 0.019 & 0.019 & 0.017 & 0.017 \\  \hline 
(20, 400, 0.0; 13) & 0.0 & 0.0 & 0.0 & 0.0 & 0.0 & 0.0 & 0.017 & 0.017 & 0.014 & 0.015 \\  \hline 
(20, 400, 0.0; 14) & 0.0 & 0.0 & 0.0 & 0.0 & 0.0 & 0.0 & 0.019 & 0.02 & 0.034 & 0.025 \\  \hline 
(20, 400, 0.0; 15) & 0.0 & 0.0 & 0.0 & 0.0 & 0.0 & 0.0 & 0.015 & 0.016 & 0.015 & 0.014 \\  \hline 
(20, 400, 0.0; 16) & 0.0 & 0.0 & 0.0 & 0.0 & 0.0 & 0.0 & 0.017 & 0.017 & 0.011 & 0.012 \\  \hline 
(20, 400, 0.0; 17) & 0.0 & 0.0 & 0.0 & 0.0 & 0.0 & 0.0 & 0.024 & 0.022 & 0.065 & 0.031 \\  \hline 
(20, 400, 0.0; 18) & 0.0 & 0.0 & 0.0 & 0.0 & 0.0 & 0.0 & 0.017 & 0.017 & 0.011 & 0.014 \\  \hline 
(20, 400, 0.0; 19) & 0.0 & 0.0 & 0.0 & 0.0 & 0.0 & 0.0 & 0.017 & 0.018 & 0.022 & 0.018 \\  \hline 
(20, 400, 0.0; 20) & 0.0 & 0.0 & 0.0 & 0.0 & 0.0 & 0.0 & 0.019 & 0.019 & 0.02 & 0.016 \\  \hline 

\end{tabular}}
\end{center}
\end{table}

\begin{table}[ht]
\caption {\small Realizable Case $p = 20, n = 400, \text{sparsity} = 0.75$ with $\beta^{\ast} \sim N(0.5 \cdot \mathbf{1}_p, 10 \cdot I_p)$} \label{tab:realizable_20_400_75}
\begin{center}
\resizebox{0.7 \textwidth}{!}{\begin{tabular}{|l|l|l|l|l|l|l|l|l|l|l|} \hline
	\multirow{2}*{\bf Settings }& \multicolumn{2}{|l|}{\bf Sorting } & \multicolumn{2}{|l|}{\bf Sorting + Iter} & \multicolumn{2}{|l|}{\bf Sorting + GD } & \multicolumn{2}{|l|}{\bf GD } & \multicolumn{2}{|l|}{\bf SGD }  \\ \cline{2 - 11}
	& Prediction & Recovery & Prediction & Recovery & Prediction & Recovery &  Prediction & Recovery  & Prediction & Recovery \\ \specialrule{.15em}{.05em}{.05em} 

(20, 400, 0.0; 1) & 0.0 & 0.0 & 0.0 & 0.0 & 0.0 & 0.0 & 0.01 & 0.012 & 0.02 & 0.013 \\  \hline 
(20, 400, 0.0; 2) & 0.0 & 0.0 & 0.0 & 0.0 & 0.0 & 0.0 & 0.006 & 0.008 & 0.006 & 0.006 \\  \hline 
(20, 400, 0.0; 3) & 0.0 & 0.0 & 0.0 & 0.0 & 0.0 & 0.0 & 0.003 & 0.005 & 0.005 & 0.006 \\  \hline 
(20, 400, 0.0; 4) & 0.0 & 0.0 & 0.0 & 0.0 & 0.0 & 0.0 & 0.009 & 0.011 & 0.027 & 0.015 \\  \hline 
(20, 400, 0.0; 5) & 0.0 & 0.0 & 0.0 & 0.0 & 0.0 & 0.0 & 0.009 & 0.011 & 0.008 & 0.009 \\  \hline 
(20, 400, 0.0; 6) & 0.0 & 0.0 & 0.0 & 0.0 & 0.0 & 0.0 & 0.009 & 0.012 & 0.016 & 0.013 \\  \hline 
(20, 400, 0.0; 7) & 0.0 & 0.0 & 0.0 & 0.0 & 0.0 & 0.0 & 0.005 & 0.008 & 0.009 & 0.008 \\  \hline 
(20, 400, 0.0; 8) & 0.0 & 0.0 & 0.0 & 0.0 & 0.0 & 0.0 & 0.003 & 0.005 & 0.013 & 0.01 \\  \hline 
(20, 400, 0.0; 9) & 0.0 & 0.0 & 0.0 & 0.0 & 0.0 & 0.0 & 0.007 & 0.01 & 0.01 & 0.009 \\  \hline 
(20, 400, 0.0; 10) & 0.0 & 0.0 & 0.0 & 0.0 & 0.0 & 0.0 & 0.007 & 0.009 & 0.007 & 0.007 \\  \hline 
(20, 400, 0.0; 11) & 0.0 & 0.0 & 0.0 & 0.0 & 0.0 & 0.0 & 0.008 & 0.01 & 0.014 & 0.01 \\  \hline 
(20, 400, 0.0; 12) & 0.0 & 0.0 & 0.0 & 0.0 & 0.0 & 0.0 & 0.011 & 0.012 & 0.01 & 0.009 \\  \hline 
(20, 400, 0.0; 13) & 0.0 & 0.0 & 0.0 & 0.0 & 0.0 & 0.0 & 0.005 & 0.008 & 0.007 & 0.008 \\  \hline 
(20, 400, 0.0; 14) & 0.0 & 0.0 & 0.0 & 0.0 & 0.0 & 0.0 & 0.007 & 0.009 & 0.012 & 0.009 \\  \hline 
(20, 400, 0.0; 15) & 0.0 & 0.0 & 0.0 & 0.0 & 0.0 & 0.0 & 0.005 & 0.008 & 0.003 & 0.004 \\  \hline 
(20, 400, 0.0; 16) & 0.0 & 0.0 & 0.0 & 0.0 & 0.0 & 0.0 & 0.005 & 0.007 & 0.011 & 0.009 \\  \hline 
(20, 400, 0.0; 17) & 0.0 & 0.0 & 0.0 & 0.0 & 0.0 & 0.0 & 0.006 & 0.009 & 0.011 & 0.01 \\  \hline 
(20, 400, 0.0; 18) & 0.0 & 0.0 & 0.0 & 0.0 & 0.0 & 0.0 & 0.005 & 0.008 & 0.011 & 0.009 \\  \hline 
(20, 400, 0.0; 19) & 0.0 & 0.0 & 0.0 & 0.0 & 0.0 & 0.0 & 0.01 & 0.012 & 0.01 & 0.01 \\  \hline 
(20, 400, 0.0; 20) & 0.0 & 0.0 & 0.0 & 0.0 & 0.0 & 0.0 & 0.012 & 0.014 & 0.032 & 0.018 \\  \hline   

\end{tabular}}
\end{center}
\end{table}

\begin{table}[ht]
\caption {\small Realizable Case $p = 20, n = 400, \text{sparsity} = 0.9$ with $\beta^{\ast} \sim N(0.5 \cdot \mathbf{1}_p, 10 \cdot I_p)$} \label{tab:realizable_20_400_90}
\begin{center}
\resizebox{0.7 \textwidth}{!}{\begin{tabular}{|l|l|l|l|l|l|l|l|l|l|l|} \hline
	\multirow{2}*{\bf Settings }& \multicolumn{2}{|l|}{\bf Sorting } & \multicolumn{2}{|l|}{\bf Sorting + Iter} & \multicolumn{2}{|l|}{\bf Sorting + GD } & \multicolumn{2}{|l|}{\bf GD } & \multicolumn{2}{|l|}{\bf SGD }  \\ \cline{2 - 11}
	& Prediction & Recovery & Prediction & Recovery & Prediction & Recovery &  Prediction & Recovery  & Prediction & Recovery \\ \specialrule{.15em}{.05em}{.05em} 

(20, 400, 0.0; 1) & 0.0 & 0.0 & 0.0 & 0.0 & 0.0 & 0.0 & 0.004 & 0.007 & 0.005 & 0.006 \\  \hline 
(20, 400, 0.0; 2) & 0.0 & 0.0 & 0.0 & 0.0 & 0.0 & 0.0 & 0.004 & 0.006 & 0.009 & 0.008 \\  \hline 
(20, 400, 0.0; 3) & 0.0 & 0.0 & 0.0 & 0.0 & 0.0 & 0.0 & 0.005 & 0.007 & 0.006 & 0.005 \\  \hline 
(20, 400, 0.0; 4) & 0.0 & 0.0 & 0.0 & 0.0 & 0.0 & 0.0 & 0.002 & 0.004 & 0.02 & 0.012 \\  \hline 
(20, 400, 0.0; 5) & 0.0 & 0.0 & 0.0 & 0.0 & 0.0 & 0.0 & 0.003 & 0.005 & 0.006 & 0.006 \\  \hline 
(20, 400, 0.0; 6) & 0.0 & 0.0 & 0.0 & 0.0 & 0.0 & 0.0 & 0.003 & 0.005 & 0.003 & 0.004 \\  \hline 
(20, 400, 0.0; 7) & 0.0 & 0.0 & 0.0 & 0.0 & 0.0 & 0.0 & 0.005 & 0.008 & 0.01 & 0.009 \\  \hline 
(20, 400, 0.0; 8) & 0.0 & 0.0 & 0.0 & 0.0 & 0.0 & 0.0 & 0.006 & 0.008 & 0.006 & 0.006 \\  \hline 
(20, 400, 0.0; 9) & 0.0 & 0.0 & 0.0 & 0.0 & 0.0 & 0.0 & 0.003 & 0.006 & 0.012 & 0.008 \\  \hline 
(20, 400, 0.0; 10) & 0.0 & 0.0 & 0.0 & 0.0 & 0.0 & 0.0 & 0.005 & 0.008 & 0.007 & 0.006 \\  \hline 
(20, 400, 0.0; 11) & 0.0 & 0.0 & 0.0 & 0.0 & 0.0 & 0.0 & 0.004 & 0.007 & 0.014 & 0.009 \\  \hline 
(20, 400, 0.0; 12) & 0.0 & 0.0 & 0.0 & 0.0 & 0.0 & 0.0 & 0.006 & 0.008 & 0.009 & 0.007 \\  \hline 
(20, 400, 0.0; 13) & 0.0 & 0.0 & 0.0 & 0.0 & 0.0 & 0.0 & 0.006 & 0.008 & 0.006 & 0.006 \\  \hline 
(20, 400, 0.0; 14) & 0.0 & 0.0 & 0.0 & 0.0 & 0.0 & 0.0 & 0.007 & 0.008 & 0.018 & 0.011 \\  \hline 
(20, 400, 0.0; 15) & 0.0 & 0.0 & 0.0 & 0.0 & 0.0 & 0.0 & 0.004 & 0.006 & 0.007 & 0.007 \\  \hline 
(20, 400, 0.0; 16) & 0.0 & 0.0 & 0.0 & 0.0 & 0.0 & 0.0 & 0.003 & 0.005 & 0.004 & 0.005 \\  \hline 
(20, 400, 0.0; 17) & 0.0 & 0.0 & 0.0 & 0.0 & 0.0 & 0.0 & 0.006 & 0.009 & 0.006 & 0.006 \\  \hline 
(20, 400, 0.0; 18) & 0.0 & 0.0 & 0.0 & 0.0 & 0.0 & 0.0 & 0.005 & 0.007 & 0.004 & 0.005 \\  \hline 
(20, 400, 0.0; 19) & 0.0 & 0.0 & 0.0 & 0.0 & 0.0 & 0.0 & 0.001 & 0.003 & 0.013 & 0.008 \\  \hline 
(20, 400, 0.0; 20) & 0.0 & 0.0 & 0.0 & 0.0 & 0.0 & 0.0 & 0.005 & 0.007 & 0.011 & 0.008 \\  \hline    

\end{tabular}}
\end{center}
\end{table}

\begin{table}[ht]
\caption {\small Realizable Case $p = 50, n = 1000, \text{sparsity} = 0.1$ with $\beta^{\ast} \sim N(0.5 \cdot \mathbf{1}_p, 10 \cdot I_p)$} \label{tab:realizable_50_1000_10}
\begin{center}
\resizebox{0.7 \textwidth}{!}{\begin{tabular}{|l|l|l|l|l|l|l|l|l|l|l|} \hline
	\multirow{2}*{\bf Settings }& \multicolumn{2}{|l|}{\bf Sorting } & \multicolumn{2}{|l|}{\bf Sorting + Iter} & \multicolumn{2}{|l|}{\bf Sorting + GD } & \multicolumn{2}{|l|}{\bf GD } & \multicolumn{2}{|l|}{\bf SGD }  \\ \cline{2 - 11}
	& Prediction & Recovery & Prediction & Recovery & Prediction & Recovery &  Prediction & Recovery  & Prediction & Recovery \\ \specialrule{.15em}{.05em}{.05em} 

(50, 1000, 0.0; 1) & 0.0 & 0.0 & 0.0 & 0.0 & 0.0 & 0.0 & 0.715 & 0.17 & 1.104 & 0.21 \\  \hline 
(50, 1000, 0.0; 2) & 0.0 & 0.0 & 0.0 & 0.0 & 0.0 & 0.0 & 0.601 & 0.154 & 0.787 & 0.175 \\  \hline 
(50, 1000, 0.0; 3) & 0.0 & 0.0 & 0.0 & 0.0 & 0.0 & 0.0 & 0.661 & 0.167 & 0.741 & 0.175 \\  \hline 
(50, 1000, 0.0; 4) & 0.0 & 0.0 & 0.0 & 0.0 & 0.0 & 0.0 & 0.551 & 0.143 & 0.631 & 0.151 \\  \hline 
(50, 1000, 0.0; 5) & 0.0 & 0.0 & 0.0 & 0.0 & 0.0 & 0.0 & 0.528 & 0.144 & 0.661 & 0.16 \\  \hline 
(50, 1000, 0.0; 6) & 0.0 & 0.0 & 0.0 & 0.0 & 0.0 & 0.0 & 0.347 & 0.105 & 0.417 & 0.112 \\  \hline 
(50, 1000, 0.0; 7) & 0.0 & 0.0 & 0.0 & 0.0 & 0.0 & 0.0 & 0.493 & 0.13 & 0.586 & 0.141 \\  \hline 
(50, 1000, 0.0; 8) & 0.0 & 0.0 & 0.0 & 0.0 & 0.0 & 0.0 & 0.526 & 0.141 & 0.814 & 0.176 \\  \hline 
(50, 1000, 0.0; 9) & 0.0 & 0.0 & 0.0 & 0.0 & 0.0 & 0.0 & 0.46 & 0.128 & 0.848 & 0.173 \\  \hline 
(50, 1000, 0.0; 10) & 0.0 & 0.0 & 0.0 & 0.0 & 0.0 & 0.0 & 0.571 & 0.15 & 0.703 & 0.167 \\  \hline 
(50, 1000, 0.0; 11) & 0.0 & 0.0 & 0.0 & 0.0 & 0.0 & 0.0 & 0.641 & 0.157 & 0.883 & 0.183 \\  \hline 
(50, 1000, 0.0; 12) & 0.0 & 0.0 & 0.0 & 0.0 & 0.0 & 0.0 & 0.601 & 0.157 & 0.785 & 0.179 \\  \hline 
(50, 1000, 0.0; 13) & 0.0 & 0.0 & 0.0 & 0.0 & 0.0 & 0.0 & 0.581 & 0.144 & 0.871 & 0.176 \\  \hline 
(50, 1000, 0.0; 14) & 0.0 & 0.0 & 0.0 & 0.0 & 0.0 & 0.0 & 0.469 & 0.126 & 0.569 & 0.139 \\  \hline 
(50, 1000, 0.0; 15) & 0.0 & 0.0 & 0.0 & 0.0 & 0.0 & 0.0 & 0.535 & 0.14 & 0.882 & 0.181 \\  \hline 
(50, 1000, 0.0; 16) & 0.0 & 0.0 & 0.0 & 0.0 & 0.0 & 0.0 & 0.7 & 0.168 & 0.85 & 0.184 \\  \hline 
(50, 1000, 0.0; 17) & 0.0 & 0.0 & 0.0 & 0.0 & 0.0 & 0.0 & 0.613 & 0.156 & 0.881 & 0.182 \\  \hline 
(50, 1000, 0.0; 18) & 0.0 & 0.0 & 0.0 & 0.0 & 0.0 & 0.0 & 0.551 & 0.142 & 0.802 & 0.17 \\  \hline 
(50, 1000, 0.0; 19) & 0.0 & 0.0 & 0.0 & 0.0 & 0.0 & 0.0 & 0.475 & 0.131 & 0.589 & 0.148 \\  \hline 
(50, 1000, 0.0; 20) & 0.0 & 0.0 & 0.0 & 0.0 & 0.0 & 0.0 & 0.566 & 0.149 & 0.608 & 0.154 \\  \hline 

\end{tabular}}
\end{center}
\end{table}

\begin{table}[ht]
\caption {\small Realizable Case $p = 50, n = 1000, \text{sparsity} = 0.25$ with $\beta^{\ast} \sim N(0.5 \cdot \mathbf{1}_p, 10 \cdot I_p)$} \label{tab:realizable_50_1000_25}
\begin{center}
\resizebox{0.7 \textwidth}{!}{\begin{tabular}{|l|l|l|l|l|l|l|l|l|l|l|} \hline
	\multirow{2}*{\bf Settings }& \multicolumn{2}{|l|}{\bf Sorting } & \multicolumn{2}{|l|}{\bf Sorting + Iter} & \multicolumn{2}{|l|}{\bf Sorting + GD } & \multicolumn{2}{|l|}{\bf GD } & \multicolumn{2}{|l|}{\bf SGD }  \\ \cline{2 - 11}
	& Prediction & Recovery & Prediction & Recovery & Prediction & Recovery &  Prediction & Recovery  & Prediction & Recovery \\ \specialrule{.15em}{.05em}{.05em} 

(50, 1000, 0.0; 1) & 0.0 & 0.0 & 0.0 & 0.0 & 0.0 & 0.0 & 0.096 & 0.04 & 0.128 & 0.045 \\  \hline 
(50, 1000, 0.0; 2) & 0.0 & 0.0 & 0.0 & 0.0 & 0.0 & 0.0 & 0.073 & 0.033 & 0.075 & 0.031 \\  \hline 
(50, 1000, 0.0; 3) & 0.0 & 0.0 & 0.0 & 0.0 & 0.0 & 0.0 & 0.092 & 0.038 & 0.073 & 0.033 \\  \hline 
(50, 1000, 0.0; 4) & 0.0 & 0.0 & 0.0 & 0.0 & 0.0 & 0.0 & 0.09 & 0.038 & 0.071 & 0.032 \\  \hline 
(50, 1000, 0.0; 5) & 0.0 & 0.0 & 0.0 & 0.0 & 0.0 & 0.0 & 0.095 & 0.039 & 0.107 & 0.04 \\  \hline 
(50, 1000, 0.0; 6) & 0.0 & 0.0 & 0.0 & 0.0 & 0.0 & 0.0 & 0.112 & 0.044 & 0.145 & 0.051 \\  \hline 
(50, 1000, 0.0; 7) & 0.0 & 0.0 & 0.0 & 0.0 & 0.0 & 0.0 & 0.086 & 0.037 & 0.078 & 0.034 \\  \hline 
(50, 1000, 0.0; 8) & 0.0 & 0.0 & 0.0 & 0.0 & 0.0 & 0.0 & 0.091 & 0.037 & 0.136 & 0.045 \\  \hline 
(50, 1000, 0.0; 9) & 0.0 & 0.0 & 0.0 & 0.0 & 0.0 & 0.0 & 0.1 & 0.041 & 0.091 & 0.038 \\  \hline 
(50, 1000, 0.0; 10) & 0.0 & 0.0 & 0.0 & 0.0 & 0.0 & 0.0 & 0.121 & 0.047 & 0.104 & 0.042 \\  \hline 
(50, 1000, 0.0; 11) & 0.0 & 0.0 & 0.0 & 0.0 & 0.0 & 0.0 & 0.078 & 0.033 & 0.097 & 0.037 \\  \hline 
(50, 1000, 0.0; 12) & 0.0 & 0.0 & 0.0 & 0.0 & 0.0 & 0.0 & 0.088 & 0.038 & 0.088 & 0.035 \\  \hline 
(50, 1000, 0.0; 13) & 0.0 & 0.0 & 0.0 & 0.0 & 0.0 & 0.0 & 0.073 & 0.033 & 0.073 & 0.03 \\  \hline 
(50, 1000, 0.0; 14) & 0.0 & 0.0 & 0.0 & 0.0 & 0.0 & 0.0 & 0.088 & 0.037 & 0.124 & 0.042 \\  \hline 
(50, 1000, 0.0; 15) & 0.0 & 0.0 & 0.0 & 0.0 & 0.0 & 0.0 & 0.082 & 0.036 & 0.106 & 0.038 \\  \hline 
(50, 1000, 0.0; 16) & 0.0 & 0.0 & 0.0 & 0.0 & 0.0 & 0.0 & 0.067 & 0.03 & 0.144 & 0.044 \\  \hline 
(50, 1000, 0.0; 17) & 0.0 & 0.0 & 0.0 & 0.0 & 0.0 & 0.0 & 0.101 & 0.04 & 0.132 & 0.045 \\  \hline 
(50, 1000, 0.0; 18) & 0.0 & 0.0 & 0.0 & 0.0 & 0.0 & 0.0 & 0.083 & 0.035 & 0.121 & 0.041 \\  \hline 
(50, 1000, 0.0; 19) & 0.0 & 0.0 & 0.0 & 0.0 & 0.0 & 0.0 & 0.093 & 0.039 & 0.107 & 0.041 \\  \hline 
(50, 1000, 0.0; 20) & 0.0 & 0.0 & 0.0 & 0.0 & 0.0 & 0.0 & 0.1 & 0.04 & 0.251 & 0.062 \\  \hline  

\end{tabular}}
\end{center}
\end{table}

\begin{table}[ht]
\caption {\small Realizable Case $p = 50, n = 1000, \text{sparsity} = 0.5$ with $\beta^{\ast} \sim N(0.5 \cdot \mathbf{1}_p, 10 \cdot I_p)$} \label{tab:realizable_50_1000_50}
\begin{center}
\resizebox{0.7 \textwidth}{!}{\begin{tabular}{|l|l|l|l|l|l|l|l|l|l|l|} \hline
	\multirow{2}*{\bf Settings }& \multicolumn{2}{|l|}{\bf Sorting } & \multicolumn{2}{|l|}{\bf Sorting + Iter} & \multicolumn{2}{|l|}{\bf Sorting + GD } & \multicolumn{2}{|l|}{\bf GD } & \multicolumn{2}{|l|}{\bf SGD }  \\ \cline{2 - 11}
	& Prediction & Recovery & Prediction & Recovery & Prediction & Recovery &  Prediction & Recovery  & Prediction & Recovery \\ \specialrule{.15em}{.05em}{.05em} 

(50, 1000, 0.0; 1) & 0.0 & 0.0 & 0.0 & 0.0 & 0.0 & 0.0 & 0.026 & 0.015 & 0.037 & 0.016 \\  \hline 
(50, 1000, 0.0; 2) & 0.0 & 0.0 & 0.0 & 0.0 & 0.0 & 0.0 & 0.026 & 0.015 & 0.034 & 0.016 \\  \hline 
(50, 1000, 0.0; 3) & 0.0 & 0.0 & 0.0 & 0.0 & 0.0 & 0.0 & 0.028 & 0.016 & 0.045 & 0.018 \\  \hline 
(50, 1000, 0.0; 4) & 0.0 & 0.0 & 0.0 & 0.0 & 0.0 & 0.0 & 0.024 & 0.014 & 0.021 & 0.012 \\  \hline 
(50, 1000, 0.0; 5) & 0.0 & 0.0 & 0.0 & 0.0 & 0.0 & 0.0 & 0.02 & 0.012 & 0.028 & 0.014 \\  \hline 
(50, 1000, 0.0; 6) & 0.0 & 0.0 & 0.0 & 0.0 & 0.0 & 0.0 & 0.02 & 0.012 & 0.022 & 0.012 \\  \hline 
(50, 1000, 0.0; 7) & 0.0 & 0.0 & 0.0 & 0.0 & 0.0 & 0.0 & 0.017 & 0.011 & 0.024 & 0.012 \\  \hline 
(50, 1000, 0.0; 8) & 0.0 & 0.0 & 0.0 & 0.0 & 0.0 & 0.0 & 0.022 & 0.013 & 0.039 & 0.016 \\  \hline 
(50, 1000, 0.0; 9) & 0.0 & 0.0 & 0.0 & 0.0 & 0.0 & 0.0 & 0.019 & 0.011 & 0.015 & 0.009 \\  \hline 
(50, 1000, 0.0; 10) & 0.0 & 0.0 & 0.0 & 0.0 & 0.0 & 0.0 & 0.019 & 0.012 & 0.034 & 0.015 \\  \hline 
(50, 1000, 0.0; 11) & 0.0 & 0.0 & 0.0 & 0.0 & 0.0 & 0.0 & 0.018 & 0.012 & 0.014 & 0.009 \\  \hline 
(50, 1000, 0.0; 12) & 0.0 & 0.0 & 0.0 & 0.0 & 0.0 & 0.0 & 0.024 & 0.014 & 0.02 & 0.011 \\  \hline 
(50, 1000, 0.0; 13) & 0.0 & 0.0 & 0.0 & 0.0 & 0.0 & 0.0 & 0.024 & 0.015 & 0.039 & 0.017 \\  \hline 
(50, 1000, 0.0; 14) & 0.0 & 0.0 & 0.0 & 0.0 & 0.0 & 0.0 & 0.037 & 0.019 & 0.04 & 0.017 \\  \hline 
(50, 1000, 0.0; 15) & 0.0 & 0.0 & 0.0 & 0.0 & 0.0 & 0.0 & 0.021 & 0.013 & 0.042 & 0.016 \\  \hline 
(50, 1000, 0.0; 16) & 0.0 & 0.0 & 0.0 & 0.0 & 0.0 & 0.0 & 0.02 & 0.013 & 0.053 & 0.019 \\  \hline 
(50, 1000, 0.0; 17) & 0.0 & 0.0 & 0.0 & 0.0 & 0.0 & 0.0 & 0.019 & 0.012 & 0.017 & 0.011 \\  \hline 
(50, 1000, 0.0; 18) & 0.0 & 0.0 & 0.0 & 0.0 & 0.0 & 0.0 & 0.015 & 0.01 & 0.014 & 0.009 \\  \hline 
(50, 1000, 0.0; 19) & 0.0 & 0.0 & 0.0 & 0.0 & 0.0 & 0.0 & 0.018 & 0.012 & 0.034 & 0.016 \\  \hline 
(50, 1000, 0.0; 20) & 0.0 & 0.0 & 0.0 & 0.0 & 0.0 & 0.0 & 0.019 & 0.012 & 0.015 & 0.01 \\  \hline   

\end{tabular}}
\end{center}
\end{table}

\begin{table}[ht]
\caption {\small Realizable Case $p = 50, n = 1000, \text{sparsity} = 0.75$ with $\beta^{\ast} \sim N(0.5 \cdot \mathbf{1}_p, 10 \cdot I_p)$} \label{tab:realizable_50_1000_75}
\begin{center}
\resizebox{0.7 \textwidth}{!}{\begin{tabular}{|l|l|l|l|l|l|l|l|l|l|l|} \hline
	\multirow{2}*{\bf Settings }& \multicolumn{2}{|l|}{\bf Sorting } & \multicolumn{2}{|l|}{\bf Sorting + Iter} & \multicolumn{2}{|l|}{\bf Sorting + GD } & \multicolumn{2}{|l|}{\bf GD } & \multicolumn{2}{|l|}{\bf SGD }  \\ \cline{2 - 11}
	& Prediction & Recovery & Prediction & Recovery & Prediction & Recovery &  Prediction & Recovery  & Prediction & Recovery \\ \specialrule{.15em}{.05em}{.05em} 

(50, 1000, 0.0; 1) & 0.0 & 0.0 & 0.0 & 0.0 & 0.0 & 0.0 & 0.009 & 0.007 & 0.009 & 0.006 \\  \hline 
(50, 1000, 0.0; 2) & 0.0 & 0.0 & 0.0 & 0.0 & 0.0 & 0.0 & 0.011 & 0.008 & 0.014 & 0.007 \\  \hline 
(50, 1000, 0.0; 3) & 0.0 & 0.0 & 0.0 & 0.0 & 0.0 & 0.0 & 0.011 & 0.008 & 0.027 & 0.01 \\  \hline 
(50, 1000, 0.0; 4) & 0.0 & 0.0 & 0.0 & 0.0 & 0.0 & 0.0 & 0.01 & 0.007 & 0.01 & 0.006 \\  \hline 
(50, 1000, 0.0; 5) & 0.0 & 0.0 & 0.0 & 0.0 & 0.0 & 0.0 & 0.008 & 0.006 & 0.015 & 0.007 \\  \hline 
(50, 1000, 0.0; 6) & 0.0 & 0.0 & 0.0 & 0.0 & 0.0 & 0.0 & 0.005 & 0.005 & 0.008 & 0.005 \\  \hline 
(50, 1000, 0.0; 7) & 0.0 & 0.0 & 0.0 & 0.0 & 0.0 & 0.0 & 0.016 & 0.01 & 0.006 & 0.005 \\  \hline 
(50, 1000, 0.0; 8) & 0.0 & 0.0 & 0.0 & 0.0 & 0.0 & 0.0 & 0.007 & 0.006 & 0.009 & 0.006 \\  \hline 
(50, 1000, 0.0; 9) & 0.0 & 0.0 & 0.0 & 0.0 & 0.0 & 0.0 & 0.012 & 0.008 & 0.011 & 0.006 \\  \hline 
(50, 1000, 0.0; 10) & 0.0 & 0.0 & 0.0 & 0.0 & 0.0 & 0.0 & 0.008 & 0.007 & 0.022 & 0.009 \\  \hline 
(50, 1000, 0.0; 11) & 0.0 & 0.0 & 0.0 & 0.0 & 0.0 & 0.0 & 0.007 & 0.006 & 0.01 & 0.006 \\  \hline 
(50, 1000, 0.0; 12) & 0.0 & 0.0 & 0.0 & 0.0 & 0.0 & 0.0 & 0.008 & 0.006 & 0.014 & 0.007 \\  \hline 
(50, 1000, 0.0; 13) & 0.0 & 0.0 & 0.0 & 0.0 & 0.0 & 0.0 & 0.007 & 0.006 & 0.008 & 0.006 \\  \hline 
(50, 1000, 0.0; 14) & 0.0 & 0.0 & 0.0 & 0.0 & 0.0 & 0.0 & 0.006 & 0.006 & 0.026 & 0.011 \\  \hline 
(50, 1000, 0.0; 15) & 0.0 & 0.0 & 0.0 & 0.0 & 0.0 & 0.0 & 0.007 & 0.006 & 0.006 & 0.005 \\  \hline 
(50, 1000, 0.0; 16) & 0.0 & 0.0 & 0.0 & 0.0 & 0.0 & 0.0 & 0.006 & 0.005 & 0.011 & 0.006 \\  \hline 
(50, 1000, 0.0; 17) & 0.0 & 0.0 & 0.0 & 0.0 & 0.0 & 0.0 & 0.006 & 0.005 & 0.01 & 0.006 \\  \hline 
(50, 1000, 0.0; 18) & 0.0 & 0.0 & 0.0 & 0.0 & 0.0 & 0.0 & 0.009 & 0.007 & 0.007 & 0.005 \\  \hline 
(50, 1000, 0.0; 19) & 0.0 & 0.0 & 0.0 & 0.0 & 0.0 & 0.0 & 0.006 & 0.006 & 0.011 & 0.007 \\  \hline 
(50, 1000, 0.0; 20) & 0.0 & 0.0 & 0.0 & 0.0 & 0.0 & 0.0 & 0.011 & 0.008 & 0.014 & 0.007 \\  \hline 

\end{tabular}}
\end{center}
\end{table}

\begin{table}[ht]
\caption {\small Realizable Case $p = 50, n = 1000, \text{sparsity} = 0.9$ with $\beta^{\ast} \sim N(0.5 \cdot \mathbf{1}_p, 10 \cdot I_p)$} \label{tab:realizable_50_1000_90}
\begin{center}
\resizebox{0.7 \textwidth}{!}{\begin{tabular}{|l|l|l|l|l|l|l|l|l|l|l|} \hline
	\multirow{2}*{\bf Settings }& \multicolumn{2}{|l|}{\bf Sorting } & \multicolumn{2}{|l|}{\bf Sorting + Iter} & \multicolumn{2}{|l|}{\bf Sorting + GD } & \multicolumn{2}{|l|}{\bf GD } & \multicolumn{2}{|l|}{\bf SGD }  \\ \cline{2 - 11}
	& Prediction & Recovery & Prediction & Recovery & Prediction & Recovery &  Prediction & Recovery  & Prediction & Recovery \\ \specialrule{.15em}{.05em}{.05em} 

(50, 1000, 0.0; 1) & 0.0 & 0.0 & 0.0 & 0.0 & 0.0 & 0.0 & 0.005 & 0.005 & 0.006 & 0.004 \\  \hline 
(50, 1000, 0.0; 2) & 0.0 & 0.0 & 0.0 & 0.0 & 0.0 & 0.0 & 0.006 & 0.005 & 0.009 & 0.005 \\  \hline 
(50, 1000, 0.0; 3) & 0.0 & 0.0 & 0.0 & 0.0 & 0.0 & 0.0 & 0.005 & 0.005 & 0.008 & 0.005 \\  \hline 
(50, 1000, 0.0; 4) & 0.0 & 0.0 & 0.0 & 0.0 & 0.0 & 0.0 & 0.002 & 0.003 & 0.03 & 0.009 \\  \hline 
(50, 1000, 0.0; 5) & 0.0 & 0.0 & 0.0 & 0.0 & 0.0 & 0.0 & 0.004 & 0.004 & 0.009 & 0.005 \\  \hline 
(50, 1000, 0.0; 6) & 0.0 & 0.0 & 0.0 & 0.0 & 0.0 & 0.0 & 0.008 & 0.006 & 0.008 & 0.005 \\  \hline 
(50, 1000, 0.0; 7) & 0.0 & 0.0 & 0.0 & 0.0 & 0.0 & 0.0 & 0.008 & 0.006 & 0.01 & 0.005 \\  \hline 
(50, 1000, 0.0; 8) & 0.0 & 0.0 & 0.0 & 0.0 & 0.0 & 0.0 & 0.005 & 0.005 & 0.012 & 0.005 \\  \hline 
(50, 1000, 0.0; 9) & 0.0 & 0.0 & 0.0 & 0.0 & 0.0 & 0.0 & 0.009 & 0.007 & 0.01 & 0.006 \\  \hline 
(50, 1000, 0.0; 10) & 0.0 & 0.0 & 0.0 & 0.0 & 0.0 & 0.0 & 0.005 & 0.004 & 0.01 & 0.005 \\  \hline 
(50, 1000, 0.0; 11) & 0.0 & 0.0 & 0.0 & 0.0 & 0.0 & 0.0 & 0.008 & 0.006 & 0.005 & 0.004 \\  \hline 
(50, 1000, 0.0; 12) & 0.0 & 0.0 & 0.0 & 0.0 & 0.0 & 0.0 & 0.007 & 0.006 & 0.006 & 0.004 \\  \hline 
(50, 1000, 0.0; 13) & 0.0 & 0.0 & 0.0 & 0.0 & 0.0 & 0.0 & 0.005 & 0.005 & 0.007 & 0.004 \\  \hline 
(50, 1000, 0.0; 14) & 0.0 & 0.0 & 0.0 & 0.0 & 0.0 & 0.0 & 0.003 & 0.003 & 0.006 & 0.004 \\  \hline 
(50, 1000, 0.0; 15) & 0.0 & 0.0 & 0.0 & 0.0 & 0.0 & 0.0 & 0.007 & 0.006 & 0.006 & 0.004 \\  \hline 
(50, 1000, 0.0; 16) & 0.0 & 0.0 & 0.0 & 0.0 & 0.0 & 0.0 & 0.006 & 0.005 & 0.004 & 0.003 \\  \hline 
(50, 1000, 0.0; 17) & 0.0 & 0.0 & 0.0 & 0.0 & 0.0 & 0.0 & 0.007 & 0.006 & 0.01 & 0.005 \\  \hline 
(50, 1000, 0.0; 18) & 0.0 & 0.0 & 0.0 & 0.0 & 0.0 & 0.0 & 0.005 & 0.004 & 0.008 & 0.004 \\  \hline 
(50, 1000, 0.0; 19) & 0.0 & 0.0 & 0.0 & 0.0 & 0.0 & 0.0 & 0.005 & 0.005 & 0.013 & 0.006 \\  \hline 
(50, 1000, 0.0; 20) & 0.0 & 0.0 & 0.0 & 0.0 & 0.0 & 0.0 & 0.005 & 0.005 & 0.008 & 0.005 \\  \hline     

\end{tabular}}
\end{center}
\end{table}

\end{document}